\newcommand{\N}{\mathbb{N}}
\newcommand{\Z}{\mathbb{Z}}
\newcommand{\R}{\mathbb{R}}
\newcommand{\Cb}{\mathbb{C}}
\newcommand{\Hb}{\mathbb{H}}
\newcommand{\Gb}{\mathbb{G}}
\newcommand{\A}{\mathbb{A}}
\renewcommand{\S}{\mathbb{S}}
\newcommand{\Pb}{\mathbb{P}}
\newcommand{\Pd}{\mathbb{P}^d}
\newcommand{\Eb}{\mathbb{E}}
\newcommand*{\so}[1]{\mathrm{PSO}_{#1,1}}
\newcommand*{\s}[1]{\mathrm{SL}_{#1}(\mathbb{R})}
\renewcommand*{\sl}[1]{\mathfrak{sl}_{#1}(\mathbb{R})}
\newcommand{\lieo}{\mathfrak{o}}
\newcommand*{\lieso}[1]{\mathfrak{so}_{#1,1}}
\newcommand{\Idd}{\mathrm{Id}}
\renewcommand{\gg}{\mathfrak{g}}
\newcommand{\G}{\Gamma}
\newcommand{\g}{\gamma}
\renewcommand{\O}{\Omega}
\newcommand{\U}{\mathcal{U}}
\newcommand{\V}{\mathcal{V}}
\newcommand{\E}{\mathcal{E}}
\newcommand{\Gc}{\mathcal{G}}
\newcommand{\Cc}{\mathcal{C}}
\newcommand{\Aut}{\textrm{Aut}}
\newcommand{\Aff}{\textrm{Aff}}
\newcommand{\Isom}{\textrm{Isom}}
\newcommand{\Hom}{\mathrm{Hom}}
\newcommand{\ra}{\rightarrow}
\newcommand{\emp}{\emptyset}
\newcommand{\orb}{\mathcal{O}}
\newcommand{\torb}{\widetilde{\mathcal{O}}}
\newcommand{\PGL}{\mathrm{PGL}}
\newcommand{\SL}{\mathrm{SL}}
\newcommand{\SI}{\mathbb{S}}
\newcommand{\Def}{\mathrm{Def}}
\newcommand{\hP}{\hat{P}}
\renewcommand{\leq}{\leqslant}
\renewcommand{\geq}{\geqslant}
\renewcommand{\epsilon}{\varepsilon}
\newcommand{\B}{\textrm{\Fontauri C\normalfont}}
\newcommand{\C}{\B}
\theoremstyle{plain}
\newtheorem{theorem}{Theorem}[section]
\newtheorem{propo}[theorem]{Proposition}
\newtheorem{lemma}[theorem]{Lemma}
\theoremstyle{definition}
\newtheorem*{ex}{Example}
\newtheorem{de}[theorem]{Definition}
\theoremstyle{remark}
\newtheorem{rem}[theorem]{Remark}
\title[Deformations of convex real projective manifolds and orbifolds]{Deformations of convex real projective \\ manifolds and orbifolds}
\author{
Suhyoung Choi
}
\address{Department of Mathematical Sciences, KAIST,  Republic of Korea}
\email{schoi@math.kaist.ac.kr}
\author{
Gye-Seon Lee
}
\address{Mathematisches Institut, Ruprecht-Karls-Universit\"{a}t Heidelberg, Germany}
\email{lee@mathi.uni-heidelberg.de}
\author{
Ludovic Marquis
}
\address{IRMAR, University of Rennes, France}
\email{ludovic.marquis@univ-rennes1.fr}
\begin{document}
\let\oldmarginpar\marginpar
\renewcommand\marginpar[1]{\-\oldmarginpar[\raggedleft\tiny #1]%
{\raggedright\tiny #1}}

\begin{abstract}
In this survey, we study representations of finitely generated groups into Lie groups, focusing on the deformation spaces of convex real projective structures on closed manifolds and orbifolds, with an excursion on projective structures on surfaces. We survey the basics of the theory of character varieties, geometric structures on orbifolds, and Hilbert geometry. The main examples of finitely generated groups for us will be Fuchsian groups, 3-manifold groups and Coxeter groups.
\end{abstract}

\thanks{
S. Choi was supported by the Mid-career Researcher Program through the NRF grant NRF-2013R1A1A2056698 funded by the MEST.
G.-S. Lee was supported by the DFG research grant “Higher Teichm\"{u}ller Theory” and he acknowledges support from the U.S. National Science Foundation grants DMS 1107452, 1107263, 1107367 “RNMS: GEometric structures And Representation varieties” (the GEAR Network). L. Marquis acknowledges support from the French ANR programs Finsler and Facets.  The authors thank Lizhen Ji, Athanase Papadopoulos and Shing-Tung Yau for the opportunity to publish this survey in “Handbook of Group Actions”. Finally, we would like to thank the referee and Athanase Papadopoulos for carefully reading this paper and suggesting several improvements.
}

\maketitle

\tableofcontents

\section{Introduction}

\par{
The goal of this paper is to survey the deformation theory of convex real projective structures on manifolds and orbifolds. Some may prefer to speak of discrete subgroups of the group $\PGL_{d+1}(\R)$ of projective transformations of the real projective space $\R\Pb^d$ which preserve a properly convex open subset of $\R\Pb^d$, and some others prefer to speak of Hilbert geometries. 
\\
\par{
Some motivations for studying this object are the following:
}

\subsection{Hitchin representations}

\par{
Let $S$ be a closed surface of genus $g \geqslant 2$ and let $\G$ be the fundamental group of $S$. There is a unique irreducible representation $i_m : \mathrm{PSL}_2(\R) \to \mathrm{PSL}_m(\R)$, up to conjugation. A representation $\rho:\G \to \mathrm{PSL}_m(\R)$ is called a {\em Hitchin representation}\index{Hitchin representation}\index{representation!Hitchin} if there are a continuous path $\rho_t \in \textrm{Hom}(\G,\mathrm{PSL}_m(\R))$ and a discrete, faithful representation $\tau : \G \to \mathrm{PSL}_2(\R)$ such that $\rho_0 = \rho$ and $\rho_1 = i_m \circ \tau :\G \to \mathrm{PSL}_m(\R)$. The space $\mathrm{Hit}_m(\G)$ of conjugacy classes of Hitchin representations of $\G$ in $\mathrm{PSL}_m(\R)$ has a lot of interesting properties: Each connected component is homeomorphic to an open ball of dimension $2(g-1)(m^2-1)$ (see Hitchin \cite{hitchin}), and every Hitchin representation is discrete, faithful, irreducible and purely loxodromic (see Labourie \cite{labourie}). 
}
\\
\par{
When $m=3$, the first author and Goldman \cite{Choi_gold_close} showed that each Hitchin representation preserves a properly convex domain\footnote{We abbreviate a connected open set to a {\em domain}\index{domain}.} of $\R\Pb^2$. In other words,  $\mathrm{Hit}_3(\G)$ is the space $\C(S)$ of marked convex real projective structures on the surface $S$.
}
\\
\par{To understand the geometric properties of Hitchin representations, Labourie \cite{labourie} introduced the concept of Anosov representation. Later on, Guichard and Wienhard \cite{Guichard} studied this notion for finitely generated Gromov-hyperbolic groups. For example, if $M$ is a closed manifold whose fundamental group is Gromov-hyperbolic, then the holonomy representations of convex real projective structures on $M$ are Anosov.
}

\subsection{Deformations of hyperbolic structures}

\par{
Let $M$ be a closed hyperbolic manifold of dimension $d \geqslant 3$, and let $\G=\pi_1(M)$. By Mostow rigidity, up to conjugation, there is a unique faithful and discrete representation $\rho_{\mathrm{geo}}$ of $\G$ in $\so{d}(\R)$. The group $\so{d}(\R)$ is canonically embedded in $\PGL_{d+1}(\R)$. We use the same notation $\rho_{\mathrm{geo}}:\G \to \PGL_{d+1}(\R)$ to denote the composition of $\rho_{\mathrm{geo}} : \G \to \so{d}(\R)$ with the canonical inclusion. Now, there is no reason that $\rho_{\mathrm{geo}}$ is the unique faithful and discrete representation of $\G$ in $\PGL_{d+1}(\R)$, up to conjugation.
}
\\
\par{
In fact, there are examples of closed hyperbolic manifold $M$ of dimension $d$ such that $\G$ admits discrete and faithful representations in $\PGL_{d+1}(\R)$ which are not conjugate to $\rho_{\mathrm{geo}}$ (see Theorem \ref{thm:JM}). We can start looking at the connected component $\mathrm{Ben}(M)$ of the space of representations of $\G$ into $\PGL_{d+1}(\R)$ containing $\rho_{\mathrm{geo}}$, up to conjugation. The combination of a theorem of Koszul and a theorem of Benoist implies that every representation in $\mathrm{Ben}(M)$ is discrete, faithful, irreducible and preserves a properly convex domain $\O$ of $\R\Pb^d$.\footnote{The action of $\G$ on $\O$ is automatically proper and cocompact for general reasons.}
}  
\\
\par{
At the moment of writing this survey, there is no known necessary and sufficient condition on $M$ to decide if $\mathrm{Ben}(M)$ consists of exactly one element, which is the hyperbolic structure. There are infinitely many closed hyperbolic 3-manifolds $M$ such that  $\mathrm{Ben}(M)$ is the singleton (see Heusener-Porti \cite{HP}), and there are infinitely many closed hyperbolic 3-orbifolds $M$ such that $\mathrm{Ben}(M)$ is homeomorphic to an open $k$-ball, for any $k \in \N$ (see Marquis \cite{ecima_ludo}).
}

\subsection{Building blocks for projective surfaces}

\par{
Let $S$ be a closed surface. We might wish to understand all possible real projective structures on $S$, not necessarily only the convex one. The first author showed that convex projective structures are the main building blocks to construct all possible projective structures on the surface $S$ (see Theorem \ref{thm:convdec}). 
}

\subsection{Geometrization}

\par{
Let $\O$ be a properly convex domain of $\mathbb{RP}^d$, and let $\mathrm{Aut}(\O)$ be the subgroup of $\PGL_{d+1}(\R)$ preserving $\O$. There is an $\mathrm{Aut}(\O)$-invariant metric $d_{\O}$ on $\O$, called the {\em Hilbert metric}\index{Hilbert!metric}, that make $(\O,d_{\O})$ a complete proper geodesic metric space, called a {\em Hilbert geometry}\index{Hilbert!geometry}. We will discuss these metrics in Section \ref{hilbert_geo_defi}.  The flavour of the metric space $(\O,d_{\O})$ really depends on the geometry of the boundary of $\O$. For example, on the one hand, the interior of an ellipse equipped with the Hilbert metric is isometric to the hyperbolic plane, forming the projective model of the hyperbolic plane, and on the other hand, the interior of a triangle is isometric to the plane with the norm whose unit ball is the regular hexagon (see de la Harpe \cite{delaharpe}).
}
\\
\par{
Unfortunately, Hilbert geometries are almost never CAT$(0)$: A Hilbert geometry $(\O,d_{\O})$ is  CAT$(0)$ if and only if $\O$ is an ellipsoid (see Kelly-Straus \cite{kelly_straus}). 
However, the idea of Riemmanian geometry of non-positive curvature is a good guide towards the study of the metric properties of Hilbert geometry.
}  
\\
\par{
An irreducible symmetric space $X=G/K$ is “Hilbertizable” if there exist a properly convex domain $\O$ of $\R\Pb^d$ for some $d$ and an irreducible representation $\rho:G \to \mathrm{PSL}_{d+1}(\R)$ such that $\rho(G)$ acts transitively on $\O$ and the stabilizer of a point of $\O$ is conjugate to $K$. The symmetric spaces for $\so{d}(\R)$, $\mathrm{PSL}_m(\mathbb{K})$ for $ \mathbb{K}=\R,\Cb,\Hb$, and the exceptional Lie group $E_{6,-26}$ are exactly the symmetric spaces that are Hilbertizable (see Vinberg \cite{homogeneous_vin_1,homogeneous_vin_2} 
or Koecher \cite{koecher}).
}
\\
\par{
Nevertheless we can ask the following question to start with:
\begin{itemize} 
\item[] “Which manifold or orbifold $M$ can be realized as the quotient of a properly convex domain $\O$ by a discrete subgroup $\G$ of $\mathrm{Aut}(\O)$?”
\end{itemize} 

If this is the case, we say that $M$ {\em admits a properly convex real projective structure}\index{properly convex!real projective structure}\index{geometric structure!real projective structure!properly convex}.
}
\\
\par{
In dimension $2$, the answer is easy: a closed surface $S$ admits 
a convex projective structure if and only if its Euler characteristic is non-positive. The universal cover of a properly convex projective torus is a triangle, and a closed surface of negative Euler characteristic admits a hyperbolic structure, which is an example of a properly convex projective structure.
}
\\
\par{
In dimension greater than or equal to $3$, no definite answer is known; see Section \ref{existence} for a description of our knowledge. To arouse the reader’s curiosity we just mention that there exist manifolds which admit a convex real projective structure but which cannot be locally symmetric spaces. 
}

\subsection{Coxeter groups}

\par{
A Coxeter group is a finitely presented group that “resembles” the groups generated by reflections; see Section \ref{Coxeter} for a precise definition, and de la Harpe \cite{harpe}  for a beautiful invitation.
An important object to study Coxeter group, denoted $W$, is a representation $\rho_{\mathrm{geo}} : W \to \mathrm{GL}(V)$ introduced by Tits \cite{MR0240238}. The representation $\rho_{\mathrm{geo}}$, in fact, preserves a convex domain of the real projective space $\Pb(V)$.
For example, Margulis and Vinberg \cite{margu_vin} used this property of $\rho_{\mathrm{geo}}$ to show that an irreducible Coxeter group is either finite, virtually abelian or large.\footnote{A group is {\em large}\index{large group}\index{Coxeter!group!large} if it contains a subgroup of finite index that admits an onto morphism to a non-abelian free group.}
}
\\
\par{
From our point of view, Coxeter groups are a great source for building groups acting on properly convex domains of $\Pb(V)$. Benoist \cite{CD4,MR2295544} used them to construct the first example of a closed 3-manifold that admits a convex projective structure $\Omega/\Gamma$ such that $\O$ is not strictly convex, or to build the first example of a closed 4-manifold that admits a convex projective structure $\Omega/\Gamma$ such that $(\O, d_{\O})$ is Gromov-hyperbolic but not quasi-isometric to the hyperbolic space (see Section \ref{existence}).
}

\section{Character varieties}

All along this article, we study the following kind of objects:
\begin{itemize}
\item a finitely generated group $\Gamma$ which we think of as the fundamental group of a complete real hyperbolic manifold/orbifold or its siblings,

\item a Lie group $G$ which is also the set of real points of an algebraic group $\Gb$, and

\item a real algebraic set $\textrm{Hom}(\Gamma,G)$.
\end{itemize}

We want to understand the space $\textrm{Hom}(\Gamma,G)$. First, the group $G$ acts on $\textrm{Hom}(\Gamma,G)$ by conjugation. We can notice that the quotient space is not necessarily Hausdorff since the orbit of the action of $G$ on $\textrm{Hom}(\Gamma,G)$ may not be closed. But the situation is not bad since each orbit closure contains at most one closed orbit. Hence, a solution to the problem is to forget the representations whose orbits are not closed. Let us recall the characterization of the closedness of the orbit:

\begin{lemma}[Richardson \cite{Richardson}] \label{lem:Richardson} 
Assume that $G$ is the set of real points of a reductive\footnote{An algebraic group is {\em reductive}\index{algebraic group!reductive}\index{reductive!algebraic group} if its unipotent radical is trivial.}  algebraic group defined over $\R$. Let $\rho:\Gamma \to G$ be a representation. Then the orbit $G \cdot \rho$ is closed if and only if the Zariski closure of $\rho(\Gamma)$ is a reductive subgroup of $G$. Such a representation is called a {\em reductive} representation\index{reductive!representation}\index{representation!reductive}.
\end{lemma}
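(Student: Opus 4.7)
The plan is to fix a finite generating set $\g_1,\ldots,\g_n$ of $\Gamma$ and embed $\textrm{Hom}(\Gamma,G)$ as a $G$-invariant real algebraic subvariety of $G^n$ via $\rho\mapsto (\rho(\g_1),\ldots,\rho(\g_n))$, with $G$ acting by simultaneous conjugation. The question then reduces to one about orbit closures for a real reductive group acting on an affine variety, and the central tool is the real analogue of the Hilbert--Mumford criterion (due to Birkes and Richardson): the orbit $G\cdot \rho$ is closed in the Hausdorff topology if and only if, for every real one-parameter subgroup $\lambda$ of $G$ such that $\lim_{t\to 0}\lambda(t)\rho\lambda(t)^{-1}$ exists, this limit already lies in $G\cdot \rho$.

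For the sufficiency, write $H := \overline{\rho(\G)}^{\,\textrm{Zar}}$ and assume $H$ is reductive. If the limit $\rho'(\g) := \lim_{t\to 0}\lambda(t)\rho(\g)\lambda(t)^{-1}$ exists for every $\g\in\G$, then $\rho(\G)$, and hence $H$, is contained in the parabolic subgroup $P(\lambda) = \{g\in G : \lim_{t\to 0}\lambda(t)g\lambda(t)^{-1} \text{ exists}\}$. By a theorem of Mostow, any reductive subgroup of $P(\lambda)$ can be conjugated into the Levi $L(\lambda) = Z_G(\lambda)$ by some $u \in R_u(P(\lambda))$. Since $\lambda(t)$ is central in $L(\lambda)$ and $\lambda(t)u\lambda(t)^{-1}\to 1$ as $t\to 0$, the identity
\[
\lambda(t)\rho(\g)\lambda(t)^{-1} \;=\; [\lambda(t)u^{-1}\lambda(t)^{-1}] \cdot u\rho(\g)u^{-1}\cdot [\lambda(t)u\lambda(t)^{-1}]
\]
passes to the limit and gives $\rho'(\g) = u\rho(\g)u^{-1}$, so $\rho' \in G\cdot \rho$.

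For the necessity, suppose $H$ is not reductive, so that $R_u(H)\neq \{1\}$. By the Borel--Tits theorem there is a proper parabolic $P = P(\lambda) \subsetneq G$ containing $H$ with $R_u(H) \subseteq R_u(P(\lambda))$. The algebraic Levi projection $\pi : P(\lambda) \to L(\lambda)$ has unipotent kernel, so $\pi(H) \cong H/R_u(H)$ is reductive, whereas $H$ itself is not. The limit representation $\rho'$ equals $\pi \circ \rho$, so its Zariski closure is $\pi(H)$, which cannot be $G$-conjugate to the non-reductive $H$; therefore $\rho' \notin G\cdot \rho$, and the orbit is not closed.

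The main obstacle is not in the algebraic content sketched above, which is classical, but in descending to real points: one must verify that real-orbit closedness in the Hausdorff topology coincides with the correct algebraic notion of closedness, and that the destabilizing one-parameter subgroups and conjugating unipotent elements supplied by the complex theory can be chosen to be defined over $\R$. These real-descent arguments constitute the technical core of Richardson's paper cited in the lemma.
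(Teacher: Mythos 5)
The paper does not prove this lemma: it is stated as a quoted result of Richardson, so there is no in-paper argument to compare against. Your sketch is the standard (and, as far as I can tell, correct) proof of that result: reduction to simultaneous conjugation on $G^n$, the real Hilbert--Mumford criterion of Birkes/Kempf/Richardson--Slodowy for sufficiency, Mostow's conjugacy of Levi subgroups under $R_u(P(\lambda))$ to identify the limit with $u\rho u^{-1}$ (your displayed identity is valid precisely because $u\rho(\g)u^{-1}$ centralizes $\lambda$), and Borel--Tits applied to $R_u(H)$ for necessity, where the identification $H\cap R_u(P)=R_u(H)$ makes $\pi(H)\cong H/R_u(H)$ reductive and hence not conjugate to $H$. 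You also correctly identify the genuinely delicate point, namely the descent to real points (real 1-parameter subgroups, $u\in R_u(P)(\R)$, and the equivalence of Hausdorff and Zariski closedness of real orbits), which is where Richardson's actual work lies; as a proof sketch this is accurate and complete in outline.
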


Define
$$
R(\Gamma,G) = \textrm{Hom}(\Gamma,G)/G
\quad \mathrm{and} \quad
\chi(\Gamma,G) = \{ [\rho] \in \textrm{R}(\Gamma,G) \, |\, \rho \textrm{ is reductive} \}.
$$
These spaces are given with the quotient topology and the subspace topology, respectively. 

\begin{theorem}[Topological, geometric and algebraic viewpoint, Luna \cite{Luna1,Luna2} and Richardson-Slodowy \cite{richardson_slodowy}]\label{th:structure_character_variety}
Assume that $G$ and $\rho$ are as in Lemma {\em \ref{lem:Richardson}}. Then
\begin{itemize}
\item There exists a unique reductive representation $\overrightarrow{\rho} \in \overline{G \cdot \rho}$, up to conjugation.

\item The space $\chi(\Gamma,G)$ is Hausdorff and it is identified with the Hausdorff quotient of $R(\Gamma,G)$.

\item The space $\chi(\Gamma,G)$ is also a real semi-algebraic variety which is the {\em GIT}-quotient\index{GIT-quotient}\footnote{GIT is the abbreviation for the Geometric Invariant Theory; see the lecture notes of Brion \cite{Brion} for information on this subject.} of the action of $G$ on $\hbox{{\rm Hom}}(\Gamma,G)$.
\end{itemize}
\end{theorem}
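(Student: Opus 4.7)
The overall plan is to reduce the theorem to standard facts about reductive group actions on affine algebraic varieties, then invoke Richardson's lemma for the identification of closed orbits with reductive representations.

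First, I would realize $\Hom(\Gamma,G)$ as an affine real algebraic variety. Fix a finite generating set $\gamma_1, \ldots, \gamma_n$ of $\Gamma$. The evaluation map $\rho \mapsto (\rho(\gamma_1), \ldots, \rho(\gamma_n))$ embeds $\Hom(\Gamma,G)$ as a Zariski-closed subset of $G^n$, cut out by the defining relations of $\Gamma$. This embedding is equivariant for the diagonal conjugation action of $G$, so we are in the setting of a reductive group acting on an affine real algebraic variety.

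For the first bullet, I would apply the closed orbit lemma from GIT: in any action of a reductive group on an affine variety, every orbit closure contains a unique closed orbit. By Lemma \ref{lem:Richardson}, closed orbits correspond precisely to reductive representations, which gives the existence and uniqueness of $\overrightarrow{\rho}$ up to conjugation inside $\overline{G\cdot\rho}$. For the third bullet, I would invoke Hilbert's finiteness theorem for reductive actions: the algebra $\R[\Hom(\Gamma,G)]^G$ of $G$-invariant polynomials is finitely generated, and its real spectrum furnishes the affine GIT quotient. The real points form a semi-algebraic set by the Tarski-Seidenberg principle, and the closed-orbit-selection property of GIT shows that this GIT quotient parametrizes conjugacy classes of reductive representations, i.e. identifies set-theoretically with $\chi(\Gamma,G)$.

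For the second bullet, I would argue in two steps. Hausdorffness of $\chi(\Gamma,G)$ follows because two distinct closed $G$-orbits in $\Hom(\Gamma,G)$ are separated by a $G$-invariant polynomial (a consequence of reductivity and the classical separation lemma of Mumford), hence by continuous functions on the GIT quotient. For the identification with the Hausdorff quotient of $R(\Gamma,G)$, I would verify the universal property: the map $[\rho] \mapsto [\overrightarrow{\rho}]$ from $R(\Gamma,G)$ to $\chi(\Gamma,G)$ is continuous and surjective onto a Hausdorff space, and any continuous map from $R(\Gamma,G)$ to a Hausdorff space must send $[\rho]$ and $[\overrightarrow{\rho}]$ to the same point because $\overrightarrow{\rho} \in \overline{G\cdot\rho}$.

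The main obstacle is the passage from the complex-algebraic GIT picture, where everything is cleanest, to the real setting relevant here, since a real reductive group need not be the real points of a complex reductive group in a naive way, and real closed orbits can behave differently from complex ones. This is precisely what Richardson-Slodowy \cite{richardson_slodowy} handle: they prove that for the action of a real reductive algebraic group on a real affine variety, the real GIT quotient is Hausdorff and semi-algebraic, and closed real orbits correspond to reductive representations. Once this real analog is in hand, the three bullets follow by stringing together the steps above.
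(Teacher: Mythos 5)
First, note that the paper does not actually prove this theorem: it is quoted from Luna and Richardson--Slodowy, and the remark immediately following it refers the reader to Remark 7.3.3 of \cite{richardson_slodowy} and to Bergeron's notes. So your proposal can only be measured against the standard argument, and there it contains one genuine gap. Your proof of the second bullet rests on the claim that two distinct closed $G$-orbits in $\mathrm{Hom}(\Gamma,G)$ are separated by a $G$-invariant polynomial, ``a consequence of reductivity and the classical separation lemma of Mumford.'' That lemma is a statement over an algebraically closed field; over $\R$ it is false. The paper's own baby example exhibits the failure: in $\chi(\Z,\mathrm{SL}_2(\R))$ the rotations by $\theta$ and by $-\theta$ give two distinct closed conjugation orbits with the same trace, and the conjugation-invariant polynomials on $\mathrm{SL}_2(\R)$ are generated by the trace, so no real invariant polynomial separates them --- this is exactly why $\chi(\Z,\mathrm{SL}_2(\R))$ is a circle with two half-lines attached rather than the interval that the trace alone would cut out. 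For the same reason your description of the third bullet is off if taken literally: the set of closed real orbits maps onto a semi-algebraic subset of the real points of $\mathrm{Spec}\,\R[\mathrm{Hom}(\Gamma,G)]^G$, but that map is only finite-to-one, not injective, so the real GIT quotient in the sense of the theorem is not the real spectrum of the invariant ring.

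The route that actually works over $\R$ --- the one Richardson and Slodowy take, and which you gesture at only in your closing paragraph without letting it do the work --- is the Kempf--Ness/minimal-vector theory: fix a maximal compact subgroup $K$ of $G$ and a compatible inner product on the ambient representation, show that a $G$-orbit is closed if and only if it meets the set $\mathcal{M}$ of minimal vectors, that it then meets $\mathcal{M}$ in a single $K$-orbit, and that the space of closed orbits with the quotient topology is homeomorphic to $\mathcal{M}/K$, which is Hausdorff because $K$ is compact and semi-algebraic by Tarski--Seidenberg. The unique closed orbit in each real orbit closure (your first bullet) also comes out of this machinery rather than out of the algebraically closed closed-orbit lemma. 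The remaining ingredients of your sketch --- realizing $\mathrm{Hom}(\Gamma,G)$ as a closed $G$-subvariety of $G^n$, combining the unique closed orbit with Lemma \ref{lem:Richardson} to get $\overrightarrow{\rho}$, and the universal-property argument identifying $\chi(\Gamma,G)$ with the Hausdorff quotient of $R(\Gamma,G)$ --- are all correct once Hausdorffness has been established by the correct means.
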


The real semi-algebraic Hausdorff space $\chi(\Gamma,G)$ is called the {\em character variety}\index{character variety} of the pair $(\Gamma,G)$.

\begin{rem}
Theorem \ref{th:structure_character_variety} is not explicitly stated in Luna's work. The statement can be founded in Remark 7.3.3 of Richardson-Slodowy \cite{richardson_slodowy} and is also proved by independent method via the action of the reductive group $G$ on the affine variety  $\hbox{{\rm Hom}}(\Gamma,G)$. See also Section 3 of Bergeron \cite{bergeron_maxime}.
\end{rem}

\subsection*{A baby example}

The space $\chi(\Z,G)$ is the set of semi-simple elements of $G$ modulo conjugation.
\begin{itemize}
\item If $G=\mathrm{SL}_m(\Cb)$, then $\chi(\Z,G)=\Cb^{m-1}$.
\item If $G=\mathrm{SL}_2(\R)$, then $\chi(\Z,G)$ is a circle $\{ e^{i \theta} \,\,|\,\, 0 \leq \theta < 2 \pi\}$ with two half-lines that are glued on the circle at the points $\{ 1\}$ and $\{ -1\}$.
\end{itemize}

\section{Geometric structures on orbifolds}

In this section, we recall the vocabulary of orbifolds and of geometric structures on orbifolds. The reader can skip this section if he or she is  familiar with these notions. A classical reference are Thurston’s lecture notes \cite{Thurston:2002}. See also Goldman \cite{Goldmanexp}, Choi \cite{msjbook}, Boileau-Maillot-Porti \cite{BMP}.
For the theory of orbifolds itself, we suggest the article of Moerdijk-Pronk \cite{Moerd}, the books of Adem-Leida-Ruan \cite{ALR} and of Bridson-Haefliger \cite{BH}. 

\subsection{Orbifolds}

An orbifold\index{orbifold} is a topological space which is locally homeomorphic to the quotient space of $\mathbb{R}^d$ by a finite subgroup of ${\rm Diff}(\mathbb{R}^d)$, the diffeomorphism group of $\mathbb{R}^d$. Here is a formal definition:
A $d$-dimensional {\em orbifold}\index{orbifold} $\mathcal{O}$ consists of a second countable, Hausdorff space $X_{\mathcal{O}}$ 
with the following additional structure:
\begin{enumerate}
\item A collection of open sets $\{ U_i \}_{i \in I}$, for some index $I$, which is a 
covering of $X_{\mathcal{O}}$ and is closed under finite intersections.
\item To each $U_i$ are associated a finite group $\Gamma_i$, a smooth action of $\Gamma_i$ on an open subset $\widetilde{U}_i$ in $\mathbb{R}^d$ and a homeomorphism $\phi_i \,:\, \widetilde{U}_i/\Gamma_i \rightarrow  U_i$.
\item Whenever $U_i \subset U_j$, there are an injective homomorphism $f_{ij} \,:\, \Gamma_i \rightarrow \Gamma_j$ and a
smooth embedding $\widetilde{\phi}_{ij} \,:\, \widetilde{U}_i \rightarrow \widetilde{U}_j$ equivariant with respect to $f_{ij}$, i.e. $\widetilde{\phi}_{ij}(\gamma x) = f_{ij}(\gamma) \widetilde{\phi}_{ij}(x)$ for $\gamma \in \Gamma_i$ and $x \in \widetilde{U}_i$, such that the following diagram commutes:

$$
\begin{tikzpicture}
\matrix(m)[matrix of math nodes,
row sep=2em, column sep=4em,
text height=1.5ex, text depth=0.25ex]
{\widetilde{U}_i & \widetilde{U}_j\\
\widetilde{U}_i/\Gamma_i & \widetilde{U}_j/f_{ij}(\Gamma_i) \\
 & \widetilde{U}_j/\Gamma_j \\
U_i & U_j \\
};
\path[->,font=\small]
(m-1-1) edge node[above]{$\widetilde{\phi}_{ij}$}
 (m-1-2)
(m-1-1) edge (m-2-1)
(m-1-2) edge (m-2-2)
(m-2-1) edge (m-2-2)
(m-2-1) edge node[left]{$\phi_i$} (m-4-1)
(m-2-2) edge (m-3-2)
(m-3-2) edge node[right]{$\phi_j$} (m-4-2)
(m-4-1) edge node[above]{\normalsize $\subset$} (m-4-2);
\end{tikzpicture}
$$

\item The collection $\{ U_i \}$ is maximal relative to the conditions (1) -- (3).
\end{enumerate}

This additional structure is called an {\em orbifold structure}\index{orbifold!structure}, and the space $X_{\orb}$ is the {\em underlying space}\index{orbifold!underlying space} of $\orb$. 
Here, it is somewhat important to realize that $\widetilde{\phi}_{ij}$ is uniquely determined up to compositions of elements of $\Gamma_{i}$ and $\Gamma_{j}$.

\begin{ex}
If $M$ is a smooth manifold and $\Gamma$ is a subgroup of ${\rm Diff}(M)$ acting properly discontinuously on $M$, then the quotient space $M/\Gamma$ has an obvious orbifold structure. 
\end{ex}
 
 \par{
An orbifold is said to be {\em connected}\index{orbifold!connected}, {\em compact}\index{orbifold!compact} or {\em noncompact}\index{orbifold!noncompact} according to whether the underlying space is connected, compact or noncompact, respectively.
}
\\
\par{
A {\em smooth map}\index{orbifold!smooth map} between orbifolds $\mathcal{O}$ and $\mathcal{O}'$ is a continuous map $f : X_{\mathcal{O}} \rightarrow X_{\mathcal{O}' }$ satisfying that for each $x \in \mathcal{O}$ there are coordinate neighborhoods $U \approx \widetilde{U}/\Gamma$ of $x$ in $\mathcal{O}$  and $U' \approx \widetilde{U'}/\Gamma' $ of $f(x)$ in $\mathcal{O'}$ such that $f(U) \subset U'$ and the restriction $f|_{U}$ can be lifted to a smooth map $ \widetilde{f} : \widetilde{U} \rightarrow \widetilde{U}'$ which is equivariant with respect to a homomorphism $\Gamma \rightarrow \Gamma'$. Note that the homomorphism $\Gamma \ra \Gamma'$ may not be injective nor surjective. An {\em orbifold-diffeomorphism}\index{orbifold!diffeomorphism} between $\mathcal{O}$ and $\mathcal{O}'$ is a smooth map $\mathcal{O} \ra \mathcal{O}'$ with a smooth inverse map. If there is an orbifold-diffeomorphism between $\mathcal{O}$ and $\mathcal{O}'$, we denote this by $\mathcal{O} \approx \mathcal{O}'$.
\\
\par{
An orbifold $\mathcal{S}$ is a {\em suborbifold}\index{suborbifold}\index{orbifold!suborbifold} of an orbifold $\mathcal{O}$ if the underlying space $X_{\mathcal{S}}$ of $\mathcal{S}$ is a subset of $X_{\mathcal{O}}$ and for each point $s \in X_{\mathcal{S}}$, there are a coordinate neighborhood  $U = \phi(\widetilde{U}/\Gamma)$ of $s$ in $\mathcal{O}$ and a closed submanifold $\widetilde{V}$ of $\widetilde U$ preserved by $\Gamma$ such that $V = \phi(\widetilde{V}/(\Gamma|_{\widetilde{V}}))$ is a coordinate neighborhood of $s$ in $\mathcal{S}$. Here, since the submanifold $\widetilde{V}$ is preserved by $\G$, we denote by $\Gamma|_{\widetilde{V}}$ the group obtained from the elements of $\G$ by restricting their domains and codomains to $\widetilde{V}$. Note that the restriction map $\Gamma \ra \Gamma|_{\widetilde{V}}$ may not be injective (see also Borzellino-Brunsden \cite{Borz}). For example, let $\sigma_x$ (resp. $\sigma_y$) be the reflection in the $x$-axis $L_x$ (resp. $y$-axis $L_y$) of $\mathbb{R}^2$. Then $L_x/\langle \sigma_y \rangle$ and $L_y/\langle \sigma_x \rangle$ are suborbifolds of $\mathbb{R}^2/\langle \sigma_x, \sigma_y \rangle$, however both maps $\langle \sigma_x, \sigma_y \rangle \ra  \langle \sigma_y \rangle$ and $\langle \sigma_x, \sigma_y \rangle \ra  \langle \sigma_x \rangle$ are not injective.
Our definition is more restrictive than Adem-Leida-Ruan’s \cite{ALR} and
less restrictive than Kapovich’s \cite{Kp}, however, this definition seems to be better for studying decompositions of $2$-orbifolds along $1$-orbifolds. 
}

\subsection{$(G,X)$-orbifolds}

\par{
Let $X$ be a real analytic manifold and let $G$ be a Lie group acting analytically, faithfully and transitively on $X$. An orbifold is a {\em $(G, X)$}-orbifold\index{$(G, X)$-orbifold}\index{orbifold!geometric} if $\Gamma_i$ is a subgroup of $G$, $\widetilde{U}_i$ is an open subset of $X$, and $\widetilde{\phi}_{ij}$ is locally an element of $G$ (c.f. the definition of orbifold). A $(G,X)$-manifold is a $(G,X)$-orbifold with $\Gamma_i$ trivial. A {\em $(G,X)$-structure}\index{$(G,X)$-structure}\index{geometric structure} on an orbifold $\orb$ is an orbifold-diffeomorphism from $\mathcal{O}$ to a $(G,X)$-orbifold $S$.
}
\\
\par{
Here are some examples: Let $\Eb^{d}$ be the $d$-dimensional Euclidean space and let $\Isom(\Eb^{d})$ be the group of isometries of $\Eb^{d}$. Having an $(\Isom(\Eb^{d}),\Eb^{d})$-structure (or {\em Euclidean structure}\index{Euclidean structure}\index{geometric structure!Euclidean structure}) on a manifold $\mathcal{O}$ is equivalent to having a Riemannian metric on $\mathcal{O}$ of sectional curvature zero.  We can also define a {\em spherical structure}\index{spherical structure}\index{geometric structure!spherical structure} or a {\em hyperbolic structure}\index{hyperbolic!structure}\index{geometric structure!hyperbolic structure} on $\mathcal{O}$ and give a similar characterization for each structure.
}
\\
\par{
Let $\A^{d}$ be the $d$-dimensional affine space and let $\Aff(\A^{d})$ be the group of affine transformations, i.e. transformations of the form $x \mapsto Ax + b$ where $A$ is a linear transformation of $\A^{d}$ and $b$ is a vector in $\A^d$. An $(\Aff(\A^{d}), \A^{d})$-structure (or {\em affine structure}\index{affine!structure}\index{geometric structure!affine structure}) on an orbifold $\mathcal{O}$ is equivalent to a flat torsion-free affine connection on $\mathcal{O}$
(see Kobayashi-Nomizu \cite{Kobbook}). Similarly, a $(\PGL_{d+1}(\R),\R \Pb^{d})$-structure (or {\em real projective structure}\index{real projective structure}\index{geometric structure!real projective structure}) on $\mathcal{O}$ is equivalent to a projectively flat torsion-free affine connection on $\mathcal{O}$ (see Eisenhart \cite{Eis}).
}

\subsection{A tool kit for orbifolds}\label{subsection:kit}

\par{
To each point $x$ in an orbifold $\mathcal{O}$ is associated a group $\Gamma_x$  called the {\em isotropy group}\index{isotropy group}\index{orbifold!isotropy group} of $x$: In a local coordinate system $U \approx \widetilde{U}/\G$ this is the isomorphism class of the stabilizer $\Gamma_{\tilde x} \, \leqslant \G$ of any inverse point $\tilde x$ of $x$ in $\widetilde{U}$. The set $\{ x \in X_\mathcal{O} \,|\, \Gamma_x \neq  \{ 1 \}  \}$ is the {\em singular locus}\index{singular!locus}\index{orbifold!singular locus} of $\mathcal{O}$.
}
\\
\par{
In general, the underlying space of an orbifold is not even a manifold. However, in dimension two, it is homeomorphic to a surface with/without boundary. Moreover, the singular locus of a 2-orbifold can be classified into three families because there are only three types of 
finite subgroups in the orthogonal group ${\mathrm O}_2(\mathbb{R})$\,: 
}

\begin{itemize}
\item {\em Mirror}\index{mirror}\index{orbifold!singular locus!mirror}: $\R^{2}/\Z_{2}$ when $\Z_{2}$ acts by reflection.
\item {\em Cone points of order $n \geqslant 2$}\index{cone point}\index{orbifold!singular locus!cone point}: $\R^{2}/\Z_{n}$ when $\Z_{n}$ acts by rotations of angle $\frac{2\pi}{n}$.
\item {\em Corner reflectors of order $n \geqslant 2$}\index{corner reflector}\index{orbifold!singular locus!corner reflector}: $\R^{2}/D_{n}$ when $D_{n}$ is the dihedral group of order $2n$ generated by reflections in two lines meeting at angle $\frac{\pi}{n}$.
\end{itemize}

In the definition of an orbifold, if we allow $\widetilde{U}_i$ to be an open set in the closed half-space $\R_+^{d}$ of $\R^{d}$, then we obtain the structure of an {\em orbifold with boundary}\index{orbifold!boundary}. To make a somewhat redundant remark, we should not confuse the boundary $\partial\mathcal{O}$ of an orbifold $\mathcal{O}$ with 
the boundary $\partial X_\mathcal{O}$ of the underlying space $X_\mathcal{O}$, when $X_\mathcal{O}$ is a manifold with boundary.

\begin{ex}
A manifold $M$ with boundary $\partial M$ can have an orbifold structure in which $\partial M$ becomes a {\em mirror}\index{singular!locus!mirror}\index{mirror}\index{orbifold!singular locus!mirror}, i.e. a neighborhood of any point $x$ in $\partial M$ is orbifold-diffeomorphic to $\R^d/\mathbb{Z}_2$ such that $\mathbb{Z}_2$ acts by reflection. Notice that the singular locus is then $\partial M$ and the boundary of the orbifold is empty.
\end{ex}

\par{
Given a compact orbifold $\orb$, we can find a cell decomposition of the underlying space $X_{\orb}$ such that the isotropy group of each open cell is constant. Define the orbifold Euler characteristic to be
\[ \chi(\orb) := \sum_{c_{i}} (-1)^{\dim c_{i}} \frac{1}{|\Gamma(c_{i})|}. \] Here, $c_{i}$ ranges over the cells and $|\Gamma(c_{i})|$ is the order of
the isotropy group $\Gamma(c_{i})$ of any point in the relative interior of $c_{i}$.
}
\\
\par{
A {\em covering orbifold}\index{orbifold!covering}\index{covering!orbifold} of an orbifold $\mathcal{O}$ is an orbifold $\mathcal{O}'$ with a continuous surjective map $p \,:\, X_{\mathcal{O}'} \rightarrow X_{\mathcal{O}}$ between the underlying spaces such that each point $x \in X_{\mathcal{O}}$ lies in a coordinate neighborhood $U \approx \widetilde{U}/\Gamma$ and each component $V_i$ of $p^{-1}(U)$ is 
orbifold-diffeomorphic to $\widetilde{U}/\Gamma_i$ with $\Gamma_i$ a subgroup of $\Gamma$. The map $p$ is called a {\em covering map}\index{covering!map}.
}

\begin{ex}
If a group $\Gamma$ acts properly discontinuously on a manifold $M$ and $\Gamma'$ is a subgroup of $\Gamma$, then $M/\Gamma'$ is a covering orbifold of $M/\Gamma$ with covering map $M/\Gamma' \rightarrow M/\Gamma$. In particular, $M$ is a covering orbifold of $M/\Gamma$.
\end{ex}

\par{
Even if it is more delicate than for manifold, we can define the universal covering orbifold of an orbifold $\mathcal{O}$: A {\em universal covering orbifold}\index{universal covering orbifold}\index{orbifold!universal covering} of $\mathcal{O}$ is a covering orbifold $\widetilde{\mathcal{O}}$ with covering map $p : \widetilde{\mathcal{O}} \rightarrow \mathcal{O}$ such that for every covering orbifold $\widetilde{\mathcal{O}}'$ with covering map $p' : \widetilde{\mathcal{O}}' \rightarrow \mathcal{O}$, there is a covering map $q : \widetilde{\mathcal{O}} \rightarrow \widetilde{\mathcal{O}}'$ which satisfies the following commutative diagram:

$$
\begin{tikzpicture}
\matrix(m)[matrix of math nodes,
row sep=1em, column sep=4em,
text height=1.5ex, text depth=0.25ex]
{\widetilde{\mathcal{O}}  & \\
                          & \widetilde{\mathcal{O}}' \\
\mathcal{O}               & \\};
\path[->,font=\small]
(m-2-2) edge node[below]{$p'$}  (m-3-1)
(m-1-1) edge node[left]{$p$}   (m-3-1)
(m-1-1) edge node[above]{$q$} (m-2-2);
\end{tikzpicture}
$$
}

\par{
It is important to remark that every orbifold has a unique universal covering orbifold (up to orbifold-diffeomorphism). The {\em orbifold fundamental group}\index{orbifold!fundamental group} $\pi_{1}^{orb}(\mathcal{O})$ of $\mathcal{O}$ is the group of deck transformations of the universal covering orbifold $\widetilde{\mathcal{O}}$.
}

\begin{ex}
If $\Gamma$ is a cyclic group of rotations acting on the sphere $\S^2$ fixing the north and south poles, then the orbifold $\S^2/\Gamma$
is a sphere with two cone points. Therefore its orbifold fundamental group is $\Gamma$, even though the fundamental group of the sphere, which is the underlying space of $\S^2/\Gamma$, is trivial.
\end{ex}

An orbifold $\mathcal{O}$ is {\em good}\index{orbifold!good}\index{good orbifold} if some covering orbifold of $\mathcal{O}$ is a manifold. In this case, the universal covering orbifold $\widetilde{\mathcal{O}}$ is a simply connected manifold and the group $\pi_{1}^{orb}(\mathcal{O})$ acts properly discontinuously on $\widetilde{\mathcal{O}}$.
In other words, a good orbifold is simply a manifold $M$ with a properly discontinuous group action on $M$. Moreover, we have the following good news:

\begin{theorem}[Chapter 3 of Thurston \cite{Thurston:2002}]
Every $(G,X)$-orbifold is good.
\end{theorem}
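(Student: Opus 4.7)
The plan is to build a manifold cover of $\mathcal{O}$ via analytic continuation of local charts. The key feature of the $(G,X)$-category is that the action of $G$ on $X$ is real-analytic, so any two elements of $G$ that agree on a non-empty open set coincide; equivalently, the local transition maps between charts, being (locally) restrictions of elements of $G$, extend uniquely to global elements of $G$ on connected components. This rigidity is exactly what will let me piece the local manifold models $\widetilde{U}_i \subset X$ together into a global manifold covering $X_{\mathcal{O}}$.

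First, I fix a basepoint $x_0$ in the regular (manifold) stratum of $X_{\mathcal{O}}$ with local lift $\tilde{x}_0 \in X$. For any continuous path $\gamma: [0,1] \to X_{\mathcal{O}}$ starting at $x_0$, I cover $\gamma([0,1])$ by a finite chain of overlapping charts and inductively continue $\tilde{x}_0$ along $\gamma$: at each overlap, the transition is the restriction of a uniquely determined element of $G$, which I use to pass from one local lift to the next. Let $\widetilde{\mathcal{O}}$ be the set of equivalence classes $[(\gamma, \tilde{\gamma})]$ under the relation that two pairs are equivalent when their terminal lifts are connected through a continuous family of such continuations. This relation is the orbifold generalization of path-homotopy, including the identifications coming from loops near singular points, where different lifts differ by an element of the local isotropy group $\Gamma_x \leqslant G$. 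Topologize $\widetilde{\mathcal{O}}$ so that the evaluation map $D: \widetilde{\mathcal{O}} \to X$, $[(\gamma,\tilde{\gamma})]\mapsto\tilde{\gamma}(1)$, is a local homeomorphism. Then $\widetilde{\mathcal{O}}$ is a manifold equipped with a $(G,X)$-structure, and the projection $p: \widetilde{\mathcal{O}} \to X_{\mathcal{O}}$ is a covering-orbifold map: over a chart $U \approx \widetilde{U}/\Gamma$, each component of $p^{-1}(U)$ is orbifold-diffeomorphic to $\widetilde{U}/\Gamma_i$ for some subgroup $\Gamma_i$ of $\Gamma$. Since $\widetilde{\mathcal{O}}$ is a manifold, $\mathcal{O}$ is good.

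The main obstacle is verifying that the equivalence relation on pairs $(\gamma,\tilde{\gamma})$ is well-defined and that $\widetilde{\mathcal{O}}$ comes out Hausdorff. The analyticity of the $G$-action is essential here: it guarantees that two continuations that coincide on any non-empty open set near the endpoint already coincide on a full neighborhood, so the local identifications at singular points assemble consistently under global traversals. Once this is in place, the ordinary topological universal cover of $\widetilde{\mathcal{O}}$ is a simply connected manifold which serves as the universal covering orbifold of $\mathcal{O}$, matching the remark that follows the statement.
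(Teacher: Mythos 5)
The paper itself gives no proof of this statement; it is quoted from Thurston's notes, and Thurston's argument is precisely the developing-map/analytic-continuation construction you outline. So your overall strategy is the right one. However, as written there is one genuine gap, and it sits exactly at the heart of the theorem. You write ``Topologize $\widetilde{\mathcal{O}}$ so that the evaluation map $D$ is a local homeomorphism'' --- but this cannot be decreed; local injectivity of $D$ over the singular locus is the whole content of the claim that $\widetilde{\mathcal{O}}$ is a manifold. Concretely: over a chart $U \approx \widetilde{U}/\Gamma$ and a point $\tilde{x} \in \widetilde{U}$ with nontrivial stabilizer $\Gamma_{\tilde{x}}$, the fiber of $\widetilde{\mathcal{O}}$ above the image of $\tilde{x}$ is the set of germs of continuations back to the basepoint, and $\Gamma_{\tilde{x}}$ acts on this set. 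The quotient at that point is a manifold point if and only if this action is \emph{free}. That freeness is what you must prove, and it is exactly here that the hypotheses on the $(G,X)$-pair enter: a nontrivial $\gamma \in \Gamma_{\tilde{x}} \leqslant G$ fixes $\tilde{x}$ but, because $G$ acts faithfully and analytically on the connected analytic manifold $X$, is not the identity on any neighborhood of $\tilde{x}$; hence it moves the germ of every continuation, and the action on the fiber is free. Your paragraph on analyticity invokes rigidity only to make the continuation process well defined (two elements of $G$ agreeing on an open set coincide), which is necessary but is not the same point; without the freeness argument the construction could just as well return $\mathcal{O}$ itself, singularities included.

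Two smaller points to tighten. First, the transition between overlapping charts is \emph{not} ``a uniquely determined element of $G$'': as the paper notes, $\widetilde{\phi}_{ij}$ is determined only up to composition with elements of $\Gamma_i$ and $\Gamma_j$, and likewise a path entering a chart $\widetilde{U}/\Gamma$ lifts non-uniquely (though lifts exist, since quotients by finite group actions have the path-lifting property). Your construction must therefore take \emph{all} continuations and impose the equivalence afterwards; the uniqueness you actually get from analyticity is that a continuation is determined by its germ at any single parameter value. Second, the equivalence relation ``terminal lifts connected through a continuous family of continuations'' should be pinned down (e.g.\ equality of the terminal germ of the developed image, combined with homotopy of paths rel endpoints) before one can check that $p:\widetilde{\mathcal{O}} \to \mathcal{O}$ restricted over a chart $U \approx \widetilde{U}/\Gamma$ has components of the form $\widetilde{U}/\Gamma'$ with $\Gamma'$ trivial, and that $\widetilde{\mathcal{O}}$ is Hausdorff (which then follows from its being an orbifold covering of a Hausdorff space rather than needing a separate argument).
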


\subsection{Geometric structures on orbifolds}
\par{
We will discuss the deformation space\index{deformation!space} of geometric structures on an orbifold $\orb$ as Goldman \cite{Goldmanexp} exposed the theory for manifolds. 
}
\\
\par{
Suppose that $M$ and $N$ are $(G,X)$-orbifolds. A map $f : M \rightarrow N$ is a {\em $(G,X)$-map}\index{$(G,X)$-map} if, for each pair of charts
$\phi : \widetilde{U_i}/\Gamma_{i}  \rightarrow U_i \subset M$
from the $(G,X)$-orbifold structure of $M$ and 
$\psi : \widetilde{V_j}/\Gamma_{j} \rightarrow V_j \subset N$ 
from the $(G,X)$-orbifold structure of $N$, 
the composition $\psi^{-1} \circ f \circ \phi$ restricted to $\phi^{-1}(U_i \cap f^{-1}(V_j))$ lifts to 
the restriction of an element of $G$ on the inverse image in $\widetilde U_i$ of $\phi^{-1}(U_i \cap f^{-1}(V_j))$. 
}
\\
\par{
Recall a {\em $(G,X)$-structure}\index{$(G,X)$-structure}\index{geometric structure} on an orbifold $\orb$ is an orbifold-diffeomorphism $f$ from $\mathcal{O}$ to a $(G,X)$-orbifold $S$. Two $(G,X)$-structures $f: \orb \ra S$ and $f': \orb \ra S'$ on $\orb$ are {\em equivalent}\index{geometric structure!equivalent} if the map $f' \circ f^{-1}: S \ra S'$ is isotopic to a $(G,X)$-map from $S$ to $S'$ (in the category of orbifold).
The set of equivalence classes of $(G,X)$-structures on $\orb$ is denoted by $\Def(\orb)$. There is a topology on $\Def(\orb)$ informally defined by stating that two $(G,X)$-structures $f$ and $f'$ are close if the map $f' \circ f^{-1}: S \ra S'$ is isotopic to a map close to a $(G,X)$-map. Below is a formal definition.
}
\\
\par{
The construction of the developing map and the holonomy representation of manifolds extends to orbifolds without difficulty; see  Goldman \cite{Goldmanexp} for manifolds and Choi \cite{msjbook} for orbifolds. 
For a $(G,X)$-orbifold $\orb$, there exists a pair $(D,\rho)$ of an immersion 
$D : \widetilde{\orb} \rightarrow X$ 
and a homomorphism $\rho : \pi_{1}^{orb}(\orb) \rightarrow G$ such that for each $\gamma \in \pi_{1}^{orb}(\orb)$, the following diagram commutes:
$$
\begin{tikzpicture}
\matrix(m)[matrix of math nodes,
row sep=2em, column sep=3em,
text height=1.5ex, text depth=0.25ex]
{\widetilde{\mathcal{O}}  & X \\
\widetilde{\mathcal{O}}   & X\\};
\path[->,font=\small]
(m-1-1) edge node[left]{$\gamma$}  (m-2-1)
(m-1-1) edge node[above]{$D$} (m-1-2)
(m-2-1) edge node[below]{$D$} (m-2-2)
(m-1-2) edge node[right]{$\rho(\gamma)$} (m-2-2);
\end{tikzpicture}
$$
We call $D$ a {\em developing map}\index{developing map}\index{geometric structure!developing map} and $\rho$ a {\em holonomy representation}\index{holonomy representation}\index{geometric structure!holonomy representation} of $\mathcal{O}$.
Moreover if $(D',\rho')$ is another such pair, then there exists $g \in G$ such that 
\[D'=g \circ D \hbox{ and } \rho' (\gamma)= g \rho(\gamma)g^{-1} \hbox{ for each } \gamma \in \pi_{1}^{orb}(\orb).\] 
In other words, a {\em developing pair}\index{developing pair}\index{geometric structure!developing pair} $(D,\rho)$ is uniquely determined up to the action of $G$: 
\begin{equation}\label{eq:conjugation}
g \cdot (D, \rho(\cdot)) = (g \circ D, g \rho(\cdot) g^{-1}), \textrm{ for each } g \in G.
\end{equation}
}
\\
\par{
Consider the space
\begin{multline*}
\Def'_2(\orb) = \{(D, \rho) \,|\, D: \torb \ra X \hbox{ is an immersion }\\
\textrm{ equivariant with respect to a homomorphism } \rho: \pi_{1}^{orb}(\orb) \ra G \}/\sim.
\end{multline*}
Here $(D, \rho) \sim (D', \rho')$ if $D'= D\circ \tilde \iota$ for the lift $\tilde \iota$ of an isotopy $\iota:\orb \ra \orb$
satisfying $\gamma \circ \tilde \iota = \tilde \iota \circ \gamma$ for every $\gamma \in \pi_{1}^{orb}(\orb)$. We topologize this space naturally using the $C^{r}$-compact-open topology, $r \geq 2$, before taking the quotient, and denote by $\Def_2(\orb)$ the quotient space of $\Def'_2(\orb) $ by the action of $G$ (see Equation (\ref{eq:conjugation})).
}
\\
\par{
We can define a map $\mu:\Def_2(\orb) \ra \Def(\orb)$ from $[ (D, \rho) ]$ to a $(G,X)$-structure on $\orb$ by pulling back the canonical  $(G,X)$-structure on $X$ to $\torb$ by $D$ and taking the orbifold quotient. The inverse map is derived from the construction of the developing pair, hence $\mu$ is a bijection. This gives a topology on $\Def(\orb)$.
}

\subsection{Ehresmann-Thurston principle}
\par{
One of the most important results in this area is the following theorem first stated for closed manifolds. However, it can be easily generalized to closed orbifolds. There exist many proofs of this theorem for manifolds; see Canary-Epstein-Green \cite{CaEp}, Lok \cite{Lok} following John Morgan, Bergeron-Gelander \cite{BergeronGe}, Goldman \cite{Goldmanexp}. For a proof for orbifolds, see Choi \cite{Gorb}, which is a slight modification of the proof for manifolds.
}
\\
\par{
Suppose that $G$ is the real points of a reductive algebraic group defined over $\R$. A representation $\rho : \pi_{1}^{orb}(\orb) \ra G$ is {\em stable}\index{representation!stable}\index{stable representation} when $\rho$ is reductive and the centralizer of $\rho$ is finite.\footnote{A representation $\rho \in \Hom(\pi_{1}^{orb}(\orb), G)$ being stable is equivalent to the fact that the image of $\rho$ is not contained in any parabolic subgroup of $G$ (see Johnson-Millson \cite{JM})} Denote by $\Hom^{st}(\pi_{1}^{orb}(\orb), G)$ the space of stable representations. It is shown in Johnson-Millson \cite{JM} that this is an open subset of $\Hom(\pi_{1}^{orb}(\orb), G)$ and that the action of $G$ on $\Hom^{st}(\pi_{1}^{orb}(\orb), G)$ is proper. Denote by $\Def^{st}_2(\orb)$ the space of $(G,X)$-structures on $\orb$ whose holonomy representation is stable.
}
\begin{theorem}[Ehresmann-Thurston principle]\label{thm:EhTh1}
Let $\orb$ be a closed orbifold. Then the map 
\[\Def'_2(\orb) \to \Hom(\pi_{1}^{orb}(\orb),G) \quad {\rm and} \quad \Def^{st}_2(\orb) \to \Hom^{st}(\pi_{1}^{orb}(\orb),G)/G \] 
induced by $(D, \rho) \ra \rho$ are local homeomorphisms.
\end{theorem}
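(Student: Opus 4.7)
The plan is to adapt the classical manifold proof (as in Goldman \cite{Goldmanexp}) to the orbifold setting, exploiting the fact that every $(G,X)$-orbifold is good: the universal cover $\widetilde{\orb}$ is a simply connected manifold on which $\pi_1^{orb}(\orb)$ acts properly discontinuously, and every equivariant pair $(D,\rho)$ lives in this manifold setting. Continuity of $(D,\rho) \mapsto \rho$ is built into the $C^r$-compact-open topology on $\Def'_2(\orb)$, so the substance of the theorem is local bijectivity near a fixed basepoint $(D_0,\rho_0)$. I would establish local surjectivity (every $\rho$ close to $\rho_0$ is the holonomy of some $D$ close to $D_0$) and local injectivity (two close developing pairs with the same holonomy are isotopic), then deduce the stable version by descending to the $G$-quotient using properness of the $G$-action on $\Hom^{st}$.

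For local surjectivity, I would fix a finite $\pi_1^{orb}(\orb)$-equivariant orbifold-CW decomposition of $\widetilde{\orb}$ with a compact fundamental set $K$, together with a finite generating set $\gamma_1,\ldots,\gamma_n$ of $\pi_1^{orb}(\orb)$ encoding the face pairings between $K$ and its neighbouring translates. For $\rho$ near $\rho_0$, the failure of equivariance of $D_0$ with respect to $\rho$ along each face—the discrepancy between $\rho(\gamma_i)\circ D_0$ and $D_0\circ\gamma_i$—is small and controlled by the distance $d(\rho,\rho_0)$. Using smooth bump functions supported in a collar of $\partial K$, I would interpolate between $D_0$ on the interior of $K$ and modifications on the collar chosen so that the $\rho$-equivariant extension to $\pi_1^{orb}(\orb)\cdot K = \widetilde{\orb}$ is smooth across every face identification. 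Because immersions form an open subset in the $C^1$-topology, the resulting $D$ is still an immersion provided $\rho$ is close enough to $\rho_0$.

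For local injectivity, suppose $(D_0,\rho_0)$ and $(D_1,\rho_0)$ are close developing pairs with identical holonomy. Since $D_0$ is a local diffeomorphism, the formula $\tilde\iota = D_0^{-1}\circ D_1$ defines a smooth self-map of $\widetilde{\orb}$ in a neighbourhood of each point, and these local definitions glue globally because $D_0$ and $D_1$ are equivariant with respect to the \emph{same} $\rho_0$. The resulting $\tilde\iota$ is $\pi_1^{orb}(\orb)$-equivariant, close to the identity, and descends to an orbifold isotopy $\iota:\orb\to\orb$ via the proper orbifold action, so $(D_0,\rho_0)\sim(D_1,\rho_0)$ in $\Def'_2(\orb)$. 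For the stable version, the properness of the $G$-action on $\Hom^{st}(\pi_1^{orb}(\orb),G)$ gives local slices to $G$-orbits, so the local homeomorphism upstairs descends to a local homeomorphism $\Def^{st}_2(\orb)\to\Hom^{st}(\pi_1^{orb}(\orb),G)/G$.

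The main obstacle is the orbifold-equivariance bookkeeping in the patching step: the bump-function interpolation must be compatible with the finite isotropy groups $\Gamma_x$ attached to the lower-dimensional cells, and the self-map produced in the injectivity step must descend to an \emph{orbifold} isotopy, not merely a map of underlying topological spaces. Both issues are resolved by performing all constructions in orbifold charts and averaging over the local isotropy groups where necessary, which is the content of the orbifold adaptation carried out in Choi \cite{Gorb}.
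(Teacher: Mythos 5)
The paper does not actually prove Theorem \ref{thm:EhTh1}; being a survey, it states the principle and defers to Canary--Epstein--Green, Lok, Bergeron--Gelander and Goldman for manifolds and to Choi \cite{Gorb} for the orbifold adaptation. So there is no in-paper argument to compare against, and your proposal has to be judged against those references. Measured that way, you have chosen the right route: the good-orbifold reduction to an equivariant statement on the manifold $\widetilde{\orb}$, surjectivity by perturbing $D_0$ into a $\rho$-equivariant immersion, injectivity via the locally defined map $D_0^{-1}\circ D_1$ propagated by $\rho_0$-equivariance from a compact fundamental set, and descent to $\Hom^{st}/G$ using properness and finiteness of centralizers. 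The injectivity and descent steps are essentially complete as sketched (the uniform injectivity radius of the immersion $D_0$ on a compact set is what makes the local inverses glue, and equivariance with respect to the \emph{same} $\rho_0$ is exactly what lets you extend off the fundamental set).

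The step you should not regard as settled is the surjectivity patching. Interpolating with bump functions in a collar of $\partial K$ handles the discrepancy across a single facet pairing, but a fundamental set has strata of codimension $\geq 2$ where several facet pairings meet, and there the corrections attached to different generators must agree simultaneously; this consistency problem (a cocycle condition over the nerve of the decomposition, already present for manifolds) is the technical heart of every published proof. The references resolve it either by an induction over the skeleta of a fine covering or by recasting the whole construction as the persistence of a transverse section of the flat $(G,X)$-bundle associated to $\rho$, which packages the consistency automatically. Your closing paragraph identifies the orbifold-specific issue (isotropy-equivariance of the interpolation, fixed by averaging over the finite local groups) but attributes the \emph{entire} difficulty to it; as written, the manifold-level corner consistency is asserted rather than argued, and that is the one place where the proposal would need genuine additional work to become a proof rather than a correct plan.
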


This principle means that sufficiently nearby $(G,X)$-structures are completely determined by their holomony representations.  

\section{A starting point for convex projective structures}

\subsection{Convexity in the projective sphere or in the projective space}$\,$

\par{
Let $V$ be a real vector space of dimension $d+1$. Consider the action of $\R^*_+$ on $V$ by homothety, and the {\em projective sphere}\index{projective!sphere} 
$$ \S(V) = (V \smallsetminus \{ 0\}) \,/\, \R^*_+ = \{ \textrm{rays of } V\}  .$$
Of course, $\S(V)$ is the 2-fold cover of the real projective space $\Pb(V)$. 
The canonical projection map $V \smallsetminus \{ 0\} \rightarrow \S(V)$ is denoted by $\S$. 
}
\\
\par{A convex cone $\Cc$ is {\em sharp}\index{convex!cone!sharp}\index{sharp convex cone} if $\Cc$ does not contain an affine line.  A subset $C$ of $\S(V)$ is {\em convex}\index{convex!set} (resp. {\em properly convex}\index{properly convex!set}\index{convex!set!properly}) if the subset $\S^{-1}(C) \cup \{ 0 \} $ of $V$ is a convex cone (resp. sharp convex cone). Given a hyperplane $H$ of $\S(V)$, we call the two connected components of $\S(V) \smallsetminus H$ {\em affine charts}\index{affine!chart}. An open set $\O \varsubsetneqq \S(V)$ is convex (resp. properly convex) if and only if there exists an affine chart $\A$ such that $\O \subset \A$ (resp. $\overline{\O} \subset \A$) and $\O$ is convex in the usual sense in $\A$. A properly convex set $\O$ is {\em strictly convex}\index{strictly convex set}\index{convex!set!strictly} if every line segment in $\partial\O$ is a point. All these definitions can be made for subset of $\Pb(V)$. The projective space is more common but the projective sphere allows to get rid of some technical issues. It will be clear from the context whether our convex domain is inside $\S(V)$ or $\Pb(V)$.
}
\\
\par{
The group $\SL^{\pm}(V)$ of linear automorphisms of $V$ of determinant $\pm 1$ is identified to the group of automorphisms of $\S(V)$. A {\em properly convex projective structure}\index{properly convex!real projective structure}\index{geometric structure!real projective structure!properly convex} on an orbifold $\orb$ is a $(\PGL(V),\Pb(V))$-structure (or a $(\SL^{\pm}(V),\S(V))$-structure) whose developing map is a diffeomorphism onto a properly convex subset of $\R\Pd$ (or $\S^d$). 
We refer the reader to Section 1 of Marquis \cite{survey_ludo} to see the correspondence between properly convex $(\PGL(V),\Pb(V))$-structures and properly convex $(\SL^{\pm}(V),\S(V))$-structures (see also p.143 of Thurston \cite{thurston:1997}).
}
\\
\par{
From now on, $\B(\orb)$ will denote the space of properly convex  projective structures on an orbifold $\orb$. This is a subspace of the deformation space of real projective structures on $\orb$.
}

\subsection{Hilbert geometries}\label{hilbert_geo_defi}$\,$

\par{
On every properly convex domain $\O$, there exists a distance $d_{\O}$ on $\O$ defined using the cross ratios: take two points $x \neq y \in \O$ and draw the line between them. This line intersects the boundary $\partial \O$ of $\O$ in two points $p$ and $q$. We assume that $x$ is between $p$ and $y$. If $[p:x:y:q]$ denotes the cross ratio of $p, x, y, q$, then the following formula defines a metric (see Figure \ref{disttt}):
}
$$d_{\O}(x,y) =  \displaystyle \frac{1}{2}\log \Big( [p:x:y:q] \Big), \quad \textrm{ for every }  x, y \in \O.$$

\par{
This metric gives to $\O$ the same topology as the one inherited from $\S(V)$. The metric space $(\O,d_{\O})$ is complete and the closed balls in $\Omega$ are compact. The group $\Aut(\O)$ acts on $\O$ by isometries, and therefore properly.
}
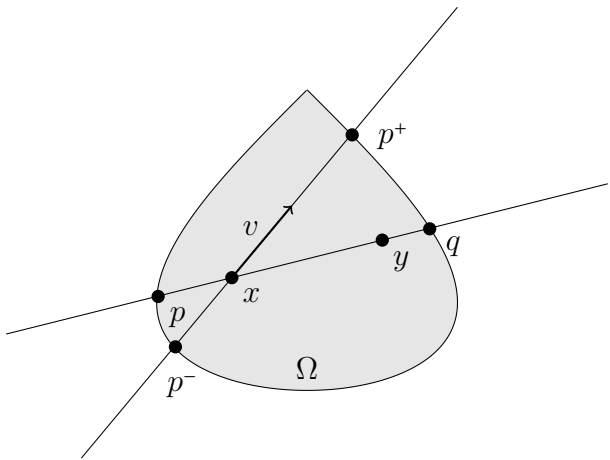
\begin{figure}[h!]
\centering
\begin{tikzpicture}
\filldraw[draw=black,fill=gray!20]
 plot[smooth,samples=200,domain=0:pi] ({4*cos(\x r)*sin(\x r)},{-4*sin(\x r)});
cycle;
\draw (-1,-2.5) node[anchor=north west] {$x$};
\fill [color=black] (-1,-2.5) circle (2.5pt);
\draw (1,-2) node[anchor=north west] {$y$};
\fill [color=black] (1,-2) circle (2.5pt);
\draw [smooth,samples=200,domain=-4:4] plot ({\x},{0.25*\x-2.25});
\draw (-1.97,-2.75) node[anchor=north west] {$p$};
\fill [color=black] (-1.98,-2.75) circle (2.5pt);
\draw (1.7,-1.8) node[anchor=north west] {$q$};
\fill [color=black] (1.63,-1.85) circle (2.5pt);
\draw [smooth,samples=200,domain=-3:2] plot ({\x},{1.2*\x-1.3});
\draw[->,distance=10pt,thick] (-1,-2.5) -- (-0.2,-1.54);
\draw (-1,-1.6) node[anchor=north west] {$v$};
\fill [color=black] (-1.75,-3.42) circle (2.5pt);
\draw (-2,-3.6) node[anchor=north west] {$p^-$};
\fill [color=black] (.6,-.6) circle (2.5pt);
\draw (.8,-.3) node[anchor=north west] {$p^+$};
\draw (0,-3.7) node {$\O$};
\end{tikzpicture}
\caption{The Hilbert metric}
\label{disttt}
\end{figure}

\par{
This metric is called the {\em Hilbert metric}\index{Hilbert!metric} and it can also be defined by a Finsler norm on the tangent space $T_x \O$ at each point $x$ of $\O$: Let $v$ be a vector of $T_x \O$. The quantity  $\left. \frac{d}{dt}\right| _{t=0} d_{\O}(x,x+tv)$ defines a Finsler norm $F_{\O}(x,v)$ on $T_x \O$. Let us choose an affine chart $\A$ containing $\O$ and a Euclidean norm $|\cdot|$ on $\A$.
If $p^+$ (resp. $p^-$) is the intersection point of $\partial \O$ with the half-line determined by $x$ and $v$ (resp. $-v$), and $|\, ab\, |$ is the distance between two points $a,b$ of $\A$ (see Figure \ref{disttt}), then we obtain
}
$$F_{\O}(x,v) = \left. \frac{d}{dt}\right| _{t=0} d_{\O}(x,x+tv) = \frac{|v|}{2}\Bigg(\frac{1}{|xp^-|} + \frac{1}{| xp^+|} \Bigg). $$
\par{The regularity of the Finsler norm is the same as the regularity of the boundary $\partial \O$. The Finsler structure gives rise to an absolutely continuous measure $\mu_{\O}$ with respect to Lebesgue measure, called the {\em Busemann measure}\index{Busemann measure}.
}
\\
\par{
If $\O$ is an ellipsoid, then $(\O,d_{\O})$ is the projective model of the hyperbolic space. More generally, if $\O$ is {\em round}\index{convex!domain!round}, i.e. strictly convex with $\Cc^1$-boundary, then the metric space $(\O,d_{\O})$ exhibits some hyperbolic behaviour
even though $(\O,d_{\O})$ is not Gromov-hyperbolic.\footnote{If $\O$ is strictly convex and $\partial \O$ is of class $\Cc^2$ with a positive Hessian, then $(\O,d_{\O})$ is Gromov-hyperbolic (see Colbois-Verovic \cite{ColVer}), but unfortunately the convex domain we are interested in will be at most round, except the ellipsoid. See also the necessary and sufficient condition of Benoist \cite{benoist_conv_hyp}.}
}
\\
\par{
A properly convex domain $\O$ is a polytope if and only if $(\O,d_{\O})$ is bi-Lipschitz to the Euclidean space \cite{constantin_charc_polytope}. If $\O$ is the projectivization of the space of real positive definite symmetric $m\times m$ matrices, then $\Aut(\O) = \SL_m(\R)$.
}
\\
\par{
Convex projective structures are therefore a special kind of $(G,X)$-structures, whose golden sisters are hyperbolic structures and whose iron cousins are higher-rank structures, i.e. the $(G,G/K)$-structures where $G$ is a semi-simple Lie group without compact factor of (real) rank $\geq 2$, $K$ is the maximal compact subgroup of $G$ and $G/K$ is the symmetric space of $G$.
}
\\
\par{
The above fact motivated an interest in convex projective structures. However, it is probable that this is not the main reason why convex projective structures are interesting. The main justification is the following theorem.
}

\subsection{Koszul-Benoist's theorem}

Recall that the {\em virtual center}\index{virtual center} of a group $\G$ is the subset in $\G$ consisting of the elements whose centralizer is of finite index in $\G$. It is easy to check that the virtual center of a group is a subgroup, and the virtual center of a group is trivial if and only if every subgroup of finite index has a trivial center.

\begin{theorem}[Koszul \cite{kos_open}, Benoist \cite{CD3}]\label{thm-Kos_Ben}
Let $\orb$ be a closed orbifold of dimension $d$ admitting a properly convex real projective structure. 
Suppose that the group $\pi_{1}^{orb}(\orb)$ has a trivial virtual center. 
Then the space $\B(\orb)$ of properly convex projective structures on $\orb$ corresponds to a union of connected components of the character variety $\chi(\pi_{1}^{orb}(\orb),\SL^{\pm}_{d+1}(\R))$.
\end{theorem}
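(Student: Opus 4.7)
The plan is to identify $\B(\orb)$, via the holonomy map, with a subset of $\chi(\pi_1^{orb}(\orb),\SL^{\pm}_{d+1}(\R))$ that is simultaneously open and closed. First, I would verify that the holonomy $\rho$ of a properly convex projective structure on $\orb$ is stable in the sense of Theorem \ref{thm:EhTh1}. The developing map identifies the universal cover $\widetilde{\orb}$ with a properly convex domain $\O \subset \S^d$, on which $\rho(\pi_1^{orb}(\orb))$ acts properly discontinuously and cocompactly; a classical theorem of Vey then yields that $\rho$ is strongly irreducible, and in particular reductive. The triviality of the virtual center of $\pi_1^{orb}(\orb)$ ensures that the centralizer of $\rho$ in $\SL^{\pm}_{d+1}(\R)$ is finite (any centralizing element would commute with a finite-index subgroup, and strong irreducibility forces it to be scalar), so $\rho$ is stable. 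By the Ehresmann-Thurston principle, the holonomy map is therefore a local homeomorphism from $\B(\orb)$ onto its image in $\chi$.

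Next, I would invoke Koszul's openness theorem: the image of $\B(\orb)$ is open in $\chi$. Given a convex projective structure with holonomy $\rho_0$ and developing image $\O_0$, Theorem \ref{thm:EhTh1} produces, for every representation $\rho$ close to $\rho_0$, a developing map $D_\rho$ close to $D_{\rho_0}$ and equivariant under $\rho$. Koszul's argument---deforming the sharp convex cone over $\O_0$ and using the Vinberg characteristic function---shows that the image of $D_\rho$ remains a properly convex domain on which $\rho$ acts properly and cocompactly, for all $\rho$ in a neighborhood of $\rho_0$.

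The main obstacle is the closedness, proved by Benoist. Suppose $\rho_n$ is a sequence of holonomies of properly convex projective structures on $\orb$ with developing images $\O_n \subset \S^d$, and suppose $[\rho_n] \to [\rho_\infty]$ in $\chi$. After conjugating each $\rho_n$ so that a fixed base point lies in every $\O_n$ and the $\O_n$'s are placed in a common normalized position, one extracts a Hausdorff limit $\O_\infty$. The delicate step is to show that $\O_\infty$ remains properly convex---neither collapsing to a lower-dimensional convex set nor containing a projective line---and that $\rho_\infty(\pi_1^{orb}(\orb))$ acts on its interior properly discontinuously and cocompactly. Benoist's argument combines the strong irreducibility of the limit (preserved among stable representations) with a careful analysis of the Cartan projections of elements of $\rho_n(\pi_1^{orb}(\orb))$ to rule out every form of degeneration, and then produces a properly convex $\rho_\infty$-invariant $\O_\infty$ realizing $\rho_\infty$ as a convex projective holonomy of $\orb$. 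Combining the three steps, the image of $\B(\orb)$ in $\chi(\pi_1^{orb}(\orb),\SL^{\pm}_{d+1}(\R))$ is both open and closed, hence a union of connected components.
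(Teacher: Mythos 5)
Your overall strategy---openness via Koszul, closedness via Benoist, glued together by the Ehresmann--Thurston principle---is the same as the paper's, and the first two steps are fine. But there is a concrete gap in your closedness argument. Benoist's theorem (Theorem 1.1 of \cite{CD3}), even combined with your Hausdorff-limit normalization, delivers only that the limit representation $\rho_\infty$ is discrete, faithful, strongly irreducible and preserves a properly convex domain $\Omega_\infty$ on which it acts cocompactly. It does \emph{not} by itself tell you that the quotient orbifold $\mathcal{Q}:=\Omega_\infty/\rho_\infty(\pi_1^{orb}(\orb))$ is orbifold-diffeomorphic to $\orb$. Your final sentence simply asserts that Benoist ``produces a properly convex $\rho_\infty$-invariant $\Omega_\infty$ realizing $\rho_\infty$ as a convex projective holonomy \emph{of} $\orb$,'' which is exactly the point that needs proof: a priori $\rho_\infty$ could be the holonomy of a convex projective structure on some other closed orbifold, in which case $[\rho_\infty]$ would not lie in the image of $\B(\orb)$ and closedness would fail.

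The paper closes this gap by a short bootstrapping argument you are missing: apply Koszul's openness \emph{at the limit point} $\rho_\infty$ to get a neighborhood $U$ of $[\rho_\infty]$ in $\chi(\pi_1^{orb}(\orb),\SL^{\pm}_{d+1}(\R))$ consisting of holonomies of structures on $\mathcal{Q}$; for $n$ large, $[\rho_n]\in U$, so $\rho_n$ preserves a properly convex domain $\Omega'_n$ with $\Omega'_n/\rho_n(\pi_1^{orb}(\orb))\approx\mathcal{Q}$. Then the uniqueness of the invariant properly convex domain for such actions (Proposition 3 of Vey \cite{Vey}) forces $\Omega'_n=\Omega_n$, whence $\mathcal{Q}\approx\Omega_n/\rho_n(\pi_1^{orb}(\orb))\approx\orb$. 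You should either supply this argument (or an equivalent one, e.g.\ an equivariant diffeomorphism built from the convergence $\Omega_n\to\Omega_\infty$, which is itself delicate) before declaring the image of $\B(\orb)$ closed.
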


\begin{proof}
Let $\Gamma = \pi_{1}^{orb}(\orb)$ and $G = \SL^{\pm}_{d+1}(\R)$. It was proved by Koszul \cite{kos_open} that the space of holonomy representations of elements of $\B(\orb)$ is open in $\chi(\Gamma,G)$. The closedness follows from Benoist \cite{CD3} together with the following explanation. Assume that $\rho \in \chi(\Gamma,G)$ is an algebraic limit of a sequence of holonomy representations $\rho_{n} \in \chi(\Gamma, G)$ of elements of $\B(\orb)$. This means that $\rho_{n}(\Gamma)$ acts on a properly convex domain $\Omega_{n}$ such that $\Omega_{n} / \rho_{n} (\Gamma) $ is orbifold-diffeomorphic to $\orb$. Then the representation $\rho$ is discrete, faithful and irreducible and $\rho(\Gamma)$ acts on a properly convex domain $\Omega$ by Theorem 1.1 of Benoist \cite{CD3}. We need to show that the quotient orbifold $\mathcal{Q} := \Omega/\rho(\Gamma)$ is orbifold-diffeomorphic to $\orb$:
The openness implies that there is a neighbourhood $U$ of $\rho$ in $\chi(\Gamma, G)$ consisting of holonomy representations of  elements of $\B(\mathcal{Q})$. Since $\rho_{n} \in U$ for sufficiently large $n$, there is a properly convex domain $\Omega'_{n}$ such that $\Omega'_{n}/\rho_{n}(\Gamma)$ is diffeomorphic to $\mathcal{Q}$. Proposition 3 of Vey \cite{Vey} implies that $\Omega'_{n} =\Omega_{n}$ (see also Theorem 3.1 of Choi-Lee \cite{CL15} and Proposition 2.4 of Cooper-Delp \cite{CD}). Since 
\[\mathcal{Q} \approx \Omega'_{n}/\rho_{n}(\Gamma) =\Omega_{n}/\rho_{n}(\Gamma) \approx \orb,\] 
the orbifold $\mathcal{Q}$ is orbifold-diffeomorphic to $\orb$, completing the proof that $\rho$ is the holonomy representation of an element of $\B(\orb)$.
\end{proof}

\begin{rem}
The closedness of Theorem \ref{thm-Kos_Ben} was proved by Choi-Goldman \cite{Choi_gold_close} in dimension $d=2$, and by Kim \cite{ink} in the case that $\Gamma$ is a uniform lattice of $\so{3}(\mathbb{R})$.
\end{rem}

The condition that the virtual center of a group is trivial is transparent as follows:

\begin{propo}[Corollary 2.13 of Benoist \cite{CD3}]\label{prop-Benoist}
Let $\Gamma$ be a discrete subgroup of $\PGL_{d+1}(\R)$.
Suppose that $\Gamma$ acts on a properly convex domain $\Omega$ in $\R \Pb^d$ and $\Omega/\Gamma$ is compact. Then the following are equivalent.
\begin{itemize}
\item The virtual center of $\Gamma$ is trivial.
\item Every subgroup of finite index of $\Gamma$ has a finite center.
\item Every subgroup of finite index of $\Gamma$ is irreducible in $\PGL_{d+1}(\R)$.\footnote{We call $\Gamma$ {\em strongly irreducible}\index{strongly irreducible group} if every finite index subgroup of $\Gamma$ is irreducible.}
\item The Zariski closure of $\Gamma$ is semisimple.
\item The group $\Gamma$ contains no infinite nilpotent normal subgroup.
\item The group $\Gamma$ contains no infinite abelian normal subgroup.
\end{itemize}
\end{propo}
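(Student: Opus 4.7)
The plan is to organize the six conditions into three tiers: the purely group-theoretic equivalences (1)$\Leftrightarrow$(2) and (5)$\Leftrightarrow$(6); the Zariski-closure equivalence (4)$\Leftrightarrow$(6); and the genuinely geometric one, (3)$\Leftrightarrow$(4), which will transport back to (1) via the previous step. I expect the cocompactness of the $\Gamma$-action on $\O$ to be decisive only in the last two tiers.

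First I would dispose of the purely algebraic equivalences. For (5)$\Leftrightarrow$(6), the forward direction is immediate; for the reverse, given an infinite nilpotent normal subgroup $N\lhd\Gamma$, the finite generation of $\Gamma$ forces $N$ to be finitely generated nilpotent, and a standard induction on the nilpotency class shows that some term $Z^k(N)$ of the ascending central series is an infinite characteristic, hence $\Gamma$-normal, abelian subgroup. For (1)$\Leftrightarrow$(2), observe that any $z$ in the virtual centre sits in $Z(C_\Gamma(z))$, and $C_\Gamma(z)$ has finite index by definition; conversely, any central element of a finite-index subgroup lies in the virtual centre. The passage between ``nontrivial centre'' and ``infinite centre'' of a finite-index subgroup is handled by replacing $H$ with a torsion-free finite-index subgroup (available since $\Gamma$ is a finitely generated discrete linear group), after which a nontrivial central element is automatically of infinite order.

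Next I would tie both bundles to the Zariski closure $G$ of $\Gamma$ in $\PGL_{d+1}(\R)$. If $G$ is semisimple, its solvable radical is finite, and the Zariski closure of any infinite nilpotent (or abelian) normal subgroup of $\Gamma$ is a closed normal solvable subgroup of $G$, contradicting the assumption. Conversely, if the solvable radical $R$ of $G$ is nontrivial, a Borel-density style argument adapted to the convex projective setting, using that $\Gamma$ preserves $\O$ cocompactly, shows that $\Gamma\cap R$ is Zariski dense in $R$ and in particular infinite, producing the required infinite abelian normal subgroup after passing to the last nontrivial term of its derived series. For (3)$\Leftrightarrow$(4), I would invoke the fact that strong irreducibility of $\Gamma$ is equivalent to $G$ being reductive \emph{and} acting irreducibly on $\R^{d+1}$: a $G$-invariant proper subspace would meet $\overline\O$ in a proper $\Gamma$-invariant face, and the structure theory of divisible properly convex sets rules this out unless $G$ splits off an abelian normal factor, which is already prohibited by (4) via the previous step.

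The main obstacle will be the Borel-density input for (4)$\Leftrightarrow$(6) and the rigidity argument for (3)$\Leftrightarrow$(4). Both crucially use the full strength of $\O/\Gamma$ being compact, not merely the discreteness of $\Gamma$, and it is precisely here that Benoist's earlier structural results on divisible convex sets — invariant faces, the dynamics of unipotent elements on $\partial\O$, and normalizers of invariant subspaces — must be invoked. Once these are granted, chaining the three tiers yields the circular string of implications equating (1), (2), (3), (4), (5) and (6).
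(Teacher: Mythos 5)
A preliminary remark: the paper does not actually prove this proposition — it is quoted as Corollary 2.13 of Benoist's article and used as a black box — so your proposal has to be judged on its own terms. Your three-tier architecture is sensible and you correctly locate the hard content in Benoist's structure theory of divisible convex sets, but two of the steps you classify as purely algebraic are not, and one of them fails as written. For $(2)\Rightarrow(1)$: if $z\neq 1$ lies in the virtual centre and has \emph{finite} order, then $z\in Z(C_\Gamma(z))$ only exhibits a finite-index subgroup with nontrivial torsion centre, and your fix — replacing $H$ by $H\cap\Gamma_0$ with $\Gamma_0$ torsion-free of finite index — destroys the witness, since $z\notin\Gamma_0$. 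The real issue is ruling out a nontrivial \emph{finite} normal subgroup $F$ (such an $F$ always lies in the virtual centre, because the conjugation action $\Gamma\to\mathrm{Aut}(F)$ has finite image), and this genuinely needs the geometry: $F$ fixes a point of $\O$, its fixed-point set in $\O$ is a nonempty proper closed convex $\Gamma$-invariant subset, and cocompactness of $\Quo$ forbids this. Abstractly the equivalence $(1)\Leftrightarrow(2)$ is simply false: $\Z/2\times\Delta$ with $\Delta$ a centreless surface group has nontrivial virtual centre while every finite-index subgroup has finite centre.

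Second, your justification of $(6)\Rightarrow(5)$ rests on the claim that finite generation of $\Gamma$ forces a normal subgroup $N$ to be finitely generated; this is false in general (the kernel of $F_2\to\Z$ is an infinitely generated normal subgroup of a finitely generated group). What saves the step is discreteness and linearity — by Mostow's theorem a discrete solvable subgroup of a Lie group is polycyclic, hence $N$ is finitely generated — after which a central-series argument (for instance via the preimage in $N$ of the centre of $N/\gamma_c(N)$, a finitely generated group with finite commutator subgroup, whose centre therefore has finite index and is infinite) produces the infinite abelian characteristic subgroup. Finally, the two genuinely geometric implications are exactly where the sketch stops being a proof: ``a Borel-density style argument shows $\Gamma\cap R$ is Zariski dense in $R$'' is not an argument — Borel density concerns lattices in semisimple groups, and a Zariski-dense discrete subgroup of $S\ltimes R$ has no a priori reason to meet $R$ at all; likewise a $G$-invariant projective subspace need not meet $\overline{\O}$, so the ``invariant face'' step requires the dual configuration to be treated as well. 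In Benoist's treatment these implications come out of his decomposition theorem for divisible convex sets ($\O$ virtually splits as a cone or product and $\Gamma$ acquires a central $\Z^k$); that is the specific structural input you would need to state and apply, rather than gesture at.
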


\subsection{Duality between convex real projective orbifolds}

\par{
We start with linear duality. Every sharp convex open cone $C$ of a real vector space $V$ gives rise to a  {\em dual} convex cone\index{dual!convex cone}\index{convex!cone!dual}
$$
C^\star = \{ \varphi \in V^\star \, | \, \varphi_{|\overline{C} \smallsetminus \{ 0\} } > 0 \}.
$$
It can be easily verified that $C^\star$ is also a sharp convex open cone and that
 $C^{\star\star}=C$. Hence, the duality leads to an involution between sharp convex open cones.
}
\\
\par{
Now, consider the “projectivization” of linear duality: If $\O$ is a properly convex domain of $\Pb(V)$ and $C_{\O}$ is the cone of $V$ over $\O$, then {\em the dual $\O^{\star}$ of $\O$}\index{convex!domain!dual}\index{dual!convex domain} is $\Pb(C_{\O}^\star)$. This is a properly convex domain of $\Pb(V^{\star})$. In a more intrinsic way, the dual $\O^\star$ is the set of hyperplanes $H$ of $V$ such that $\Pb(H) \cap \overline{\O} = \varnothing$. Since there is a correspondence between hyperplanes and affine charts of the projective space, the dual $\O^\star$ can be defined as the space of affine charts $\A$ of $\R\Pb^d$ such that $\O$ is a bounded subset of $\A$.
}
\\
\par{
The second interpretation offers us a map $\O^\star \to \O$: namely to $\A \in \O^\star$ we can associate the center of mass of $\O$ in $\A$. The map is well defined since $\O$ is a bounded convex domain of $\A$. In fact, Vinberg showed that this map is an analytic diffeomorphism (see Vinberg \cite{homogeneous_vin_1} or Goldman \cite{wmgnote}), so we call it the {\em Vinberg duality diffeomorphism}\index{Vinberg duality diffeomorphism}.
}
\\
\par{
Finally, we can bring the group in the playground. Recall that the dual representation $\rho^\star:\G \to \mathrm{PGL}(V^{\star})$ of a representation $\rho: \G \to \mathrm{PGL}(V)$ is defined by $\rho^\star(\g) = \,^t\rho(\g^{-1})$, i.e. the dual projective transformation of $\rho(\g)$. All the constructions happen in projective geometry, therefore if a representation $\rho$ preserves a properly convex domain $\O$, then the dual representation $\rho^{\star}$ preserves the dual properly convex domain $\O^\star$. Even more, if we assume the representation $\rho$ to be discrete, then the Vinberg duality diffeomorphism induces a diffeomorphism between the quotient orbifolds $\O/\rho(\G)$ and $\O^\star/\rho^\star(\G)$.
}

\section{The existence of deformations or exotic structures}\label{existence}
\subsection{Bending construction}

\par{
Johnson and Millson \cite{JM} found an important class of deformations of convex real projective structures on an orbifold. The bending construction was introduced by Thurston to deform $(\so{2}(\R),\Hb^2)$-structures on a surface into $(\so{3}(\R),\S^2)$-structures, i.e. conformally flat structures, and therefore in particular to produce quasi-Fuchsian groups. This was extended by Kourouniotis \cite{Kourouniotis} to deform $(\so{d}(\R),\Hb^d)$-structures on a manifold into $(\so{d+1}(\R),\S^d)$-structures.
}
\\
\par{
Johnson and Millson indicated several other deformations, all starting from a hyperbolic structure on an orbifold. However we will focus only on real projective deformations. Just before that we stress that despite the simplicity of the argument, the generalization is not easy. Goldman and Millson \cite{goldman_millson_local_rigidity}, for instance, show that there exists no non-trivial\footnote{A deformation of a representation $\rho : \Gamma \ra G$ is {\em trivial}\index{trivial deformation}\index{deformation!trivial} if it is a conjugation of $\rho$ in $G$.} deformation of a uniform lattice of $\mathrm{SU}_{d,1}$ into $\mathrm{SU}_{d+1,1}$.
}
\\
\par{
Let $\sl{d+1}$ be the Lie algebra of $\SL_{d+1}(\R)$, and let $\orb$ be a closed properly convex projective orbifold. Suppose that $\orb$ contains a two-sided totally geodesic suborbifold $\Sigma$ of codimension $1$. For example, all the hyperbolic manifolds obtained from standard arithmetic lattices of $\so{d}(\R)$, up to a finite cover, admit such a two-sided totally geodesic hypersurface (see Section 7 of Johnson-Millson \cite{JM} for the construction of {\em standard} arithmetic lattices). Let $\Gamma = \pi_{1}^{orb}(\orb)$ and $A=\pi_{1}^{orb}(\Sigma)$. Recall that the Lie group $\SL_{d+1}(\R)$ acts on the Lie algebra $\sl{d+1}$ by the adjoint action.
}

\begin{lemma}[Johnson-Millson \cite{JM}] \label{lem:JM}
Let $\rho \in \Hom(\Gamma,  \SL_{d+1}(\R))$.
Suppose that $\rho(A)$ fixes an element $x_{1}$ in $\sl{d+1}$ and that $x_{1}$ is not invariant under $\Gamma$.
Then $\Hom(\Gamma, \SL_{d+1}(\R))$ contains a non-trivial curve $(\rho_t)_{t \in (- \epsilon, \epsilon)}$, $\epsilon > 0$, with $\rho_0=\rho$, i.e. the curve is transverse to the conjugation action of $\SL_{d+1}(\R)$. 
\end{lemma}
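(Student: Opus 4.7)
The plan is to use a bending construction along $\Sigma$. Since $\Sigma$ is two-sided and of codimension one inside $\orb$, cutting $\orb$ along $\Sigma$ expresses $\Gamma = \pi_{1}^{orb}(\orb)$ as a graph of groups over the edge group $A$. Concretely, either $\orb \smallsetminus \Sigma$ is disconnected, in which case $\Gamma = \Gamma_1 *_{A} \Gamma_2$ with $\Gamma_i$ the fundamental groups of the two pieces, or $\orb \smallsetminus \Sigma$ is connected, in which case $\Gamma = \Gamma_1 *_{A}$ is an HNN extension with stable letter $t_0$ conjugating the two copies of $A$ inside $\Gamma_1$. This algebraic decomposition is the structural input that allows us to deform $\rho$ on one side while leaving it unchanged on the other.

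Next, I would exploit the hypothesis that $\rho(A)$ fixes $x_1$ under the adjoint action. This means that $\exp(tx_1) \in \SL_{d+1}(\R)$ centralises $\rho(A)$ for every $t \in \R$. Writing $c_t$ for conjugation by $\exp(tx_1)$, define a path of representations as follows. In the amalgamated case, set
\[
\rho_t(\g) = \rho(\g) \text{ for } \g \in \Gamma_1, \qquad \rho_t(\g) = c_t(\rho(\g)) \text{ for } \g \in \Gamma_2.
\]
Because $\exp(tx_1)$ commutes with $\rho(A)$, the two definitions agree on the amalgamating subgroup $A$, so the universal property of the amalgamated product gives a well-defined homomorphism $\rho_t : \Gamma \to \SL_{d+1}(\R)$. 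In the HNN case, instead keep $\rho_t = \rho$ on $\Gamma_1$ and replace $\rho(t_0)$ by $\exp(tx_1) \rho(t_0)$; the relations for an HNN extension with edge group $A$ are preserved precisely because $\exp(tx_1)$ commutes with $\rho(A)$. In both cases the map $t \mapsto \rho_t$ is a smooth curve in $\Hom(\Gamma,\SL_{d+1}(\R))$ passing through $\rho_0 = \rho$.

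It remains to show that this curve is non-trivial transversely to the $\SL_{d+1}(\R)$-conjugation action, for sufficiently small $t$. Differentiating at $t=0$ produces a cocycle $u \in Z^1(\Gamma, \sl{d+1})$ for the representation $\mathrm{Ad}\circ\rho$ which is supported on one side of the decomposition: $u$ vanishes on $\Gamma_1$ and equals $x_1 - \mathrm{Ad}(\rho(\g))\cdot x_1$ on $\Gamma_2$ (respectively, $u(t_0) = x_1$ and $u|_{\Gamma_1} = 0$ in the HNN case). The deformation is tangent to the conjugation orbit precisely when $u$ is a coboundary, i.e.\ when there exists $y \in \sl{d+1}$ with $u(\g) = y - \mathrm{Ad}(\rho(\g))\cdot y$ for all $\g \in \Gamma$. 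Vanishing of $u$ on $\Gamma_1$ forces $y$ to be $\mathrm{Ad}(\rho(\Gamma_1))$-invariant; comparing with the prescribed values of $u$ on the other factor then forces $y - x_1$ (or $y$, in the HNN case) to be $\mathrm{Ad}(\rho(\Gamma_2))$-invariant. Combining, $x_1$ would have to be invariant under all of $\Gamma$, contradicting the hypothesis. Hence $[u]\in H^1(\Gamma, \sl{d+1})$ is nonzero and the curve $\rho_t$ is transverse to the conjugation orbit. The main delicate point, and what one must handle separately for the amalgamated and HNN presentations, is precisely this last cohomological computation ensuring non-triviality of the deformation.
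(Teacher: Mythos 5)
The paper itself offers no proof of this lemma --- it is quoted from Johnson--Millson --- so there is no in-text argument to compare against; your route (graph-of-groups decomposition of $\G$ over $A$ coming from the two-sided codimension-one $\Sigma$, bending by the one-parameter subgroup $\exp(tx_1)$ centralising $\rho(A)$, then testing the tangent cocycle against coboundaries) is exactly the standard Johnson--Millson construction, and the construction of the curve $\rho_t$ and the computation of its tangent cocycle $u$ are correct.

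The gap is in your final cohomological step. From $u=\delta y$ you correctly deduce that $y$ is $\mathrm{Ad}\,\rho(\Gamma_1)$-invariant and $x_1-y$ is $\mathrm{Ad}\,\rho(\Gamma_2)$-invariant, but ``combining, $x_1$ would have to be invariant under all of $\Gamma$'' is a non sequitur: what follows is only that $x_1\in\mathfrak{z}(\rho(\Gamma_1))+\mathfrak{z}(\rho(\Gamma_2))$, where $\mathfrak{z}(\cdot)$ denotes the $\mathrm{Ad}$-fixed subspace of $\sl{d+1}$, and this sum is in general strictly larger than $\mathfrak{z}(\rho(\Gamma))=\mathfrak{z}(\rho(\Gamma_1))\cap\mathfrak{z}(\rho(\Gamma_2))$. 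The failure is not cosmetic: if $x_1$ happens to lie in $\mathfrak{z}(\rho(\Gamma_1))$, then $\exp(tx_1)$ centralises all of $\rho(\Gamma_1)$ and your bending curve is globally the conjugation of $\rho$ by $\exp(tx_1)$ --- a trivial deformation --- even though $x_1$ need not be $\Gamma$-invariant. The same issue arises in the HNN case, where $[u]=0$ exactly when $x_1\in\bigl(1-\mathrm{Ad}\,\rho(t_0)\bigr)\,\mathfrak{z}(\rho(\Gamma_1))$. To close the argument you must control the centralisers of the vertex groups, which is what Johnson--Millson do: in the geometric situation each $\rho|_{\Gamma_i}$ acts irreducibly on $\R^{d+1}$ (for the hyperbolic holonomy, $\rho(\Gamma_i)$ is Zariski-dense in a conjugate of the orthogonal group of a Lorentzian form, whose standard representation is absolutely irreducible), so $\mathfrak{z}(\rho(\Gamma_i))=0$ by Schur's lemma; then $[u]=0$ would force $x_1=0$, contradicting the hypothesis that $x_1$ is not $\Gamma$-invariant. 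Some such irreducibility input on the restrictions $\rho|_{\Gamma_i}$ must be stated and used; without it the non-triviality of the bending curve simply does not follow from the hypotheses you invoke.
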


\begin{theorem}[Johnson-Millson \cite{JM}, Koszul \cite{kos_open}, Benoist \cite{CD1,CD3}] \label{thm:JM}
Suppose that a closed hyperbolic orbifold $\orb$ contains $r$ disjoint 
totally geodesic suborbifolds $\Sigma_{1}, \dots, \Sigma_{r}$ of codimension-one. Then the dimension of the space $\B(\orb)$ at the hyperbolic structure is greater than or equal to $r$. Moreover, the bending curves lie entirely in $\B(\orb)$ and all the properly convex structures on $\orb$ are strictly convex.
\end{theorem}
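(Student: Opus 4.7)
The three assertions decouple naturally, and I would handle them in order.

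\emph{Dimension bound.} For each $\Sigma_i$ I apply Lemma \ref{lem:JM}. Viewing $\rho_{\mathrm{geo}}(\Gamma) \subset \so{d}(\R) \subset \SL_{d+1}(\R)$, the holonomy $\rho_{\mathrm{geo}}(A_i)$ of $A_i = \pi_1^{orb}(\Sigma_i)$ stabilizes a Lorentzian hyperplane $V_i \subset \R^{d+1}$ with spacelike normal line $\ell_i$. The traceless endomorphism $x_i \in \sl{d+1}$ acting as $+1$ on $V_i$ and as $-d$ on $\ell_i$ is block-diagonal in the splitting $V_i \oplus \ell_i$, hence commutes with $\rho_{\mathrm{geo}}(A_i)$. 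It is however not $\Gamma$-invariant: by Borel density $\rho_{\mathrm{geo}}(\Gamma)$ is Zariski dense in $\so{d}(\R)$, and the centralizer of $\lieso{d}$ in $\sl{d+1}$ under the adjoint action is trivial since $\lieso{d}$ acts irreducibly on $\R^{d+1}$ for $d \geq 1$. Lemma \ref{lem:JM} then produces a non-trivial bending deformation $\rho_t^{(i)}$ for each $i = 1, \dots, r$.

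\emph{Linear independence.} To conclude $\dim \B(\orb) \geq r$ via the Ehresmann-Thurston principle (Theorem \ref{thm:EhTh1}), the $r$ associated infinitesimal bending cocycles $u_i \in Z^1(\Gamma, \mathrm{Ad} \circ \rho_{\mathrm{geo}})$ must be linearly independent in $H^1$. The disjointness of the $\Sigma_i$ permits a Mayer-Vietoris decomposition of $\orb$ along $\bigsqcup_i \Sigma_i$ in which each $u_i$ is represented by a cocycle supported on a single fixed side of $\Sigma_i$, with pairwise disjoint supports. A relation $\sum c_i [u_i] = 0$ would then, upon localization near each $\Sigma_i$ separately, force $c_i x_i$ to be $\Gamma$-invariant, contradicting the previous paragraph unless $c_i = 0$. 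This is the delicate step, requiring care because the $\Sigma_i$ themselves may carry orbifold singularities along which the Mayer-Vietoris sequence must be set up equivariantly.

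\emph{Staying in $\B(\orb)$ and strict convexity.} The hyperbolic structure lies in $\B(\orb)$ via the projective (Klein) model of $\Hb^d$. Since $\Gamma$ is a uniform lattice in the semisimple Lie group $\so{d}(\R)$ without compact factors, its Zariski closure is semisimple and so $\Gamma$ has trivial virtual center by Proposition \ref{prop-Benoist}. Theorem \ref{thm-Kos_Ben} then gives that $\B(\orb)$ is a union of connected components of $\chi(\Gamma, \SL^{\pm}_{d+1}(\R))$; the continuous bending curves $t \mapsto [\rho_t^{(i)}]$ therefore remain in $\B(\orb)$ throughout. Finally, for strict convexity, I would invoke Benoist's theorem asserting that a properly convex domain $\Omega$ divided cocompactly by a word-hyperbolic group is automatically strictly convex: since $\pi_1^{orb}(\orb)$ inherits word-hyperbolicity from the closed hyperbolic structure, every properly convex structure on $\orb$ is strictly convex.
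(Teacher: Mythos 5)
Your assembly of the cited ingredients is the one the paper intends: the survey gives no proof of Theorem \ref{thm:JM}, and the natural argument is exactly Lemma \ref{lem:JM} for the bending deformations, Theorem \ref{thm-Kos_Ben} together with Proposition \ref{prop-Benoist} (trivial virtual center via Borel density) to keep the curves inside $\B(\orb)$, and Benoist's criterion from \cite{CD1} (a divisible $\O$ is strictly convex if and only if the dividing group is Gromov-hyperbolic) for the last assertion. Your choice of $x_i$ and the non-invariance argument are correct, and the final two parts of your proof are sound; the only cosmetic point there is that Koszul's openness and Benoist's closedness hold already at the level of $\Hom(\G,\SL^{\pm}_{d+1}(\R))$, which is the cleaner place to run the connectedness argument since the bent representations live there before one worries about reductivity.

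There is, however, a genuine gap in the dimension bound. Producing $r$ curves through $\rho_{\mathrm{geo}}$ whose tangent cocycles are linearly independent in $H^1(\G,\sl{d+1})_{\rho_{\mathrm{geo}}}$ does \emph{not} imply that the local dimension of the character variety (equivalently, by Ehresmann--Thurston, of $\B(\orb)$) is at least $r$: the union of the coordinate axes in $\R^r$ carries $r$ curves with independent tangents through the origin but has local dimension $1$ there. Independent tangent vectors bound the Zariski tangent space, not the dimension. What is needed --- and what Johnson--Millson actually do --- is the \emph{simultaneous} bending: because the $\Sigma_i$ are disjoint, $\G$ is the fundamental group of a graph of groups whose edge groups are the $A_i$, and one may twist each edge by $\exp(t_i x_i)$ with independent parameters, obtaining a single map $(-\epsilon,\epsilon)^r \to \Hom(\G,\SL_{d+1}(\R))$. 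The cocycle-independence you sketch is then used to show that the induced map to the character variety has injective differential at $0$, so its image is an embedded $r$-ball and the dimension bound follows. In other words, the disjointness hypothesis is needed not only for your Mayer--Vietoris bookkeeping but, more fundamentally, to make the $r$-parameter family exist at all; your write-up never constructs it.
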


We say that a convex domain $\O$ of the real projective space is {\em divisible}\index{convex!domain!divisible}\index{divisible convex domain} (resp. {\em quasi-divisible}\index{quasi-divisible convex domain}\index{convex!domain!quasi-divisible}) if there exists a discrete subgroup $\G$ of $\Aut(\O)$ such that the action of $\G$ on $\O$ is cocompact (resp. of finite Busemann covolume).

\begin{theorem}[Johnson-Millson \cite{JM}, Koszul \cite{kos_open}, Benoist \cite{CD1}] \label{thm:existence1}
For every integer $d \geqslant 2$, there exists a non-symmetric divisible strictly convex domain of dimension $d$.
\end{theorem}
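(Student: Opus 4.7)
The plan is to apply the bending construction of Theorem \ref{thm:JM} to a carefully chosen hyperbolic manifold, and then to show that the resulting deformed convex domains cannot all be ellipsoids, hence must produce non-symmetric divisible strictly convex domains.

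First, I would invoke the existence, for every $d \geq 2$, of a closed hyperbolic $d$-orbifold $\orb$ containing a two-sided codimension-one totally geodesic suborbifold $\Sigma$. For $d \geq 3$ such examples come from the standard arithmetic construction in Section 7 of Johnson--Millson \cite{JM} (applied to quadratic forms that are rational over a totally real number field and split off a hyperplane), and for $d = 2$ one can take any closed hyperbolic surface together with a simple closed geodesic. Let $\Gamma = \pi_{1}^{orb}(\orb)$, let $A = \pi_{1}^{orb}(\Sigma)$, and let $\rho_0 = \rho_{\mathrm{geo}} : \Gamma \to \so{d}(\R) \hookrightarrow \mathrm{SL}^{\pm}_{d+1}(\R)$ be the hyperbolic holonomy, which preserves the round ellipsoid $\O_0$, the projective model of $\Hb^d$.

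Next, to apply Lemma \ref{lem:JM}, I need an element $x_1 \in \sl{d+1}$ fixed by $\mathrm{Ad}(\rho_0(A))$ but not by $\mathrm{Ad}(\rho_0(\Gamma))$. Since $\Sigma$ is totally geodesic of codimension one, $\rho_0(A)$ preserves a splitting $\R^{d+1} = \R \oplus \R^{d}$ where $\R^d$ carries the restricted hyperbolic form; the diagonal element $x_1 = \mathrm{diag}(d, -1,\dots, -1) \in \sl{d+1}$ (or any other element of the centralizer of $\rho_0(A)$ that lies outside $\lieso{d}$) commutes with $\rho_0(A)$ but, because $\G$ acts irreducibly on $\R^{d+1}$ via $\rho_0$, cannot be fixed by all of $\G$. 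Lemma \ref{lem:JM} then produces a non-trivial one-parameter family $\rho_t \in \Hom(\Gamma, \mathrm{SL}^{\pm}_{d+1}(\R))$ with $\rho_t \ne \rho_0$ up to conjugation for $t \ne 0$ small. By Koszul's openness part of Theorem \ref{thm-Kos_Ben}, for sufficiently small $t$ each $\rho_t$ is the holonomy of an element of $\B(\orb)$, i.e.\ $\rho_t(\G)$ divides a properly convex domain $\O_t$ with $\O_t/\rho_t(\G) \approx \orb$. Benoist's strict-convexity theorem (invoked in Theorem \ref{thm:JM}) ensures that each $\O_t$ is strictly convex with $\Cc^1$ boundary.

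Finally, I claim that for some arbitrarily small $t \ne 0$, the divisible strictly convex domain $\O_t$ is not symmetric. Among divisible strictly convex domains the only symmetric one is the ellipsoid (the only strictly convex Hilbertizable symmetric space, by the Vinberg--Koecher classification cited in the introduction). Hence it suffices to rule out $\O_t$ being projectively equivalent to an ellipsoid for small $t \ne 0$. If $\O_t$ were an ellipsoid then, after conjugation in $\mathrm{PGL}_{d+1}(\R)$, the image $\rho_t(\G)$ would lie in $\so{d}(\R)$, so $\rho_t$ would be conjugate to a discrete faithful cocompact representation of $\G$ into $\so{d}(\R)$. For $d \geq 3$, Mostow rigidity forces any such representation to be conjugate to $\rho_0$, contradicting the transversality of the bending curve to the $\mathrm{SL}^{\pm}_{d+1}(\R)$-conjugation action. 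For $d = 2$, I would instead observe that the Fuchsian locus inside the component $\B(\orb)$ (the Hitchin component $\mathrm{Hit}_3(\G)$) has dimension $6g - 6$, while $\B(\orb)$ has dimension $16g - 16 > 6g - 6$ by Goldman's parametrisation, so generic small deformations of $\rho_0$ (in particular the bending direction, chosen as above to lie outside $\lieso{2}$) leave the Fuchsian locus. Either way, $\O_t$ is a non-symmetric divisible strictly convex domain of dimension $d$, proving the theorem.

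The main obstacle, and the only delicate point, is the choice of the bending element $x_1$: one must verify both that $x_1$ lies in the centralizer of $\rho_0(A)$ in $\sl{d+1}$ (which requires exploiting the reducibility of $\rho_0|_A$ coming from the codimension-one totally geodesic $\Sigma$) and, more importantly, that $x_1$ can be taken outside $\lieso{d} \subset \sl{d+1}$, so that the resulting bending curve genuinely leaves the hyperbolic locus rather than producing a deformation internal to Teichm\"uller space.
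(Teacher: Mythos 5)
Your proposal is correct and follows exactly the route the paper intends: the survey states Theorem \ref{thm:existence1} as a direct consequence of the bending construction (Lemma \ref{lem:JM} and Theorem \ref{thm:JM}) applied to an arithmetic hyperbolic orbifold with a totally geodesic hypersurface, combined with Koszul's openness, Benoist's strict convexity, and the observation that a symmetric strictly convex divisible domain must be an ellipsoid, which is excluded by Mostow rigidity (or, for $d=2$, by the transversality of the $\lieo$-valued bending cocycle to the Fuchsian locus). Your identification of the centralizer element $x_1$ coming from the $\rho_0(A)$-invariant splitting $\R^{d+1}=\R\oplus\R^d$ is precisely the standard Johnson--Millson choice, so there is nothing to add.
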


\begin{rem}
Kac and Vinberg \cite{KacVin} made the first examples of non-symmetric divisible convex domains of dimension $2$ using Coxeter groups (see Section \ref{Coxeter}).
\end{rem}

\begin{rem}
The third author with Ballas \cite{exemple_ludo,Ballasp} extended Theorem \ref{thm:existence1} to non-symmetric {\em quasi-divisible}\index{quasi-divisible convex domain} (not divisible) convex domains.
\end{rem}

\subsection{The nature of the exotics}
\par{
Now we have seen the existence of non-symmetric divisible convex domains in all dimensions $d \geqslant 2$. We first remark that so far all the divisible convex domains built are round if they are not the products of lower dimensional convex domains. So we might want to know if we can go further, specially if we can find indecomposable divisible convex domains that are not strictly convex. The first result in dimension $2$ is negative:\footnote{The word “positive/negative” in this subsection reflects only the feeling of the authors.}
}

\subsubsection{Dimension $2$\,{\rm :} Kuiper-Benz\'ecri’s Theorem}

\begin{theorem}[Kuiper \cite{Ku}, Benz\'ecri \cite{Benthesis}, Marquis \cite{Surf_ludo}] A quasi-divisible convex domain of dimension $2$ is round, except the triangle. 
\end{theorem}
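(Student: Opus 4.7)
The plan has two stages: prove strict convexity, and then deduce $C^{1}$-regularity of $\partial\O$. The second stage reduces to the first by the duality from the previous subsection. If $\G$ quasi-divides $\O$, then $\rho^{\star}(\G)$ quasi-divides the dual $\O^{\star}$, and $\O^{\star}$ is a triangle if and only if $\O$ is. In dimension two, the non-$C^{1}$ points of $\partial\O$ correspond bijectively to the non-trivial segments in $\partial\O^{\star}$ (a segment in $\partial\O^{\star}$ records a family of supporting lines at a single point of $\partial\O$). So once the dichotomy ``triangle or strictly convex'' is proved for every quasi-divisible domain, applying it to $\O^{\star}$ forces $\partial\O$ to be $C^{1}$, and $\O$ is round.

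It therefore suffices to show: a quasi-divisible $\O \subset \R\Pb^{2}$ that is not strictly convex is a triangle. Suppose $\partial\O$ contains a maximal open segment $\sigma$, with endpoints $p$ and $q$; let $L$ be the line spanning $\sigma$, and let $L_{p}, L_{q}$ be supporting lines at $p,q$ distinct from $L$ (these exist because $\O$ is properly convex and not contained in $L$). Pick interior points $x_{n}\in\O$ converging to a point of $\sigma$. Since the Busemann covolume of $\O/\G$ is finite, we can choose $\g_{n}\in\G$ so that $y_{n}:=\g_{n}x_{n}$ remains in a compact subset $K$ of $\O$ lifting the thick part of the quotient; the $\g_{n}$ necessarily escape to infinity in $\G$.

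Now I invoke \textbf{Benzécri's compactness principle}: the moduli space of pointed properly convex open subsets of $\R\Pb^{2}$, modulo $\PGL_{3}(\R)$, is Hausdorff, and the orbit of a quasi-divisible pointed pair is closed. Extracting a subsequence and applying normalizing elements $g_{n}\in\PGL_{3}(\R)$, the sequence $(g_{n}\O, g_{n}y_{n})$ converges to a pointed properly convex domain $(\O_{\infty}, y_{\infty})$. The renormalization magnifies a neighborhood of $\sigma$ while keeping $y_{n}$ bounded, and a direct two-dimensional analysis --- tracking the transported supporting lines $g_{n}L_{p}, g_{n}L_{q}$ and the images $g_{n}\sigma$ --- shows that $\partial\O_{\infty}$ must contain three concurrent segments, so $\O_{\infty}$ is a triangle. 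By the closedness of the $\PGL_{3}(\R)$-orbit of $\O$, the limit $\O_{\infty}$ is $\PGL_{3}(\R)$-equivalent to $\O$, and hence $\O$ itself is a triangle, contradicting the assumption.

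The main obstacle is the geometric analysis of $\O_{\infty}$: one has to show that the limit is genuinely a triangle and not some other degeneration such as a strip or a half-plane, which requires careful control of the behavior of $L_{p}$ and $L_{q}$ under the renormalizing sequence $g_{n}$. A second subtlety, specific to the quasi-divisible setting (Marquis's extension of Benzécri's original cocompact argument), is that one must ensure the normalizing $\g_{n}\in\G$ can be chosen so that $y_{n}=\g_{n}x_{n}$ stays in the thick part: in dimension two, this follows from the fact that cusp stabilizers are parabolic and fix a unique boundary point, which cannot lie in the interior of a segment, so $\sigma$ cannot be trapped inside a cusp region and the thick-part renormalization is available.
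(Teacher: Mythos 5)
The paper states this theorem without proof (it is a survey), so I am measuring your proposal against the arguments of Benz\'ecri and of Marquis \cite{Surf_ludo} that it cites. Your overall strategy --- prove the dichotomy ``strictly convex or triangle'' via Benz\'ecri's compactness theorem plus a closed-orbit argument, then deduce $\Cc^1$-regularity by passing to the dual domain --- is indeed the classical route. But as written there are genuine gaps, and they sit exactly at the points where the quasi-divisible case is harder than the divisible one.

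First, the assertion that ``the orbit of a quasi-divisible pointed pair is closed'' is the crux of the theorem, not a consequence of Benz\'ecri's compactness principle. For a divisible $\O$, the set $\{(\O,y)\,:\,y\in\O\}$ has \emph{compact} image in the quotient of the space of pointed convex domains by $\PGL_3(\R)$, and that is what makes the orbit closed; when the action is only of finite covolume this image need not be compact. Your repair --- bring the $x_n$ back to the thick part because cusp stabilizers are parabolic with extremal fixed points --- presupposes a thick--thin decomposition of $\O/\G$ and a structure theory of the ends of finite-volume convex projective surfaces, which is itself a substantial portion of Marquis's paper and is not available before anything is known about $\partial\O$; as stated the argument is circular. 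Second, the renormalization step is garbled: once $y_n=\g_n x_n$ lies in a fixed compact $K\subset\O$, the pointed domains $(\O,y_n)$ subconverge with no degeneration, so applying ``normalizing elements'' to $(g_n\O,g_ny_n)$ produces nothing new. The zooming sequence must be applied to $(\O,x_n)$; the identity $(\O,x_n)=\g_n^{-1}(\O,y_n)$ is what places the degenerate limit inside the closure of the orbit of $(\O,K)$. And the analysis showing that this limit is genuinely a triangle --- rather than a half-plane, a strip, or a properly convex domain with a single boundary segment --- is the technical heart of the two-dimensional proof; you name it as ``the main obstacle'' but do not carry it out. Third, the duality reduction needs $\O^\star$ to be quasi-divisible, i.e.\ that finite Busemann covolume is preserved under duality. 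The Vinberg duality diffeomorphism conjugates the two actions but is not measure-preserving, so in the non-cocompact case this is a theorem (proved by Marquis in dimension $2$), not a formality. None of these gaps invalidates the strategy, but each is a place where the actual proof, as opposed to the plan, lives.
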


The next result in dimension $3$ is positive:

\subsubsection{Dimension $3$\,{\rm :} Benoist's Theorem}

First, we can classify the possible topology for closed convex projective 3-manifold or 3-orbifold.

\begin{theorem}[Benoist \cite{CD4}]\label{CD4_descrip}
If a closed $3$-orbifold $\mathcal{O}$ admits an indecomposable\footnote{A convex projective orbifold is {\em indecomposable}\index{orbifold!real projective!indecomposable} if its holonomy representation is strongly irreducible.} properly convex projective structure, then $\mathcal{O}$ is topologically the union along the boundaries of finitely many $3$-orbifolds each of which admits a finite-volume hyperbolic structure on its interior.
\end{theorem}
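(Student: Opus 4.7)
Write $\Gamma = \pi_{1}^{orb}(\mathcal{O})$ and let $\Omega \subset \R\Pb^3$ be the properly convex divisible domain with $\mathcal{O} \approx \Omega/\Gamma$. By Proposition \ref{prop-Benoist}, indecomposability of $\mathcal{O}$ is equivalent to the strong irreducibility of $\Gamma$, and in particular $\Gamma$ has trivial virtual center, so the full force of Theorem \ref{thm-Kos_Ben} applies. The plan is a \emph{JSJ-style} decomposition: extract from $\bar\Omega$ a $\Gamma$-invariant family of two-dimensional projective ``flats'', project it to a disjoint family of embedded closed Euclidean $2$-suborbifolds $\Sigma_{1},\dots,\Sigma_{r}$ of $\mathcal{O}$, cut $\mathcal{O}$ along them, and identify each resulting $3$-piece as a hyperbolic orbifold of finite volume via orbifold geometrization.

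I begin with a dichotomy on the shape of $\Omega$. If $\Omega$ is strictly convex, then $\Gamma$ is Gromov-hyperbolic and one takes the trivial decomposition with a single piece, whose hyperbolic type is inferred from the theory of strictly convex divisible domains. If $\partial\Omega$ contains a non-trivial segment, then the structural fact specific to dimension $3$ is that every maximal segment of $\partial\Omega$ is an edge of a \emph{properly embedded triangle}, i.e.\ an open $2$-simplex $T\subset \bar\Omega$ whose three vertices lie in $\partial\Omega$ and whose relative interior meets $\Omega$. Since the Hilbert metric induced on such a $T$ is (by the de la Harpe computation already recalled) the flat hexagonal norm, the stabilizer $\Gamma_{T}\leqslant \Gamma$ acts on $T$ by Euclidean isometries; cocompactness of $\Gamma\curvearrowright \Omega$ then forces $\Gamma_{T}$ to be virtually $\mathbb{Z}^{2}$ and to act cocompactly on $T$. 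The central technical step is to show that the family $\mathcal{T}$ of such triangles is pairwise disjoint with finitely many $\Gamma$-orbits: disjointness is proved by contradiction, since two triangles sharing a vertex or an edge would, through their stabilizers, generate either a non-discrete subgroup or a virtually nilpotent/abelian subgroup of rank at least $3$ in $\Gamma$, both excluded by Proposition \ref{prop-Benoist}; finiteness of orbits then follows from cocompactness together with a compactness argument on the space of triangles in $\bar\Omega$. Each orbit projects to a disjoint embedded closed Euclidean $2$-suborbifold $\Sigma_{i}\subset\mathcal{O}$.

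To conclude, cut $\mathcal{O}$ along $\Sigma_{1},\dots,\Sigma_{r}$ into $3$-suborbifolds $\mathcal{O}_{1},\dots,\mathcal{O}_{s}$. Each $\mathcal{O}_{j}$ is irreducible and atoroidal in the orbifold sense: any essential Euclidean $2$-suborbifold inside $\mathcal{O}_{j}$ would lift in $\Omega$ to a further properly embedded triangle, which by the previous step already belongs to the cutting family. Orbifold geometrization, applied to each $\mathcal{O}_{j}$, then yields a complete finite-volume hyperbolic structure on the interior of $\mathcal{O}_{j}$, as claimed. The principal obstacle is the central step above: producing the properly embedded, locally finite, $\Gamma$-equivariant family of triangles with only finitely many orbits. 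This hinges on a delicate case analysis of the virtually abelian subgroups of $\Aut(\Omega)$ and their dynamics on $\partial\Omega$, and constitutes the bulk of Benoist's work in \cite{CD4}; the closing appeal to geometrization is, by comparison, a routine verification once the correct cutting locus is in hand.
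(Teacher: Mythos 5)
The paper does not prove this statement: it is quoted from Benoist \cite{CD4} without proof, so there is nothing internal to compare against. Your outline does reproduce the architecture of Benoist's actual argument (the dichotomy strictly convex / segment in the boundary, the properly embedded triangles, their virtually-$\mathbb{Z}^{2}$ cocompact stabilizers, the induced decomposition along Euclidean $2$-suborbifolds, and hyperbolization of the pieces), and you are candid that the central step is only asserted. But as a proof the gaps are real and not only the one you flag. First, your disjointness argument leans on Proposition \ref{prop-Benoist} to exclude ``a virtually nilpotent/abelian subgroup of rank at least $3$''; that proposition only rules out infinite abelian or nilpotent \emph{normal} subgroups, so it says nothing about an arbitrary $\mathbb{Z}^{3}$ inside $\Gamma$. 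What actually excludes it is a cohomological-dimension argument: $\Gamma$ acts freely (up to finite index) and cocompactly on the contractible $3$-manifold $\Omega$, hence is virtually a $PD_{3}$ group, so by Strebel's theorem a $\mathbb{Z}^{3}$ subgroup would have finite index, making $\Gamma$ virtually abelian and contradicting strong irreducibility. Second, the atoroidality of the pieces does not follow ``by the previous step'': the cutting family was produced from segments of $\partial\Omega$, whereas atoroidality requires that \emph{every} essential Euclidean $2$-suborbifold of $\mathcal{O}_{j}$ be isotopic into the cutting locus, i.e.\ that every maximal virtually-$\mathbb{Z}^{2}$ subgroup of $\Gamma$ preserves a triangle of the family $\mathcal{T}$. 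That is a separate key theorem of Benoist, not a corollary of the construction of $\mathcal{T}$.

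Two further points deserve explicit mention. In the strictly convex case you ``infer'' the hyperbolic type from the theory of strictly convex divisible domains, but a strictly convex divisible $\Omega$ in dimension $3$ need not be an ellipsoid (this is the content of Theorem \ref{thm:existence1}), so the quotient is not hyperbolic as given; what you need is that a closed aspherical $3$-orbifold with Gromov-hyperbolic fundamental group is hyperbolic, which is the full geometrization theorem, while the pieces with non-empty boundary in the non-strictly-convex case are handled by Thurston's hyperbolization for Haken orbifolds. Finally, even granting the existence of a properly embedded triangle through each boundary segment, the cocompactness of $\Gamma_{T}$ on $T$ is not ``forced'' by cocompactness of $\Gamma$ on $\Omega$ in one line: one must show that the image of $T$ in $\mathcal{O}$ is a \emph{closed} suborbifold, which is where the local finiteness of $\mathcal{T}$ enters and which is intertwined with, not prior to, the disjointness statement. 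So the proposal is a faithful road map of \cite{CD4}, but each of its three main junctions (construction of $\mathcal{T}$, identification of $\mathcal{T}$ with the JSJ locus, hyperbolization of the complement) still carries the full weight of the original proof.
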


Second, these examples do exist.

\begin{theorem}[Benoist \cite{CD4}, Marquis \cite{ecima_ludo}, Ballas-Danciger-Lee \cite{BDL}]\label{CD4_exist}
There exists an indecomposable divisible properly convex domain $\Omega$ of dimension $3$ which is not strictly convex. Moreover, every line segment in $\partial \Omega$ is contained in the boundary of a properly embedded triangle.\footnote{A simplex $\Delta$ in $\Omega$ is {\em properly embedded}\index{properly embedded} if $\partial \Delta \subset \partial \O$.}
\end{theorem}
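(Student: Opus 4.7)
The plan is to construct $\Omega$ explicitly using the theory of projective Coxeter polytopes due to Vinberg. I would choose a combinatorial $3$-dimensional polytope $P$ together with a Coxeter labelling of its edges subject to two conditions: (i) no vertex of $P$ is labelled by a spherical Coxeter group (so the associated Coxeter group $W$ is infinite), and (ii) at least one vertex $v$ of $P$ has a link labelled by an affine Coxeter group of rank three, namely one of $\widetilde{A}_2$, $\widetilde{B}_2$ or $\widetilde{G}_2$. Such labelled polytopes can be written down by direct combinatorial inspection; for instance one can modify a compact hyperbolic Coxeter polytope by altering the dihedral angles at a single vertex until its link becomes Euclidean.

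Next I would invoke Vinberg's theorem on projective reflection groups: the set of Cartan matrices realizing the prescribed combinatorics of $P$ and its angle data forms a positive-dimensional real-analytic moduli space, and every choice gives a faithful representation of $W$ into $\spm{4}$ as a discrete reflection group $\Gamma$ that tiles a properly convex open domain $\Omega \subset \R\Pb^3$ by copies of a projective realization of $P$. By Selberg's lemma, a torsion-free finite-index subgroup $\Gamma_0 \leqslant \Gamma$ yields a closed $3$-manifold $\Omega/\Gamma_0$. For parameters away from a proper analytic subvariety, the Zariski closure of $\Gamma_0$ is all of $\SL_4(\R)$, so $\Gamma_0$ is strongly irreducible and by Proposition \ref{prop-Benoist} the structure is indecomposable; by Theorem \ref{thm-Kos_Ben} indecomposability persists on the full connected component of the structure in $\B(\Omega/\Gamma_0)$.

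Third, I would analyze $\partial\Omega$ near the affine vertex $v$. The stabilizer of $v$ in $W$ is the affine Coxeter group from (ii); it acts by projective transformations on $\R\Pb^3$ fixing a point $\widetilde{v} \in \partial\Omega$ and preserving a supporting affine chart. A direct computation with Vinberg's reflection matrices shows that this stabilizer preserves a $2$-simplex $\Delta \subset \overline{\Omega}$ such that $\Delta \cap \Omega$ is its relative interior and $\partial\Delta \subset \partial\Omega$; hence $\Delta$ is a properly embedded triangle, and in particular $\Omega$ is not strictly convex. The same construction at each $\Gamma$-translate of $v$ produces a $\Gamma$-invariant family of properly embedded triangles in $\partial\Omega$. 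Finally, the assertion that \emph{every} line segment in $\partial\Omega$ lies in the boundary of one such triangle follows from Benoist's rigidity analysis for indecomposable divisible convex domains of dimension $3$, where it is shown that the non-strictly-convex locus is exactly the union of the orbits of these triangles, via the classification of infinite virtually abelian subgroups of $\Gamma$ and the JSJ-type structure of Theorem \ref{CD4_descrip}.

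The principal difficulty is this last step: producing the triangles at the affine vertices is a local construction, but controlling the global shape of $\partial\Omega$ and ruling out further non-strict segments requires the full structure theorem for closed indecomposable properly convex $3$-orbifolds, which is the core of Benoist's work and the real obstacle to a direct elementary proof. The Ballas--Danciger--Lee approach instead replaces this with a gluing argument along totally geodesic tori between cusped convex projective pieces, providing an alternative route to existence that side-steps part of the boundary analysis but relies on its own deformation-theoretic input.
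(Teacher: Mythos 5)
Your overall strategy --- Vinberg's projective Coxeter polytopes for the construction, plus Benoist's structure theory for the claim that \emph{every} segment of $\partial \Omega$ lies in the boundary of a properly embedded triangle --- is indeed the route of Benoist and Marquis that this survey cites (the survey itself gives no proof, only the remark that the triangles correspond to incompressible Euclidean suborbifolds of the quotient). However, there is a concrete error in your conditions (i) and (ii): you place the affine Coxeter group at a \emph{vertex} of $P$ and ask that no vertex link be spherical. By item (5) of Theorem \ref{t:tits-vin}, an open face $f$ of $P$ lies in $\Omega$ if and only if $W_f$ is finite; hence every vertex with an affine (or otherwise infinite) link is deleted from the quotient, $\Omega/\Gamma$ is non-compact, and $\Gamma$ only \emph{quasi}-divides $\Omega$ --- whereas the theorem asserts the existence of a \emph{divisible} domain. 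Worse, an affine vertex link does not produce a properly embedded triangle at all: it produces a cusp whose holonomy preserves an ellipsoid, exactly the phenomenon the survey describes for the quasi-divisible examples of \cite{Cox_ludo}, so the boundary near $\widetilde{v}$ is a single cusp point rather than a triangle and your ``direct computation'' yielding $\Delta$ with $\partial\Delta \subset \partial\Omega$ does not go through. As written, your construction would at best give a cusped, possibly strictly convex, quasi-divisible domain.

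The fix, which is what Benoist and Marquis actually do: keep \emph{every} vertex link spherical, so that $\hat{P}$ is a closed Coxeter $3$-orbifold and $\Omega$ is genuinely divisible, and instead arrange for a rank-three affine subgroup along a \emph{prismatic $3$-circuit} --- three facets that are pairwise adjacent but share no common vertex, with edge orders $(3,3,3)$ (note that $(2,4,4)$ and $(2,3,6)$ are \emph{bad} in the sense preceding Theorem \ref{thm:Marquis}, so your list $\widetilde{A}_2,\widetilde{B}_2,\widetilde{G}_2$ also needs pruning). Such a circuit violates Andreev's condition (A2), so $\hat{P}$ carries no hyperbolic structure, yet it is realizable projectively; the corresponding standard subgroup stabilizes a properly embedded triangle whose image in $\Omega/\Gamma$ is the incompressible Euclidean $2$-suborbifold along which the quotient decomposes as in Theorem \ref{CD4_descrip}. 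With that correction, your remaining steps --- Selberg's lemma, genericity of strong irreducibility, and the appeal to Benoist's global analysis in \cite{CD4} for the assertion that the non-strictly-convex locus of $\partial\Omega$ is exactly the union of the boundaries of the translated triangles --- are in line with the cited proofs, and your closing description of the Ballas--Danciger--Lee doubling construction as an alternative route is accurate.
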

\par{
At the time of writing this survey, Theorem \ref{CD4_descrip} is valid only for divisible convex domain. However, Theorem \ref{CD4_exist} is true also for quasi-divisible convex domains which are not divisible (see Marquis \cite{Cox_ludo}).
}
\\
\par{
The presence of properly embedded triangles in the convex domain is related to the existence of incompressible Euclidean suborbifolds on the quotient orbifold. 
Benoist and the third author made examples using Coxeter groups and a work of Vinberg \cite{MR0302779}. We will explore more this technique in Section \ref{Coxeter}. Ballas, Danciger and the second author \cite{BDL} found a sufficient condition under which the double of a cusped hyperbolic three-manifold admits a properly convex projective structure, to produce the examples.
}
\\
\par{
In order to obtain the quasi-divisible convex domains in \cite{Cox_ludo}, the third author essentially keeps the geometry of the cusps. In other words, the holonomy of the cusps preserves an ellipsoid as do the cusps of finite-volume hyperbolic orbifolds. From this perspective, there is a different example whose cusps vary in geometry.
}
\begin{theorem}[Ballas \cite{8knot_sam}, Choi \cite{choi_geo_fini}]
There exists an indecomposable quasi-divisible (necessarily not divisible) properly convex domain $\Omega$ of dimension $3$ which is not strictly convex nor with $\Cc^1$-boundary such that the quotient is homeomorphic to a hyperbolic manifold of finite volume. 
\end{theorem}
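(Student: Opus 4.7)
The plan is to construct $\O$ by deforming the holonomy of a one-cusped finite-volume hyperbolic 3-manifold; the figure-eight knot complement $M = \Hb^3/\G$ provides the model case worked out by Ballas. The starting point is the discrete faithful representation $\rho_0 : \G \to \so{3}(\R) \hookrightarrow \PGL_4(\R)$ whose image preserves the projective model $\O_0$ of $\Hb^3$. We seek a continuous family $\rho_t : \G \to \PGL_4(\R)$, $t \in (-\epsilon,\epsilon)$, such that each $\rho_t$ preserves a properly convex domain $\O_t$, and such that the geometry of the cusp deforms in a way that forces line segments into the boundary and breaks $\Cc^1$-regularity.

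First, I would analyze the character variety $\chi(\G, \PGL_4(\R))$ near the hyperbolic point. For the figure-eight knot complement the peripheral subgroup is $\Z^2$ acting by parabolics on $\Hb^3$; either a direct matrix computation in the spirit of Ballas, or an infinitesimal cocycle analysis in the Johnson--Millson framework of Lemma \ref{lem:JM}, produces a non-trivial one-parameter family of deformations of $\rho_0$. In these deformations the peripheral subgroup becomes a two-dimensional unipotent subgroup of $\PGL_4(\R)$ no longer conjugate into $\so{3}(\R)$, so the cusp transitions from the standard hyperbolic parabolic type to a genuinely projective cusp type (a generalized cusp).

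Next, I would show that each $\rho_t$ preserves a properly convex domain. On a compact thick part of $M$ (the complement of an open cusp neighborhood), the Ehresmann--Thurston principle (Theorem \ref{thm:EhTh1}) combined with Koszul openness guarantees that the properly convex projective structure persists under the small deformation. For the cusp one constructs an explicit model generalized cusp: a properly convex projective structure on $T^2 \times [0,\infty)$ whose holonomy realizes $\rho_t|_{\Z^2}$ and whose Busemann volume is finite. Gluing this model to the deformed thick part along a neighborhood of the peripheral torus produces a global convex projective structure on $M$ whose developing image is the desired $\O$. The line segments in $\partial\O$ arise from the unipotent flats of the cusp holonomy, and $\partial\O$ fails to be $\Cc^1$ at the endpoints of these segments, exactly as at the interface between the strictly convex thick portion and the flat cusp region.

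Finally, I would verify the remaining properties. The quotient $\O/\rho_t(\G)$ is diffeomorphic to $M$ by construction, hence homeomorphic to a finite-volume hyperbolic manifold, and its Busemann volume is finite (thick part plus finitely many finite-volume generalized cusps), giving quasi-divisibility. Divisibility is ruled out since a closed indecomposable properly convex 3-orbifold cannot be homeomorphic to a cusped hyperbolic manifold. Indecomposability, equivalent to strong irreducibility of $\rho_t$ by Proposition \ref{prop-Benoist}, follows by continuity and the openness of irreducibility in the character variety, since $\rho_0$ is strongly irreducible. The main obstacle is the gluing step: one must show that the developing map extends to a homeomorphism onto a globally properly convex domain and that the boundary has exactly the claimed non-strictly-convex, non-$\Cc^1$ behavior. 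This is precisely the point at which Ballas's explicit computations on the figure-eight knot complement and Choi's more general convex-cone techniques for geometrically finite convex projective structures diverge, and it is the technical heart of the argument.
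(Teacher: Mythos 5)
The paper itself offers no proof of this statement: it is a survey item cited to Ballas and Choi, accompanied only by the remark that Ballas's example is an explicit convex projective deformation of the hyperbolic structure on the figure-eight knot complement. Your outline follows exactly that strategy, so at the level of approach you are aligned with the cited source; but as a proof it has a genuine gap at its center. Koszul's openness theorem and the Ehresmann--Thurston principle (Theorem \ref{thm:EhTh1}) as invoked in this paper apply to \emph{closed} orbifolds, and neither applies to the non-compact finite-volume manifold $M$: for cusped manifolds a small deformation of the holonomy need not be the holonomy of any properly convex structure. Moreover, proper convexity of the developing image is a global property, so exhibiting a convex model for the thick part and a convex model generalized cusp and gluing them along the peripheral torus does not by itself show that the developing map of the glued structure is injective with properly convex image, nor does it control the boundary regularity of $\O$. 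This is precisely the content of Ballas's explicit matrix computation (and of Choi's convex-hull machinery for geometrically finite structures); you correctly name it as the main obstacle, but the statement is not proved until that step is supplied. The finiteness of the Busemann volume of the deformed cusps, which you assert in order to get quasi-divisibility, likewise requires verification rather than assertion.

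A second, more localized error concerns the parenthetical ``necessarily not divisible.'' You rule out divisibility by observing that the quotient $\O/\rho_t(\G)$ is non-compact, but divisibility of $\O$ asks whether \emph{some} discrete subgroup of $\Aut(\O)$ acts cocompactly, not whether the given one does. The reason the theorem can assert non-divisibility is Benoist's structure theory in dimension $3$ (the source of Theorems \ref{CD4_descrip} and \ref{CD4_exist}): an indecomposable divisible convex $3$-domain has $\Cc^1$-boundary --- which is exactly why the divisible $3$-dimensional examples in Theorem \ref{CD4_exist} are only claimed to fail strict convexity, whereas the higher-dimensional ones also fail $\Cc^1$-regularity. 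Since the $\O$ constructed here is explicitly not $\Cc^1$, it cannot be divisible. Your indecomposability argument via Proposition \ref{prop-Benoist} and the openness of strong irreducibility near $\rho_0$ is fine.
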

\par{
More precisely, the example of Ballas is an explicit convex projective deformation of the hyperbolic structure on the figure-eight knot complement. Note that the first author gave such an example in Chapter 8 of \cite{choi_geo_fini} earlier without verifying its property.  
}
\\
\par{
We do not survey results about cofinite volume action of discrete groups on Hilbert geometries, and we refer the reader to: Marquis \cite{Surf_ludo,Cox_ludo}, Crampon-Marquis \cite{geo_fini}, Cooper-Long-Tillmann \cite{clt_cusps}, Choi \cite{choi_geo_fini}.
}

\subsubsection{Orbifolds of dimension 4 and beyond}

Until now, there are only three sources for non-symmetric divisible convex domains of dimension $d \geqslant 4$: from the “standard” bending of Johnson-Millson \cite{JM}, from the “clever” bending of Kapovich \cite{Kp2}, and using Coxeter groups. The last method was initiated by Benoist \cite{CD4,MR2295544} and extended by the three authors \cite{CLM_ecima}:

\begin{theorem}[Benoist \cite{CD4}, Choi-Lee-Marquis \cite{CLM_ecima}]
For $d=4, \dotsc, 7$, there exists an indecomposable divisible convex domain $\O$ of dimension $d$ which is not strictly convex nor with $\Cc^1$-boundary such that $\O$ contains a properly embedded $(d-1)$-simplex. Moreover, the quotient is homeomorphic to the union along the boundaries of finitely many $d$-orbifolds each of which admits a finite-volume hyperbolic structure on its interior.
\end{theorem}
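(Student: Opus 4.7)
The plan is to construct the examples with Vinberg's theory of linear Coxeter groups, following Benoist \cite{CD4} in dimension~$4$ and extending his recipe to $d=5,6,7$ as in \cite{CLM_ecima}. For each $d\in\{4,5,6,7\}$ I would first produce, by an explicit combinatorial search, a Coxeter system $(W,S)$ whose Coxeter diagram $\mathcal{G}$ is irreducible, has no direct factor of finite or affine type (so that $W$ is strongly irreducible by Proposition~\ref{prop-Benoist}), and contains at least one subset $J \subset S$ of cardinality $d+1$ whose induced subdiagram is of affine type. This last feature means that the parabolic subgroup $W_J \leq W$ is an irreducible Euclidean reflection group of rank $d$; it is what will ultimately produce a properly embedded simplex in the convex domain.

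Given such a $(W,S)$, Vinberg's theorem associates a family of Cartan matrices $A=(a_{ij})$ compatible with the Coxeter data and a canonical representation $\rho_A : W \to \PGL_{d+1}(\R)$ preserving a properly convex open domain $\O_A \subset \R\Pb^d$ on which $\rho_A(W)$ acts properly discontinuously and cocompactly, with a simplicial fundamental polytope. One distinguished choice of $A$ yields the hyperbolic reflection group, $\rho_A(W) \subset \so{d}(\R)$ and $\O_A$ an ellipsoid, but the entries $a_{ij}a_{ji}$ attached to edges of $\mathcal{G}$ labelled $\infty$ can be strictly increased beyond $4$, producing a genuine continuous deformation. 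The openness half of Theorem~\ref{thm-Kos_Ben} guarantees that nearby $\rho_A$ still yield divisible properly convex $\O_A$, and indecomposability is immediate from Proposition~\ref{prop-Benoist}. The key geometric point is that the Euclidean parabolic subgroup $W_J$ acts on $\O_A$ via $\rho_A|_{W_J}$ preserving a unique projective $d$-simplex $\Delta$; Vinberg's analysis forces $\Delta \subset \overline{\O_A}$ with $\partial \Delta \subset \partial \O_A$. This properly embedded $(d-1)$-simplex has line-segment edges in $\partial \O_A$, obstructing strict convexity, and at each of its vertices two distinct hyperplanes support $\O_A$, obstructing $\Cc^1$-regularity of $\partial \O_A$.

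Finally, the $W$-orbit of $\Delta$ descends in $\O_A/\rho_A(W)$ to a disjoint collection of embedded codimension-one Euclidean suborbifolds; cutting along them yields pieces whose interiors carry finite-volume hyperbolic structures coming from the non-compact Coxeter polytopes obtained by ``un-truncating'' the affine facets of the fundamental domain of $W$. The main obstacle is the first step, the combinatorial/numerical search: one must, in each of the four dimensions, write down a Coxeter diagram that simultaneously carries an affine rank-$d$ subdiagram, is strongly irreducible, and admits a non-hyperbolic Cartan matrix satisfying Vinberg's positivity inequalities. The ceiling at $d=7$ reflects the scarcity of such diagrams in higher dimensions, not a defect of the projective-geometric machinery, which after the combinatorial input goes through uniformly thanks to Vinberg's reflection-group construction and the openness and duality tools summarized in Section~4.
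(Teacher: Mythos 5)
Your choice of method --- Vinberg's theory of projective reflection groups applied to hand-picked Coxeter diagrams --- is exactly the route the paper indicates (it attributes the construction to Coxeter groups and Vinberg \cite{MR0302779}, with the machinery recalled in Section \ref{Coxeter}), and the survey offers no further proof, so the question is whether your sketch of that route is sound. Two steps are not. The most serious is the anchor of your deformation argument: there is no ``distinguished choice of $A$'' for which $\rho_A(W)\subset \so{d}(\R)$ and $\O_A$ is an ellipsoid. The group you are trying to build stabilizes a properly embedded $(d-1)$-simplex and acts cocompactly on its interior, which is quasi-isometric to $\R^{d-1}$; hence it contains $\Z^{d-1}$ with $d-1\geq 3$ and cannot be a cocompact lattice in $\so{d}(\R)$ (equivalently, the affine circuit in the diagram violates the Andreev--Vinberg hyperbolicity conditions). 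So you cannot start at the hyperbolic reflection group and use the openness half of Theorem \ref{thm-Kos_Ben} to propagate divisibility; the whole point of these examples is that the quotient orbifold is \emph{not} hyperbolic, only its pieces are. The construction must instead apply Theorem \ref{t:tits-vin} directly to a non-hyperbolic Cartan matrix and then verify by hand that $\Cc=\cup_{\gamma}\gamma P$ is \emph{properly} convex (Vinberg only gives convexity) and that the action on $\O$ is cocompact.

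The second gap concerns the affine subset $J$. An irreducible affine Coxeter group acting cocompactly on $\R^{d-1}$, i.e. on the interior of a $(d-1)$-simplex, has $d$ generators, not $d+1$; with $|J|=d+1$ your $W_J$ would act on $\R^{d}$ and preserve a full-dimensional $d$-simplex, which cannot serve as a codimension-one wall. More importantly, $J$ must be a \emph{prismatic} collection of facets: if the $d$ facets of $J$ all contained a common vertex $v$ of $P$, then $W_v=W_J$ would be infinite, $v$ would be an undesirable face deleted from the orbifold (item (5) of Theorem \ref{t:tits-vin}), and the action of $\Gamma$ on $\O$ would only be of cofinite volume --- you would lose divisibility. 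It is precisely when the affine circuit is prismatic that its invariant $(d-1)$-simplex cuts through infinitely many tiles $\gamma P$ while every vertex group stays finite, so that $\O/\Gamma$ is a closed orbifold containing a closed Euclidean $(d-1)$-suborbifold. With these corrections the rest of your outline (non-strict convexity from $\partial\Delta\subset\partial\O$, failure of $\Cc^1$-regularity via duality or via Benoist's equivalence of strict convexity with Gromov-hyperbolicity of $\Gamma$, hyperbolization of the complementary pieces by Andreev-type criteria, and the genuine difficulty being the diagram search in dimensions $4$ through $7$) is consistent with what Benoist \cite{CD4} and Choi--Lee--Marquis \cite{CLM_ecima} actually do.
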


Other examples were built by Benoist in dimension $4$ and by Kapovich in every dimension:

\begin{theorem}[Benoist \cite{MR2295544}, Kapovich \cite{Kp2}]
For $d \geqslant 4$, there exists a divisible convex domain $\O$ of dimension $d$ such that $(\O,d_{\O})$ is Gromov-hyperbolic but it is not quasi-isometric to a symmetric space. In particular, $\O$ is strictly convex. However it is not quasi-isometric to the hyperbolic space $\mathbb{H}^d$.
\end{theorem}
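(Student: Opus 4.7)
The plan is to exhibit a closed $d$-orbifold whose orbifold fundamental group $\Gamma$ is Gromov-hyperbolic yet is not quasi-isometric to $\mathbb{H}^d$, and then to equip this orbifold with a divisible convex projective structure $\O/\Gamma$. Once this is done, the three assertions of the theorem will fall out from Benoist's characterization of Gromov-hyperbolic Hilbert geometries, combined with the Schwarz-Milnor lemma and the quasi-isometric rigidity of real hyperbolic space.

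For $d=4$, following Benoist \cite{MR2295544}, I would take $\Gamma$ to be a carefully chosen hyperbolic Coxeter group: the Tits-Vinberg representation $\rho_{\mathrm{geo}} : W \to \GL_5(\R)$ directly produces a properly convex domain $\O \subset \R\Pb^4$ preserved by $\Gamma$, so no bending is needed --- the whole difficulty is pushed into the combinatorial analysis of the Coxeter diagram, which must be arranged so that (i) the Davis complex is CAT$(-1)$, forcing $\Gamma$ to be Gromov-hyperbolic, and (ii) no finite-index subgroup of $\Gamma$ is isomorphic to a uniform lattice of $\so{d}(\R)$. For general $d \geq 4$, following Kapovich \cite{Kp2}, I would start from a Gromov-Thurston manifold $M_0$ of dimension $d$ --- a ramified cover of a closed hyperbolic $d$-manifold along a codimension-two totally geodesic submanifold, whose fundamental group is Gromov-hyperbolic but not commensurable with any hyperbolic lattice --- and then produce a convex projective structure on $M_0$ by a bending-type deformation of its hyperbolic holonomy along a totally geodesic hypersurface meeting the ramification locus transversely, using Lemma \ref{lem:JM} to get the initial deformation and Theorem \ref{thm-Kos_Ben} to guarantee that the bent representations preserve a properly convex domain $\O$ divided by the image.

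With $(\O,\Gamma)$ in hand, strict convexity of $\O$ and Gromov-hyperbolicity of $(\O, d_{\O})$ come for free from Benoist's necessary-and-sufficient criterion (cited in the footnote of Section \ref{hilbert_geo_defi} after \cite{benoist_conv_hyp}): for a divisible convex domain, strict convexity of $\O$ is equivalent to Gromov-hyperbolicity of $(\O, d_{\O})$, which is itself equivalent to Gromov-hyperbolicity of the dividing group as an abstract group. Since $\Gamma$ is hyperbolic by construction, we get both $\O$ strictly convex and $(\O, d_{\O})$ Gromov-hyperbolic simultaneously.

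The main obstacle --- and the actual content of the theorem --- is the last assertion, that $(\O, d_{\O})$ is not quasi-isometric to $\mathbb{H}^d$, and \emph{a fortiori} to any symmetric space (symmetric spaces of rank $\geq 2$ contain isometric copies of $\R^2$ and therefore cannot be quasi-isometric to a Gromov-hyperbolic space, so only $\mathbb{H}^d$ needs to be ruled out). The argument is by quasi-isometric rigidity. The cocompact proper action of $\Gamma$ on $(\O, d_{\O})$ gives, via the Schwarz-Milnor lemma, a quasi-isometry $\Gamma \sim_{\mathrm{QI}} (\O, d_{\O})$; if the latter were quasi-isometric to $\mathbb{H}^d$, then $\Gamma$ would be quasi-isometric to $\mathbb{H}^d$, and by the quasi-isometric rigidity of real hyperbolic space in dimension $\geq 3$ (Tukia, Gabai, Casson-Jungreis), $\Gamma$ would be virtually a uniform lattice in $\Isom(\mathbb{H}^d)$, contradicting the non-commensurability property built into both the Coxeter and the Gromov-Thurston constructions. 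The chief difficulty is therefore verifying this non-commensurability, which in Benoist's case is a careful Coxeter-diagram computation and in Kapovich's case is essentially the content of the original Gromov-Thurston theorem (via simplicial-volume obstructions distinguishing the ramified cover from any hyperbolic manifold); this is where both \cite{MR2295544} and \cite{Kp2} spend the bulk of their effort.
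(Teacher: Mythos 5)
Your sketch cannot be checked against the paper's own argument, because the paper gives none: this theorem is stated as a cited survey result, with the construction deferred entirely to Benoist \cite{MR2295544} and Kapovich \cite{Kp2}. Judged on its own terms, the Benoist half of your outline is a reasonable approximation of the real proof: a Coxeter group realized through the Tits--Vinberg representation (Theorem \ref{t:tits-vin}) supplies the divisible convex domain with no bending needed, Moussong's criterion (rather than a literally CAT$(-1)$ Davis complex) gives Gromov-hyperbolicity, Benoist's equivalence for divisible convex sets converts that into strict convexity of $\O$, and Tukia's quasi-isometric rigidity reduces ``not QI to $\mathbb{H}^4$'' to showing $\Gamma$ is not virtually a uniform lattice of $\so{4}(\R)$. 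One omission here: ruling out $\mathbb{H}^d$ does not rule out every symmetric space. Your flat-subspace argument only kills rank $\geq 2$; the remaining rank-one spaces $\mathbb{H}^n_{\mathbb{C}}$, $\mathbb{H}^n_{\mathbb{H}}$, $\mathbb{H}^2_{\mathbb{O}}$ are Gromov-hyperbolic, and for $d$ even $\mathbb{H}^{d/2}_{\mathbb{C}}$ has the correct real dimension and boundary sphere, so it must be excluded separately (e.g.\ by Pansu--Chow quasi-isometric rigidity of those spaces, or by a boundary argument). This is patchable but it is a gap in the stated logic.

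The serious flaw is in the Kapovich half. You propose to ``produce a convex projective structure on $M_0$ by a bending-type deformation of its hyperbolic holonomy,'' but a Gromov--Thurston manifold $M_0$ has no hyperbolic holonomy: the whole point of the construction --- and precisely the property you invoke at the end to defeat Tukia's alternative --- is that $\pi_1(M_0)$ is not (virtually) a uniform lattice of $\so{d}(\R)$, so there is no discrete faithful cocompact representation into $\so{d}(\R)$ to bend. Consequently Lemma \ref{lem:JM} has no starting representation, and Theorem \ref{thm-Kos_Ben} (openness of $\B(M_0)$ in the character variety) has no point of $\B(M_0)$ to be open around; the argument is circular, and the bending machinery of Theorem \ref{thm:JM} only ever produces structures on manifolds that are already hyperbolic. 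What Kapovich actually has to do is build the projective structure on the ramified cover from scratch: the cover is assembled from dihedral wedges of the base hyperbolic manifold cut along totally geodesic hypersurfaces through the branch locus, more copies are glued around the branch locus than fit hyperbolically by opening the wedge ``angle'' inside $\PGL_{d+1}(\R)$, and the properness and convexity of the image of the developing map must then be verified directly --- that verification is the bulk of \cite{Kp2} and is exactly the step your sketch replaces with machinery that does not apply.
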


The three authors recently construct somehow different examples: 

\begin{theorem}[Choi-Lee-Marquis \cite{CLM16}] \label{thm:Dh}
For $d=4, \dotsc, 7$, there exists an indecomposable divisible convex domain $\O$ of dimension $d$ which is not strictly convex nor with $\Cc^1$-boundary such that $\O$ contains a properly embedded $(d-2)$-simplex but does not contain a properly embedded $(d-1)$-simplex.
\end{theorem}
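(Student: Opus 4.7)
The plan is to extend the Coxeter-group construction of \cite{CLM_ecima} by engineering Coxeter polytopes whose combinatorics produce a lower-dimensional properly embedded simplex while forbidding a codimension-one one. The guiding principle, coming from Benoist's analysis in \cite{CD4} of the stratification of $\partial \O$ by properly embedded simplices for a divisible $\O$, is that properly embedded $k$-simplices in $\O$ are in bijection with (conjugacy classes of) maximal irreducible affine sub-Coxeter-groups of rank $k+1$ inside the dividing Coxeter group $W$. So the combinatorial goal is to exhibit, for each $d\in\{4,5,6,7\}$, a Coxeter group $W$ acting cocompactly on a properly convex domain of dimension $d$ whose Coxeter diagram contains an irreducible affine subdiagram on $d-1$ nodes but no irreducible affine subdiagram on $d$ nodes.

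First I would search the lists of hyperbolic Coxeter polytopes in $\Hb^d$ (existing only for small $d$, which is consistent with the restriction $d \leq 7$) for a polytope $P$ whose dihedral-angle diagram meets these requirements. Then, applying Vinberg's construction \cite{MR0302779}, the associated abstract Coxeter group $W$ embeds in $\mathrm{SL}^{\pm}_{d+1}(\R)$ as a discrete subgroup preserving a properly convex domain $\O_0$, with quotient the orbifold $\orb_P$ whose underlying space is $P$ with mirrored facets. From the hyperbolic structure one obtains a point of $\B(\orb_P)$. If the resulting hyperbolic $\O_0$ happens to already be non-strictly-convex and without $\Cc^1$-boundary (which it will be, precisely because the rank-$(d-1)$ affine subgroup forces a properly embedded $(d-2)$-simplex in $\O_0$), then we are done at the symmetric level; otherwise, Theorem~\ref{thm-Kos_Ben} together with a Johnson--Millson style bending along a totally geodesic codimension-one suborbifold of $\orb_P$ (Lemma~\ref{lem:JM} and Theorem~\ref{thm:JM}) produces a non-trivial deformation inside $\B(\orb_P)$ landing at a non-symmetric divisible convex domain $\O$. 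Indecomposability, i.e.\ strong irreducibility of the holonomy, will follow from Proposition~\ref{prop-Benoist} once we verify that $W$ has trivial virtual center, which is immediate from its Coxeter diagram having no isolated vertex and no decomposable component.

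The main obstacle is the last clause: showing that $\O$ contains no properly embedded $(d-1)$-simplex. By Benoist's correspondence, such a simplex would be stabilized by a subgroup of $W$ acting on it as a cocompact affine group on $(d-1)$-dimensional Euclidean space; up to finite index this forces an irreducible affine Coxeter subgroup of rank $d$ inside $W$. Hence the non-existence of such a simplex reduces to the purely combinatorial statement that the Coxeter diagram of $W$ contains no irreducible affine subdiagram on $d$ nodes, which we built into the choice of $P$. Dually, the existence of the promised properly embedded $(d-2)$-simplex comes from the chosen rank-$(d-1)$ affine subdiagram: it yields a maximal abelian-by-finite affine subgroup of $W$ whose axis in $\O$ is, by \cite{CD4}, a properly embedded $(d-2)$-simplex. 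The segments in the boundary of this simplex then simultaneously witness the failure of strict convexity and of $\Cc^1$ regularity, completing the verification.

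The delicate points are therefore twofold: (i) certifying the existence of suitable Coxeter polytopes in each dimension $d=4,\dots,7$, presumably by explicit diagrams drawn case by case, which also explains the dimensional restriction; and (ii) controlling the simplicial stratification of $\partial\O$ after bending, for which one appeals to the fact that bending preserves the affine parabolic subgroups corresponding to faces of $P$ disjoint from the bending locus, together with the Koszul--Benoist closedness from Theorem~\ref{thm-Kos_Ben} ensuring that the deformed structures remain properly convex and divisible.
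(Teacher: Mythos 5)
First, a point of comparison: this survey does not actually prove Theorem~\ref{thm:Dh}; it states it as a citation to \cite{CLM16}, so there is no in-paper proof to measure you against. Judged on its own terms, your proposal has the right combinatorial instinct (an irreducible affine subdiagram on $d-1$ nodes but none on $d$ nodes), but the construction route you choose cannot work. The fatal problem is your starting point: a compact hyperbolic Coxeter $d$-polytope has a Coxeter group that is a cocompact lattice in $\so{d}(\R)$, hence Gromov-hyperbolic, hence contains no $\Z^2$; so its diagram can contain no irreducible affine subdiagram of rank $d-1\geq 3$, and the list you propose to search is empty. Your parenthetical claim that the hyperbolic domain $\O_0$ "will be" non-strictly convex is false --- the hyperbolic domain is the ellipsoid, which is strictly convex and contains no properly embedded simplex of positive dimension. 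The fallback via bending fails for the same structural reason: as Theorem~\ref{thm:JM} in this very survey records, \emph{every} properly convex projective structure on a closed hyperbolic orbifold is strictly convex (Benoist: a divisible $\O$ is strictly convex if and only if the dividing group is Gromov-hyperbolic). A properly embedded $(d-2)$-simplex with $d\geq 4$ forces a virtually $\Z^{d-2}$ subgroup with $d-2\geq 2$, so the orbifold one needs can never be a closed hyperbolic one. The examples of \cite{CLM16} are instead obtained from projective Coxeter polytopes that are \emph{not} hyperbolic, produced directly by Vinberg's theory \cite{MR0302779} from explicit Cartan matrices (in the spirit of a generalized Dehn filling of cusped hyperbolic Coxeter orbifolds), not by bending a hyperbolic structure.

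The second gap is in the "main obstacle" you correctly identify but do not resolve. Ruling out a properly embedded $(d-1)$-simplex requires knowing that the stabilizer of \emph{any} properly embedded simplex in $\O$ is, up to finite index and conjugacy, a standard affine subgroup $W_T$ of the Coxeter group; only then does the non-existence reduce to the absence of a rank-$d$ irreducible affine subdiagram. You attribute this correspondence to Benoist \cite{CD4}, but that reference treats only dimension $3$ (properly embedded triangles and virtually $\Z^2$ subgroups). In higher dimensions one needs the classification of maximal virtually abelian subgroups of a Coxeter group (via the Davis complex and the flat torus theorem, in the style of Moussong and Krammer) together with a projective argument matching each such subgroup to a properly embedded simplex and its dimension; this is a substantive portion of the actual proof in \cite{CLM16} and cannot be waved through. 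The remaining ingredients you list --- indecomposability via Proposition~\ref{prop-Benoist}, and the fact that a segment in the boundary of the $(d-2)$-simplex witnesses both the failure of strict convexity and of $\Cc^1$ regularity (the latter by duality) --- are fine once the construction itself is repaired.
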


\section{Real projective surfaces}

Another motivation for studying convex projective structures is that these structures are just the right building blocks for all projective structures on closed surfaces. 

\subsection{Affine and projective structures on tori}
\subsubsection{Classification of affine surfaces}

Compact affine surfaces are topologically restrictive:

\begin{theorem}[Benz\'ecri \cite{Benthesis}]
If $S$ is a compact affine surface with empty or geodesic\footnote{An arc of a projective (or affine) surface is {\em geodesic}\index{geodesic!arc} if it has a lift which is developed into a line segment.} boundary, then $\chi(S) = 0$.
\end{theorem}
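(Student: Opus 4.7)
The plan is to reduce to closed orientable surfaces and then handle the signs of $\chi(S)$ separately, with $\chi(S)<0$ being the main content. If $\partial S\neq\emptyset$ is geodesic, form the double $DS = S\cup_{\partial S} S$: since affine reflection across a straight line in $\mathbb{A}^{2}$ is an affine transformation, the charts on the two copies glue to an affine structure on $DS$, and $\chi(DS) = 2\chi(S)$. By passing to the oriented double cover (a closed orientable affine surface of Euler characteristic $2\chi(S)$) one may further assume $S$ is orientable.

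For $\chi(S)>0$ one has $S=S^{2}$, and the developing map $D:S^{2}\to\mathbb{A}^{2}\cong\mathbb{R}^{2}$ is an immersion of a compact connected surface into the plane. It is an open map, and $D(S^{2})$ is compact, hence a non-empty clopen subset of the connected non-compact space $\mathbb{R}^{2}$, a contradiction. For $\chi(S)<0$, so $S=\Sigma_g$ with $g\geq 2$, observe that the affine structure endows $TS$ with a flat $\mathrm{GL}_{2}^{+}(\mathbb{R})$-structure: because the tangent bundle of $\mathbb{A}^{2}$ is $\mathrm{Aff}(\mathbb{A}^{2})$-equivariantly trivial with $\mathrm{Aff}(\mathbb{A}^{2})$ acting on fibres via the linear-part projection $\mathrm{Aff}(\mathbb{A}^{2})\to\mathrm{GL}_{2}(\mathbb{R})$, the developing map induces $TS\cong\widetilde S\times_{L}\mathbb{R}^{2}$, where $L:\pi_{1}(S)\to\mathrm{GL}_{2}^{+}(\mathbb{R})$ is the linear part of the holonomy $\rho$. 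Now invoke Milnor's inequality for flat oriented $\mathbb{R}^{2}$-bundles over $\Sigma_{g}$, $|e(E)|<g$. Applied to $E=TS$ with $e(TS)=\chi(S)=2-2g$, this gives $2g-2<g$, i.e.\ $g<2$, a contradiction.

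The main obstacle is the last step: one must use the \emph{strict} Milnor bound $|e|<g$ for flat $\mathrm{GL}_{2}^{+}(\mathbb{R})$-bundles, as opposed to the Milnor--Wood bound $|e|\leq 2g-2$ for flat oriented circle bundles, which is saturated by Fuchsian uniformization and hence does not obstruct a flat structure on the circle bundle associated with $TS$. What makes Milnor's stronger bound applicable here is the crucial fact that affine holonomy acts \emph{linearly} on the tangent space $\mathbb{R}^{2}$, so that the induced flat structure on $TS$ has structure group $\mathrm{GL}_{2}^{+}(\mathbb{R})$ rather than merely $\mathrm{Homeo}^{+}(S^{1})$; the identification $TS\cong\widetilde S\times_{L}\mathbb{R}^{2}$ needs to be unpacked carefully from the definition of a flat torsion-free affine connection to secure this.
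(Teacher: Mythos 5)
The survey states this theorem only with a pointer to Benz\'ecri's thesis and gives no proof, so I am judging your argument on its own terms and against Benz\'ecri's original approach. For \emph{closed} surfaces your route is the standard modern one and is correct: the orientation double cover disposes of non-orientability, the sphere is excluded because the developing map would be an open map with compact image in the connected non-compact plane, and for $g\geq 2$ the linear part of the holonomy makes $TS$ a flat $\mathrm{GL}_2^{+}(\mathbb{R})$-bundle, so Milnor's strict inequality $|e|<g$ applied to $e(TS)=2-2g$ forces $g<2$. This is genuinely different from Benz\'ecri's argument, which is a direct geometric analysis of the developing map and predates Milnor's inequality; the Milnor route proves more (for $g\geq 2$ the tangent bundle carries no flat linear connection at all, torsion-free or not) at the cost of importing Milnor's theorem as a black box, and your observation that the strict linear bound, rather than the Milnor--Wood bound $|e|\leq 2g-2$, is what does the work is exactly the right point to isolate.

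The gap is in your very first reduction, the doubling. That reflection across a line is affine does not by itself glue the two atlases: the holonomy of the double must restrict on each boundary subgroup $\langle\gamma\rangle$ to both $\rho(\gamma)$ and $\sigma\rho(\gamma)\sigma^{-1}$, where $\sigma$ is the chosen affine involution fixing the developed boundary line $\ell$ pointwise, so $\sigma$ must \emph{commute} with $\rho(\gamma)$. In coordinates where $\ell$ is the $x$-axis, $\rho(\gamma)$ takes the form $(x,y)\mapsto(ax+by+e,\,dy)$, every affine involution fixing $\ell$ pointwise and exchanging its sides has the form $(x,y)\mapsto(x+py,-y)$, and the commutation condition reads $2b=p(a-d)$, which has no solution when $a=d$ and $b\neq 0$, i.e.\ when the linear part of the boundary holonomy is non-semisimple with equal eigenvalues. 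This case genuinely occurs for compact affine surfaces with geodesic boundary: the elementary affine annuli of the second kind in the Nagano--Yagi classification recalled in this very section, namely $\bigl(U\smallsetminus\{0\}\bigr)/\bigl\langle\bigl(\begin{smallmatrix}\lambda&\mu\\ 0&\lambda\end{smallmatrix}\bigr)\bigr\rangle$, have geodesic boundary with exactly such holonomy, and nothing in your argument rules out boundary holonomy of this type on a surface with $\chi<0$. So the boundary case needs either Benz\'ecri's direct argument or a relative version of Milnor's inequality --- here one would use that the geodesic boundary makes the tangent line field of $\partial S$ a \emph{parallel} sub-bundle of $TS|_{\partial S}$, which defines a relative Euler number equal to $\chi(S)$ --- rather than the reflection doubling as written.
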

In the early 1980s, Nagano and Yagi \cite{NY} classified the affine structures on a torus and an annulus with geodesic boundary; see Benoist \cite{BenNil} for a modern viewpoint and Baues \cite{baues} for an extensive survey on this topic.
\\
\par{
Let $Q$ be the closed positive quadrant of $\R^{2}$ and let $U$ be the closed upper-half plane. 
An {\em elementary affine annulus}\index{elementary!affine annulus} is the quotient of 
$Q \smallsetminus \{ 0 \}$ by the group generated by a diagonal linear automorphism with all eigenvalues greater than $1$
or the quotient of $U \smallsetminus \{ 0 \}$ by the group generated by a linear automorphism 
$\left(\begin{smallmatrix}
\lambda & \mu  \\
0 & \lambda \\
\end{smallmatrix}\right)$
with $\lambda > 1$. It is indeed an affine annulus with geodesic boundary. An affine torus $A$ is {\em complex}\index{complex affine torus} if its affine structure comes from a $(\mathbb{C}\smallsetminus \{ 0 \}, \mathbb{C}^{*})$-structure (see page 112 of Thurston \cite{thurston:1997}).
}

\begin{theorem}[Nagano-Yagi \cite{NY}]
If $A$ is a compact affine surface with empty or geodesic boundary,
then one of the following holds\,{\em :}
\begin{itemize}
\item The universal cover of $A$ is either a complete affine space, a half-affine space, a closed parallel strip or a quadrant.
\item The surface $A$ is a complex affine torus.
\item The surface $A$ is decomposed into elementary affine annuli along simple closed geodesics.
\end{itemize}
\end{theorem}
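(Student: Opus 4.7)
The plan is to combine Benz\'ecri's theorem with a classification of the holonomy representation $\rho:\pi_1(A)\to \Aff(\R^2)$, then analyse the developing map in each resulting case.

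First, Benz\'ecri's theorem already stated gives $\chi(A)=0$, so $A$ is a torus, Klein bottle, annulus or M\"obius band. By passing to the orientation double cover and checking that each of the three listed outcomes passes up and down along such a cover, we reduce to $A$ being either a torus $T^2$ or an annulus with two geodesic boundary components.

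For the annulus case, the holonomy is generated by a single affine transformation $g=(L(g),b)$. A boundary component of $A$ lifts to a properly embedded geodesic line in $\tilde A$ preserved by $g$, on which $g$ acts by an orientation-preserving translation; in particular $L(g)$ has a real $(+1)$-eigendirection, and a standard spiralling argument shows $L(g)$ cannot have non-real eigenvalues (otherwise trajectories would wrap and no geodesic would close up into the boundary). A short analysis of the three Jordan types for $L(g)$ with real eigenvalues then yields exactly three affine models: $g$ a translation of $\R^2$ (universal cover is a closed parallel strip), $g$ a diagonal contraction/dilation on a quadrant, and $g$ the shear $\left(\begin{smallmatrix}\lambda&\mu\\0&\lambda\end{smallmatrix}\right)$ on an upper half plane. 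The last two are the elementary affine annuli.

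For the torus $T^2$, the abelian holonomy is generated by a commuting pair $(g_1,g_2)=\rho(\alpha),\rho(\beta)$. We split on the common linear part $L\rho(\Z^2)\subset\mathrm{GL}_2(\R)$.
\begin{itemize}
\item \emph{Linear part trivial:} then $\rho$ acts by translations; the developing map $D:\tilde A\to\R^2$ is equivariant under a rank-$2$ lattice of translations and $D(\tilde A)$ is a convex open subset of $\R^2$ invariant under this lattice. Such a set is either $\R^2$, a half-plane, a closed parallel strip, or a quadrant, giving the first alternative of the theorem.
\item \emph{Complex linear part:} if $L\rho(\Z^2)$ lies in a conjugate of $\mathrm{GL}_1(\C)\subset\mathrm{GL}_2(\R)$, then the holonomy preserves a compatible complex structure, and standard facts identify the structure with a quotient of $\C\setminus\{0\}$ by a lattice in $\C^\ast$, i.e. a complex affine torus.
\item \emph{Real but nontrivial linear part:} $L\rho(\Z^2)$ preserves a real line $\ell\subset\R^2$ but acts nontrivially. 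The parallel invariant line field integrates to a geodesic foliation of $T^2$; a standard linearisation/suspension argument on the quotient circle shows that closed leaves exist, and cutting along a maximal disjoint family of such closed geodesics produces pieces that are annuli with geodesic boundary. By the annulus case, each piece is an elementary affine annulus.
\end{itemize}

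The main obstacle will be the third subcase on the torus. It requires one to show that the parallel invariant line field admits at least one closed leaf (ruling out ``irrational'' linear holonomy incompatible with cocompactness), to check that the complement of the closed leaves consists of open annuli with geodesic boundary, and to argue that none of those annular pieces can degenerate to a strip-type universal cover. The rest of the proof is a fairly mechanical case-check once the commuting-pair classification in $\Aff(\R^2)$ is in place, but this integration-and-decomposition step in the real-eigenvalue case is where the real content of the theorem lives.
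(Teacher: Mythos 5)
The paper itself gives no proof of this theorem --- it is quoted from Nagano--Yagi, with Benoist's \cite{BenNil} cited as a modern treatment --- so your proposal has to stand on its own, and it has a genuine gap at exactly the step you flag as the ``main obstacle''; the problem is not a missing verification but a false claim. In your third torus subcase (nontrivial real linear part preserving a line) you assert that the invariant parallel line field always has closed leaves, so that the torus decomposes into annuli. Consider the torus obtained as the quotient of the open quadrant by a generic lattice in the group of diagonal matrices $\mathrm{diag}(e^s,e^t)$: the linear holonomy is nontrivial and preserves two real lines, yet no leaf of either invariant foliation closes up (the stabilizer of a leaf is generically trivial), and in fact there are no closed geodesics at all. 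Similarly, the quotient of the open upper half-plane by the commuting pair $(x,y)\mapsto(x+y,y)$ and $(x,y)\mapsto(\lambda x,\lambda y)$ does have closed horizontal geodesics, but cutting along them yields quotients of closed parallel strips, which are not elementary affine annuli. Both examples belong to the \emph{first} alternative of the theorem (universal cover a quadrant, resp.\ a half-affine space), yet your scheme routes them to the annulus decomposition, while your first subcase (translation holonomy) can only ever produce $\R^2$ --- no nontrivial translation preserves an open quadrant, half-plane or strip as a bijection --- so three of the four model domains fall through the cracks of your trichotomy. The correct dichotomy in this subcase is: either the developing map is injective onto one of the four model convex domains, or there is a suitable closed geodesic to cut along; establishing this, i.e.\ controlling the developing map, which you treat throughout as automatically a diffeomorphism onto a convex domain, is where the substance of the theorem lies.

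Two smaller problems. In the annulus case, the claim that the holonomy acts on a lift of a boundary geodesic by a translation, forcing a $+1$-eigenvector of $L(g)$, is wrong: in the elementary annulus $(Q\smallsetminus\{0\})/\langle g\rangle$ with $g$ diagonal and both eigenvalues greater than $1$, the boundary develops onto an open ray on which $g$ acts by dilation. And ``exactly three affine models'' is too few: a compact affine annulus with geodesic boundary can be a union of several elementary annuli (for instance the quotient of a closed half-plane minus the origin by such a $g$ is two elementary annuli glued along a closed geodesic), which is precisely why the third alternative of the theorem asserts a decomposition rather than listing models.
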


\subsubsection{Classification of projective tori}
Let $\gamma$ be an element of $\SL_{3}( \R)$.
A matrix $\gamma$ is {\em positive hyperbolic}\index{matrix!positive hyperbolic} if $\gamma$ has three distinct positive eigenvalues, that is, $\gamma$ is conjugate to
\[
\left(
\begin{array}{ccc}
 \lambda & 0  & 0  \\
 0 & \mu  & 0  \\
0  & 0  & \nu
\end{array}
\right)\quad  (\lambda\mu\nu=1 \textrm{ and } 0 < \lambda < \mu < \nu ).
\]
A matrix $\gamma$ is {\em planar}\index{matrix!planar} if $\gamma$ is diagonalizable and it has only two distinct eigenvalues. 
A matrix $\gamma$ is {\em quasi-hyperbolic}\index{matrix!quasi-hyperbolic} if $\gamma$ has only two distinct positive eigenvalues and it is not
diagonalizable, that is, $\gamma$ is conjugate to
\begin{equation}\label{matrix:quasi}
\left(
\begin{array}{ccc}
 \lambda & 1  & 0  \\
0  & \lambda   & 0  \\
0  & 0  & \lambda^{-2}
\end{array}
\right)\quad  (0 < \lambda \ne 1).
\end{equation}
A matrix $\gamma$ is a {\em projective translation}\index{matrix!projective translation} (resp. {\em parabolic}\index{matrix!parabolic}) if $\gamma$ is conjugate to
\[
\left(
\begin{array}{ccc}
 1 & 1  & 0  \\
 0 & 1  & 0  \\
0  & 0  & 1
\end{array}
\right) \textrm{ (resp. }
\Bigg(
\begin{array}{ccc}
 1 & 1  & 0  \\
 0 & 1  & 1  \\
0  & 0  & 1
\end{array}
\Bigg)).
\]

\par{
These types of matrices represent all conjugacy classes of non-trivial elements of $\SL_3(\R)$ with positive eigenvalues.
}
\\
\par{
Let $\vartheta$ be a positive hyperbolic element of $\SL_{3}( \R)$ with eigenvalues $\lambda < \mu < \nu$. It is easy to describe the action of $\vartheta$ on the projective plane. This action preserves three lines meeting at the three fixed points. The fixed point $r$ (resp. $s$, $a$) corresponding to the eigenvector for $\lambda$ (resp. $\mu$, $\nu$) is said to be {\em repelling}\index{point!repelling} (resp. {\em saddle}\index{point!saddle}, {\em attracting}\index{point!attracting}).
}
\\
\begin{figure}[h]
\centerline{ \includegraphics[height=9cm]{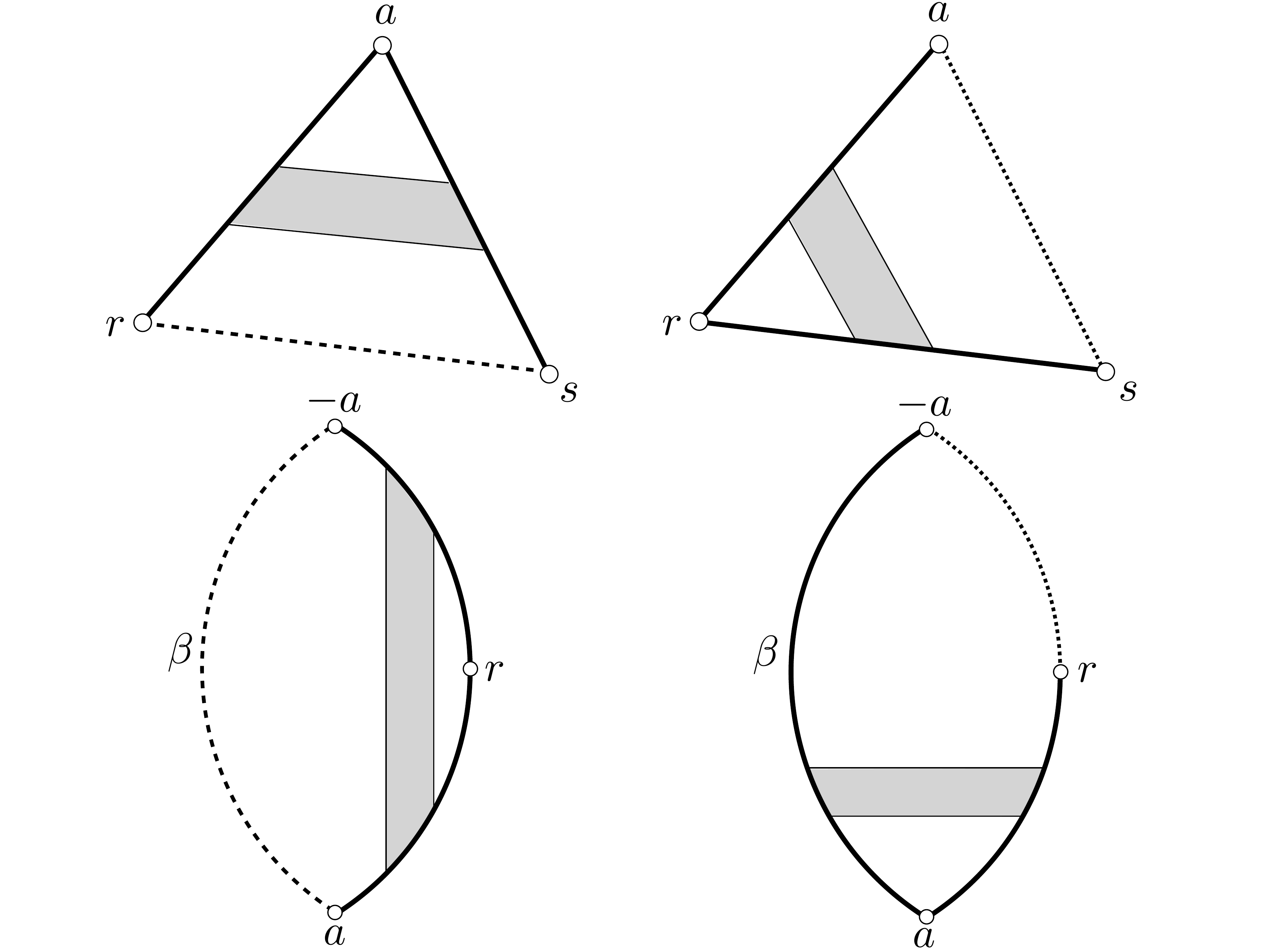}}
\caption{Elementary annuli of type I and type II}
\label{fig:annuli}
\end{figure}
\par{
Let $\triangle$ be a triangle with vertices $r, s, a$.\footnote{To be precise, in $\R\Pb^2$ (resp. $\S^2$) there are four (resp. eight) different choices for $\triangle$ and a unique choice (resp. two choices) for each $r,s,a$. However, once the triangle $\triangle$ in $\S^2$ is chosen, the points $r,s,a$ of $\S^2$ are uniquely determined. Of course, this choice does not affect the later discussion.} An {\em elementary annulus of type I}\index{elementary!annulus of type I} is one of the two\footnote{If $P = \triangle \cup \overline{as}^{\circ} \cup \overline{rs}^{\circ}$, then the space $P / \langle \vartheta \rangle$ is not Hausdorff: Let $x$ be a point of $\overline{as}^{\circ}$, and let $y$ be a point of $\overline{rs}^{\circ}$. For any  neighborhood $\U, \V$ of $x,y$, respectively, in $P$, there exists $N \in \mathbb{N}$ such that $\vartheta^{-N}(\U) \cap \vartheta^N(\V) \neq \varnothing$.} real projective annuli $(\triangle \cup \overline{ar}^{\circ} \cup \overline{as}^{\circ}) /\langle \vartheta \rangle$ and $(\triangle \cup \overline{ar}^{\circ} \cup \overline{rs}^{\circ}) /\langle \vartheta \rangle$ (see Figure \ref{fig:annuli}). These annuli are in fact compact since we can find a compact fundamental domain. We call the image of $ \overline{ar}^{\circ}$ in the annulus a {\em principal}\index{geodesic!principal} closed geodesic, and the image of $\overline{as}^{\circ}$ and the image of $\overline{rs}^{\circ} $ {\em weak}\index{geodesic!weak} closed geodesics.
}
\\
\par{
Now, let $\vartheta$ be a quasi-hyperbolic element conjugate to the matrix (\ref{matrix:quasi}) with $\lambda > 1$. In this case, it is easier to give a description of $\vartheta$ in $\S^2$ than in $\R\Pb^2$, hence we work in $\S^2$. The fixed point $a$ (resp. $r$) corresponding to the eigenvalue $\lambda$ (resp. $\lambda^{-2}$) is attracting (resp. repelling). Let $C$ be the line on which the action of $\vartheta$ is parabolic, and let $\beta$ be the invariant segment in $C$ with endpoints $a$ and $-a$, the antipodal point of $a$, such that, for each point $x \in \beta^{\circ}$, the sequence $\vartheta^n(x)$ converges to $a$. 
}
\\
\par{
Let $L$ be an open lune bounded by $\beta$, $\overline{ar}$, $\overline{-ar}$.
An {\em elementary annulus of type II}\index{elementary!annulus of type II} is one of two real projective annuli $(L \cup \overline{ar}^{\circ} \cup \overline{-ar}^{\circ}) /\langle \vartheta \rangle$ and $(L \cup \beta^{\circ} \cup \overline{ar}^{\circ}) /\langle \vartheta \rangle$ (see Figure \ref{fig:annuli}). These annulli are also compact with geodesic boundary. We call the image of $\overline{ar}^{\circ}$ and of $\overline{-ar}^{\circ}$ in the annulus a {\em principal}\index{geodesic!principal} closed geodesic, and the image of $\beta^{\circ} $ a {\em weak}\index{geodesic!weak} closed geodesic.
}
\\
\par{
By pasting the boundaries of finitely many compact elementary annuli, we obtain an annulus or a torus. The gluing of course requires that boundaries are either both principal or both weak, and their holonomies are conjugate to each other.
}
\begin{theorem}[Goldman \cite{Goldman1977}]
If $T$ is a projective torus or a projective annulus with geodesic boundary,
then $T$ is an affine torus or $T$ can be decomposed into elementary annuli.
\end{theorem}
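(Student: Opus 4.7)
My plan is to reduce the theorem to a classification of the holonomy. Let $(D,\rho)$ be a developing pair for $T$, with $D:\widetilde{T}\to\R\Pb^2$ a local diffeomorphism and $\rho:\pi_1(T)\to\PGL_3(\R)$. Since $\pi_1(T)$ is either $\Z$ (annulus) or $\Z^2$ (torus), the image $H:=\rho(\pi_1(T))$ is an abelian subgroup of $\PGL_3(\R)$. Both possible conclusions of the theorem are invariant under finite covers, so I may pass to an orientable finite cover of $T$ and to a finite index subgroup of $H$ to arrange that every non-trivial element of $H$ has positive real eigenvalues, and hence falls into exactly one of the five conjugacy classes listed just above the theorem: positive hyperbolic, planar, quasi-hyperbolic, projective translation, or parabolic. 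Simultaneous canonical form for commuting matrices in $\SL_3(\R)$ then gives a normal form for $H$ up to conjugation.

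The argument now bifurcates on the algebraic type of $H$. On the first branch, $H$ preserves a projective line $\ell\subset\R\Pb^2$ on which it acts ``degenerately'' --- precisely when every non-trivial element of $H$ is a projective translation, a parabolic, or planar with shared invariant line $\ell$. In this situation my plan is to show that $D(\widetilde T)$ misses $\ell$, so that the affine chart $\A:=\R\Pb^2\smallsetminus\ell$ carries the image of $D$ and $T$ inherits an $(\Aff(\A),\A)$-structure, i.e.\ an affine structure. The non-avoidance of $\ell$ would produce a closed $H$-invariant $1$-submanifold $D^{-1}(\ell)\subset\widetilde T$ descending to a $1$-dimensional singular set on $T$, which is incompatible with the geodesic-boundary hypothesis (for the annulus) and with the topology of the torus; hence the avoidance holds.

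On the second branch, $H$ contains a positive hyperbolic or quasi-hyperbolic element $\vartheta$ (and in the torus case a complete basis for $H$ can be chosen from such elements). The plan is to identify simple closed geodesics on $T$ whose lifts in $\widetilde T$ develop onto invariant axes of $\vartheta$, cut $T$ along them, and verify that each resulting piece is, via its developing map, an elementary annulus of type I or II as in Figure \ref{fig:annuli}. The main obstacle is a global geometric step: showing that the developing image $D(\widetilde T)$ is exactly tiled by the $\vartheta$-invariant triangles or lunes of $\R\Pb^2$ together with the prescribed edges and vertices, with no extra material escaping this combinatorial structure. I would resolve this by a convexity-plus-dynamics argument: compactness of a fundamental domain for $\langle\vartheta\rangle$ acting on $T$ implies that any divergent sequence in $\widetilde T$ maps under $D$ to a sequence accumulating among the fixed points of $\vartheta$, so the invariant lines of $\vartheta$ act as impassable barriers, forcing the developing image into the closed invariant triangle or lune. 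Once this is established, the elementary pieces glue along their principal boundary geodesics according to the action of the second generator of $\pi_1(T)=\Z^2$ (trivially in the annulus case), producing the required decomposition of $T$.
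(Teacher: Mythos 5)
The paper itself offers no proof of this statement---it is quoted from Goldman's classification of projective tori and annuli---so your proposal has to stand on its own, and as written it has two genuine gaps.

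First, the opening reduction fails. You pass to a finite cover ``to arrange that every non-trivial element of $H$ has positive real eigenvalues,'' but an element of $\SL_3(\R)$ with a pair of genuinely complex eigenvalues (e.g.\ the holonomy $z\mapsto \lambda z$, $\lambda\notin\R$, of a complex affine torus, which appears explicitly in the Nagano--Yagi statement just above) has powers whose eigenvalues remain complex for all $n$, so no finite-index subgroup lands in the five listed conjugacy classes. This case cannot be reduced away and is not covered by either branch of your dichotomy: such elements are neither ``projective translation, parabolic, or planar with shared invariant line'' nor positive hyperbolic/quasi-hyperbolic. Moreover, even where the eigenvalue reduction does apply, the claim that both conclusions of the theorem are invariant under passing to finite covers is asserted rather than proved; descending an affine structure or an annulus decomposition from a cover requires showing the relevant invariant line or the cutting curves are preserved by the full deck group.

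Second, the key global step on the hyperbolic branch is false as stated. You argue that the invariant lines of $\vartheta$ are ``impassable barriers, forcing the developing image into the closed invariant triangle or lune.'' But the whole point of the decomposition theorem is that the developing image is in general a \emph{union of several} invariant triangles or lunes glued along the open edges $\overline{ar}^{\circ}$, $\overline{as}^{\circ}$, etc.\ (possibly wrapping around a fixed point, with $D$ non-injective); this is exactly how a torus or annulus decomposes into \emph{more than one} elementary annulus. Your barrier argument, if it worked, would show every such surface is a single elementary annulus, which is wrong. Relatedly, on the degenerate branch the claim that $D^{-1}(\ell)\neq\varnothing$ is ``incompatible with the topology of the torus'' is unsupported---a torus contains plenty of closed $1$-submanifolds, and indeed in the non-affine cases the developing image genuinely meets invariant lines; those intersections are the decomposing geodesics, not a contradiction. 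What is needed instead is a careful analysis, for each normal form of the abelian holonomy, of which open $H$-invariant domains can occur as developing images of a surface with a compact (or geodesic-boundary) quotient; that case analysis is the substance of Goldman's proof and is missing here.
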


\subsection{Automorphisms of convex 2-domain}

\par{
A $2$-orbifold $\orb$ is of {\em finite-type}\index{orbifold!finite-type}\index{finite-type!orbifold} if the underlying space of $\orb$ is a surface of finite-type\footnote{A surface of {\em finite-type}\index{finite-type!surface} is a compact surface with a finite number of points removed.} and the singular locus of $\orb$ is a union of finitely many suborbifolds of dimension $0$ or $1$. An element $\g$ of $\pi^{orb}_1(\orb)$ is {\em peripheral}\index{peripheral element} if 
$\g$ is isotopic to an element of $\pi_1^{orb}(\orb \smallsetminus (C \cup \partial \orb))$ for every compact subset $C$ of $\orb \smallsetminus \partial \orb$.
}
\\
\par{
Let $\Cc$ be a properly convex subset of $\SI^{2}$ and let $\G$ be a discrete subgroup of $\mathrm{SL}_3(\mathbb{R})$ acting properly discontinuously on $\Cc$. A closed geodesic $g$ in $S=\Cc/\G$ is {\em principal}\index{geodesic!principal} when the holonomy $\gamma$ of $g$ is positive hyperbolic or quasi-hyperbolic, and the lift of $g$ is the geodesic segment in $\Cc$ connecting the attracting and repelling fixed points of $\g$. In addition, if $\gamma$ is positive hyperbolic, then $g$ is said to be {\em $h$-principal}\index{geodesic!$h$-principal}.
}
\\
\par{
The following theorem generalizes well-known results of hyperbolic structures on surfaces,
and is essential to understand the convex real projective $2$-orbifolds. The nonorientable orbifold version exists but is a bit more complicated to state (see Choi-Goldman \cite{cgorb}).
}

\begin{theorem}[Kuiper \cite{Ku}, Choi \cite{cdcr1}, Marquis \cite{Surf_ludo}]
Let $S = \Cc/\G$ be an orientable properly convex real projective $2$-orbifold of finite type with empty or geodesic boundary, 
for a properly convex subset $\Cc \subset \SI^{2}$. We denote by $\O$ the interior of $\Cc$. Suppose that $\O$ is not a triangle. 
\begin{enumerate}
\item An element has finite order if and only if it fixes a point in $\O$.
\item Each infinite-order element $\gamma$ of $\Gamma$ is positive hyperbolic, quasi-hyperbolic or parabolic.

\item If an infinite-order element $\gamma$ is nonperipheral, then $\gamma$ is positive hyperbolic and a unique closed geodesic $g$ realizes $\gamma$. Moreover, $g$ is principal and any lift of $g$ is contained in $\O$. If $\gamma$ is represented by a {\em simple} closed curve in $S$, then $g$ is also {\em simple}.

\item A peripheral positive hyperbolic element is realized by a unique principal geodesic $g$, and either all the lifts of $g$ are contained in $\O$ or all the lifts of $g$ are in $\Cc \cap \partial \O$.

\item A quasi-hyperbolic element $\g$ is peripheral and is realized by a unique geodesic $g$ in $\Cc$. Moreover, $g$ is principal and any lift of $g$ is in $\Cc \cap \partial \O$.

\item A parabolic element $\g$ is peripheral and is realized by the projection of $\E \smallsetminus \{p \}$ where $\E$ is a $\g$-invariant ellipse whose interior is inside $\O$ and $p \in \partial \O$ is the unique fixed point of $\g$.
\end{enumerate}
\end{theorem}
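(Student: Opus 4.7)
My plan is to analyze each $\g\in\G$ through its Jordan form in $\SL_3(\R)$, using throughout that $\G$ acts on $(\O,d_\O)$ by isometries of a proper geodesic metric space, hence properly discontinuously, and preserves the sharp convex cone over $\Cc$. Part (1) is essentially a Cartan-style fixed-point argument: if $\g$ has finite order, the barycenter in $\O$ of any $\langle\g\rangle$-orbit is fixed by $\g$ and lies in $\O$ by convexity; conversely, the $\Aut(\O)$-stabilizer of any point of $\O$ is compact (Hilbert isometries act properly), so a discrete element fixing a point of $\O$ must have finite order.

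For part (2), a Perron--Frobenius-type argument on the sharp convex cone over $\Cc$ (compare Vinberg) shows every $\g\in\G$ admits a real positive eigenvalue with eigenvector in the closed cone, and by duality an eigenvalue whose dual eigenvector lies in the dual cone; combined with $\g\in\SL_3(\R)$ (from orientability of $S$) this forces all three eigenvalues to be positive. Hence $\g$ is conjugate to one of the five matrix models listed just before the theorem. The planar and projective-translation models each admit a pointwise-fixed projective line $L$. If $L\cap\O\neq\varnothing$, then $L\cap\O$ is a $\g$-fixed arc inside $\O$, contradicting properness of the action together with the infinite order of $\g$. If $L\cap\overline{\Cc}\subset\partial\Cc$, then a direct computation shows the Hilbert translation length of $\g$ on $\O$ is zero, forcing an accumulation of $\langle\g\rangle$-iterates that contradicts discreteness. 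Thus $\g$ is positive hyperbolic, quasi-hyperbolic, or parabolic.

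For parts (3)--(6) I would proceed case by case on the three surviving Jordan types, in each case locating the $\g$-invariant geometric object in $\S^2$ with respect to $\overline{\Cc}$ and showing that its $\langle\g\rangle$-quotient embeds in $S$ as the expected configuration:
\begin{itemize}
\item \emph{Positive hyperbolic:} the three fixed points $r,s,a$ span a triangle $\triangle$, and the closure of the axis $\overline{ra}$ is the attracting-repelling axis; a case analysis based on which of $r,s,a$ lies in $\O$ versus $\partial\O$ yields the dichotomy of (3) and (4). Nonperipherality forces the axis into $\O$: otherwise a $\langle\g\rangle$-invariant neighborhood of the axis embeds in $S$ as an elementary annulus of type I with geodesic boundary in $\partial\O$, that is, as an end of $S$.
\item \emph{Quasi-hyperbolic:} the invariant segment $\beta$ on the parabolic line $C$ lies in $\overline{\Cc}$, and the shear action of $\g$ on any transverse neighborhood of $\beta$ in $\O$ obstructs properness unless $\beta\subset\partial\O$; a neighborhood of the axis then quotients to an elementary annulus of type II, embedded in $S$ as an end.
\item \emph{Parabolic:} the unique fixed point $p$ lies in $\partial\O$, and the Jordan-block structure produces a one-parameter family of $\g$-invariant conics tangent to $\partial\O$ at $p$, whose members inside $\O$ act as horoballs; their quotient neighborhoods embed as cusp neighborhoods of $S$.
\end{itemize}
Uniqueness in each case follows from the uniqueness of the invariant axis, segment, or ellipse attached to the matrix.

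The principal obstacle will be the peripherality assertions in (4)--(6), which require the precise identification of a $\langle\g\rangle$-invariant neighborhood of the axis with an elementary annulus of type I, type II, or a cusp neighborhood, and an argument that this neighborhood embeds in $S$ so that $\g$ is conjugate into the fundamental group of an end. The inputs are the classification of elementary annuli from the preceding subsection and, in the parabolic case, the construction of invariant horoballs, which in turn depends on verifying that $\overline{\O}$ is strictly convex at the parabolic fixed point $p$ by a Jordan-form analysis on a neighborhood of $p$. A secondary difficulty is the simplicity statement in (3): one must show that a transverse self-intersection of the closed geodesic representative would lift to two $\G$-translates of the axis crossing inside $\O$, contradicting the fact that lifts of a simple closed curve on $S$ are pairwise either disjoint or coincident.
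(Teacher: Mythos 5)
The paper states this theorem without proof, citing Kuiper, Choi and Marquis, so there is no internal argument to compare against; your overall strategy (Jordan-form analysis of elements preserving the sharp cone over $\Cc$, Perron--Frobenius plus duality to get a positive spectrum, then a case-by-case study of the invariant configuration) is indeed the approach of the cited references, and part (1) and the positivity step of (2) are sound. There is, however, a genuine gap in your exclusion of the planar and projective-translation types in part (2). First, the claim that a planar element has zero Hilbert translation length is false: for eigenvalues $\lambda,\lambda,\mu$ with $\lambda\neq\mu$ one has $\inf_{x\in\O}d_{\O}(x,\g x)=\tfrac12\,|\log(\lambda/\mu)|>0$. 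Second, even where the translation length is zero (the projective translation), this does not contradict discreteness: genuine parabolics also have zero translation length and are permitted by the theorem, so ``zero translation length'' cannot be the mechanism separating the forbidden types from the parabolic one. Third, your dichotomy on the position of the pointwise-fixed line $L$ is not exhaustive, since it omits $L\cap\overline{\Cc}=\varnothing$.

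The step can be repaired by an invariant-segment argument. For a planar $\g$ with isolated fixed point $q$ and pointwise-fixed line $L$, every $x\in\O$ lies on the $\g$-invariant line through $q$ and $x$, whose trace on $\O$ is a $\g$-invariant open segment; its two endpoints in $\partial\O$ are permuted by $\g$, hence either both fixed or swapped, and in the latter case $\g$ fixes an interior point of the segment, contradicting part (1). Since the only fixed points of $\g$ on that line are $q$ and its intersection with $L$, the segment runs from $q$ to $L$, so $\O$ is the open cone from $q\in\partial\O$ over the segment $L\cap\partial\O$, i.e.\ a triangle, which is excluded by hypothesis. For a projective translation the same argument shows that the unique fixed point on the relevant invariant line cannot furnish two distinct fixed endpoints, forcing a fixed point inside $\O$ and hence a contradiction with proper discontinuity. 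Beyond this, your treatment of (3)--(6) is an outline that correctly identifies, but does not carry out, the substantial work: the peripherality claims, the identification of invariant neighbourhoods with elementary annuli of types I and II, the construction of the invariant ellipse inside $\O$ at a parabolic fixed point, and the simplicity claim in (3) all still require proofs.
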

Note that in the fourth item, the closed geodesics homotopic to $g$ may not be unique.

\subsection{Convex projective structures on surfaces}\label{subsection:convexsurfaces}
\par{
Let $S$ be a compact surface with or without boundary. When $S$ has boundary, in general, the holonomy of a convex projective structure on $S$ does not determine the structure. More precisely, there exists a convex projective structure whose holonomy preserves more than one convex domain. Therefore we need to make some assumptions on the convex projective structure in order to avoid this problem hence we consider the subspace, denoted by $\C^{pgb}(S)$, in $\C(S)$ of convex projective structures on $S$ for which each boundary component is a principal geodesic. 
}

\begin{theorem}[Goldman \cite{Gconv}] \label{thm:CS}
If $S$ is a closed surface of genus $> 1$, then the space $\C(S)$ is homeomorphic to an open cell of dimension $-8\chi(S)$.
\end{theorem}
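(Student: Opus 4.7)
The plan is to follow Goldman's approach in \cite{Gconv}, establishing an explicit real-analytic parametrization of $\C(S)$ by $16(g-1)$ coordinates via a Fenchel--Nielsen-type construction adapted to convex real projective geometry.

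Fix a pants decomposition of $S$: disjoint simple closed curves $\g_1,\ldots,\g_{3g-3}$ whose complement is a disjoint union of $2g-2$ pairs of pants $P_1,\ldots,P_{2g-2}$. Given any element of $\C(S)$, part~(3) of the preceding theorem on convex projective $2$-orbifolds implies that each $\g_i$ is realized by a unique simple principal closed geodesic. Cutting $S$ along these geodesics associates to each $P_k$ a convex projective structure with principal geodesic boundary, i.e., an element of $\C^{pgb}(P_k)$, and conversely, compatible pants structures reassemble into an element of $\C(S)$.

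The core step is to show that $\C^{pgb}(P)$ is homeomorphic to $\R^8$ for each pair of pants $P$. Since $\pi_1(P)\cong F_2$, the space $\Hom(F_2,\spm{3})$ has dimension $16$ and its reductive quotient $\chi(F_2,\spm{3})$ has dimension $8$. One parametrizes $\C^{pgb}(P)$ by six \emph{boundary parameters} --- for each of the three boundary components, the two smaller eigenvalues of its positive hyperbolic holonomy --- together with two \emph{internal parameters} $(s,t)\in\R^2$ describing the convex-projective shape not detected by the boundary holonomies. The existence of a corresponding properly convex structure for each admissible parameter tuple is obtained by an explicit construction in $\R\Pb^2$: one builds a fundamental polygonal domain with the prescribed projective data on vertices and edges, and verifies using Theorem~\ref{thm-Kos_Ben} that the group so generated is discrete, faithful and preserves a properly convex domain of $\R\Pb^2$.

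The gluing along each $\g_i$ is dimensionally neutral: matching the adjacent boundary holonomies imposes two constraints (equality of the pair $(\lambda,\mu)$ of eigenvalues), while the freedom of gluing contributes two new parameters --- a Fenchel--Nielsen-type \emph{twist} and a specifically projective \emph{bulge} --- both arising from the $2$-dimensional centralizer of a positive hyperbolic element of $\spm{3}$. Assembling the boundary parameters, internal parameters, twists and bulges produces a map $\Phi:\R^{16(g-1)}\to\C(S)$ with $16(g-1) = 8(2g-2) = -8\chi(S)$, injective by construction and surjective by the cutting procedure. The Ehresmann--Thurston principle (Theorem~\ref{thm:EhTh1}) makes $\Phi$ a local homeomorphism, while Theorem~\ref{thm-Kos_Ben} ensures that its image is open and closed in the relevant character variety, so $\Phi$ is a global homeomorphism onto $\C(S)$. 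The main obstacle is the pants step: proving that the parametrization of $\C^{pgb}(P)$ is indeed a homeomorphism onto $\R^8$, which blends a character-variety dimension count with a delicate geometric realization showing that every admissible parameter value arises from a genuine properly convex projective structure.
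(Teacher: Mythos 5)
Your proposal follows essentially the same route as the paper, which sketches Goldman's proof via Propositions \ref{prop:bending} and \ref{thm:pp}: a pants decomposition, the identification of $\C^{pgb}(P)$ for a pair of pants as an $\R^2$-principal bundle over the boundary invariants, and two gluing parameters (twist and bulge) per curve coming from the $2$-dimensional centralizer of a positive hyperbolic element. The only small imprecision is that $\C^{pgb}(P)$ itself is a manifold with corners (an $\R^2$-bundle over $\mathcal{R}^3$), and it is its interior --- the structures with $h$-principal geodesic boundary, which are the only ones arising when cutting a closed surface --- that is the open $8$-cell.
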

\par{
The following two propositions illustrate the proof of Theorem \ref{thm:CS}. Recall that for a Lie group $G$, a fiber bundle $\pi : P \rightarrow X$ is {\em $G$-principal}\index{principal fiber bundle} if there exists an action of $G$ on $P$ such that $G$ preserves the fibers and acts simply transitively on them.
}
\\
\par{
Let $c$ be a non-peripheral simple closed curve in $S$. The complement of $c$ in $S$ can be connected, or a disjoint union of $S_{1}$ and $S_{2}$. In the first case, the completion $S'$ of $S \smallsetminus c$ has two boundary components $c'_1$ and $c'_2$ corresponding to $c$. Denote by $\C_c^{pgb}(S')$ the subspace of structures in $\C^{pgb}(S')$ satisfying that the holonomies of $c'_1$ and $c'_2$ are both positive hyperbolic and conjugate to each other. In the second case, the completion of each $S_{i}$, $i=1,2$, has a boundary component corresponding to $c$. Denote by $\C^{pgb}_c(S'_{1}) \boxtimes_c \C^{pgb}_c(S'_{2})$ the subspace of structures $(P_1,P_2)$ in $\C^{pgb}(S'_{1}) \times \C^{pgb}(S'_{2})$ 
whose holonomies corresponding to $c$ are positive hyperbolic and conjugate to each other. 
}

\begin{propo}[Goldman \cite{Gconv}, Marquis \cite{marquis_moduli_surf}]\label{prop:bending}
Let $S$ be a compact surface with or without boundary such that $\chi(S)<0$, and let $c$ be a non-peripheral simple closed curve in $S$. If $S \smallsetminus c$ is connected {\em (}resp. a disjoint union of $S_{1}$ and $S_{2}${\em )}, then the forgetful map $\C^{pgb}(S) \to \C^{pgb}_c(S')$ {\em (}resp. $\C^{pgb}(S) \to \C^{pgb}_c(S'_{1}) \boxtimes_c \C^{pgb}_c(S'_{2})${\em )} is an $\R^{2}$-principal fiber bundle.
\end{propo}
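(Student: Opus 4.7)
The plan is to analyze the fiber of the forgetful map $F$ and exhibit it as a single orbit of a free $\R^{2}$-action by ``bending'' along $c$, and then conclude via the Ehresmann--Thurston principle (Theorem \ref{thm:EhTh1}). First, $F$ is well-defined: given $[P]\in\C^{pgb}(S)$, the theorem of the previous subsection says $c$ is realized by a unique principal simple closed geodesic $g$ with positive hyperbolic holonomy $\rho(c)$, so cutting along $g$ produces a convex projective structure on $S'$ (or $S_{1}'\sqcup S_{2}'$) whose two new boundary components are principal geodesics with holonomies conjugate to $\rho(c)$, landing in the prescribed target.

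Next I would construct the $\R^{2}$-action. The identity component of the centralizer of the positive hyperbolic element $\rho(c)$ in $\SL_{3}^{\pm}(\R)$ is the two-parameter abelian group $T\cong\R^{2}$ of positive diagonal matrices of determinant $1$ in the eigenbasis of $\rho(c)$. Writing $\pi_{1}^{orb}(S)$ as an HNN extension in the non-separating case, or an amalgamated product in the separating case, both over $\langle c\rangle$, each $t\in T$ produces a new representation $\rho_{t}$ by conjugating one of the two factors by $t$. To integrate $\rho_{t}$ to an actual convex projective structure, I would realize the developing map of $[P]$ as a union of two developing half-domains---the images of the two sides of a lift of $g$---meeting along a segment of the fixed line of $\rho(c)$, and re-glue them using $t$. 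Since $t$ preserves this line and the two invariant half-planes on which the half-domains sit, the regluing yields a properly convex open set, hence a new element of $\C^{pgb}(S)$.

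The crux is to show that this $T$-action is \emph{free} and \emph{transitive} on the fibers of $F$. For transitivity, two structures in the same fiber have developing maps that agree (up to global $\SL_{3}^{\pm}(\R)$-action) on lifts of $S'$, so they differ only by the projective identification across $g$, which must commute with $\rho(c)$ and, after reduction to the identity component via orientation considerations, lies in $T$. For freeness, if $t\cdot[P]=[P]$ then there is $h\in\SL_{3}^{\pm}(\R)$ conjugating $\rho$ to $\rho_{t}$: this $h$ centralizes the holonomy of one side of $g$, while $t^{-1}h$ centralizes that of the other; Proposition \ref{prop-Benoist} applied to each subgroup forces $h$ and $t^{-1}h$ into finite centers, so the stabilizer is a closed discrete subgroup of the connected group $T$, and continuity forces it to be $\{1_{T}\}$. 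Local triviality then follows from Theorem \ref{thm:EhTh1}, which supplies local sections via the slice through a chosen representation.

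The main obstacle I expect is showing that the bent structure remains properly convex for every $t\in T$. Deforming the holonomy is purely algebraic, but re-assembling a properly convex image for the developing map requires controlling the geometry along the fixed line of $\rho(c)$: the two generators of $T$ correspond respectively to a hyperbolic-type earthquake along the principal direction of $\rho(c)$ and to a projective ``bulge'' along the saddle direction, the latter having no analogue in hyperbolic geometry. Verifying that proper convexity persists throughout the two-parameter deformation, and that $g$ remains a principal geodesic so that $[P_{t}]\in\C^{pgb}(S)$, is where the principal geodesic hypothesis plays its essential role.
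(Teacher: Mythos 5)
Your overall strategy is the right one, and it matches the only indication the paper itself gives: the survey does not prove Proposition \ref{prop:bending} (it cites Goldman and Marquis), and its one-paragraph commentary after the statement says exactly what you say — the fiber parameters come from the two-dimensional identity component of the centralizer of the positive hyperbolic holonomy of $c$, one twist parameter and one bulge/bending parameter in the spirit of Lemma \ref{lem:JM}. Your identification of the structure group $T\cong\R^2$, the HNN/amalgam description of the twisting, and the outline of freeness and transitivity are all sound in outline (with one caveat: Proposition \ref{prop-Benoist} as stated applies to cocompact actions on open properly convex domains, which is not literally the situation for the pieces $S_i'$, whose holonomy acts cocompactly only on a convex set containing boundary segments; irreducibility of the holonomy of a compact convex projective surface with principal geodesic boundary needs its own, standard, argument).

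The genuine gap is that the two statements carrying essentially all of the analytic content are not proved. First, you never establish surjectivity of the forgetful map: a fiber bundle has nonempty fibers, so you must show that \emph{any} structure in $\C^{pgb}_c(S')$ (resp.\ any matching pair in the fiber product) can be glued to a properly convex structure on $S$. Your construction only deforms a structure already known to lie in a fiber; it does not produce one. This is Goldman's attaching lemma, and it is the heart of the proof. Second, even granting a point in the fiber, you flag but do not prove that proper convexity of the developing image persists for every $t\in T$ — in particular for the bulge direction, which degenerates the domain toward a triangle and has no hyperbolic analogue. The standard argument here is a local-to-global convexity principle: the preimage of $g$ in the universal cover is an infinite disjoint family of lines (not a single line separating ``two half-domains,'' as your picture suggests), the complementary pieces develop onto convex sets, and one shows the locally convex connected union is globally properly convex. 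Without these two ingredients the fibers could a priori be empty or fail to be full $T$-orbits, and the appeal to Theorem \ref{thm:EhTh1} for local triviality has nothing to act on. If you supply the attaching lemma and the convexity-under-bending argument, the rest of your outline assembles into the known proof.
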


\par{
Proposition \ref{prop:bending} should be compared with Lemma \ref{lem:JM}. Since the holonomy $\gamma$ of a non-peripheral simple closed curve $c$ is positive hyperbolic, the centralizer of $\gamma$ is $2$-dimensional. The first gluing parameter is the twist parameter like in hyperbolic geometry, and the second gluing parameter is the bending parameter we obtain in view of Lemma \ref{lem:JM}.
}
\\
\par{
Next, we need to understand the convex projective structures on a pair of pants. 
Assume that $\gamma$ is an element of $\mathrm{SL}_3(\mathbb{R})$ with positive eigenvalues. We denote by $\lambda({\gamma})$ the smallest eigenvalue of $\gamma$ and by $\tau({\gamma})$ the sum of the two other eigenvalues, and we call the pair $(\lambda(\gamma), \tau(\gamma))$ the {\em invariant}\index{invariant} of $\gamma$. The map $[\gamma] \mapsto (\lambda(\gamma), \tau(\gamma))$ is a homeomorphism between the space of conjugacy classes of positive hyperbolic or quasi-hyperbolic elements $\gamma$ of $\mathrm{SL}_3(\mathbb{R})$ and the space
\[\mathcal{R}:= \{(\lambda, \tau) \in \R^{2} \,|\,\,  0 < \lambda < 1,\,\,\, \frac{2}{\sqrt{\lambda}} \leq \tau \leq \lambda + \frac{1}{\lambda^{2}}\} \]
(see Figure \ref{figure:pants}). Note that $(1,2) \notin \mathcal{R}$ corresponds to the conjugacy class of parabolic elements.  
}
\\
\begin{figure}[h]
\centerline{ \includegraphics[height=6cm]{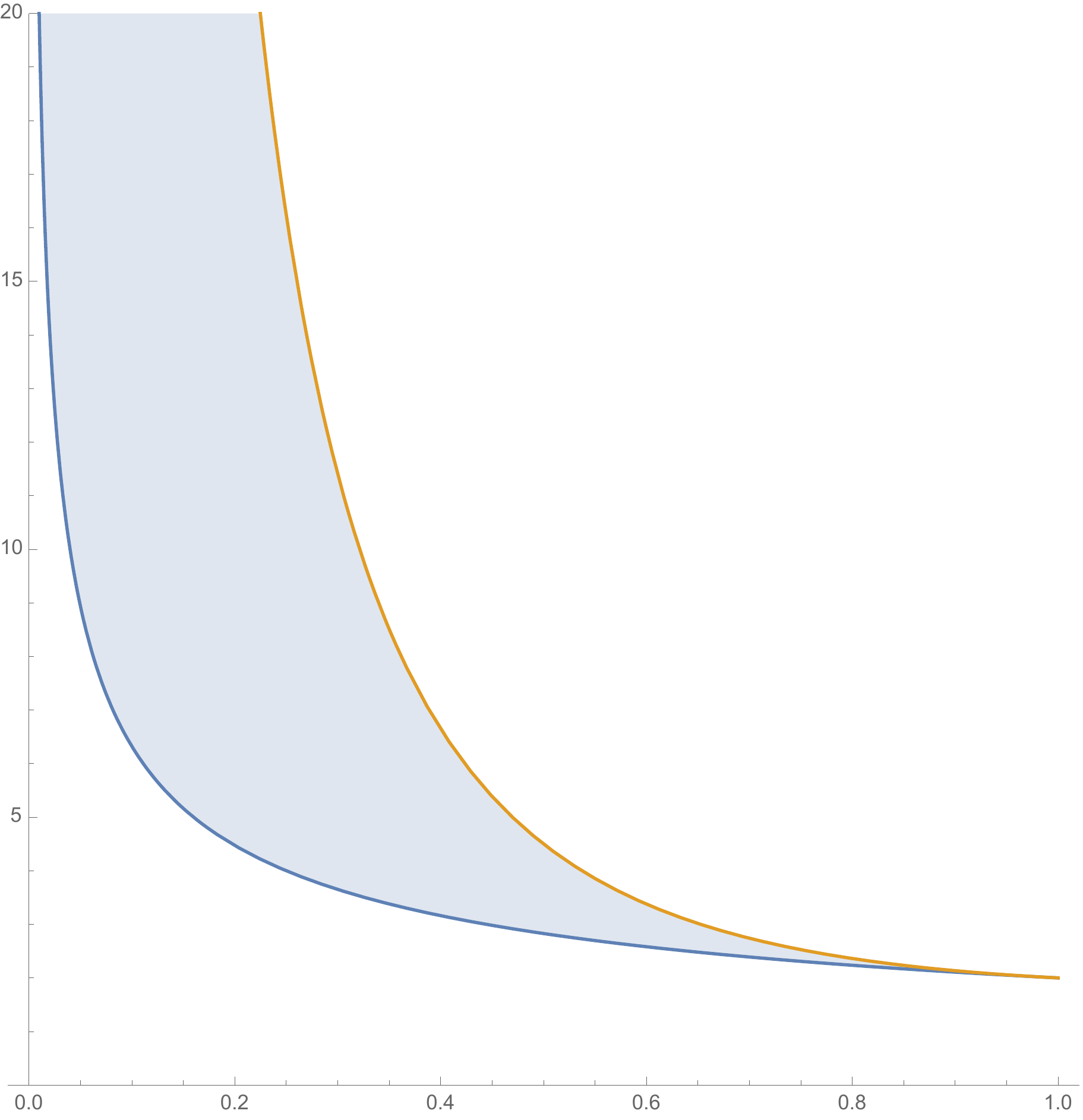}}
 \caption{ The region $\mathcal{R}$ between the graphs $y = \frac{2}{\sqrt{x}}$ and $y= x+ \frac{1}{x^2}$ 
 }\label{figure:pants}
 \end{figure}
\par{
Let $S$ be a compact surface with $n$ (oriented) boundary components $c_{i}$, $i=1, \dotsc, n$. Let us define a map
\[R_{\partial S}: \C^{pgb}(S) \ra \mathcal{R}^{n} \]
from each structure to the $n$-tuples of invariants $(\lambda(\gamma_i), \tau(\gamma_i))$ of the holonomy $\gamma_i$ of $c_{i}$.
}
\begin{propo}[Goldman \cite{Gconv}, Marquis \cite{marquis_moduli_surf}] \label{thm:pp}
If $P$ is a pair of pants, then the map $R_{\partial P}: \C^{pgb}(P) \ra \mathcal{R}^{3} $ is an $\R^2$-principal fiber bundle, and the interior of $\C^{pgb}(P)$ is exactly the space of convex projective structures with $h$-principal geodesic boundary. In particular it is an open cell of dimension $8$.
\end{propo}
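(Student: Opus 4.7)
The plan is to identify $\C^{pgb}(P)$ with a subspace of the $\SL_3(\R)$-character variety of $\pi_{1}(P)$, then analyze the fibers of $R_{\partial P}$ via an explicit $\R^{2}$-action arising from centralizers of the boundary holonomies. First, by the Ehresmann--Thurston principle (Theorem \ref{thm:EhTh1}, adapted to surfaces with geodesic boundary), the holonomy yields a local homeomorphism from $\C^{pgb}(P)$ into the character variety $\chi(\pi_{1}(P), \SL_3(\R))$. Since $\pi_{1}(P) \cong F_{2}$ is free of rank $2$ on generators $a, b$ with $c = (ab)^{-1}$ the third boundary, a structure in $\C^{pgb}(P)$ corresponds to a triple $(A, B, C) \in \SL_3(\R)^{3}$ with $ABC = \Idd$, each of $A, B, C$ positive hyperbolic or quasi-hyperbolic, modulo simultaneous conjugation; the map $R_{\partial P}$ extracts the pair $(\lambda, \tau) \in \mathcal{R}$ of each factor.

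Second, a dimension count: $\Hom(F_{2}, \SL_3(\R))$ is $16$-dimensional, strong irreducibility of the holonomy (compare Proposition \ref{prop-Benoist}) makes the conjugation action locally free, so the character variety is $8$-dimensional near $[\rho]$, and since the target $\mathcal{R}^{3}$ is $6$-dimensional, generic fibers are $2$-dimensional. To promote this count to a principal bundle, I would normalize $A$ to lie in a fixed maximal torus $T \subset \SL_3(\R)$; this consumes $8 - 2$ dimensions of the conjugation action and leaves the $2$-dimensional identity component $T_{0} \cong \R^{2}$ of the centralizer $Z(A)$ as residual symmetry. The group $T_{0}$ acts on pairs $(A, B)$ by $g \cdot (A, B) = (A, gBg^{-1})$, preserving both $[B]$ and the conjugacy class of $AB$ (since $g(AB)g^{-1}$ is conjugate to $AB$), hence acting on each fiber of $R_{\partial P}$. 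Freeness follows from the triviality of the joint centralizer of the irreducible pair $(A, B)$; transitivity is by dimension count together with connectedness of the fiber, which in turn is established via Goldman's internal parameters $(s, t) \in \R^{2}$ realized as cross-ratios of the nine fixed points of $A$, $B$, $C$ in $\R\Pb^{2}$, yielding a global trivialization.

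Third, for the interior statement, I would observe that the topological boundary of $\mathcal{R}$ in $\R^{2}$ (the curves $\tau = 2/\sqrt{\lambda}$ and $\tau = \lambda + \lambda^{-2}$) parametrizes exactly the quasi-hyperbolic conjugacy classes, while the interior of $\mathcal{R}$ parametrizes the positive hyperbolic classes. Therefore the interior of $\mathcal{R}^{3}$ corresponds precisely to structures with $h$-principal boundary, which agrees with the topological interior of $\C^{pgb}(P)$ via the principal bundle structure. That bundle restricted over $(\mathrm{int}\,\mathcal{R})^{3}$, an open $6$-cell, is trivial since the base is contractible, so the interior of $\C^{pgb}(P)$ is an open $8$-cell.

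The main obstacle is the transitivity of the $T_{0}$-action on fibers, equivalently their connectedness: a naive dimension count cannot rule out additional components, and one must either directly verify that any two representations with the same boundary invariants are $T_{0}$-related, or produce explicit global parameters such as Goldman's $(s,t)$. Constructing these parameters and simultaneously verifying that the resulting representations indeed arise as holonomies of genuine convex projective structures on $P$ with principal boundary (via an explicit convex hexagonal fundamental domain in $\R\Pb^{2}$ whose alternating sides lie on the axes of $A$, $B$, $C$) is the geometric heart of the argument.
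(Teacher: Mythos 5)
The paper itself gives no proof of this proposition: it is a survey statement quoted from Goldman \cite{Gconv} and Marquis \cite{marquis_moduli_surf}, so the comparison must be with Goldman's original argument, which proceeds by putting the holonomy triple $(A,B,C)$, $ABC=\Idd$, into an explicit normal form depending on the three boundary invariants in $\mathcal{R}^{3}$ together with two residual positive parameters $(s,t)\in\R_{+}^{2}$, and by exhibiting a convex fundamental hexagon realizing each such triple. Your overall strategy (holonomy via Ehresmann--Thurston, dimension count, identification of $\partial\mathcal{R}$ with the quasi-hyperbolic classes, explicit fundamental domain) is in the same spirit, and your description of the interior statement is correct.

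However, there is a genuine gap at the central step: the $\R^{2}$-action you propose is trivial on the character variety, hence cannot be the principal action on the fibers of $R_{\partial P}$. Indeed, if $g\in T_{0}\subset Z(A)$, then conjugating the pair $\bigl(A,\,gBg^{-1}\bigr)$ by $g^{-1}$ returns $\bigl(g^{-1}Ag,\,Bg^{-1}g\bigr)=(A,B)$, so $[(A,gBg^{-1})]=[(A,B)]$ in $\chi(\pi_{1}(P),\SL_3(\R))$ for every $g$. (This is the same reason why, for a pair of pants, there is no interior simple closed curve along which to twist or bend: the two fiber directions are \emph{not} of the twist--bulge type that Lemma \ref{lem:JM} and Proposition \ref{prop:bending} produce for a non-peripheral curve $c$; those require the centralizer of the holonomy of $c$ to act on only \emph{one} of the two glued pieces.) Consequently your freeness and transitivity arguments have nothing to apply to, and the dimension count alone ($16-8-6=2$) only bounds the fiber dimension; it does not produce a simply transitive $\R^{2}$-action. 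The correct action must be constructed by hand, e.g.\ from Goldman's normal form, where the two parameters $(s,t)$ appear as ratios of triple products of the fixed flags of $A$, $B$, $C$ and $\R_{+}^{2}$ acts by rescaling them; one then still has to check, as you note at the end, that every parameter value is realized by a genuine convex structure with principal boundary, via the convex hexagon bounded by segments of the axes. Without replacing the trivial action by such an explicit construction, the proof does not go through.
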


\par{
Now consider a surface $S$ of finite type with ends. A properly convex projective structure on $S$ is of {\em relatively finite volume}\index{relatively finite volume} if for each end of $S$, there exists an end neighborhood $\mathcal{V}$ such that $\mu_{S}(\mathcal{V}) < \infty$.
Let $\B^{pgb}_{gf}(S)$ denote the subspace in $\C^{pgb}(S)$ of convex projective structures with principal geodesic boundary and of relatively finite volume. The following theorem generalizes to such structures:
}

\begin{theorem}[Fock-Goncharov \cite{baby_fock}, Theorem 3.7 of Marquis \cite{marquis_moduli_surf}]
Let $S$ be a surface of finite-type. If $b$ is the number of boundary components of $S$ and $p$ is the number of ends of $S$, then the space $\B^{pgb}_{gf}(S)$ is a manifold with corner homemorphic to $\R^{16g-16+6p+7b} \times [0,1]^b$, and the interior of $\B^{pgb}_{gf}(S)$ is exactly the space of structures with $h$-principal geodesic boundary and of relatively finite volume.
\end{theorem}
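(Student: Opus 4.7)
The strategy is to mimic the proof of Theorem~\ref{thm:CS}, parametrizing $\B^{pgb}_{gf}(S)$ via a pants decomposition and then assembling coordinates through Propositions~\ref{prop:bending} and \ref{thm:pp}. Cut $S$ along a maximal family of disjoint, non-peripheral, pairwise non-isotopic essential simple closed curves $c_1, \ldots, c_C$ to obtain pairs of pants $P_1, \ldots, P_N$, where $N = 2g - 2 + b + p$ and $C = 3g - 3 + b + p$. By Proposition~\ref{thm:pp}, each $\C^{pgb}(P_i)$ is $8$-dimensional, realized as an $\R^2$-principal fiber bundle over $\mathcal{R}^3$ via the boundary-invariant map. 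By the theorem on automorphisms of convex $2$-domains, each cuff holonomy is non-peripheral hence positive hyperbolic, so every cuff invariant lies in the interior of $\mathcal{R}$; Proposition~\ref{prop:bending}, applied iteratively, gives zero net dimension per cuff (a codimension-$2$ matching of invariants cancels an $\R^2$ of twist-bending). At each of the $p$ ends, the relative finite volume condition forces parabolic peripheral holonomy—equivalently, invariant equal to the single point $(1, 2) \in \overline{\mathcal{R}}$—a codimension-$2$ restriction.

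The total dimension is then
\[
8N - 2p \;=\; 8(2g - 2 + b + p) - 2p \;=\; 16g - 16 + 8b + 6p,
\]
matching $\dim(\R^{16g - 16 + 6p + 7b} \times [0,1]^b)$. The $b$ corner directions arise from the $b$ genuine boundary components: at each such boundary, the peripheral invariant ranges over $\overline{\mathcal{R}} \setminus \{(1,2)\}$, whose interior corresponds to $h$-principal geodesic boundary and whose boundary curve $\{\tau = 2/\sqrt{\lambda}\}$ corresponds to quasi-hyperbolic boundary. Choosing a length-type coordinate along the quasi-hyperbolic curve (contributing a factor of $\R$) together with a suitably normalized transverse coordinate taking values in $[0,1]$ (with $0$ at the quasi-hyperbolic locus and $1$ at a controlled limit inside $\mathcal{R}$), each such boundary component contributes a factor $\R \times [0,1]$. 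All remaining coordinates—cuff invariants in the open $\mathcal{R}$, internal pant parameters, and twist/bending parameters at cuffs—are plain $\R$-factors. Assembling them yields the claimed homeomorphism, with the interior of the $[0,1]^b$ factor being exactly the locus of $h$-principal geodesic boundary, as required.

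The main obstacle is pinning down the compact $[0,1]$ structure of the corner coordinate rather than a non-compact half-line. Concretely, one must produce at each boundary component a natural geometric invariant (for example a bounded Hilbert length, or an angle-type quantity attached to the quasi-hyperbolic normal form) whose range across $\overline{\mathcal{R}} \setminus \{(1,2)\}$ is bounded and extends continuously to endpoints on both sides. A second subtlety, also standard but delicate, is to verify that the local parameterization coming from a chosen pants decomposition glues into a global manifold-with-corners structure independent of the decomposition; this amounts to checking that the twist-bending flows from Proposition~\ref{prop:bending} extend continuously to the quasi-hyperbolic corner locus and transform compatibly under changes of decomposition.
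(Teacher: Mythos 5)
This survey states the theorem without proof (it is quoted from Fock--Goncharov and from Theorem 3.7 of Marquis's moduli paper), so there is no in-paper argument to compare against; your plan is the expected pants-decomposition route that Propositions \ref{prop:bending} and \ref{thm:pp} are set up for, and your dimension count $8N-2p=16g-16+8b+6p$ does match $\dim(\R^{16g-16+6p+7b}\times[0,1]^b)$. One genuine error concerns the corner coordinate. For each fixed $\lambda\in(0,1)$ the region $\mathcal{R}$ is already a \emph{compact} interval $2/\sqrt{\lambda}\le\tau\le\lambda+1/\lambda^{2}$ whose two endpoints are the two quasi-hyperbolic strata (Jordan block on the doubled largest, resp.\ doubled smallest, eigenvalue); hence $\mathcal{R}\cong\R\times[0,1]$ with \emph{both} endpoints of the $[0,1]$ factor quasi-hyperbolic, and this is exactly where $[0,1]^b$ comes from (sanity check: for a pair of pants the formula gives $\R^{5}\times[0,1]^{3}\cong\mathcal{R}^{3}\times\R^{2}$, consistent with Proposition \ref{thm:pp}). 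Your coordinate ``with $0$ at the quasi-hyperbolic locus and $1$ at a controlled limit inside $\mathcal{R}$'' cannot be right: a point with coordinate $1$ lies in the boundary of the manifold with corners, so it cannot be an $h$-principal (interior) structure. Correspondingly, the ``main obstacle'' you identify --- compactifying a half-line into $[0,1]$ --- is not an obstacle at all; the compactness is built into $\mathcal{R}$.

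The more serious gap is the treatment of the $p$ ends. Propositions \ref{prop:bending} and \ref{thm:pp} are stated only for structures with principal geodesic boundary, i.e.\ boundary invariants in $\mathcal{R}$, which excludes the parabolic point $(1,2)$. Declaring the ends to be ``a codimension-$2$ restriction'' produces the right number but proves nothing: you need (i) the fact that an end of relatively finite Busemann volume has parabolic holonomy (a theorem of Marquis, not a formal consequence of anything quoted here), and (ii) versions of the pants parametrization and of the gluing fibration in which some boundary components are replaced by cusps, together with a proof that the resulting space is still a manifold with corners --- that is, that the $\R^{2}$ fiber and the $\R^{2}$ twist--bending action persist and vary continuously as a boundary invariant degenerates to $(1,2)$. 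This degeneration analysis is the actual content of the Fock--Goncharov/Marquis theorem (handled in the literature either via explicit coordinates attached to an ideal triangulation or via a direct study of finite-volume cusps), and your plan leaves it entirely open. The decomposition-independence issue you flag at the end is real but comparatively routine.
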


\subsection{The convex decomposition of projective surfaces of genus $>1$}

Now let us consider compact real projective surfaces with geodesic boundary.

\begin{theorem}[Choi \cite{cdcr1,cdcr2}] \label{thm:convdec}
Let $S$ be a compact projective surface with geodesic boundary. 
If $\chi(S) < 0$, then $S$ can be decomposed along simple closed geodesics into convex projective subsurfaces with principal geodesic boundary and elementary annuli.
\end{theorem}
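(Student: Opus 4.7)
The natural approach is to work on the universal cover $\widetilde{S}$ with the developing pair $(D,\rho)$ of the given real projective structure on $S$, and to locate the precise obstructions to $D$ being an embedding onto a properly convex domain. Following the ideology of Choi's crescent theory, one defines a \emph{crescent} in $\widetilde{S}$ to be an embedded disk bounded by two geodesic arcs meeting at two endpoints which is mapped by $D$ homeomorphically onto a lune in $\S^2$. The first step is to establish the dichotomy that, for a closed projective surface with geodesic boundary and $\chi(S)<0$, either $\widetilde{S}$ contains no crescent, in which case $D$ is an embedding onto a properly convex subset of $\S^2$ and $S$ is already convex with principal geodesic boundary (using the classification of infinite-order holonomies on a convex $2$-orbifold recalled earlier), or $\widetilde{S}$ contains at least one crescent and one must cut.

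Assuming crescents exist, the key technical step is a patching lemma: if two crescents in $\widetilde{S}$ overlap, then (after replacing them by maximal ones, using the geodesic boundary of $S$ and $\chi(S)<0$ to prevent runaway behavior) their union either sits inside a single larger crescent or their attracting/repelling vertices must coincide, so that their developing images share a fixed point of the same element of $\rho(\pi_1(S))$. From this one extracts a canonical family of maximal crescents, shows their interiors are pairwise disjoint in $\widetilde{S}$, and identifies each orbit of maximal crescents under $\rho(\pi_1(S))$ with an elementary annulus (of type I or II according to whether the stabilizing holonomy is positive hyperbolic or quasi-hyperbolic) via the classification recalled in Subsection on automorphisms of convex $2$-domains. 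The bounding geodesic arcs of the maximal crescents descend to finitely many disjoint simple closed geodesics in $S$, because their holonomies lie in a discrete set of conjugacy classes of $\mathrm{SL}_3(\R)$ and elementary annuli are compact with principal or weak geodesic boundary.

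Cutting $S$ along these simple closed geodesics separates the elementary annuli from the rest. On each non-annular piece $S_j$, by construction the restricted structure has no crescent in its universal cover, so by the first step $S_j$ is a convex projective subsurface. A final cosmetic step adjusts each boundary component: any boundary circle of $S_j$ that is only weakly geodesic (coming from a type I or II elementary annulus) can be replaced by the corresponding principal closed geodesic homotopic to it, by Proposition on bending and the holonomy classification; this is possible because the relevant holonomy is already positive hyperbolic or quasi-hyperbolic, and the principal geodesic exists uniquely in that homotopy class. After this adjustment every convex piece has principal geodesic boundary, yielding the desired decomposition.

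The main obstacle is the second step, the patching and finiteness argument: one must rule out infinite families of overlapping crescents accumulating in $\widetilde{S}$, which requires an a priori bound using $\chi(S)<0$ together with the compactness of $S$, and one must show that the quotient objects are genuine embedded elementary annuli rather than immersed or self-intersecting pieces. This is precisely where Choi's crescent machinery does the heavy lifting, and where the hypothesis of geodesic boundary is essential: otherwise crescents could escape through the boundary and the decomposition would fail to close up.
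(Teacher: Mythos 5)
The paper does not actually prove Theorem \ref{thm:convdec}; it is quoted as a result of Choi \cite{cdcr1,cdcr2} with no argument supplied beyond the surrounding remarks on elementary annuli, so there is no in-paper proof to compare against. Judged on its own, your plan correctly reconstructs the architecture of Choi's published argument: a convexity criterion in terms of crescents, a patching/equivalence relation on overlapping crescents whose classes descend to elementary annuli, and a final cut along the resulting simple closed geodesics leaving convex pieces. But as written it is an outline, not a proof: the two steps you defer --- ``no crescents implies $D$ embeds onto a properly convex set'' and the patching-plus-finiteness of maximal crescents --- are precisely the main theorems of the two cited papers, so essentially all of the content is left as a black box.

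One concrete technical point that would need fixing before the plan could be executed: your definition of a crescent as an embedded disk \emph{in} $\widetilde{S}$ bounded by two geodesic arcs and developing homeomorphically onto a lune is not the right notion. A compact lune sitting entirely inside $\widetilde{S}$ is no obstruction to convexity at all. In Choi's theory the crescent lives in the Kuiper completion $\check{S}$ of the universal (or holonomy) cover with respect to the metric pulled back by $D$, and one of its two bounding segments is required to lie in the ideal set $\check{S}\smallsetminus\widetilde{S}$; it is this interaction with the ideal boundary that detects the failure of convexity and that makes the equivalence classes of crescents project to the ``concave'' pieces, i.e.\ the elementary annuli. Relatedly, the dichotomy in your first step (no crescent implies $D$ is an embedding onto a properly convex set) is itself a nontrivial theorem requiring the compactness of $S$ and the geodesic boundary hypothesis, not a formal consequence of the definition. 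With the crescents relocated to the completion and those two lemmas supplied, your plan matches Choi's proof; without them it does not yet constitute one.
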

\par{
The annuli with principal geodesic boundary whose holonomies are positive hyperbolic (resp. quasi-hyperbolic) were classified by Goldman \cite{Goldman1977} (resp. Choi \cite{cdcr2}).
}
\\
\par{
If $A$ is an annulus with quasi-hyperbolic principal geodesic boundary, then by Proposition 5 of Choi \cite{choi_III} only one boundary can be identified with a boundary of a convex real projective surface with principal geodesic boundary.
Hence, if a compact real projective surface $S$ has quasi-hyperbolic holonomy for a closed curve, then $S$ must have a boundary.
}
\begin{rem}
Given a convex projective surface $S = \Omega/\Gamma$ with a $h$-principal boundary and an elementary annulus $A$, we can obtain a new projective surface $S'$ by identifying the respective boundary components 
of $S$ and $A$. The surface $S'$ is still convex since the union of $\Omega$ and the triangles given by the universal cover of $A$ is convex.
\end{rem}

\subsection{Projective structures on a closed surface of genus $> 1$}

\par{
Let $S$ be a closed surface of genus greater than $1$ and let $\Pb(S)$ denote the space of real projective structures on $S$. For each connected component of $\Pb(S)$, any two elements share the same decomposition up to isotopy, given by Theorem \ref{thm:convdec}. Let $\mathcal{S}(S)$ denote the collection of mutually disjoint isotopy classes of non-trivial simple closed curves, and let $F_2^{+,even}$ be the set of elements of the free semigroup on two generators whose word-lengths are even.
}
\\
\par{
In \cite{Goldman1977} Goldman constructs a map \[\Pb(S)  \ra  \mathcal{S}(S) \times F_2^{+,even}\] that describes the gluing patterns of elementary annuli of type I (with $h$-principal geodesic boundary) in a projective structure $P$ on $S$. Finally, by removing all the annuli from $P$ and reattaching it, we obtain a convex projective structure on $S$.
}

\begin{theorem}[Choi \cite{cdcr1,cdcr2}, Goldman \cite{Gconv}] \label{thm-CG}
Let $S$ be a closed surface with $\chi(S) < 0$. Then each fiber of the map $\Pb(S)  \ra  \mathcal{S}(S) \times F_2^{+,even}$ can be identified with $\C(S)$. In particular, $\Pb(S)$ is homeomorphic to a disjoint union of countably many open cells of dimension $16(g-1)$.
\end{theorem}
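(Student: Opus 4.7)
The plan is to construct, for each combinatorial type $(\mathcal{C}, w) \in \mathcal{S}(S) \times F_2^{+,even}$ lying in the image of the map, an explicit homeomorphism from the corresponding fiber to $\C(S)$, and then invoke Theorem \ref{thm:CS}.

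First I would unpack the map itself. Given $P \in \Pb(S)$, Theorem \ref{thm:convdec} provides a canonical decomposition of $P$ along a maximal collection of mutually disjoint simple closed principal geodesics into convex projective subsurfaces with principal geodesic boundary and elementary annuli of type I. The isotopy class of this system of decomposition curves gives the first coordinate $\mathcal{C} \in \mathcal{S}(S)$, while the cyclic word in the two types of elementary annuli of type I appearing across each decomposition curve gives the second coordinate $w \in F_2^{+,even}$. The uniqueness of the convex decomposition up to isotopy (from \cite{cdcr1,cdcr2}) ensures that this assignment is well defined on $\Pb(S)$, and in particular that two structures in the same fiber share the same decomposition combinatorics.

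The central step is the identification of a fixed fiber with $\C(S)$. In one direction, starting from $P$ in the fiber I would remove every elementary annulus and directly identify the paired principal geodesic boundaries of the convex subsurfaces; since the holonomies along a paired boundary are conjugate positive hyperbolic elements, the result is a convex projective structure $Q \in \C(S)$, by the gluing principle underlying Proposition \ref{prop:bending}. In the converse direction, given $Q \in \C(S)$ together with the combinatorial data $(\mathcal{C},w)$, I would realize each curve of $\mathcal{C}$ by its unique principal simple closed geodesic in $Q$, cut $Q$ along these geodesics, and insert chains of elementary annuli of type I prescribed by $w$, whose principal-boundary holonomies must agree with those of the cut geodesics. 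The crucial observation is that an elementary annulus of type I is rigid once its holonomy class and combinatorial type are fixed, so this insertion introduces no new continuous moduli.

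The hard part will be verifying that these two constructions are mutually inverse and continuous, which amounts to checking that the twist/bending parameter one might naively associate to each annulus insertion is already absorbed by the gluing parameters intrinsic to $\C(S)$ via Proposition \ref{prop:bending}, and that the principal geodesic representatives of $\mathcal{C}$ depend continuously on $Q$. Granting these compatibilities, each nonempty fiber is homeomorphic to $\C(S)$, which by Theorem \ref{thm:CS} is an open cell of dimension $-8\chi(S) = 16(g-1)$. Since $\mathcal{S}(S) \times F_2^{+,even}$ is countable, $\Pb(S)$ decomposes as a countable disjoint union of open cells of dimension $16(g-1)$, as claimed.
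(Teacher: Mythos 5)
Your proposal follows essentially the same route the survey sketches just before the theorem: use the convex decomposition of Theorem \ref{thm:convdec} to read off the curve system and the even word of type-I annuli, identify each fiber with $\C(S)$ by deleting the annuli and regluing the principal boundaries directly (with the $\R^2$ of gluing parameters from Proposition \ref{prop:bending} accounting for the apparent extra freedom of the insertions), and then apply Theorem \ref{thm:CS}. This matches the paper's intended argument, so no further comparison is needed.
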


\subsection{Convex projective orbifolds of negative Euler characteristic}

\par{
Every compact $2$-orbifold $\Sigma$ is obtained from a surface with corners by making some arcs mirrors and putting cone-points and corner-reflectors in a locally finite manner. An endpoint of a mirror arc can be either in a boundary component of $\Sigma$ or a  corner-reflector in $\Sigma$ that should be an endpoint of another mirror arc. Moreover, the smooth topology of a $2$-orbifold is determined by the underlying topology of the surface with corners, the number of cone-points of order $q \in \mathbb{N} \smallsetminus \{0,1\}$, the number of corner-reflectors of order $r \in \mathbb{N} \smallsetminus \{0,1\}$, and the boundary patterns of the mirror arcs.
}
\\
\par{
A {\em full} $1$-orbifold\index{full $1$-orbifold} is a segment with two mirror endpoints. Let $\Sigma$ be a compact $2$-orbifold with $m$ cone points of order $q_{i}$, $i=1, \dotsc, m$, and $n$ corner-reflectors of order $r_{j}$, $j=1, \dotsc, n$, and $n_{\Sigma}$ boundary full $1$-orbifolds. The orbifold Euler characteristic of $\Sigma$ is
\[\chi(\Sigma) = \chi(X_{\Sigma})  - \sum_{i=1}^{m} \left(1- \frac{1}{q_{i}}\right) - \frac{1}{2} \sum_{j=1}^{n} \left( 1- \frac{1}{r_{j}} \right)
-\frac{1}{2}n_{\Sigma}.\]
This is called the generalized Riemman-Hurwitz formula (see Section \ref{subsection:kit} for the definition of the orbifold Euler characteristic). 
}

\begin{theorem}[Thurston \cite{Thurston:2002}] \label{thm:hypo}
Let $\Sigma$ be a compact $2$-orbifold of negative orbifold Euler characteristic with the underlying space $X_{\Sigma}$.
Then the deformation space ${\mathcal{T}}(\Sigma)$ of hyperbolic structures on $\Sigma$
is a cell of dimension $-\chi(X_{\Sigma}) + 2m + n + 2n_{\Sigma}$ where $m$ is the number of
cone-points, $n$ is the number of corner-reflectors and $n_{\Sigma}$ is the number of boundary full $1$-orbifolds.
\end{theorem}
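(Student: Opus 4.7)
The plan is to reduce the statement to the classical case of closed surfaces via a finite manifold cover, and then obtain the dimension count either by an equivariant cohomology computation or, more concretely, by a Fenchel--Nielsen style decomposition adapted to orbifolds.

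First, since $\chi(\Sigma) < 0$ the orbifold $\Sigma$ is good with universal cover $\Hb^2$, and by Selberg's lemma it admits a finite regular orbifold cover $\pi: \widetilde{\Sigma} \to \Sigma$ with deck group $G$, where $\widetilde{\Sigma}$ is an honest compact surface (with boundary if $\Sigma$ has mirror arcs). I would then identify
\[ \mathcal{T}(\Sigma) \;\cong\; \mathcal{T}(\widetilde{\Sigma})^G, \]
the $G$-fixed locus of the Teichm\"uller space of $\widetilde{\Sigma}$. Indeed, any hyperbolic structure on $\Sigma$ pulls back to a $G$-invariant hyperbolic structure on $\widetilde{\Sigma}$, and conversely a $G$-invariant hyperbolic structure descends to $\Sigma$; orbifold isotopies upstairs correspond to $G$-equivariant isotopies downstairs, so the identification descends to deformation spaces.

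Classical Teichm\"uller theory gives that $\mathcal{T}(\widetilde{\Sigma})$ is a cell, and $G$ acts smoothly (in fact real-analytically) on it through the extended mapping class group. To show the fixed locus is a cell, I would use that $G$ acts by isometries of the Weil--Petersson metric, which is non-positively curved and geodesically convex after completion, so a Cartan-type fixed-point argument (or directly the smooth slice theorem for finite group actions on a cell) produces a contractible smooth submanifold as $\mathcal{T}(\widetilde{\Sigma})^G$; combined with the Ehresmann--Thurston principle (Theorem~\ref{thm:EhTh1}) applied at any fixed point, this submanifold is a cell of dimension equal to $\dim H^1(\Sigma; \mathrm{Ad}\,\rho_{\mathrm{geo}}) = \dim H^1(\widetilde{\Sigma}; \mathrm{Ad}\,\widetilde{\rho}_{\mathrm{geo}})^G$.

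For the dimension I would perform a direct Fenchel--Nielsen count on $\Sigma$: cut along a maximal collection of disjoint embedded simple closed geodesics and full $1$-suborbifolds to decompose $\Sigma$ into elementary hyperbolic pieces (orbifold pairs of pants, disks with cone or corner-reflector singularities, annuli with mirror structure). Each piece is rigid up to its boundary length/angle parameters, each interior closed geodesic contributes a twist parameter, and each interior full $1$-suborbifold contributes a single shift parameter (no twist, because of the mirror symmetry at the seam). Summing the local contributions and applying the generalized Riemann--Hurwitz formula recorded just above the statement yields the asserted $-\chi(X_{\Sigma}) + 2m + n + 2n_{\Sigma}$.

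The main obstacle is this last bookkeeping step: matching the local contributions from the three singular types (cone points, corner reflectors, mirror boundary full $1$-orbifolds) to the elementary pieces and to the gluing parameters, and verifying that no parameters are lost at "mixed" seams where a mirror arc meets a geodesic curve. A secondary subtlety is the cellularity of the fixed locus at points whose stabilizer under the mapping class group of $\widetilde{\Sigma}$ is larger than $G$; this is handled cleanly by the Weil--Petersson/slice-theorem argument above, which sidesteps any direct analysis of the topology of orbifold Teichm\"uller space at its non-smooth points.
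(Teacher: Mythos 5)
The survey gives no proof of this statement---it is quoted from Thurston's notes, where it is proved by directly splitting $\Sigma$ along disjoint simple closed curves and full $1$-suborbifolds into elementary $2$-orbifolds and exhibiting Fenchel--Nielsen-type coordinates. Your step 4 \emph{is} that argument, so the covering-space detour of steps 1--3 is redundant if step 4 is carried out in full: the decomposition, the rigidity of each elementary piece relative to its boundary lengths, and the fibration property of the gluing maps already yield both the cell structure and the dimension. Worse, steps 1--3 as written are circular at their entry point: to know that $\Sigma$ is good with universal cover $\Hb^2$, and to apply Selberg's lemma to $\pi_1^{orb}(\Sigma)$, you are implicitly using a discrete faithful linear holonomy, i.e.\ a point of $\mathcal{T}(\Sigma)$---but the nonemptiness of $\mathcal{T}(\Sigma)$ is part of the assertion. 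Thurston's route avoids this because the hyperbolic structure is \emph{constructed} from the elementary pieces.

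The genuine gap is that the one step carrying the content of the theorem---the dimension count---is deferred as ``bookkeeping,'' and the sketch you give of it is not yet consistent. You need, for each elementary $2$-orbifold, a proof that its Teichm\"uller space is a cell parametrized by the lengths of its boundary components (cone points and corner reflectors of fixed order contribute no continuous angle parameter), and at the seams you must count a length and a twist (two parameters) for each interior closed geodesic but only a length and \emph{no} further parameter for each interior full $1$-orbifold: a regluing along an arc joining two mirrors that respects the mirror structure at the endpoints admits no continuous shift, so your ``single shift parameter (no twist)'' is either a double count of the length or a parameter that does not exist. Finally, if you actually carry out the count you will obtain $-3\chi(X_\Sigma)+2m+n+2n_\Sigma$, not the coefficient printed in the statement: test it on a closed genus-$g$ surface (dimension $6g-6$, not $2g-2$), on a hyperbolic triangle reflection orbifold (dimension $0$, not $2$), or on a right-angled hexagon orbifold of type (D1) (dimension $3$). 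A correct bookkeeping therefore cannot ``yield the asserted'' formula as it stands; the coefficient of $\chi(X_\Sigma)$ in the survey is a typo, and your proof should detect it rather than reproduce it.
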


\par{
In order to understand the deformation spaces of convex projective structures on surfaces, we saw that it is important to study the convex projective structures on a pair of pants, which is the most  “elementary” surface. Similarly, in the case of 
$2$-orbifolds, we should firstly understand the elementary $2$-orbifolds.
}
\\
\par{
Let us discuss the process of splitting and sewing of $2$-orbifolds. Note that orbifolds always have a path-metric. For example, we can define a notion of Riemannian metric on an orbifold $\Sigma$, i.e. for all coordinate neighborhoods $U_i \approx \widetilde{U}_i/\Gamma_i$, there exist $\Gamma_i$-invariant Riemannian metrics on $\widetilde{U}_i$ which are compatible each other (see Choi \cite{msjbook}). Let $c$ be a simple closed curve or a full $1$-orbifold in the interior\footnote{The {\em interior}\index{orbifold!interior} of an orbifold $\Sigma$ is $\Sigma \smallsetminus \partial \Sigma$.} of a $2$-orbifold $\Sigma$ and let $\hat \Sigma$ be the completion of $\Sigma \smallsetminus c$ with respect to the path-metric induced from one on $\Sigma$. We say that $\hat \Sigma$ is obtained from the {\em splitting}\index{orbifold!splitting}\index{splitting} of $\Sigma$ along $c$. Conversely, if $\hat c$ is the union of two boundary components of $\hat \Sigma$ corresponding to $c$, then $\Sigma$ is obtained from {\em sewing}\index{orbifold!sewing}\index{sewing} $\hat \Sigma$ along $\hat c$.
}
\\
\par{
 An {\em elementary $2$-orbifold}\index{elementary!$2$-orbifold} is a compact $2$-orbifold of negative orbifold Euler characteristic which we cannot split further along simple closed curves or full $1$-orbifolds into suborbifolds. We assume in this subsection that our orbifolds are of negative orbifold Euler characteristic.
}
\\
\par{
The following is the classification of elementary $2$-orbifolds (see Figure \ref{fig:eleorb}). Arcs with/without dotted arcs next to them indicate boundary/mirror components, respectively, and black points indicate singular points. We can obtain the orbifolds (P$j$), $j =2,3,4$, from changing the boundary components of (P1) to cusps and then to elliptic points, considering them as hyperbolic surfaces with singularities. For each $j=1, \dotsc, 4$, the orbifold (D$j$) (resp. (A$j$)) is the quotient orbifold of (P$j$) by an order-two involution preserving (resp. switching a pair of) boundary components or cone-points.
Note that the underlying space of (P1) is a pair of pants, the ones of (P2), (A1) and (A2) are closed annuli, the ones of (P3), (A3), (A4) and (D1)-(D4) are closed disks, and the one of (P4) is a sphere.
}

\begin{figure}[h]

\centerline{ \includegraphics[height=9cm]{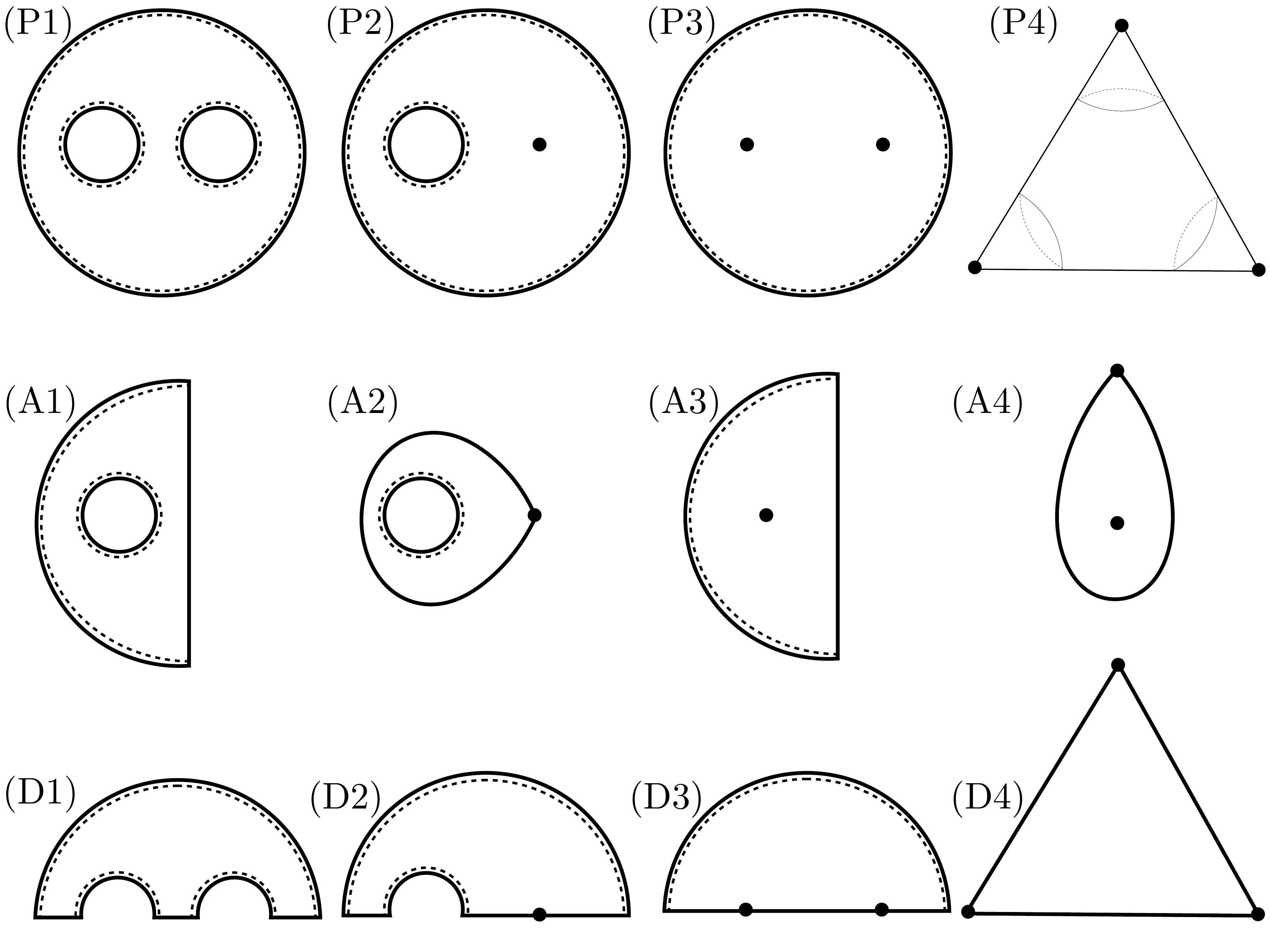}}

\caption{Elementary orbifolds}
\label{fig:eleorb}

\end{figure}

  \begin{enumerate}
\item[(P1)] A pair of pants ($\chi =- 1$).
\item[(P2)] An annulus with a cone-point of order $p$ ($\chi =  \frac{1}{p} -1$).
\item[(P3)] A disk with two cone-points of orders $p, q$ ($\chi = \frac{1}{p} + \frac{1}{q} -1$).
\item[(P4)] A sphere with three cone-points of order
$p, q, r$ ($\chi =  \frac{1}{p} + \frac{1}{q} + \frac{1}{r} -1$).
\end{enumerate}

 \begin{enumerate} 
\item[(A1)] An annulus with a boundary circle, a boundary arc and a mirror arc 
($\chi = -\frac{1}{2}$).

\item[(A2)] An annulus with a boundary circle and 
a corner-reflector of order $p$ ($\chi = \frac{1}{2p} -\frac{1}{2}$).

\item[(A3)] A disk with a boundary arc, a mirror arc
and a cone-point of order $p$ ($\chi = \frac{1}{p} - \frac{1}{2}$).

\item[(A4)] A disk with a corner-reflector of order $p$
and a cone-point of order $q$ ($\chi=\frac{1}{2p} + \frac{1}{q}-\frac{1}{2}$).

\end{enumerate}

  \begin{enumerate}  

\item[(D1)] A disk with three mirror arcs and three boundary arcs ($\chi= -\frac{1}{2}$). 

\item[(D2)] A disk with a corner-reflector of order $p$ at which two mirror arcs meet,  one more mirror arc and two boundary arcs ($\chi = \frac{1}{2p} - \frac{1}{2}$).

\item[(D3)] A disk with two corner-reflectors of order $p$, $q$, and a boundary arc ($\chi =   \frac{1}{2p} + \frac{1}{2q} - \frac{1}{2} $). 
\item[(D4)]  A disk with three corner-reflectors of order $p, q, r$ and three mirror arcs ($\chi =  \frac{1}{2p} + \frac{1}{2q} + \frac{1}{2r} -\frac{1}{2}$).
\end{enumerate}

\par{
Let $\Sigma$ be a properly convex real projective $2$-orbifold. A geodesic full $1$-orbifold $c$ in $\Sigma$ is {\em $h$-principal}\index{full $1$-orbifold!$h$-principal} if there is a double cover $\widetilde{\Sigma}$ of $\Sigma$ such that the double cover of $c$ in $\widetilde{\Sigma}$ is a simple closed geodesic and is $h$-principal. 
}
\\
\par{
Assume that every boundary component of $\Sigma$ is $h$-principal. Let $c$ be an oriented boundary component of $\Sigma$. We know from Section \ref{subsection:convexsurfaces} that if $c$ is homeomorphic to a circle, then the space $\textrm{Inv}(c)$ of projective invariants of $c$ is homeomorphic to $\mathcal{R}^{\circ}$. However, if $c$ is a full $1$-orbifold, then $\textrm{Inv}(c) = \R^*$ because in that case the space $\C(c)$ of convex projective structures on the Coxeter $1$-orbifold $c$ is parametrised by the Hilbert length of $c$ in the universal cover of $c$.
}
\\
\par{
Denoting by $B(\Sigma)$ the set of boundary components of $\Sigma$, we define
\[\textrm{Inv}(\partial \Sigma) :=\prod_{c \in B(\Sigma)} \textrm{Inv}(c)  \quad  \textrm{and} \quad \textrm{Inv}(\emp) = \{\ast\} \textrm{ is a singleton}.\]
}
\begin{propo}[Choi-Goldman \cite{cgorb}] \label{prop:CG}
Let $S$ be an elementary orbifold in Figure \ref{fig:eleorb}. The map
$\C^{pgb}(S)^{\circ} \ra \mathrm{Inv}(\partial S)$ is a fibration of an $n$-dimensional open cell over the $k$-dimensional open cell with $l$-dimensional open cell fiber.
We list $(n, k, l)$ below.
\begin{description}
\item[(P1)] $(8, 6, 2)$ {\rm (}Goldman \cite{Gconv}{\rm )}.
\item[(P2)] $(6, 4, 2)$ if there is no cone-point of order $2$. Otherwise $(4, 4, 0)$.
\item[(P3)] $(4, 2, 2)$ if there is no cone-point of order $2$. Otherwise $(2, 2, 0)$.
\item[(P4)] $(2, 0, 2)$ if there is no cone-point of order $2$. Otherwise $(0, 0, 0)$.
\item[(A1)] $(4, 3, 1)$.
\item[(A2)] $(3, 2, 1)$ if there is no corner-reflector of order $2$. Otherwise $(2, 2, 0)$.
\item[(A3)] $(2, 1, 1)$.
\item[(A4)]  $(1, 0, 1)$ if there is no corner-reflector of order $2$. Otherwise $(0, 0, 0)$.
\item[(D1)] $(4, 3, 1)$.
\item[(D2)] $(3, 2, 1)$ if there is no corner-reflector of order $2$. Otherwise $(2, 2, 0)$.
\item[(D3)] $(2, 1, 1)$ if there is no corner-reflector of order $2$. Otherwise $(1, 1, 0)$.
\item[(D4)] $(1, 0, 1)$  if there is no corner-reflector of order $2$. Otherwise $(0, 0, 0)$.
\end{description}
\end{propo}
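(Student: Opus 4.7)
I would prove this case by case, using (P1) as the anchor. Case (P1) is exactly Proposition \ref{thm:pp}. For the other eleven cases, the Ehresmann--Thurston principle (Theorem \ref{thm:EhTh1}) locally identifies $\C^{pgb}(S)^\circ$ with an open subset of the character variety $\chi(\pi_1^{orb}(S), \PGL_3(\R))$, reducing the problem to dimension counts there.

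For the cone-point cases (P2)--(P4), I would use the standard presentation
\[\pi_1^{orb}(S) = \langle c_1,\ldots,c_k,\, q_1,\ldots,q_m \mid c_1 \cdots c_k \, q_1 \cdots q_m = 1,\ q_j^{p_j} = 1\rangle.\]
Each boundary generator $c_i$ is unconstrained, contributing $\dim G = 8$ (its image lies in a $6$-dim positive hyperbolic conjugacy class parametrized by the $2$-dim invariant $(\lambda, \tau) \in \mathcal{R}^\circ$). Each cone-point generator $q_j$ lies in the conjugacy class of a rotation of order $p_j$ in $\PGL_3(\R)$: dimension $6$ when $p_j \geq 3$ (centralizer of dim $2$), and dimension $4$ when $p_j = 2$ (centralizer of dim $4$). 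Subtracting $8$ for the product relation when there are no boundary components (case (P4)) and $8$ for conjugation yields $n$ in each subcase. The base dimension $k$ equals $2$ times the number of boundary circles, and $l = n - k$ matches the listed values.

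For the reflection-type cases (A1)--(A4) and (D1)--(D4), I would take the orientation double cover $\widetilde S \to S$ obtained by unfolding along the mirror arcs. A Euler-characteristic computation shows $\widetilde S$ is always one of (P1)--(P4): for example, (D$j$) doubles to (P$j$), (A1) to (P1), (A2) to (P2), (A3) to (P3) with its two cone points swapped by the deck involution, and (A4) to (P4) with two cone points of order $q$ swapped. The deck involution $\sigma$ acts on $\C^{pgb}(\widetilde S)$, and Theorem \ref{thm:EhTh1} identifies $\C^{pgb}(S)^\circ$ with its fixed set. The dimension of this fixed set---computed via the action of $\sigma$ on the character variety, or equivalently by presenting $\pi_1^{orb}(S)$ directly with reflection generators in $\PGL^{\pm}_3(\R)$---halves the count from the cover, with the order-$2$ drop at a corner-reflector mirroring the order-$2$ drop at the corresponding cone point on the cover. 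The base $k$ then combines $2$-dim invariants for boundary circles and $1$-dim invariants for boundary full $1$-orbifolds.

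The main obstacle I expect is verifying the open-cell structure of $\C^{pgb}(S)^\circ$ and the fibration property. The base $\mathrm{Inv}(\partial S) = \prod_c \mathrm{Inv}(c)$ is manifestly an open cell as a product. For the total space I would construct explicit global coordinates, combining the boundary invariants with Fenchel--Nielsen-style twist-bending parameters around interior curves of $S$, and show the parameter domain is convex in $\R^n$. The fibration property then follows by producing explicit local sections of the boundary map. The most delicate point is the order-$2$ cases, where the fiber---rather than the base---collapses; this I would confirm using a Vinberg-type local model near an order-$2$ rotation, where the reduced centralizer forces the twist-bending parameters to degenerate.
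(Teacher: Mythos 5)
First, a caveat: this survey states Proposition \ref{prop:CG} only as a citation of Choi--Goldman \cite{cgorb} and contains no proof of it, so there is no in-paper argument to compare yours against; what follows assesses your plan on its own terms. Your dimension arithmetic is correct throughout: the conjugacy-class count ($6$ for a rotation of order $\geq 3$, $4$ for one of order $2$, minus $\dim \PGL_3(\R)=8$ for the product relation when present and again for conjugation) reproduces every triple $(n,k,l)$ for (P1)--(P4), and the double-cover bookkeeping for (A$j$) and (D$j$) is consistent with the paper's own description of these as $\Z/2$-quotients of (P$j$), with the claimed dimensions halving. This is a sound way to \emph{predict} the answer.

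However, the proposition asserts more than a dimension: it asserts that $\C^{pgb}(S)^{\circ}$ is an open cell and that the boundary-invariant map is a fibration with open-cell fibers, and here your plan has a genuine gap. You propose to build global coordinates from ``Fenchel--Nielsen-style twist-bending parameters around interior curves of $S$,'' but an elementary orbifold by definition admits no essential simple closed curve or full $1$-orbifold along which to split, so there are no interior curves and no twist-bending parameters; those live in Proposition \ref{prop:bending}, i.e.\ in the gluing of elementary pieces, not inside a single piece. The $l$-dimensional fiber parameters for (P1) in Goldman's proof come instead from invariants of the triangles of a developing fundamental domain, and Choi--Goldman's treatment of the remaining cases is of the same explicitly geometric kind (for the mirror cases, writing each reflection as $\Idd - \alpha \otimes b$ \`a la Vinberg and imposing the corner-reflector conditions). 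Two further points need real work in your scheme: (i) for (P4), (A4), (D4) the group has an honest relation, and the expected-dimension count does not by itself give smoothness, connectedness, or surjectivity onto $\mathrm{Inv}(\partial S)$ --- identifying the correct component and showing it is a cell is the substance of the theorem; (ii) identifying $\C^{pgb}(S)^{\circ}$ with the fixed locus of the deck involution on $\C^{pgb}(\widetilde S)^{\circ}$ requires handling the lift of the holonomy to the $\Z/2$-extension of $\pi_1^{orb}(\widetilde S)$ (existence and uniqueness of the lift) and an argument that this fixed locus is a half-dimensional cell; neither is automatic.
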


Finally, we can describe the deformation space of convex projective structures on closed $2$-orbifolds.

\begin{theorem}[Choi-Goldman \cite{cgorb}] \label{thm:CG}
Let $\Sigma$ be a closed $2$-orbifold with $\chi(\Sigma) < 0$.
Then the space $\C(\Sigma)$ of convex projective structures on $\Sigma$ is
homeomorphic to a cell of dimension
\[ -8\chi(X_{\Sigma}) + (6 m - 2 m_2) + (3n - n_2) \]
where $X_{\Sigma}$ is the underlying space of $\Sigma$, $m$ is the number of cone-points,
$m_2$ is the number of cone-points of order $2$, $n$ is the number of corner-reflectors,
and $n_2$ is the number of corner-reflectors of order $2$.
\end{theorem}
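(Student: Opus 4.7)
The plan is to reduce the closed orbifold case to the elementary pieces of Figure \ref{fig:eleorb} by a generalised pants decomposition and to iteratively apply a sewing principle, paralleling the scheme used for Theorem \ref{thm:CS}. Any closed $2$-orbifold $\Sigma$ with $\chi(\Sigma) < 0$ admits a maximal finite collection of mutually disjoint, non-isotopic simple closed geodesics $c_{1}, \ldots, c_{K}$ and full $1$-geodesics $\beta_{1}, \ldots, \beta_{L}$ along which cutting yields a disjoint union of elementary $2$-orbifolds $S_{1}, \ldots, S_{N}$; this is the orbifold analogue of a hyperbolic pants decomposition, the reducing curve or arc existing because $\chi(\Sigma) < 0$ prevents every non-elementary piece from being atomic.

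Next, I would establish the sewing principle extending Proposition \ref{prop:bending}. For a closed-curve seam $c_j$, Proposition \ref{prop:bending} already provides a principal $\R^{2}$-bundle structure on the forgetful map $\C^{pgb}(\Sigma) \to \C^{pgb}_{c_j}(\Sigma')$. For a full $1$-orbifold seam $\beta_\ell$ I would prove an analogous principal $\R^{1}$-bundle, the single gluing parameter reflecting that $\mathrm{Inv}(\beta_\ell) = \R^{*}$ is one-dimensional and arising from the one-dimensional centraliser in $\SL_{3}^{\pm}(\R)$ of the holonomy of a loop doubling $\beta_\ell$ (a product of two commuting reflections). Iterating the sewings across all $K+L$ seams realises $\C(\Sigma)=\C^{pgb}(\Sigma)$ as an iterated principal fiber bundle whose base is the subset of $\prod_{i} \C^{pgb}(S_{i})^{\circ}$ cut out by the seam-invariant matching. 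Each sewing adds $+2$ (respectively $+1$) gluing parameters and removes $2$ (respectively $1$) dimensions via the matching constraint, so the dimensions are additive:
\[\dim \C(\Sigma) = \sum_{i=1}^{N} \dim \C^{pgb}(S_{i})^{\circ}.\]

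I would then read off the dimensions of the elementary pieces from Proposition \ref{prop:CG} and verify the claimed formula by a case-by-case bookkeeping. A cone point of order $q$ contributes $6$ to the total when $q \geq 3$ and $4$ when $q = 2$; a corner reflector of order $r$ contributes $3$ when $r \geq 3$ and $2$ when $r = 2$; and the residual topological part assembles into $-8\chi(X_{\Sigma})$, using that $\chi(X)$ is additive under cuts along closed circles and that the boundary-arc corrections arising from the full $1$-orbifold cuts are precisely what is packaged into the explicit dimensions of Proposition \ref{prop:CG}. Summing the contributions over $S_{1}, \ldots, S_{N}$ yields
\[-8\chi(X_{\Sigma}) + (6m - 2m_{2}) + (3n - n_{2}),\]
matching the stated dimension.

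Finally, each $\C^{pgb}(S_{i})^{\circ}$ is an open cell by Proposition \ref{prop:CG}, and an iterated principal $\R^{k}$-bundle over an open cell is itself an open cell, so $\C(\Sigma)$ is an open cell of the claimed dimension. The hardest part will be proving the full $1$-orbifold analogue of Proposition \ref{prop:bending}, which requires an Ehresmann-Thurston-style infinitesimal deformation analysis at a mirror seam together with the explicit centraliser computation for a product of two commuting reflections. The subsequent combinatorial bookkeeping, in particular the order-$2$ corrections for cone points and corner reflectors and the conversion between $\chi(X_{\Sigma})$ and $\chi(\Sigma)$, is delicate but essentially mechanical once the sewing principle is in place.
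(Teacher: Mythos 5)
Your proposal is correct and follows essentially the same route as the paper (and the original Choi--Goldman argument): decompose the closed orbifold into the elementary pieces of Figure \ref{fig:eleorb}, use the fibrations $\C^{pgb}(S_i)^{\circ} \ra \mathrm{Inv}(\partial S_i)$ of Proposition \ref{prop:CG} together with the sewing principle of Proposition \ref{prop:bending} (and its full $1$-orbifold analogue), and sum the dimensions. Your bookkeeping checks out --- each boundary full $1$-orbifold of a piece carries a $+4$ correction relative to the closed formula, which exactly cancels the $-8$ per arc-cut coming from the non-additivity of $\chi(X_{\Sigma})$ --- and you correctly isolate the $\R^{1}$-gluing along mirror-to-mirror seams as the ingredient genuinely requiring proof.
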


\section{Convex projective Coxeter orbifolds}\label{Coxeter}
\subsection{Definitions}
\subsubsection{Coxeter group}

\par{
Let $S$ be a finite set and denote by $|S|$ the cardinality of $S$. A {\em Coxeter matrix}\index{Coxeter!matrix} on $S$ is an $|S| \times |S|$ symmetric matrix $M=(M_{st})_{s,t \in S}$ with diagonal entries $M_{ss}=1$ and other entries $M_{st} \in \{2, 3, \dotsc, \infty\}$. The pair $(S,M)$ is called a {\em Coxeter system}\index{Coxeter!system}.
}
\\
\par{
To a Coxeter system $(S,M)$ we can associate a {\em Coxeter group}\index{Coxeter!group} $W=W_S$: it is the group generated by $S$ with the relations $(st)^{M_{st}}=1$ for all $(s,t) \in S \times S$ such that $M_{st} \neq \infty$.
If $(S,M)$ is a Coxeter system, then for each subset $T$ of $S$ we can define the Coxeter sub-system $(T,M_T = (M_{st})_{s,t \in T} )$. The Coxeter group $W_T$ can be thought of as a subgroup of $W_S$ since the canonical map from $W_T$ to $W_S$ is an embedding. We stress that the last sentence is not an obvious statement and it is, in fact, a corollary of Theorem \ref{t:tits-vin}.
}
\\
\par{
The {\em Coxeter graph}\index{Coxeter!graph} of a Coxeter system $(S,M)$ is the \textit{labelled} graph where the set of vertices is $S$, two vertices $s$ and $t$ are connected by an edge $\overline{st}$ if and only if $M_{st} \neq 2$, and the label of the edge $\overline{st}$ is $M_{st}$. A Coxeter system $(S,M)$ is {\em irreducible}\index{Coxeter!system!irreducible} when its Coxeter graph is connected. It is a little abusive but we also say that the Coxeter group $W$ is irreducible.
}

\subsubsection{Coxeter orbifolds}

\par{
We are interested in $d$-dimensional Coxeter orbifolds whose underlying space is homeomorphic to a $d$-dimensional polytope\footnote{We implicitly assume that all the polytopes and polygons are convex.} $P$ minus some faces, and whose singular locus is the boundary of $P$ made up of mirrors. For the sake of clarity, facets\index{facet}\index{polytope!facet} are faces of codimension $1$, ridges\index{ridge}\index{polytope!ridge} are faces of codimension $2$ and proper faces\index{proper face}\index{polytope!proper face} are faces different from $P$ and $\varnothing$. Choose a polytope $P$ and a Coxeter matrix $M$ on the set $S$ of facets of $P$ such that if two facets $s$ and $t$ are not adjacent,\footnote{Two facets $s$ and $t$ are {\em adjacent}\index{facet!adjacent}\index{adjacent facet} if $s \cap t$ is a ridge of $P$.} then $M_{st} = \infty$. When two facets $s,t$ are adjacent, the ridge $s \cap t$ of $P$ is said to be {\em of order $M_{st}$}\index{ridge!order}. The first objects we obtain are the Coxeter system $(S,M)$ and the Coxeter group $W$. 
}
\\
\par{
We now build an orbifold whose fundamental group is $W$ and whose underlying topological space is the starting polytope $P$ minus some faces: For each proper face $f$ of $P$, let $S_f =\{s \in S \,|\, f \subset s \}$. If $W_f := W_{S_f}$ is an infinite Coxeter group then the face $f$ is said to be {\em undesirable}\index{undesirable face}. Let $\hat{P}$ be the orbifold obtained from $P$ with undesirable faces removed, with facets as mirrors, with the remaining ridges $s \cap t$ as corner reflectors of orders $M_{st}$. We call $\hat{P}$ a {\em Coxeter $d$-orbifold}\index{Coxeter!orbifold}\index{orbifold!Coxeter}. 
We remark that a Coxeter $d$-orbifold is closed if and only if for each vertex $v$ of $P$, the Coxeter group $W_v$ is finite.
}
\\
\par{
For example, let $P$ be a polytope in $\mathbb{X}=\mathbb{S}^d, \mathbb{E}^d$ or $\mathbb{H}^d$ with dihedral angles submultiples of $\pi$. The uniqueness of the reflection across a hyperplane of $\mathbb{X}$ allows us to obtain a Coxeter $(\mathrm{Isom}(\mathbb{X}),\mathbb{X})$-orbifold $\hat P$ from $P$.
}

\subsubsection{Deformation spaces}

Recall that $\C(\hat{P})$ denotes the deformation space of properly convex real projective structures on the Coxeter orbifold $\hat{P}$, that is, the space of projective structures on $\hat{P}$ whose developing map is a diffeomorphism onto a properly convex subset in $\mathbb{RP}^d$.

\subsection{Vinberg's breakthrough}

In this subsection we give a description of Vinberg's results in his article \cite{MR0302779}. An alternative treatment is given in Benoist's notes \cite{MR2655311}.

\subsubsection{Groundwork}
\par{
Let $V$ be the real vector space of dimension $d+1$. A {\em projective reflection}\index{projective!reflection} (or simply, {\em reflection}\index{reflection}) $\sigma$ is an element of order $2$ of $\mathrm{SL}^{\pm}(V)$ which is the identity on a hyperplane $H$. All reflections are of the form $ \sigma = \Idd-\alpha \otimes b $ for some linear functional $\alpha \in V^{\star}$ and some vector $b \in V$ with $\alpha(b)=2$.
Here, the kernel of $\alpha$ is the subspace $H$ of fixed points of $\sigma$ and $b$ is the eigenvector corresponding to the eigenvalue $-1$.
}
\\
\par{
Let $P$ be a $d$-polytope in $\mathbb{S}(V)$ and let $S$ be the set of facets of $P$. For each $s \in S$, choose a reflection $\sigma_s = \Idd-\alpha_s \otimes b_s$ with $\alpha_s(b_s)=2$ which fixes $s$. By making a suitable choice of signs, we assume that $P$ is defined by the inequalities $\alpha_s \leqslant 0$, $s \in S$.
Let $\Gamma \subset \mathrm{SL}^{\pm}(V)$ be the group generated by all these reflections $(\sigma_s)_{s \in S}$ and let $\mathring{P}$ be the interior of $P$. A pair $(P,(\sigma_s)_s)$ is called a {\em projective Coxeter polytope}\index{projective!Coxeter polytope} if the family $\{ \gamma \mathring{P}  \}_{\gamma \in \Gamma } $ is pairwise disjoint.
}
\\
\par{
The $|S| \times |S|$ matrix $A=(A_{st})_{s,t \in S}$, $A_{st}=\alpha_s(b_t)$, is called the {\em Cartan matrix}\index{Cartan matrix}\index{projective!Coxeter polytope!Cartan matrix} of a projective Coxeter polytope $P$. For each reflection $\sigma_s$, the linear functional $\alpha_s$ and the vector $b_s$ are defined up to transformations
\begin{displaymath}
\alpha_s \mapsto \lambda_s\alpha_s \;\; \text{and} \;\; b_s \mapsto \lambda_s^{-1}b_s  \text{ with } \lambda_s  > 0.
\end{displaymath}
Hence the Cartan matrix of $P$ is defined up to the following equivalence relation: two matrices $A$ and $B$ are {\em equivalent}\index{Cartan matrix!equivalent} if $A=\Lambda B \Lambda^{-1}$ for a diagonal matrix $\Lambda$ having positive entries. This implies that for every $s, t \in S$, the number $A_{st}A_{ts}$ is an invariant of the projective Coxeter polytope $P$.  
}

\subsubsection{Vinberg's results}
\par{
Vinberg proved that the following conditions are necessary and sufficient for $P$ to be a projective Coxeter polytope:
\begin{enumerate}
\item[(V1)] $A_{st} \leq 0$ for $s \neq t$, and $A_{st}=0$ if and only if $A_{ts}=0$.
\item[(V2)] $A_{ss}=2$; and for $s\ne t$, $A_{st}A_{ts}\geqslant 4$ or $A_{st}A_{ts}=4\cos^2 (\frac{\pi}{m_{st}}$), $m_{st} \in \mathbb{N} \smallsetminus \{0, 1\}$.
\end{enumerate}
}

\par{
The starting point of the proof is that for every two facets $s$ and $t$ of $P$, the automorphism $\sigma_s \sigma_t$ has to be conjugate to one of the following automorphisms of $V / U$ with $U = \ker(\alpha_s) \cap \ker(\alpha_t)$:
$$
\begin{bmatrix}
\lambda & 0 \\
0 & \lambda^{-1} \\
\end{bmatrix} (\lambda > 0), \quad
\begin{bmatrix}
1 & 1 \\
0 & 1 \\
\end{bmatrix} \quad \textrm{or} \quad
\begin{bmatrix}
\cos\theta & -\sin\theta \\
\sin\theta & \;\;\,\cos\theta \\
\end{bmatrix}\, (\theta= \tfrac{2\pi}{m_{st}}).
$$

In the third case we call $\sigma_s \sigma_t$ a {\em rotation}\index{rotation} of {\em angle}\index{rotation!angle} $\theta$. 
}
\\
\par{
To a projective Coxeter polytope $P$, we can associate the Coxeter matrix $M=(M_{st})_{s,t\in S}$ with the set $S$ of facets of $P$ such that $M_{st}=m_{st}$ if $\sigma_s \sigma_t$  is a rotation of angle $\frac{2\pi}{m_{st}}$, and $M_{st}=\infty$ otherwise.
Now, from the Coxeter system $(S,M)$ and the polytope $P$, we obtain the Coxeter group $W$ and projective Coxeter orbifold $\hP$. Eventually, we are also interested in the subgroup $\G$ of $\mathrm{SL}^{\pm}(V)$ generated by all the reflections across the facets of $P$.
}

\begin{theorem}[Tits \cite{MR0240238}, Vinberg \cite{MR0302779}]\label{t:tits-vin}
Let $P$ be a projective Coxeter polytope. Then the following are true\,{\rm :}
\begin{enumerate}
\item The morphism $\sigma: W \rightarrow \Gamma$ given by $\sigma(s) =
\sigma_s$ is an isomorphism.

\item The group $\Gamma$ is a discrete subgroup of $\mathrm{SL}^{\pm}(V)$.

\item The union of tiles $\Cc :=\cup_{\gamma \in \Gamma} \gamma P$ is convex.

\item The group $\Gamma$ acts properly discontinuously on $\O$, the interior of $\Cc$, hence the quotient $\O/\Gamma$ is a convex real projective Coxeter orbifold.

\item An open face $f$ of $P$ lies in $\O$ if and only if the Coxeter group $W_{f}$ is finite.
\end{enumerate}
\end{theorem}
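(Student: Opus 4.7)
My plan is to proceed in the spirit of the Poincar\'e polyhedron theorem: build an abstract tiled space $\tilde X$ out of $W$-indexed copies of $P$, develop it into $\mathbb{S}(V)$, and then use a local-to-global argument to show that the developing map is a homeomorphism onto a convex set. First, I verify that $s\mapsto \sigma_s$ extends to a well-defined homomorphism $\sigma:W\to \mathrm{SL}^{\pm}(V)$. The only non-trivial relations to check are $(\sigma_s\sigma_t)^{M_{st}}=\Idd$ for $M_{st}<\infty$. Working modulo the common fixed subspace $\ker\alpha_s\cap\ker\alpha_t$, a direct computation using the Cartan matrix shows, thanks to (V1)--(V2), that $\sigma_s\sigma_t$ acts as a rotation of angle $2\pi/M_{st}$, so the relation holds. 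This gives a well-defined homomorphism $\sigma$; item (1) will then follow from the injectivity established below.

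Next I construct the abstract space $\tilde X := (W\times P)/\sim$, identifying $(w,x)$ with $(ws,x)$ when $x$ lies on the facet $s$, and define the tautological developing map $D:\tilde X\to \mathbb{S}(V)$ by $D([w,x])=\sigma(w)\cdot x$; this is well-posed since $\sigma_s$ fixes $s$ pointwise. The first key step is to show that $D$ is a local homeomorphism: the interior of $P$ and the interior of each facet are immediate, while at a ridge $r=s\cap t$ of order $m$ condition (V2) forces the $2m$ alternating reflected tiles to fit together around $r$ without overlap. At a face $f$ of higher codimension one argues by induction on $\dim f$, using that $W_f$ is finite at faces that will end up lying in the interior $\O$.

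The main obstacle, and the heart of the proof, is upgrading local injectivity of $D$ to global injectivity and to convexity of $\Cc=\bigcup_{\g\in\G}\g P$. My plan is to prove by induction on word length the following statement: for every reduced expression $w=s_{i_1}\cdots s_{i_k}$ in $W$, the tile $\sigma(w)P$ lies entirely on the side of the hyperplane $\{\alpha_{s_{i_1}}=0\}$ opposite to $P$, and more generally the position of $\sigma(w)P$ relative to each reflecting hyperplane is governed by whether multiplying $w$ by the corresponding simple reflection increases or decreases its length. The inductive step combines the already-established local convexity at each ridge with the exchange condition in Coxeter groups. This simultaneously gives that $\sigma$ is faithful (distinct group elements produce distinct tiles), that tiles have pairwise disjoint interiors (so $\G$ is discrete and acts properly discontinuously on $\O$), and that $\Cc$ is an increasing union of locally convex pieces, hence itself convex.

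Finally, item (5) follows from the structure of $\tilde X$: if an open face $f$ lies in $\O$, a small neighborhood of $f$ in $\Cc$ is tiled by the $D$-images of the preimages of $f$ in $\tilde X$, which form a single orbit under the stabilizer $W_f$, forcing $W_f$ to be finite; conversely, if $W_f$ is finite then the $W_f$-orbit of $P$ in $\tilde X$ already covers a neighborhood of the fiber over $f$, and developing shows $f\subset \O$. I expect most of the effort to be in the length-induction argument, since it requires careful combinatorial-geometric bookkeeping that translates the algebraic length function in $W$ into the sign behaviour of the linear functionals $\alpha_s$ on the iterated reflected copies of $P$.
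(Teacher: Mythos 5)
The paper does not actually prove Theorem \ref{t:tits-vin}; it quotes it and, in the remark that follows, only locates the statements in Tits \cite{MR0240238} and Vinberg \cite{MR0302779}. So your proposal can only be measured against Vinberg's original argument, and as an outline it is faithful to it: the tautological $W$-space $(W\times P)/\sim$ with its developing map, the rank-two analysis of $\sigma_s\sigma_t$ forced by (V1)--(V2), and above all the induction on word length showing that $\ell(sw)>\ell(w)$ exactly when $\sigma(w)P$ lies in the half-space $\{\alpha_s\leq 0\}$. That last lemma is indeed the engine of the whole theorem, and you are right that injectivity of $\sigma$, disjointness of the open tiles, discreteness, proper discontinuity and convexity all hang off it.

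Two places where the plan as written would not survive contact with the details. First, the assertion that $D$ is a local homeomorphism at every ridge is false: a ridge $s\cap t$ with $A_{st}A_{ts}\geq 4$ has infinite stabilizer $\langle\sigma_s,\sigma_t\rangle$, the infinitely many tiles around it never close up, and that ridge is precisely an undesirable face lying outside $\Omega$. So the Poincar\'e-polyhedron framing (``establish a local homeomorphism, then globalize'') cannot be run as a first, separate step; the local analysis only holds at faces with finite stabilizer, which is information you obtain from item (5), itself entangled with the global length induction. In Vinberg's proof the local and global steps are genuinely interleaved rather than sequential. Second, ``an increasing union of locally convex pieces, hence itself convex'' conceals a real lemma: you need the Tietze--Nakajima-type statement that a closed, connected, locally convex subset of $\mathbb{S}^d$ with nonempty interior and distinct from $\mathbb{S}^d$ is convex, and local convexity of $\Cc$ must also be verified at boundary points lying on faces with infinite stabilizer, where the tiles accumulate without closing up. A smaller caveat: if you invoke the exchange condition for $W$, use its combinatorial proof, since the usual geometric proof of the exchange condition goes through the Tits representation, i.e.\ the symmetric-Cartan-matrix case of the very theorem you are proving. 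None of these is a wrong turn, but together they are where essentially all of the content of \cite{MR0302779} lives, so the proposal should be read as a correct roadmap rather than a proof.
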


\begin{rem}
Tits proved Theorem \ref{t:tits-vin} (without the fifth item) assuming that $P$ is a simplex and the Cartan matrix $A$ of $P$ is symmetric and $A_{st}A_{ts} \leqslant 4 $ for all facets $s,t$ of $P$. The statements can be found in Chapter 5 (Theorem 1 of Section 4 and Proposition 6 of Section 6) of \cite{MR0240238}. The final version is due to Vinberg \cite{MR0302779}.
\end{rem}

\subsection{Convex projective Coxeter $2$-orbifolds}

In the previous section, we explain the deformation space $\C(\Sigma)$ of properly convex projective structures on a closed $2$-orbifold $\Sigma$ of negative orbifold Euler characteristic (see Theorem \ref{thm:CG}). As a special case, if $\Sigma$ is a closed projective Coxeter $2$-orbifold, then the underlying space of $\Sigma$ is a polygon and $\Sigma$ does not contain cone-points. Let $v_+$ be the number of corner reflectors of order greater than $2$, and let $\mathcal{T}(\Sigma)$ be the Teichm\"{u}ller space of $\Sigma$. 
Goldman  \cite{Goldman1977} showed that  $\C(\Sigma)$ is homeomorphic to an open cell of dimension
$$- 8 + 3v - (v-v_+)  = v_+ -2 + 2 (v-3) = v_+ - 2 + 2 \textrm{ dim } \mathcal{T}(\Sigma).$$

\subsection{Hyperbolic Coxeter $3$-orbifolds}
\par{
The Coxeter $3$-orbifolds which admit a finite-volume hyperbolic structure have been classified by Andreev \cite{MR0259734,MR0273510}.
}
\\
\par{
A polytope is naturally a CW complex. A CW complex arising from a polytope is called a {\em combinatorial polytope}\index{combinatorial polytope}\index{polytope!combinatorial}. We abbreviate a $3$-dimensional polytope to a {\em polyhedron}\index{polyhedron}. Let $\mathcal{G}$ be a combinatorial polyhedron and $(\partial \mathcal{G})^*$ be the dual CW complex of the boundary $\partial \mathcal{G}$. A simple closed curve $\gamma$ is called a {\em $k$-circuit}\index{circuit} if it consists of $k$ edges of $(\partial \mathcal{G})^*$.
A circuit $\gamma$ is {\em prismatic}\index{circuit!prismatic}\index{prismatic circuit} if all the edges of $\mathcal{G}$ intersecting $\gamma$ are disjoint.
}

\begin{theorem}[Andreev \cite{MR0259734,MR0273510}]
Let $\mathcal{G}$ be a combinatorial polyhedron, and let $\{ s_i \}_{i=1}^{n}$ be the set of facets of $\mathcal{G}$. Suppose that $\mathcal{G}$ is {\em not a tetrahedron} and non-obtuse angles $\theta_{ij} \in (0,\tfrac{\pi}{2}]$ are given at each edge $s_{ij}=s_i \cap s_j$ of $\mathcal{G}$. Then the following conditions {\rm (A1)--(A4)} are necessary and sufficient for the existence of a {\em compact} hyperbolic polyhedron $P$ which realizes\footnote{There is an isomorphism $\phi : \mathcal{G} \rightarrow P$ such that the given angle at each edge $e$ of $\Gc$ is the dihedral angle at the edge $\phi(e)$ of $P$.} $\mathcal{G}$ with dihedral angle $\theta_{ij}$ at each edge $s_{ij}$. 
\begin{enumerate}
\item[(A1)] If $s_i \cap s_j \cap s_k$ is a vertex of $\mathcal{G}$, then $ \theta_{ij} + \theta_{jk} + \theta_{ki} > \pi.$
\item[(A2)] If $s_i$, $s_j$, $s_k$ form a prismatic $3$-circuit, then $ \theta_{ij} + \theta_{jk} + \theta_{ki} < \pi.$
\item[(A3)] If $s_i$, $s_j$, $s_k$, $s_l$ form a prismatic $4$-circuit, then $ \theta_{ij} + \theta_{jk} + \theta_{kl} + \theta_{li} < 2\pi.$
\item[(A4)] If $\mathcal{G}$ is a triangular prism with triangular facets $s_1$ and $s_2$, then
        $$ \theta_{13} + \theta_{14} + \theta_{15} + \theta_{23} + \theta_{24} + \theta_{25} < 3\pi. $$
\end{enumerate}

\par{
The following conditions {\rm (F1)--(F6)} are necessary and sufficient for the existence of a {\em finite-volume} hyperbolic polyhedron $P$ which realizes $\mathcal{G}$ with dihedral angle $\theta_{ij} \in (0,\frac{\pi}{2}]$ at each edge $s_{ij}$.
}
\begin{enumerate}
\item[(F1)] If $s_i \cap s_j \cap s_k$ is a vertex of $\mathcal{G}$, then $ \theta_{ij} + \theta_{jk} + \theta_{ki} \geqslant \pi.$
\item[(F2)] {\rm (}resp. {\rm(F3)} or {\rm (F4))} is the same as {\rm (A2)} {\rm (}resp. {\rm (A3)} or {\rm (A4)).}
\item[(F5)] If $s_i \cap s_j \cap s_k \cap s_l$ is a vertex of $\mathcal{G}$, then $ \theta_{ij}+\theta_{jk}+\theta_{kl}+\theta_{li}=2\pi.$
\item[(F6)] If $s_i$, $s_j$, $s_k$ are facets such that $s_i$ and $s_j$ are adjacent, $s_j$ and $s_k$ are adjacent, and $s_i$ and $s_k$ are not adjacent
but meet at a vertex not in $s_j$, then $ \theta_{ij}+\theta_{jk} < \pi.$
\end{enumerate}
In both cases, the hyperbolic polyhedron is unique up to hyperbolic isometries.
\end{theorem}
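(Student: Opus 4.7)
The proof naturally splits into necessity and sufficiency, and, within sufficiency, the compact and finite-volume cases can be treated by essentially the same continuity-and-rigidity argument. I would first treat necessity, which is a sequence of short hyperbolic-geometry computations: condition (A1) (resp.\ (F1)) expresses that the link of a finite (resp.\ possibly ideal) vertex is a spherical triangle, hence its angle sum is $>\pi$ (resp.\ $\geq \pi$ with equality only at cusps); (A2) is obtained by looking at the prismatic triple $s_i,s_j,s_k$, whose three supporting hyperplanes in $\mathbb{H}^3$ pairwise intersect but have no common point, so the associated acute triangle in the dual sphere at infinity forces $\theta_{ij}+\theta_{jk}+\theta_{ki}<\pi$; (A3) follows by the same kind of reasoning applied to a prismatic quadrilateral, and (A4) is the specific case of a triangular prism that escapes the previous analysis and must be done by a direct trigonometric argument in $\mathbb{H}^3$. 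Conditions (F5)--(F6) record the additional constraints that appear when one permits ideal vertices.

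For sufficiency I would use the Andreev--Thurston continuity method. Fix the combinatorial polyhedron $\mathcal{G}$ with $E$ edges and set
\[
\mathcal{A}_{\mathcal{G}} = \{(\theta_{ij}) \in (0,\tfrac{\pi}{2}]^{E} \,|\, (\mathrm{A1})\text{--}(\mathrm{A4})\text{ hold}\},
\]
the \emph{Andreev polytope} of admissible angles, and
\[
\mathcal{P}_{\mathcal{G}} = \{\text{compact hyperbolic polyhedra in }\mathbb{H}^3\text{ realizing }\mathcal{G}\}/\mathrm{Isom}(\mathbb{H}^3),
\]
equipped with the natural topology. Evaluation of dihedral angles defines a continuous map $\Phi \colon \mathcal{P}_{\mathcal{G}} \to \mathcal{A}_{\mathcal{G}}$, and the goal is to prove $\Phi$ is a homeomorphism onto $\mathcal{A}_{\mathcal{G}}$. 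The uniqueness statement in Andreev's theorem is equivalent to injectivity of $\Phi$, which one proves via Cauchy-type infinitesimal rigidity of hyperbolic polyhedra with non-obtuse dihedral angles. A dimension count based on the Euler formula $V-E+F=2$ shows $\dim \mathcal{P}_{\mathcal{G}} = \dim \mathcal{A}_{\mathcal{G}} = E$, so infinitesimal rigidity upgrades to $\Phi$ being a local homeomorphism, hence $\Phi(\mathcal{P}_{\mathcal{G}})$ is open in $\mathcal{A}_{\mathcal{G}}$.

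The remaining steps are closedness and nonemptiness. Closedness is the heart of the matter: one must show that along any sequence $P_n \in \mathcal{P}_{\mathcal{G}}$ with $\Phi(P_n) \to \theta_{\infty} \in \mathcal{A}_{\mathcal{G}}$, no degeneration of $P_n$ can occur. The possible failure modes are a vertex collapsing, an edge collapsing, or an entire face collapsing; for each such degeneration Andreev's conditions (A1)--(A4) provide a strict inequality that is violated in the limit, which is exactly why the conditions are stated with \emph{strict} inequalities in the compact case (and where one allows equality precisely where ideal vertices are permitted, giving (F1), (F5)--(F6)). Once $\Phi$ is shown to be a proper local homeomorphism, $\Phi(\mathcal{P}_{\mathcal{G}})$ is clopen in $\mathcal{A}_{\mathcal{G}}$; since $\mathcal{A}_{\mathcal{G}}$ is connected (a direct combinatorial check, essentially because it is cut out of a cube by finitely many linear inequalities and is easily seen to be path-connected), surjectivity follows from nonemptiness, which is established by exhibiting a single explicit realization (e.g.\ by a right-angled or symmetric base example constructed by hand or via Vinberg's theorem). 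The hardest step is unquestionably the exclusion of degenerations: one must catalogue all combinatorial collapses compatible with the given $\mathcal{G}$ and verify that each produces a limiting angle configuration violating one of (A1)--(A4), which is the origin of the seemingly ad hoc condition (A4) for triangular prisms.
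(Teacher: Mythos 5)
The paper does not prove this theorem: it is quoted verbatim from Andreev's two papers, so there is no in-text argument to compare yours against. Judged on its own terms, your outline is the standard Andreev--Thurston continuity method (as carried out in detail by Roeder--Hubbard--Dunbar), and it is the right strategy: necessity by vertex links and prismatic circuits, then openness via infinitesimal rigidity and a dimension count, properness via exclusion of degenerations, and connectedness of the angle polytope. Two small points in your favor that you should make explicit: the count $\dim\mathcal{P}_{\mathcal{G}}=3F-6=E$ requires knowing that the polyhedron is \emph{simple} (trivalent), which is itself a consequence of the non-obtuseness hypothesis (a spherical vertex link with all angles $\leq\pi/2$ and angle sum $>(n-2)\pi$ forces $n=3$); and $\mathcal{A}_{\mathcal{G}}$ is cut out by linear inequalities, hence convex, so connectedness is immediate rather than a ``combinatorial check.''

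That said, as written this is a roadmap rather than a proof, and the two steps you yourself identify as hardest are precisely the ones you do not carry out. The properness argument requires an exhaustive catalogue of how a sequence $P_n$ with converging admissible angles can degenerate (a face shrinking to a point or segment, the whole polyhedron escaping to infinity, a vertex going ideal) and a verification that each collapse forces equality or reversal in one of (A1)--(A4) in the limit; this occupies the bulk of any complete proof and cannot be waved through. The nonemptiness step is also not innocent: it is exactly where Andreev's original argument contained an error (his Lemma on realizing a base point of $\mathcal{A}_{\mathcal{G}}$), later repaired by Roeder--Hubbard--Dunbar via an explicit combinatorial reduction to realizable polyhedra; ``construct a right-angled example by hand'' is not available for an arbitrary $\mathcal{G}$, since a given combinatorial type need not admit an all-right-angled realization (indeed $(\pi/2,\dots,\pi/2)$ may violate (A2)--(A4)). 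Finally, your treatment of the finite-volume case as ``essentially the same'' glosses over the fact that ideal vertices may be $4$-valent (whence condition (F5)) and that the space $\mathcal{P}_{\mathcal{G}}$ must be compactified to include them, which changes both the dimension count and the rigidity statement. None of these is a wrong turn, but each is a genuine gap between your sketch and a proof.
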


\subsection{Convex projective Coxeter $3$-orbifolds}

\subsubsection{Restricted deformation spaces}

\par{
A point of $\C(\hat{P})$ gives us a projective Coxeter polytope $(P_0,(\sigma_s)_s)$, well defined up to projective automorphisms. We can focus on the subspace of $\C(\hat{P})$ with a {\em projectively fixed} underlying polytope $P_0$. This subspace is called the {\em restricted deformation space}\index{deformation!space!restricted} of $\hat{P}$ and denoted by $\C_{P_0}(\hat{P})$.
}
\\
\par{
Let $\hat{P}$ be a Coxeter $3$-orbifold. We now give a combinatorial hypothesis on $\hat{P}$, called the “orderability”, which allows us to say something about the restricted deformation space $\C_{P_0}(\hat{P)}$ of $\hat{P}$. A Coxeter $3$-orbifold $\hat{P}$ is {\em orderable}\index{Coxeter!orbifold!orderable}\index{orderable Coxeter orbifold} if the facets of $\hat{P}$ can be ordered so that each facet contains at most three edges that are edges of order 2 or edges in a facet of higher index.
}
\\
\par{
Let $e$ (resp. $f$, $e_2$) be the number of edges (resp. facets, edges of order 2) of $P$, and let $k(P)$ be the dimension of the group of projective automorphisms of $P$. Note that $k(P)= 3$ if $P$ is tetrahedron, $k(P) =1$ if $P$ is the cone over a polygon other than a triangle, and $k(P)=0$ otherwise.

}

\begin{theorem}[Choi \cite{Choipoly}]\label{thm:choi}
Let $\hat{P}$ be a Coxeter $3$-orbifold such that $\C(\hat{P}) \neq \varnothing$. Suppose that $\hat{P}$ is orderable and that the Coxeter group $\pi_{1}^{orb}(\hat{P})$ is infinite and irreducible. Then every restricted deformation space $\C_{P}(\hat{P})$ is a smooth manifold of dimension $3f-e-e_2-k(P)$.
\end{theorem}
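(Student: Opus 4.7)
My plan is to use Vinberg's theorem (Theorem~\ref{t:tits-vin}) to describe $\C_P(\hat P)$ as a quotient of an explicit affine subvariety. Since the underlying polytope $P\subset\R\Pb^3$ is projectively fixed, the supporting hyperplane of each facet $s$ is determined, so after normalizing the scalar, the linear form $\alpha_s\in V^*$ is fixed once and for all. A projective Coxeter structure on $\hat P$ with underlying polytope $P$ is then encoded by the tuple $(b_s)_{s}$ of reflection vectors subject to $\alpha_s(b_s)=2$, and the raw parameter space $W=\prod_{s}\{b_s\in V : \alpha_s(b_s)=2\}$ is a smooth manifold of dimension $3f$.

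The Coxeter data of $\hat P$ translates, via Vinberg's conditions (V1)--(V2), into the following equations on $W$: for every edge of order $m_{st}\in\{3,4,\ldots\}$ shared by facets $s,t$, one equation $\alpha_s(b_t)\cdot\alpha_t(b_s)=4\cos^2(\pi/m_{st})$; and for every edge of order $2$, two equations $\alpha_s(b_t)=0$ and $\alpha_t(b_s)=0$. This yields exactly $(e-e_2)+2e_2=e+e_2$ equations, which I package into a map $\Phi\colon W\to\R^{e+e_2}$. My aim is to prove that $d\Phi$ has maximal rank $e+e_2$ at every point of $\C_P(\hat P)$; by the implicit function theorem, $\Phi^{-1}(\text{target})$ is then a smooth submanifold of $W$ of dimension $3f-e-e_2$. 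The projective automorphism group $\Aut(P)$, of dimension $k(P)$, acts properly on this level set, and freely on the subset corresponding to $\C_P(\hat P)$ because the irreducibility of $\pi_1^{orb}(\hat P)$ forces the centralizer of the holonomy image in $\mathrm{SL}^{\pm}(V)$ to be trivial. Dividing out yields the claimed smoothness and dimension $3f-e-e_2-k(P)$.

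The transversality step is where the orderability hypothesis enters. Fix an ordering $s_1,\ldots,s_f$ satisfying the orderability condition and process the parameters $b_{s_i}$ in reverse order. Assign to stage $i$ precisely those equations that become linear constraints on $b_{s_i}$ once $b_{s_j}$ has been frozen for all $j>i$: namely, one equation $\alpha_{s_k}(b_{s_i})=0$ per order-2 edge $s_i\cap s_k$ of $s_i$, and one equation $\alpha_{s_j}(b_{s_i})=4\cos^2(\pi/m_{ij})/\alpha_{s_i}(b_{s_j})$ per non-order-2 edge $s_i\cap s_j$ with $j>i$ (the denominator is nonzero because $A_{s_i s_j}<0$ at a genuine projective Coxeter point by (V1)). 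Each edge of $\hat P$ is assigned to exactly one stage, and the total number of equations at stage $i$ equals the number of distinguished edges of $s_i$, which is at most three by orderability; this matches the three degrees of freedom of $b_{s_i}$ along the affine slice $\{\alpha_{s_i}(\cdot)=2\}$. The Jacobian of $\Phi$ is thus block lower-triangular in this ordering, and its surjectivity reduces, at each stage, to the linear independence in $V^*$ of the at-most-four forms consisting of $\alpha_{s_i}$ together with the $\alpha_{s_k}$ and $\alpha_{s_j}$ arising at stage~$i$.

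The crux of the proof, and where I expect the main difficulty to lie, is verifying this linear independence in all cases. Equivalently, one must show that at most four facets of $P$ sharing edges with $s_i$ always yield linearly independent defining forms. This is automatic in the generic situation where at most three facets of $P$ are concurrent at each vertex, but $P$ may have vertices of higher valence (as with the prismatic configurations in Andreev's theorem) where the naive independence fails. Resolving this likely requires a case-by-case inspection of the admissible vertex links, informed by the fact that $\pi_1^{orb}(\hat P)$ is infinite and irreducible (which restricts the finite Coxeter subgroups that can appear at a vertex) and by the hypothesis $\C(\hat P)\neq\varnothing$ (which furnishes a concrete reference point at which the gradient forms can be computed explicitly and shown to be non-degenerate). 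Combined with the $\Aut(P)$-quotient argument described above, this would complete the proof.
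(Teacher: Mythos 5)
Your overall architecture is exactly the one used in Choi's cited paper \cite{Choipoly} (the survey itself contains no proof of Theorem~\ref{thm:choi}): fix the forms $\alpha_s$ since $P$ is projectively fixed, parametrize structures by the tuple $(b_s)$ in the $3f$-dimensional product of affine slices $\{\alpha_s(\cdot)=2\}$, impose the $e+e_2$ equations coming from Vinberg's conditions, use the ordering to make the Jacobian block lower-triangular with at most three constraints per stage, and quotient by the $k(P)$-dimensional automorphism group of $P$. The bookkeeping is correct. But you stop at the decisive step: you declare the linear independence of $\alpha_{s_i}$ together with the at most three forms $\alpha_{s_k}$ of the distinguished neighbours to be ``the crux,'' guess that it can fail at high-valence vertices, and defer to an unexecuted case analysis of vertex links using irreducibility and $\C(\hat P)\neq\varnothing$. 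That is a genuine gap, and the proposed repair is aimed at the wrong target: the independence holds \emph{unconditionally}, by convexity of the facet $s_i$ alone, and neither hypothesis is needed for it. Indeed, the four forms are dependent in $V^*\cong\R^4$ iff the four supporting planes share a projective point, i.e.\ iff the three lines $\Pb(\ker\alpha_{s_k}\cap\ker\alpha_{s_i})$ --- the supporting lines of three \emph{distinct} edges of the properly convex polygon $s_i$ --- are concurrent in the plane of $s_i$. This never happens: if all three lines passed through a point $p$, the polygon would lie in the salient cone $C$ with apex $p$ cut out by the three half-planes, each of the three edges would have to lie on a ray of $C$ issuing from $p$, and the line through the angularly middle ray would then strictly separate the other two rays, contradicting that $C$ (hence the polygon, hence both remaining edges) lies on one closed side of it; the parallel case is ruled out the same way. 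In particular no vertex of $P$, however high its valence, lies on three distinct edges of the single polygon $s_i$, so vertex links are irrelevant here. (The cases of one or two neighbours are easier: distinct facets have distinct supporting planes, and distinct edges of $s_i$ have distinct lines.)

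Two further points are asserted rather than proved and should be made explicit. First, you need that every nearby solution of the Cartan-matrix equations actually lies in $\C_P(\hat P)$, i.e.\ that the strict sign conditions in (V1) and proper convexity of the resulting domain persist; this follows from Vinberg's theorem together with the openness part of Theorem~\ref{thm-Kos_Ben} (Koszul), but it is part of the proof, not a formality. Second, your freeness claim for the $\Aut(P)$-action needs care: irreducibility of the Coxeter \emph{system} does not by itself give irreducibility of the linear holonomy in $\SL^{\pm}(V)$ (Tits representations of irreducible affine Coxeter groups are reducible), so triviality of the centralizer --- and properness of the action --- is precisely where the hypotheses that $W$ is infinite and irreducible, combined with Vinberg's structure theory, must be invoked. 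With the convexity lemma above and these two points supplied, your argument closes and yields the stated dimension $3f-e-e_2-k(P)$.
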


A simplicial polyhedron\footnote{A {\em simplicial} polyhedron\index{polyhedron!simplicial}\index{simplicial polyhedron} is a polyhedron whose facets are all triangles.} is orderable. By Andreev's theorem, hyperbolic triangular prisms are orderable. 
However the cube and the dodecahedron do not carry any orderable Coxeter orbifold structure, since the lowest index facet in an orderable orbifold must be triangular.

\subsubsection{Truncation polyhedra}
\par{
Andreev's theorem gives the necessary and sufficient conditions for the existence of a closed or finite-volume hyperbolic Coxeter $3$-orbifold. We can think of analogous questions for closed or finite-volume properly convex projective Coxeter orbifolds.
}
\\
\par{
The third author \cite{ecima_ludo} completely answered the question of whether or not a Coxeter $3$-orbifold $\hat{P}$ admits a  convex projective structure assuming that the underlying space $P$ is a truncation polyehedron: a {\em truncation $d$--polytope}\index{polytope!truncation}\index{truncation polytope} is a $d$-polytope obtained from the $d$-simplex by iterated truncations of vertices. For example, a triangular prism is a truncation polyhedron. However the cube and the dodecahedron are not truncation polyhedra.
}
\\
\par{
A prismatic $3$-circuit of $\hat{P}$ formed by the facets $r$, $s$, $t$ is {\em bad}\index{circuit!prismatic!bad} if 
$$ \frac{1}{M_{rs}} + \frac{1}{M_{st}} + \frac{1}{M_{tr}} \geqslant 1 \quad \textrm{and} \quad 2 \in \{ M_{rs}, M_{st}, M_{tr} \}.$$ Let $e_+$ be the number of edges of order greater than $2$ in $\hat{P}$.
}

\begin{theorem}[Marquis \cite{ecima_ludo}]\label{thm:Marquis}
Let $\hat{P}$ be a Coxeter $3$-orbifold arising from a truncation polyhedron $P$. Assume that $\hP$ has no bad prismatic $3$-circuits.
If $\hat{P}$ is not a triangular prism and $e_+ > 3$, then $\C(\hat{P)}$ is homeomorphic to a finite union of open cells of dimension $e_+ -3$. 
Moreover, if $\hat{P}$ admits a hyperbolic structure, then $\C(\hat{P)}$ is connected.
\end{theorem}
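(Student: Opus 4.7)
Plan.

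The strategy is based on Vinberg's parametrization of convex projective Coxeter orbifolds via Cartan matrices (Theorem \ref{t:tits-vin}), with three main stages: parametrization, a dimension count and cellular structure, and the connectedness assertion.

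First, I would parametrize $\C(\hP)$ by equivalence classes of pairs $(P', A)$, where $P'$ is a projective polytope combinatorially equivalent to $P$ and $A = (A_{st})$ is a Cartan matrix on the set $S$ of facets satisfying Vinberg's (V1)--(V2) conditions and the prescribed Coxeter relations of $\hP$; equivalence is given by the $\PGL(V)$-action on polytopes together with the diagonal rescaling $A \mapsto \Lambda A \Lambda^{-1}$. A ridge of order $m < \infty$ imposes the equation $A_{st} A_{ts} = 4\cos^{2}(\pi/m)$, while a ridge of order $\infty$ imposes the open inequality $A_{st} A_{ts} \geq 4$.

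Second, a parameter count — combining the $3f$ facet-hyperplane dimensions, the $e$ invariants $A_{st}A_{ts}$ at adjacent facet pairs, the rigidity imposed by (V2) at ridges of order $2$ and of finite order $>2$, the quotient by the $\PGL(V)$-action with $k(P)$-dimensional stabilizer, and the $|S|-1$-dimensional rescaling — specialized to truncation polyhedra (with $f = 4+k$ and $e = 6+3k$ after $k$ truncations from a tetrahedron) yields $\dim \C(\hP) = e_+ - 3$. The decomposition into finitely many open cells then follows from the real semi-algebraic structure on the space of admissible Cartan matrices: the discrete data labelling components are the sign patterns in (V1) and the strict-versus-equality alternatives in (V2), each a finite choice. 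After fixing these data, the remaining free parameters parametrize an open cell of dimension $e_+ - 3$, giving the claimed finite union of open cells. A complementary route is to establish orderability of the Coxeter orbifold (using the truncation order on facets together with the no-bad-prismatic-$3$-circuit hypothesis to control order-$2$ edges) and invoke Choi's smoothness theorem (Theorem \ref{thm:choi}); fibering the result over the moduli of projective polytopes of the combinatorial type of $P$ again produces the dimension $e_+ - 3$.

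The third and most delicate step is the connectedness of $\C(\hP)$ when $\hP$ admits a hyperbolic structure. I would proceed by induction on the number $k$ of vertex truncations: the base case $k = 0$ is the tetrahedron, where $\C(\hP)$ is known to be connected (cf.\ Kac--Vinberg \cite{KacVin}); for the inductive step, a new truncation introduces a totally geodesic triangular $2$-suborbifold of $\hP$, and the bending construction (Lemma \ref{lem:JM} and Theorem \ref{thm:JM}) along this suborbifold, combined with continuous deformation of the Cartan entries at order-$>2$ ridges, should sweep out a neighborhood of the hyperbolic structure inside $\C(\hP)$. The main obstacle is to show that this sweeping reaches every convex projective structure on $\hP$ without crossing a wall between the discrete components of Step 2 and without leaving the properly convex locus. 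This is precisely where the no-bad-prismatic-$3$-circuit hypothesis and the assumption $e_+ > 3$ enter essentially: the former rules out discrete obstructions at truncated vertices by forbidding the dangerous sign configurations in Vinberg's conditions, while the latter guarantees enough deformation parameters to connect all structures to the hyperbolic basepoint.
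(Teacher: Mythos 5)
This survey does not actually prove Theorem \ref{thm:Marquis}; it only states it with a citation to \cite{ecima_ludo}, so your proposal can only be measured against the argument in that reference. Your first two stages are in the right spirit: Marquis does work entirely inside Vinberg's framework of Theorem \ref{t:tits-vin}, parametrizing projective Coxeter polytopes by Cartan matrices modulo rescaling and the $\PGL(V)$-action. But two of your steps have genuine gaps. First, in stage two you assert that after fixing the discrete data (sign patterns and strict-versus-equality alternatives in (V1)--(V2)) ``the remaining free parameters parametrize an open cell of dimension $e_+-3$.'' That is the actual content of the theorem, not a consequence of semi-algebraicity: a semi-algebraic set with fixed sign data is a finite union of cells of possibly varying dimensions, not a single open cell, and nothing in your count rules out singular strata. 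The reference handles this by decomposing the truncation polyhedron along its truncation triangles into tetrahedral blocks, computing the moduli of each block \emph{explicitly} (Vinberg's complete classification of Coxeter simplices via the cyclic products $A_{s_1s_2}\cdots A_{s_ks_1}$ of the Cartan matrix), and then analyzing the gluing along the triangular $2$-faces by hand; the cell structure is read off from an explicit homeomorphism, not from general position. Your alternative route via Theorem \ref{thm:choi} only yields smoothness of the \emph{restricted} deformation space, presupposes $\C(\hP)\neq\varnothing$, and says nothing about the number of components or the global topology, so it cannot substitute.

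Second, the connectedness argument is where the proposal genuinely fails, and you concede as much: bending along the truncation triangles plus deformation of Cartan entries ``sweeps out a neighborhood of the hyperbolic structure,'' but connectedness is a global statement about all of $\C(\hP)$, and a local sweep near one basepoint cannot deliver it. There is no induction-plus-bending argument to complete here; in \cite{ecima_ludo} connectedness is a corollary of the same explicit global parametrization, namely the computation of the number $q$ of cells in terms of the edge orders of the prismatic $3$-circuits, together with the observation that the combinatorial conditions forcing $q>1$ are incompatible with Andreev's conditions for the existence of a hyperbolic structure. The hypotheses ``no bad prismatic $3$-circuit'' and $e_+>3$ enter there, not as a device to avoid ``crossing walls'' during a deformation. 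To repair your proof you would need to replace stage three entirely by carrying out the block decomposition and the explicit identification of each fiber of the gluing map, rather than arguing by deformation from the hyperbolic point.
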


The third author actually provided an explicit homeomorphism between $\C(\hat{P)}$ and the union of $q$ copies of $\mathbb{R}^{e_+ -3}$ when $\hP$ is a Coxeter truncation $3$-orbifold. Moreover, the integer $q$ can be computed easily in terms of the combinatorics and the edge orders.

\subsection{Near the hyperbolic structure}

\subsubsection{Restricted deformation spaces}

The first and second authors and Hodgson \cite{CHL} described the local restricted deformation space for a class of
Coxeter orbifolds arising from  {\em ideal}\index{polyhedron!ideal hyperbolic} hyperbolic polyhedra, i.e. polyhedra with all vertices on $\partial \mathbb{H}^3$. Note that a finite volume hyperbolic Coxeter obifold is unique up to hyperbolic isometries by Andreev’s theorem \cite{MR0259734,MR0273510} or Mostow-Prasad rigidity theorem \cite{mostow,prasad}.

\begin{theorem}[Choi-Hodgson-Lee \cite{CHL}]
Let $P$ be an ideal hyperbolic polyhedron whose dihedral angles are all equal to $\tfrac{\pi}{3}$. If $P$ is not a tetrahedron, then 
at the hyperbolic structure the restricted deformation space $\C_P(\hP)$ is smooth and of dimension $6$.
\end{theorem}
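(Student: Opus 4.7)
The plan is to adapt the Vinberg--Choi counting argument (Theorems~\ref{t:tits-vin} and \ref{thm:choi}) to the ideal setting. Fix a projective realization of $P$ in $\R\Pb^3$; each facet $s$ is then the zero locus of a linear functional $\alpha_s$, determined up to positive scaling. A point in $\C_P(\hP)$ near the hyperbolic structure corresponds to a family of reflections $\sigma_s = \Idd - \alpha_s \otimes b_s$ with the normalization $\alpha_s(b_s) = 2$, so each $b_s$ varies in an affine $3$-plane and yields $3f$ real parameters. By Vinberg's conditions (V1)--(V2), the only algebraic constraints imposed by the Coxeter presentation of $\pi_1^{orb}(\hP)$ are the order-three relations on pairs of adjacent facets, which reduce to the scalar equations
\begin{equation*}
A_{st}A_{ts} = \alpha_s(b_t)\,\alpha_t(b_s) = 1 \quad \text{for each ridge } s \cap t;
\end{equation*}
altogether this gives $e$ equations. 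The ideal vertices contribute no additional algebraic equations on $\C_P(\hP)$, since the cusp holonomies are allowed to deform freely in $\spm{4}$ so long as the Coxeter relations persist.

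The dimension count then follows from Euler's formula. The angle-sum condition $3\cdot \pi/3 = \pi$ at each ideal vertex forces every vertex of $P$ to be $3$-valent, so $P$ is simple and $2e = 3v$; combined with $v - e + f = 2$ this yields $v = 2(f-2)$ and $e = 3(f-2)$, whence the naive count gives $3f - e = 6$. Under the hypothesis that $P$ is not a tetrahedron, $P$ is also neither a simplex nor a cone over a polygon, since any ideal cone with $3$-valent apex would force the base to be a triangle and reduce $P$ to a tetrahedron. Hence the projective-automorphism correction $k(P)$ appearing in Theorem~\ref{thm:choi} vanishes, confirming the expected dimension $3f - e - e_2 - k(P) = 6$.

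The main obstacle is the smoothness claim, i.e., that the $e$ edge equations have linearly independent differentials at the hyperbolic Cartan matrix. Choi's proof of Theorem~\ref{thm:choi} proceeds by adding constraints one at a time along a facet ordering, but orderability typically fails for ideal $\pi/3$ polyhedra (the ideal cube being the simplest non-orderable example). In its place, I would argue cohomologically: the Zariski tangent space to $\C_P(\hP)$ at the hyperbolic point embeds into the group cohomology $H^1(W, \sl{4})$, where $W = \pi_1^{orb}(\hP)$ acts via the adjoint of the hyperbolic holonomy $\rho_0 : W \hookrightarrow \so{3}(\R) \hookrightarrow \PGL_4(\R)$. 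Under the decomposition $\sl{4} \cong \lieso{3} \oplus \mathfrak{p}$ as an $\so{3}(\R)$-module, Mostow--Prasad rigidity for the finite-volume hyperbolic orbifold $\hP$, together with the rigidity of its Euclidean $\tilde{A}_2$ cusp stabilizers, annihilates the $\lieso{3}$ summand. A Mayer--Vietoris computation over a cusp truncation of $\hP$ then evaluates the $\mathfrak{p}$-contribution: the bulk term combined with the peripheral term coming from the $\tilde{A}_2$ vertex groups yields dimension $6$. Matching this upper bound on the tangent space against the $3f - e = 6$ parameter count, the implicit function theorem produces a smooth $6$-manifold structure on $\C_P(\hP)$ at the hyperbolic point.
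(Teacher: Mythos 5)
The survey you are working from does not actually prove this statement --- it only cites Choi--Hodgson--Lee \cite{CHL} --- so I am judging your argument on its own terms. Your reduction is correct and is the right framework: with $P$ projectively fixed, the Vinberg data is the family $(b_s)_{s}$ with $\alpha_s(b_s)=2$, giving $3f$ parameters; each ridge of order $3$ imposes the single equation $\alpha_s(b_t)\alpha_t(b_s)=4\cos^2(\pi/3)=1$; the horospherical link of an ideal vertex is a Euclidean polygon with all angles $\pi/3$, hence a triangle, so $P$ is simple and Euler's formula gives $3f-e=6$; and $e_2=0$, $k(P)=0$ since a cone with $3$-valent apex is a tetrahedron. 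You also correctly identify that Theorem~\ref{thm:choi} cannot be quoted because orderability fails (e.g.\ for the ideal cube), so the whole content of the theorem is the linear independence of the $e$ edge-equation differentials at the hyperbolic point.

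That independence is exactly what your proposal does not establish. The cohomology group $H^1(W,\sl{4})_{\rho_0}$ controls the Zariski tangent space to the character variety, hence to the \emph{unrestricted} deformation space $\C(\hP)$, in which the polytope and the projective shapes of the $(3,3,3)$ cusp cross-sections are free to move; a naive Vinberg count of that unrestricted data gives $3f-9$ parameters, which exceeds $6$ as soon as $P$ is not a tetrahedron, so no Mayer--Vietoris evaluation of the $\mathfrak{p}$-summand can return the number $6$. To bound the tangent space of $\C_P(\hP)$ you must cut $H^1$ down to the polytope-fixing directions, i.e.\ to the kernel of the map to the infinitesimal deformations of the functionals $\alpha_s$ --- and computing the dimension of that kernel is precisely the rank computation for the Jacobian of the $e$ functions $(b_s)_s\mapsto\alpha_s(b_t)\alpha_t(b_s)$ that you were trying to bypass. (Two smaller points: Mostow--Prasad rigidity is a global uniqueness statement and does not by itself give $H^1(W,\lieso{3})=0$ for a cusped orbifold; that vanishing needs the Garland--Raghunathan/half-lives-half-dies argument together with the rigidity of the $\tilde{A}_2$ cusp stabilizers, cf.\ Theorem~\ref{thm:GR}. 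And your final appeal to the implicit function theorem is valid only once the upper bound of $6$ on the tangent space is in hand.) What is missing, and what \cite{CHL} supplies by a direct linear-algebraic argument exploiting the $3$-valence and connectivity of $P$ at the explicit hyperbolic Cartan matrix, is a proof that those $e$ differentials are independent; as written, your argument asserts the answer at exactly the step where the theorem lives.
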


\subsubsection{Weakly orderable Coxeter orbifolds}

The first and second authors \cite{CL15} found a \lq\lq large\rq\rq\, class of Coxeter $3$-orbifolds whose local deformation spaces are understandable. A Coxeter $3$-orbifold $\hat{P}$ is {\em weakly orderable}\index{Coxeter!orbifold!weakly orderable}\index{weakly orderable Coxeter orbifold} if the facets of $P$ can be ordered so that each facet contains at most $3$ edges of order $2$ in a facet of higher index. Note that Greene \cite{MR3322033} gave an alternative (cohomological) proof of the following theorem. 

\begin{theorem}[Choi-Lee \cite{CL15}, Greene \cite{MR3322033}]\label{thm:weakly}
Let $\hP$ is a closed hyperbolic Coxeter $3$-orbifold. If $\hP$ is weakly orderable, then at the hyperbolic structure $\C(\hP)$ is smooth and of dimension $e_+ - 3$.
\end{theorem}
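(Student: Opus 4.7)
The plan is to compute the Zariski tangent space to $\C(\hP)$ at the hyperbolic structure via Vinberg's Cartan matrix description, to justify the expected dimension using the weakly orderable hypothesis, and to upgrade the result to smoothness via the openness of Vinberg's conditions.

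By the Koszul--Benoist theorem (Theorem \ref{thm-Kos_Ben}) and the Ehresmann--Thurston principle (Theorem \ref{thm:EhTh1}), near $[\rho_{\text{geo}}]$ the space $\C(\hP)$ is locally homeomorphic to a neighborhood of $[\rho_{\text{geo}}]$ in the character variety $\chi(\G, \SL^\pm_4(\R))$ where $\G = \pi_1^{orb}(\hP)$. Using Vinberg's parametrization, I would model a projective Coxeter polyhedron as the data of pairs $(\alpha_s, b_s)\in V^*\times V$ with $\alpha_s(b_s) = 2$, modulo the rescaling $(\alpha_s, b_s) \sim (\lambda_s \alpha_s, \lambda_s^{-1} b_s)$ and the global $\PGL_4(\R)$-action. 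Counting free parameters: each reflection contributes $2(d+1) - 2 = 6$ parameters for $d = 3$; each order-$2$ edge imposes the two linear constraints $\alpha_s(b_t) = \alpha_t(b_s) = 0$; each order-$>2$ edge imposes the single equation $\alpha_s(b_t)\alpha_t(b_s) = 4\cos^2(\pi/M_{st})$; and $\dim \PGL_4 = 15$. The expected dimension is therefore $6f - 2e_2 - e_+ - 15$. Since every vertex of a closed hyperbolic Coxeter $3$-orbifold is trivalent (its link is a spherical Coxeter triangle), the identity $V - E + F = 2$ combined with $3V = 2E$ yields $e_+ + e_2 = e = 3f - 6$, and substituting reduces the expected dimension to $e_+ - 3$.

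To promote this naive count to the actual Zariski tangent space dimension, I must show that the combined system of edge constraints and $\PGL_4$-gauge has full rank. This is where the weakly orderable hypothesis enters. Ordering the facets $s_1, \dotsc, s_f$ so that each $s_i$ contributes at most three order-$2$ edges to higher-indexed facets, I would proceed inductively: at stage $i$, the new order-$2$ constraints $\alpha_{s_i}(b_{s_j}) = \alpha_{s_j}(b_{s_i}) = 0$ for $j > i$ with $M_{s_i s_j} = 2$ number at most $2 \cdot 3 = 6$. Viewed as linear equations in the $6$-dimensional variable $(\alpha_{s_i}, b_{s_i})$ at the hyperbolic reference, they are linearly independent because the hyperbolic outward normals $\alpha_{s_j}^0$ for distinct adjacent facets $s_j$ are independent vectors in $V^*$. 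The bound $k_i \leq 3$ keeps the total constraint count per stage within the rank of the available gauge, preventing accidental dependencies.

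The main obstacle is smoothness, i.e.\ verifying that every infinitesimal deformation integrates to a genuine projective one. For this I would use the openness of Vinberg's axioms (V1), (V2) and of the proper-convexity condition underlying Koszul's contribution to Theorem \ref{thm-Kos_Ben}. A sufficiently small perturbation of the hyperbolic Cartan matrix that preserves the Coxeter combinatorics of $\hP$ automatically satisfies (V1), (V2), and by Theorem \ref{t:tits-vin} produces a properly convex projective Coxeter orbifold diffeomorphic to $\hP$. Hence $\C(\hP)$ is locally a smooth manifold of dimension exactly $e_+ - 3$ at the hyperbolic structure.
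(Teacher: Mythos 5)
The survey states this theorem with citations to Choi--Lee and Greene but reproduces no proof, so there is no in-paper argument to compare against; your proposal is visibly an attempt at the Choi--Lee route (Vinberg's $(\alpha_s,b_s)$ coordinates, a parameter count, transversality driven by the facet ordering), and the bookkeeping is correct: $6f-2e_2-e_+-15=e_+-3$ once one knows a compact hyperbolic Coxeter polyhedron is simple, so that $e_2+e_+=3f-6$. The genuine gap is in the step that carries all the weight: the claim that the linearized constraint system has full rank at the hyperbolic point. As written, your induction treats $\alpha_{s_i}(b_{s_j})=\alpha_{s_j}(b_{s_i})=0$ as equations ``in the $6$-dimensional variable $(\alpha_{s_i},b_{s_i})$'', but each linearizes to $(\delta\alpha_{s_i})(b_{s_j})+\alpha_{s_i}(\delta b_{s_j})=0$ and so couples the data of $s_i$ with that of $s_j$; an induction on the ordering must organize this coupling explicitly, which your sketch does not do. More seriously, the asserted independence of the covectors $\alpha_{s_j}$ for the (at most three) relevant facets adjacent to $s_i$ is not automatic: three supporting hyperplanes of facets adjacent to a common facet can a priori share a projective line, and excluding this requires an argument from the hyperbolic structure (nondegeneracy and signature of the Lorentzian Gram matrix). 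That independence lemma is precisely the technical heart of the cited proof and cannot be waved through. Your per-stage count also omits the $e_+$ equations $\alpha_s(b_t)\alpha_t(b_s)=4\cos^2(\pi/M_{st})$ (whose differentials are nonzero at the hyperbolic point only because both factors are then strictly negative --- the very reason order-$2$ edges must be handled separately) and the $15$-dimensional gauge; both must enter the joint rank computation.

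A secondary confusion concerns your last paragraph: smoothness is not supplied by the openness of Vinberg's conditions. If the full-rank claim were established, the implicit function theorem would already make the solution variety a smooth manifold of the expected dimension, and after quotienting by the locally free $\PGL_4(\R)$-action (locally free because $\rho_{\mathrm{geo}}$ is irreducible) every infinitesimal deformation integrates; there is no residual integrability issue. What openness of (V1)--(V2), Theorem \ref{t:tits-vin}, and the Koszul--Benoist/Ehresmann--Thurston package actually buy you is the identification of this local solution manifold with a neighborhood of the hyperbolic point in $\C(\hP)$ --- a step you do need, but not the one where smoothness lives. So the architecture of the argument is right and matches the cited proof, but the transversality lemma, which is where weak orderability and hyperbolicity are genuinely used, is missing rather than merely routine.
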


For example, if $P$ is a truncation polyhedron, then $\hP$ is always weakly orderable.
The cube is not a truncation polyhedron, but every closed hyperbolic Coxeter $3$-orbifold arising from the cube is weakly orderable. 
On the other hand, there exist closed hyperbolic Coxeter $3$-orbifolds arising from the dodecahedron which are not weakly orderable.
However, almost all the closed hyperbolic Coxeter $3$-orbifolds arising from the dodecahedron are weakly orderable:

\begin{theorem}[Choi-Lee \cite{CL15}]
Let $P$ be a simple\footnote{A polyhedron $P$ is {\em simple}\index{polyhedron!simple}\index{simple polyhedron} if each vertex of $P$ is adjacent to exactly three edges.} polyhedron. Suppose that $P$ has no prismatic $3$-circuit and has at most one prismatic $4$-circuit.
Then 
\begin{equation*}
\lim_{m \rightarrow \infty} \frac{ | \{ \text{weakly orderable, closed hyperbolic Coxeter $3$-orbifolds $\hP$ with
edge order $\leq m$} \} | }{
| \{ \text{closed hyperbolic Coxeter $3$-orbifolds $\hP$  with
edge order $\leq m$} \}|  } = 1
\end{equation*}
\end{theorem}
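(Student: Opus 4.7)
The plan is to parametrize closed hyperbolic Coxeter orbifold structures on $P$ by the set $E_2 \subseteq E(P)$ of order-$2$ edges, isolate the dominant $E_2$ in the asymptotic count $N(m)$ of hyperbolic labelings with orders $\leq m$, and show that every dominant $E_2$ is automatically weakly orderable. Then $W(m)$ and $N(m)$ agree to leading order in $m$, yielding the ratio $1$. The key combinatorial input is that the hypothesis ``no prismatic $3$-circuit'' forces an auxiliary graph on the facets of $P$ to be triangle-free.

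First, Andreev's condition (A1) at a trihedral vertex $v$ with edge orders $a,b,c$ requires $\tfrac{1}{a}+\tfrac{1}{b}+\tfrac{1}{c}>1$, which forces at least one of the three edges at $v$ to be of order $2$; hence $E_2$ must be a vertex cover of $P$. Partition $V(P)=V_1\sqcup V_2\sqcup V_3$ by the number of incident $E_2$-edges, and set $E_3:=E(P)\smallsetminus E_2$. For fixed $E_2$, each edge of $E_3$ carries an order in $\{3,\dots,m\}$, constrained only at $V_1$-vertices (where the pair $(a,b)$ of orders on the two incident $E_3$-edges must satisfy $\tfrac{1}{a}+\tfrac{1}{b}>\tfrac{1}{2}$, leaving finitely many pairs), while the single $E_3$-edge at a $V_2$-vertex is unconstrained. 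Writing $f(E_2)$ for the number of $E_3$-edges with both endpoints in $V_2$, each ``free'' edge contributes $m-2$ choices and each constrained one contributes $O(1)$, so $N_{E_2}(m)=\Theta(m^{f(E_2)})$ when the remaining Andreev conditions are compatible. Using simplicity ($|E(P)|=\tfrac{3|V(P)|}{2}$) and handshake counting one obtains $f(E_2) \leq \tfrac{|V(P)|}{2}$, with equality exactly when $V_1=V_3=\varnothing$, i.e., when $E_3$ is a perfect matching of $P$. Under the hypothesis of at most one prismatic $4$-circuit, condition (A3) reduces to the single combinatorial constraint that the matching meets this circuit, giving
\[
N(m) \;=\; M(P)\,(m-2)^{|V(P)|/2} \;+\; O(m^{|V(P)|/2-1}),
\]
where $M(P)$ is the number of such valid perfect matchings.

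The central step is to show every dominant $E_2$ yields a weakly orderable orbifold. Form the graph $G_2$ on the set of facets of $P$ with an edge between $f$ and $f'$ whenever $f\cap f'\in E_2$; this is a subgraph of the dual graph $G^*$ of $P$, and since $P$ is a $3$-polytope $G^*$ is planar, so $G_2$ is planar. I claim $G_2$ is triangle-free. A triangle in $G^*$ is a triple $\{f_1,f_2,f_3\}$ of pairwise-adjacent facets; by simplicity of $P$, the three shared edges $e_{ij}=f_i\cap f_j$ either all meet at a common vertex of $P$ (a ``vertex triangle'') or are pairwise disjoint (a prismatic $3$-circuit), since if two of them share a vertex $u$ then $u$ lies on all three facets and hence on all three edges. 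The hypothesis excludes the prismatic case, so every triangle of $G^*$ is a vertex triangle, and such a triangle appears in $G_2$ iff all three edges at the common vertex lie in $E_2$, i.e., iff that vertex belongs to $V_3$. Since $V_3=\varnothing$ in the dominant regime, $G_2$ is triangle-free planar, so by Euler's formula $|E(G_2)|\leq 2|V(G_2)|-4$, forcing average degree $<4$ and some facet of degree $\leq 3$. This property is inherited by every subgraph (still triangle-free planar), so $G_2$ is $3$-degenerate; reversing a degeneracy ordering of facets produces an ordering in which each facet has at most three $E_2$-edges leading to a later facet, which is exactly the weak orderability condition.

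It follows that $W(m)\geq M(P)\,(m-2)^{|V(P)|/2}$, which matches the leading term of $N(m)$, and hence $W(m)/N(m)\to 1$. The hardest step is the triangle-free claim for $G_2$; once the dichotomy between vertex triangles and prismatic $3$-circuits in $G^*$ is isolated and one observes that the dominant regime kills vertex triangles, Euler's formula for triangle-free planar graphs does the rest. The ``at most one prismatic $4$-circuit'' hypothesis plays only an ancillary role of keeping (A3) as a single combinatorial constraint on the matching so that $M(P)$ is positive; all the conceptual weight lies on the ``no prismatic $3$-circuit'' hypothesis.
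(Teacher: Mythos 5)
The survey states this theorem only by citation to Choi--Lee \cite{CL15} and reproduces no proof, so your argument has to stand on its own. It does, and it follows what is essentially the strategy of the original paper: reduce weak orderability to $3$-degeneracy of the graph $G_2$ of order-$2$ adjacencies between facets; show via Andreev's condition (A1) and the handshake bound $f(E_2)\le |V_2|/2\le |V(P)|/2$ that the asymptotically dominant labelings are exactly those for which the edges of order $\ge 3$ form a perfect matching (so $V_1=V_3=\varnothing$); and use the dichotomy ``vertex triangle versus prismatic $3$-circuit'' for triangles in the dual graph of a simple $3$-polytope to conclude that $G_2$ is triangle-free planar, hence $3$-degenerate by Euler's formula. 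Each of these steps is correct: (A1) does force $E_2$ to meet every vertex, the finiteness of the constrained pairs at $V_1$-vertices is right ($1/a+1/b>1/2$ with $a,b\ge 3$ has only five solutions), the equivalence between weak orderability and an ordering of facets with forward $G_2$-degree at most $3$ is exactly $3$-degeneracy, and the triangle-free claim correctly isolates where the hypothesis ``no prismatic $3$-circuit'' enters.

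Two points need shoring up. The load-bearing one is your assertion that $M(P)>0$: if no perfect matching of the edge graph of $P$ contained an edge of the prismatic $4$-circuit, then both $W(m)$ and $N(m)$ would drop to order $m^{|V(P)|/2-1}$ and your leading-term comparison would say nothing. Positivity is true but requires an argument: the graph of a simple $3$-polytope is cubic and $3$-connected, hence bridgeless, so Petersen's theorem gives a perfect matching, and Sch\"onberger's strengthening (every edge of a bridgeless cubic graph lies in a perfect matching) gives one meeting the unique prismatic $4$-circuit when it exists. The second point is the tetrahedron: it is simple with no prismatic $3$- or $4$-circuit, so it satisfies your hypotheses, but Andreev's theorem as stated excludes it and your counting scheme does not literally apply. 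That case is harmless --- a simplicial polyhedron is orderable, hence every Coxeter orbifold structure on it is weakly orderable and the ratio is identically $1$ for large $m$ --- but it should be disposed of separately rather than silently absorbed into the Andreev-based count.
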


A result similar to Theorem \ref{thm:weakly} is true for higher dimensional closed Coxeter orbifolds $\hP$ whose underlying polytope $P$ is a {\em truncation polytope}\index{truncation polytope}\index{polytope!truncation}:

\begin{theorem}[Choi-Lee \cite{CL15}, Greene \cite{MR3322033}]\label{thm:truncation}
If $\hat{P}$ be a closed hyperbolic Coxeter orbifold arising from a truncation polytope $P$, then at the hyperbolic structure $\C(\hP)$ is smooth and of dimension $e_+ - d$.
\end{theorem}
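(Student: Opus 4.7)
The plan is to use Ehresmann--Thurston to reduce to a group-cohomology calculation, then to exploit the recursive construction of truncation polytopes by induction on the number of truncations.

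\textbf{Reduction to cohomology.} By Theorem~\ref{thm:EhTh1}, a neighborhood of the hyperbolic point in $\C(\hat{P})$ is homeomorphic to a neighborhood of $[\rho_{\mathrm{geo}}]$ in the character variety $\chi(\Gamma, \PGL_{d+1}(\R))$, where $\Gamma = \pi_1^{orb}(\hat P)$. Weil's construction identifies the Zariski tangent space with $H^1(\Gamma, \mathfrak{g})$ and the obstruction space with $H^2(\Gamma, \mathfrak{g})$, where $\mathfrak{g} = \sl{d+1}$ carries the adjoint action of $\rho_{\mathrm{geo}}$. Split $\mathfrak{g} = \lieso{d}\oplus\mathfrak{p}$ by the Killing-orthogonal decomposition coming from $\rho_{\mathrm{geo}}(\Gamma)\subset\so{d}$. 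Mostow--Prasad rigidity, applied to a torsion-free finite-index subgroup of $\Gamma$ together with a transfer argument, gives $H^1(\Gamma, \lieso{d}) = 0$, so the task reduces to showing $\dim H^1(\Gamma, \mathfrak{p}) = e_+ - d$ and vanishing of the quadratic obstruction.

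\textbf{Induction on truncations.} Write $P$ as obtained from $\Delta^d$ by a sequence of vertex truncations, and assume the theorem inductively for $\hat P'$, where $P$ is produced from $P'$ by truncating a single vertex $v$. Let $F$ be the new facet; $F$ meets the $d$ old facets $s_1,\dots,s_d$ incident to $v$ along $d$ new ridges of orders $m_1,\dots,m_d$. By Theorem~\ref{t:tits-vin}, $\Gamma = \pi_1^{orb}(\hat P)$ is obtained from $\Gamma' = \pi_1^{orb}(\hat P')$ by adjoining one generator $r_F$ with $r_F^2 = 1$ and relations $(r_F r_{s_i})^{m_i} = 1$, so that $\Gamma = \Gamma' *_{W_v} W_v^+$, where $W_v$ is the finite spherical vertex-link group on $\{s_1,\dots,s_d\}$ and $W_v^+$ is the Coxeter group on $\{s_1,\dots,s_d, F\}$ (which is also finite, by compactness of $\hat P$ at the new vertices of $F$). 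The Mayer--Vietoris sequence in group cohomology with $\mathfrak{p}$-coefficients, together with $H^i(W_v,\mathfrak{p}) = H^i(W_v^+,\mathfrak{p}) = 0$ for $i \geq 1$ (characteristic zero) and $H^0(\Gamma,\mathfrak{p}) = H^0(\Gamma',\mathfrak{p}) = 0$ (strong irreducibility of $\rho_{\mathrm{geo}}$, Proposition~\ref{prop-Benoist}), yields
\[
\dim H^1(\Gamma, \mathfrak{p}) - \dim H^1(\Gamma', \mathfrak{p}) \;=\; \dim \mathfrak{p}^{W_v} - \dim \mathfrak{p}^{W_v^+}.
\]
A calculation analogous to the three-dimensional analysis of Choi--Lee and Greene identifies the right-hand side with $|\{i : m_i > 2\}|$: each high-order new ridge contributes one independent $(-1)$-eigenvector of $r_F$ inside $\mathfrak{p}^{W_v}$, while order-$2$ ridges force $r_F$ and $r_{s_i}$ to commute and so contribute nothing.

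\textbf{Matching, base case, and smoothness.} The right-hand side matches the change in $e_+ - d$ under the truncation, since the $d$ new ridges contribute to $e_+$ exactly when their order exceeds $2$, and $d$ itself is unchanged. The induction is seeded by $P = \Delta^d$: a direct case-check using the Lann\'er classification of compact hyperbolic Coxeter $d$-simplices verifies that $e_+ = d$ and $H^1(\Gamma_{\Delta^d},\mathfrak{p}) = 0$. Smoothness at $\rho_{\mathrm{geo}}$ follows by exhibiting an explicit slice of dimension $e_+ - d$ through the hyperbolic point: at each new high-order ridge one freely varies the Vinberg Cartan coefficient $A_{s_i F} A_{F s_i}$ subject to conditions (V1)--(V2), producing a projective Coxeter polytope (Theorem~\ref{t:tits-vin}) whose holonomy remains in $\C(\hat P)$ by Theorem~\ref{thm-Kos_Ben}. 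The main anticipated obstacle is the cohomological bookkeeping in the inductive step---computing the $(-1)$-eigenspace of $r_F$ inside $\mathfrak{p}^{W_v}$ and matching its dimension to $|\{i : m_i > 2\}|$ when the edge orders at the vertex link are mixed---which requires explicit knowledge of how finite spherical Coxeter groups decompose the traceless symmetric space $\mathfrak{p} \cong \mathrm{Sym}^2_0(\R^{d+1})$ and of how adjoining a single reflection modifies these invariants.
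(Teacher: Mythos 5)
The survey itself offers no proof of this statement---it is quoted from Choi--Lee and Greene---so I can only judge your argument on its own terms, and the inductive step rests on a false premise. You take $W_v$ (the group on the $d$ facets through the truncated vertex) and $W_v^+$ (on those facets plus $F$) to be \emph{finite} spherical groups. For a closed hyperbolic Coxeter orbifold whose underlying polytope is a truncation polytope this is exactly wrong: the truncating hyperplane $F$ exists as a facet of a compact hyperbolic polytope precisely because $v$ is hyperideal, so $W_v$ is an infinite cocompact hyperbolic reflection group of rank $d$ (for $d=3$ this is forced by Andreev's condition (A2): the facets $s_1,s_2,s_3$ form a prismatic $3$-circuit, hence $\theta_{12}+\theta_{23}+\theta_{31}<\pi$ and $W_v$ is a hyperbolic triangle group), and $W_v^+\supset W_v$ is infinite as well. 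Three things then collapse. First, $H^i(W_v,\mathfrak{p})$ and $H^i(W_v^+,\mathfrak{p})$ do not vanish for $i\geq 1$, so your Mayer--Vietoris count of $\dim H^1(\Gamma,\mathfrak{p})-\dim H^1(\Gamma',\mathfrak{p})$ is unjustified; in fact the interesting classes in $H^1(\Gamma,\mathfrak{p})$ arise exactly from the nontrivial cohomology of these infinite amalgamating subgroups (this is the bending phenomenon of Lemma \ref{lem:JM}, not a correction by invariants of finite groups). Second, since $W_v$ is infinite, the vertex $v$ is an undesirable face of $P'$, so $\hat{P}'$ is a noncompact orbifold and \emph{not} a closed hyperbolic Coxeter orbifold; the inductive hypothesis therefore cannot be applied to it. Third, the base case is also false: compact hyperbolic Coxeter $d$-simplices exist only for $d\leq 4$ (Lann\'er), and among those the ones with cyclic Coxeter diagram have $e_+=d+1$ and a one-dimensional deformation space, not $e_+=d$ and $H^1=0$. (The abstract amalgam $\Gamma=\Gamma'\ast_{W_v}W_v^+$ itself is fine, by Theorem \ref{t:tits-vin}; it is the finiteness and the resulting vanishing that fail.)

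A secondary but real error: your smoothness slice ``freely varies $A_{s_iF}A_{Fs_i}$,'' but for a fixed Coxeter orbifold condition (V2) pins each such product to $4\cos^2(\pi/m_{s_iF})$; changing it changes the rotation angle $\sigma_{s_i}\sigma_F$ and hence the orbifold, so such a path leaves $\C(\hat{P})$. The genuine Vinberg parameters are the ratios $A_{st}/A_{ts}$ (equivalently the cyclic products of the Cartan matrix around circuits of the Coxeter graph) modulo the diagonal rescalings, together with the shape of the polytope, and counting those is where $e_+-d$ actually comes from. For reference, the cited proofs avoid your trap in two different ways: Choi--Lee observe that every truncation polytope is weakly orderable, order the facets accordingly, and show by an explicit rank computation that the Jacobian of Vinberg's defining equations is surjective (giving smoothness and the dimension in one stroke); Greene computes $H^1(\Gamma,\mathfrak{g})$ cohomologically, but decomposes over the nerve of the \emph{finite parabolic} subgroups (the Davis-complex picture), where the local groups really are finite and their higher cohomology really does vanish. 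If you want to keep a cut-along-the-truncation-facets induction, you must instead carry along the nontrivial $H^1$ of the $(d-1)$-dimensional cocompact reflection groups $W_{F}$, which is where the actual work lies.
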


\par{
We remark that if $\hP$ is not weakly orderable, then Theorem \ref{thm:weakly} is not true anymore: Let $m$ be a fixed integer greater than $3$. Consider the compact hyperbolic Coxeter polyhedron $P_1$ shown in Figure \ref{fig:counterex} (A).
Here, if an edge is labelled $m$, then its dihedral angle is $\tfrac{\pi}{m}$. Otherwise, its dihedral angle is $\tfrac{\pi}{2}$.
Let $\hP_1$ be the corresponding hyperbolic Coxeter $3$-orbifold. Then $e_+ -3=0$, but $\C(\hP_1)=\R$ (see Choi-Lee \cite{CL15}).
Of course $\hP_1$ is not weakly orderable, since every facet in $\hP_1$ contains four edges of order $2$.
}
\\
\par{
There is also a compact hyperbolic Coxeter $4$-polytope $P_2$ such that $\C(\hP_2)$ is not homeomorphic to a manifold. 
The underlying polytope $P_2$  is the product of two triangles and the Coxeter graph of $\hP_2$ is shown in Figure \ref{fig:counterex} (B).
}

\begin{figure}[ht!]
\labellist
\small\hair 2pt
\pinlabel $m$ at 245 308
\pinlabel $m$ at 245 168
\pinlabel $m$ at 245 23
\endlabellist
\centering
\subfloat[]{\includegraphics[height=25mm]{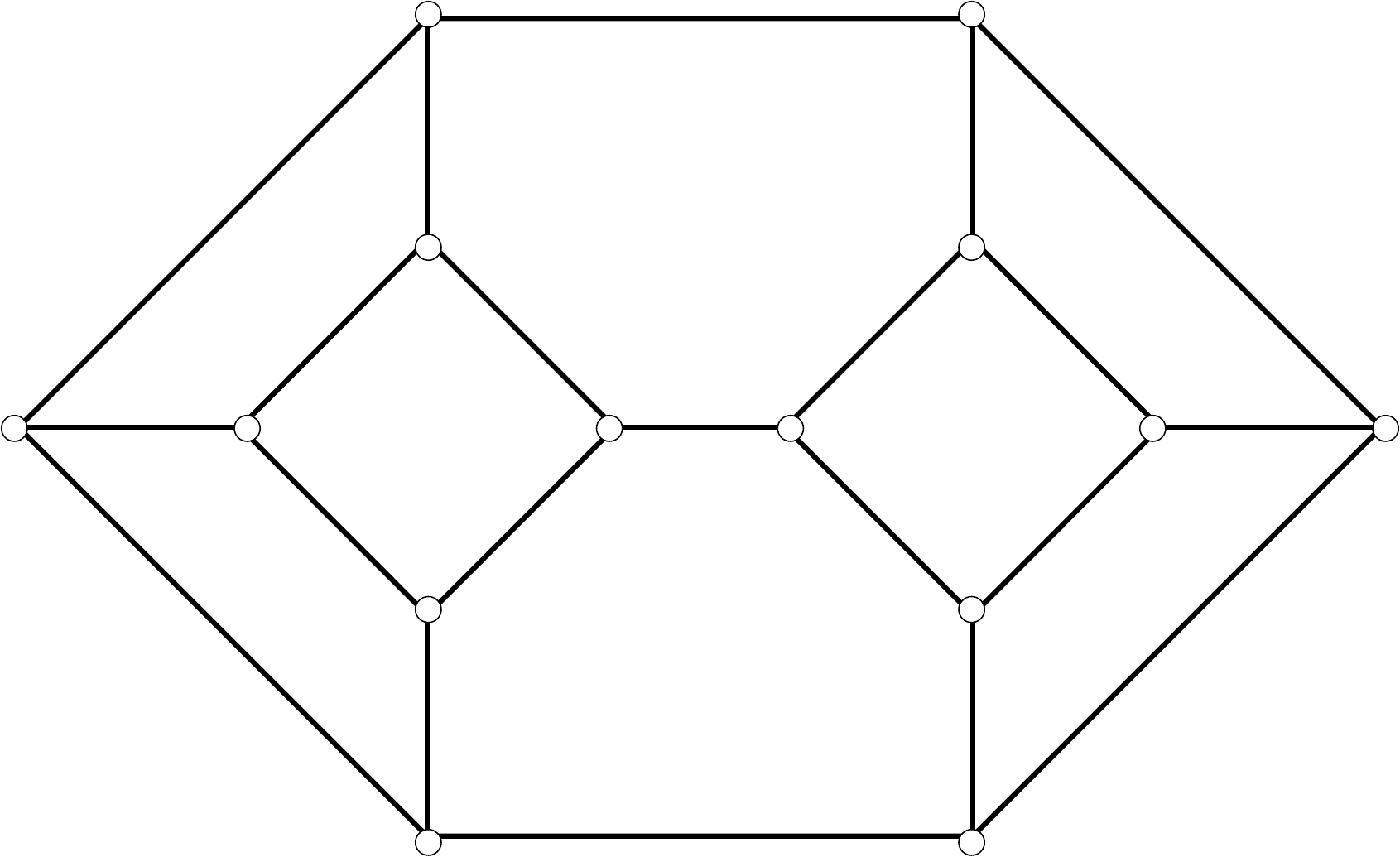}}
\quad\quad\quad\quad\quad\quad
\labellist
\small\hair 2pt
\pinlabel $5$ at 100 270
\pinlabel $5$ at 100 80
\pinlabel $5$ at 540 180
\endlabellist
\subfloat[]{\includegraphics[height=25mm]{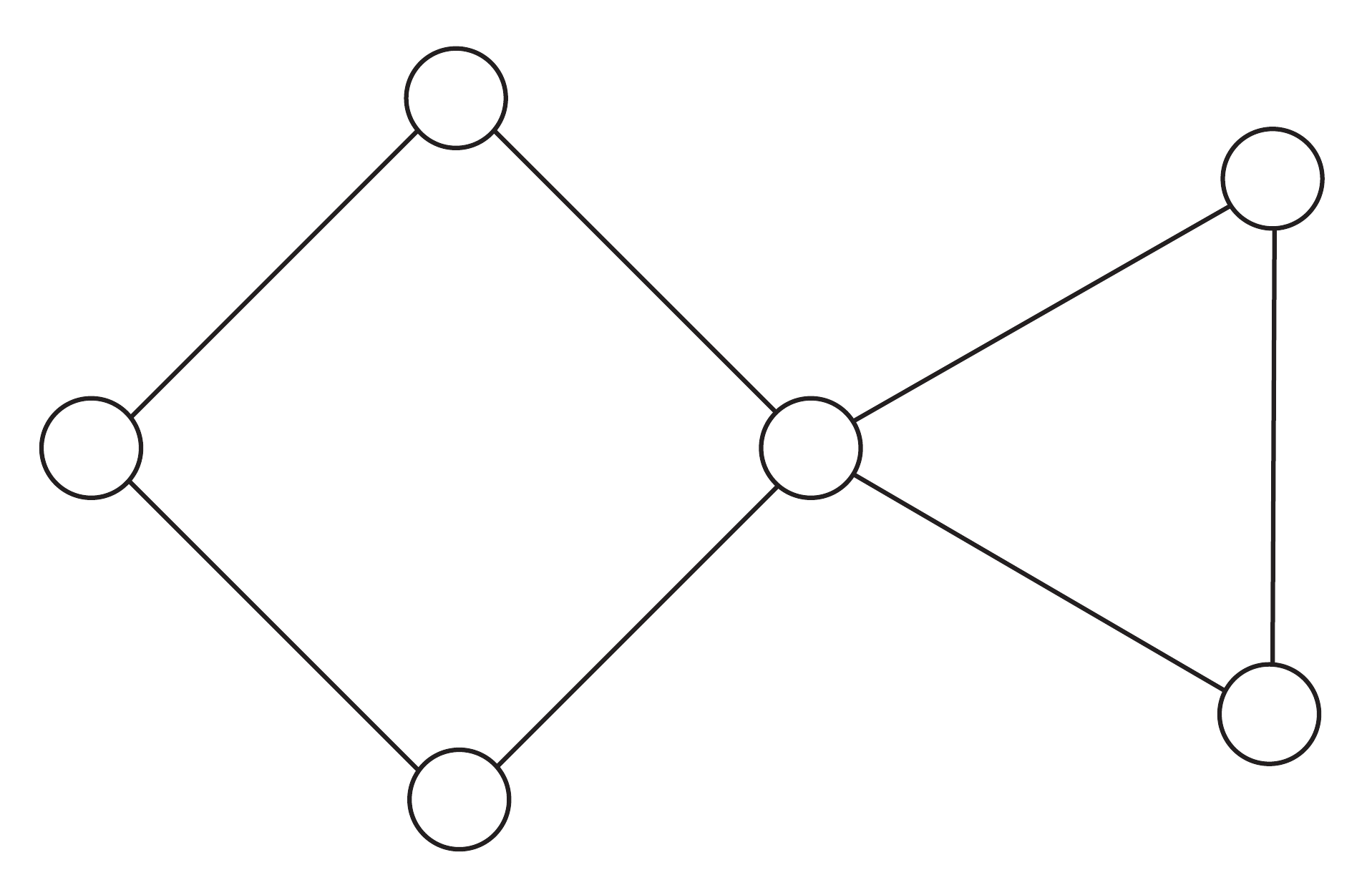}}
\caption{(A) Coxeter $3$-polytope $P_1$ \quad (B) Coxeter $4$-polytope $P_2$ }\label{fig:counterex}
\end{figure}

The space $\C(\hP_2)$  is homeomorphic to the following solution space (see Choi-Lee \cite{CL15}):
\begin{displaymath}
\mathcal{S} = \{(x,y) \in (\mathbb{R}_+)^2 \,\, |\,\, 8x-(5+\sqrt{5})y-(6-2\sqrt{5})xy-(5+\sqrt{5})x^2y+8xy^2 = 0 \},
\end{displaymath}
which is pictured in Figure \ref{fig:notmanifold}, and hence $\C(\hP_2)$ is not a manifold. Here the singular point $(1,1) \in \mathcal{S}$ corresponds to the hyperbolic structure in $\C(\hP_2)$.

\begin{figure}[ht]
\centering
\includegraphics[height=4cm]{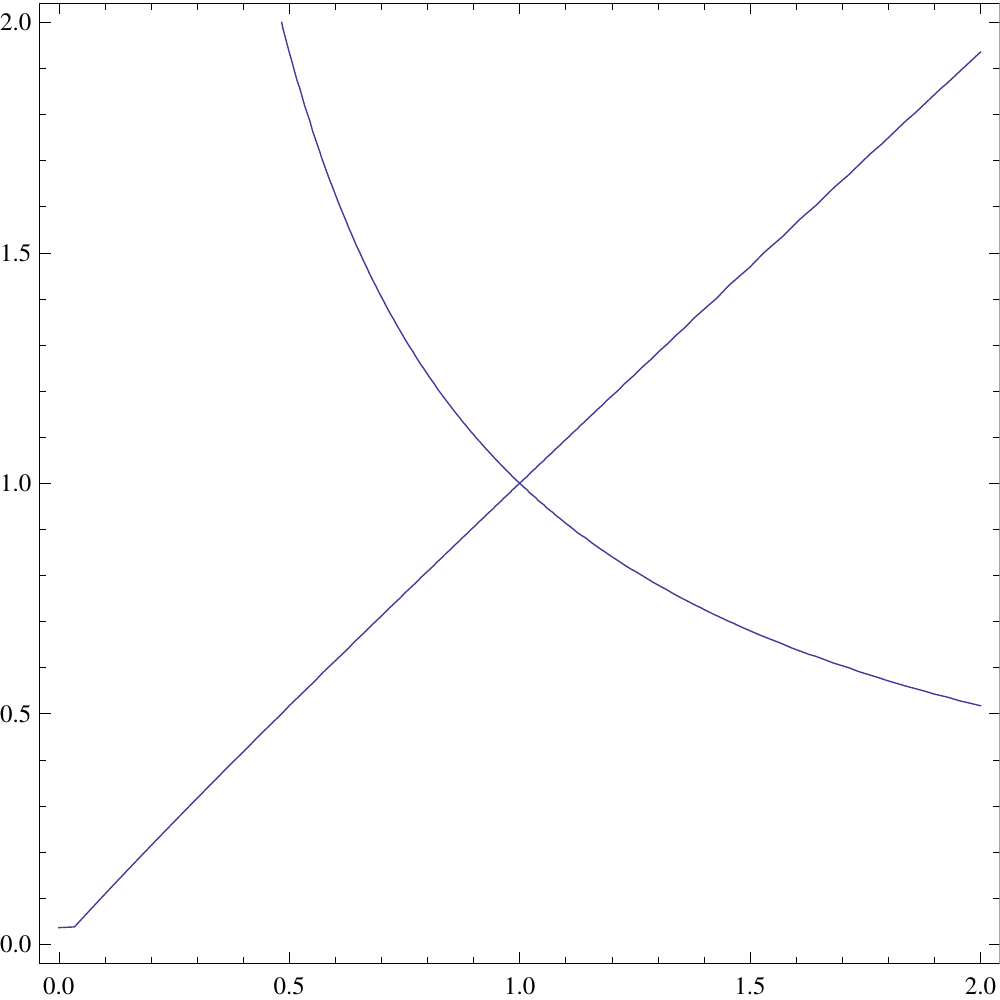}
\caption{$8x-(5+\sqrt{5})y-(6-2\sqrt{5})xy-(5+\sqrt{5})x^2y+8xy^2 = 0$}
\label{fig:notmanifold}
\end{figure}

\section{Infinitesimal deformations}

\subsection{Rigidity or deformability}

We first discuss some general theory.

\begin{de}
A representation $\rho:\Gamma \to G$ is {\em locally rigid}\index{rigid!locally}\index{locally!rigid}\index{representation!locall rigid} if the $G$-orbit of $\rho$ in $\Hom(\Gamma, G)$ contains a neighborhood of $\rho$
in $\Hom(\Gamma, G)$. Otherwise, $\rho$ is {\em locally deformable}\index{representation!locally deformable}\index{locally!deformable}.
\end{de}

If $\rho$ is locally deformable, then there exists a sequence of representations $\rho_n:\Gamma \to G$ converging to $\rho$ such that $\rho_n$ is not conjugate to $\rho$. We emphasise that $\rho_n$ has no reason to be discrete or faithful even if $\rho$ is so.

\begin{de}
Two representations $\rho, \rho': \Gamma \to G$ are of the {\em same type}\index{representation!same type} if 
for all $\g \in \G$, $\rho(\g)$ and $\rho'(\g)$ have the same type in the Jordan decomposition. 
A discrete faithful representation $\rho:\Gamma \to G$ is {\em globally rigid}\index{rigid!globally}\index{globally rigid}\index{representation!globally rigid}
if every discrete faithful representation in $\Hom(\Gamma, G)$ whose type is the same as $\rho$ is conjugate to $\rho$.
\end{de}
For example, two representations $\rho, \rho' : \Gamma \ra \mathrm{PGL}(2,\mathbb{R})$ are of the same type if and only if for each $\g \in \Gamma$, $\rho(\g)$ and $\rho'(\g)$ are both hyperbolic, parabolic or elliptic.

\subsection{What is an infinitesimal deformation?}

\par{
In this subsection, we explore the tangent space to a representation. In order to do that, we will combine differential geometry with algebraic geometry. Given a semi-algebraic set $X$ and a point $x\in X$, we say that $X$ is a {\em smooth manifold of dimension $r$ at $x$} if there is an open neighborhood $\U$ of $x$ such that the subset $\U\cap X$ is a smooth $r$-manifold. Such a point $x$ is said to be {\em smooth}\index{point!smooth}\index{smooth point}. 
}
\\
\par{
Assume now that $X$ is an algebraic set. We can define the Zariski tangent space at any point $x \in X$. If $X$ is a smooth manifold of dimension $r$ at $x$, then the Zariski tangent space at $x$ is of dimension at least $r$. Conversely, if the Zariski tangent space at $x$ is of dimension $r$ and there is a smooth $r$-manifold in $X$ containing $x$, then $X$ is a smooth manifold of dimension $r$ at $x$. A point $x \in X$ is said to be {\em singular}\index{singular!point}\index{point!singular} if there is a Zariski tangent vector which is not tangent to a smooth curve in $X$.
}

\subsection{First order}

\par{
Assume that $\rho_t$ is a smooth path in $\Hom(\G,G)$, i.e. for each $\gamma \in \Gamma$, a path $\rho_t(\gamma)$ in $G$ is smooth. Then there exists a map $u_1:\G \to \gg$ such that
$$
\forall \g\in \G,\quad  \rho_t(\g) = \exp(t\,u_1(\g) + O(t^2))\, \rho_0(\g).
$$
Since $\rho_t$ is a homomorphism, i.e. $\rho_t(\g \,\delta) = \rho_t(\g) \rho_t(\delta)$, it follows that $u_1$ is a 1-cocycle. Conversely, if $u_1 : \G \to \gg$ is a 1-cocyle, then $\rho_t$ is a homomorphism up to first order. This computation motivates the following: Given a representation $\rho:\G \to G$, we define the space of 1-cocycles $\G \to \gg$:
$$
Z^1(\G,\gg)_{\rho} = \{ u_1:\G \to \gg \, |\, u_1(\g  \delta) = u_1(\g) +  Ad_{\rho(\g)} u_1(\delta), \, \forall \g,\delta \in \G  \}.
$$
}
\par{
Moreover, since the Zariski tangent space to an algebraic variety is the space of germs of paths satisfying the equations up to first order, the Zariski tangent space $T^{Zar}_{\rho} \Hom(\G,G)$ to $\Hom(\G,G)$ at $\rho$ can be identified with the space of 1-cocyles $Z^1(\G,\gg)_{\rho}$ via the following:
$$   
\forall \g\in \G,\quad  \frac{d}{dt} \rho_t(\g) \rho_0(\g^{-1}) |_{t=0} = u_1 (\g).
$$
}
\\
\par{
Eventually we want to understand the tangent space to the character variety, hence we need to figure out which cocycles come from the conjugation. We introduce the space of 1-coboundaries:
$$
B^1(\G,\gg)_{\rho} = \{ v_1 :\G \to \gg \, |\, \exists\, u_0 \in \gg \textrm{ such that } v_1(\g) = Ad_{\rho(\g)} u_0 - u_0   \}.
$$
Every coboundary $v_1$, in fact, is tangent to the conjugation path $\rho_t = \exp(-t u_0) \, \rho_0 \, \exp(t u_0)$. The first cohomology group with coefficients in $\gg$ twisted by the adjoint action of $\rho$ is
$$
H^1(\G,\gg)_{\rho} = Z^1(\G,\gg)_{\rho}/B^1(\G,\gg)_{\rho}.
$$
}
\par{
Basically, we explain that the map 
$T^{Zar}_{\rho} \Hom(\G,G) \to Z^1(\G,\gg)_{\rho}$ is an isomorphism. In addition, under this isomorphism, the Zariski tangent vectors coming from the
$G$-conjugation of $\rho$ exactly correspond to the coboundaries.
}

\begin{de}
A representation $\rho:\G \to G$ is {\em infinitesimally rigid}\index{representation!infinitesimally rigid}\index{rigid!infinitesimally}\index{infinitesimally rigid} if $H^1(\G,\gg)_{\rho} = 0$.
\end{de}

The following theorem motivates the terminology.

\begin{theorem}[Weil's rigidity theorem \cite{weil}]\label{thm:Rigidity}
If $\rho$ is infinitesimally rigid, then $\rho$ is locally rigid.
\end{theorem}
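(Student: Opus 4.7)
The plan is to realize local rigidity as an instance of the implicit function theorem applied to the relator map cutting out $\Hom(\G, G)$, using the identification of the Zariski tangent space with $Z^1(\G, \gg)_\rho$ already recalled in the excerpt. Choose a finite generating set $\g_1, \dots, \g_k$ of $\G$ and (by Noetherianity of the coordinate ring of $G^k$) finitely many relators whose evaluations give a polynomial map $R \colon G^k \to G^\ell$ with $\Hom(\G, G) = R^{-1}(e, \dots, e)$ as a real-algebraic subset of $G^k$. Under the identification $\rho \leftrightarrow (\rho(\g_1), \dots, \rho(\g_k))$, the kernel of $dR_\rho$ is exactly $Z^1(\G, \gg)_\rho$, while the image of the differential at the identity of the conjugation orbit map $G \to G^k$, $g \mapsto (g \rho(\g_i) g^{-1})_{i}$, is (up to sign) exactly $B^1(\G, \gg)_\rho$. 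The hypothesis $H^1 = 0$ thus translates into the equality $\ker(dR_\rho) = T_\rho(G \cdot \rho)$ inside $\gg^k$.

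The next step is to build an adapted local parametrization of $G^k$ near $\rho$. Fix a vector-space complement $K \subset \gg$ of $\mathrm{Lie}(\Stab_G(\rho))$ and a complement $W \subset \gg^k$ of $B^1 = Z^1 = \ker(dR_\rho)$. The map $\phi \colon K \to G^k$, $X \mapsto (\exp(X) \rho(\g_i) \exp(-X))_i$, is a smooth embedding near $0$ with image contained in $G \cdot \rho \subset \Hom(\G, G)$. Define
\[
\psi \colon K \oplus W \longrightarrow G^k, \qquad \psi(X, w) := \exp(w) \cdot \phi(X),
\]
using left translation on $G^k$; because $d\psi_{(0,0)}$ sends $(X, w) \mapsto d\phi_0(X) + w$ and $\gg^k = B^1 \oplus W$, the map $\psi$ is a local diffeomorphism near $(0, 0)$. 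Set $\tilde R := R \circ \psi$. Then $\tilde R(X, 0) \equiv 0$ because $\phi(K) \subset \Hom(\G, G)$, and $\partial_w \tilde R(0, 0) = dR_\rho|_W$ is injective because $W$ is complementary to $\ker(dR_\rho)$. Writing
\[
\tilde R(X, w) \;=\; \tilde R(X, 0) + \int_0^1 \partial_w \tilde R(X, tw) \cdot w \, dt \;=\; \left( \int_0^1 \partial_w \tilde R(X, tw) \, dt \right) \cdot w,
\]
and observing that the averaged operator remains injective for $(X, w)$ in a small enough neighborhood of $(0, 0)$, I conclude that $\tilde R(X, w) = 0$ forces $w = 0$ near $(0, 0)$. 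Pulling back through $\psi$, every representation close enough to $\rho$ in $\Hom(\G, G)$ lies in $\phi(K) \subset G \cdot \rho$, which is exactly local rigidity.

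The main technical subtlety is upgrading the pointwise injectivity of $\partial_w \tilde R$ at $(0, 0)$ to \emph{uniform} injectivity on a common neighborhood $U_K \times U_W$, so that the integrated mean-value identity can be applied simultaneously for all parameters $X \in U_K$ and the averaged operator stays injective. This is standard but requires continuity of $(X, w) \mapsto \partial_w \tilde R(X, w)$ as a map into $\Hom(W, \gg^\ell)$ combined with the fact that the smallest singular value of a linear map is a continuous function, so that injectivity is preserved on a compact neighborhood. A secondary minor point is the reduction to a finite relator map $R$ when $\G$ is only finitely generated, which relies on $G$ being algebraic and on Noetherianity of the defining ideal of $\Hom(\G, G) \subset G^k$; only finitely many relators are needed to cut out the germ of $\Hom(\G, G)$ at $\rho$ for the local argument to run.
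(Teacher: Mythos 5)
The paper does not actually prove this theorem --- it cites Weil's article and points to Besson's notes for a presentation --- so there is no in-text argument to compare yours with. Your proof is correct and is essentially the classical one: the conjugation orbit is a smooth submanifold of $\Hom(\G,G)$ with tangent space $B^1(\G,\gg)_\rho$ at $\rho$; the hypothesis $H^1=0$ makes this equal to the full Zariski tangent space $Z^1(\G,\gg)_\rho=\ker dR_\rho$; and your adapted chart $\psi$ together with the integral (Hadamard) form of Taylor's theorem shows the zero locus of $R$ near $\rho$ is no larger than the orbit. The dimension count $\dim K+\dim W=\dim\gg^k$ that makes $\psi$ a local diffeomorphism is exactly where $H^1=0$ enters, and you use it correctly; the uniform-injectivity point you flag is indeed the only analytic subtlety and is handled as you say.

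Two small points deserve more care than you give them. First, to get $\ker dR_\rho=Z^1(\G,\gg)_\rho$ on the nose (rather than a possibly larger space, which would ruin both the choice of $W$ and the surjectivity of $d\psi_{(0,0)}$), the finitely many relators must generate the \emph{ideal} generated by all relator equations, not merely cut out $\Hom(\G,G)$ set-theoretically --- squaring relators preserves the zero set but kills all differentials. Noetherianity applied to the increasing chain of ideals generated by larger and larger finite sets of relators gives exactly this, and then $\ker dR_\rho\subseteq\ker df_\rho$ for every $f$ in that ideal, hence for every relator equation, which yields $\ker dR_\rho\subseteq Z^1$; your phrase ``defining ideal of $\Hom(\G,G)$'' (the radical ideal of the set) is not quite the right object. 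Second, the identity $\tilde R(X,w)=\bigl(\int_0^1 \partial_w\tilde R(X,tw)\,dt\bigr)\cdot w$ presupposes a vector-space target, so you should either embed $G$ in a matrix space and subtract the identity or compose $R$ with a chart of $G^\ell$ at the identity; neither changes the zero locus nor the injectivity of $\partial_w\tilde R(0,0)$. With these two repairs the argument is complete.
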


A nice presentation of Theorem \ref{thm:Rigidity} can be found in Besson \cite{besson}. Weil, Garland and Raghunathan also computed the group $H^1(\G,\gg)_{\rho}$ in a number of important cases and showed that it is often trivial.

\begin{theorem}[Weil \cite{weil_discrete_II}, Garland-Raghunathan \cite{garland_raghunathan}, Raghunathan \cite{raghunathan_weil_H_any_Gamma_uniform}]$\,$\\
Suppose that $G$ is a semi-simple group without compact factor and $\G$ is an irreducible lattice of $G$. Denote by $i:\G \to G$ the canonical representation and let $\rho:G \to H$ be a non-trivial irreducible representation of $G$ into a semi-simple group $H$.
\begin{itemize}
\item If $H^1(\G,\gg)_{i} \neq 0$, then 
\begin{itemize}
\item either $\mathfrak{g}=\mathfrak{so}_{2,1}(\R) = \mathfrak{su}_{1,1}=\mathfrak{sl}_2(\R)$,
\item or $\mathfrak{g}=\mathfrak{so}_{3,1}(\R) =\mathfrak{sl}_2(\Cb)$ and $\G$ is a {\em non}-uniform lattice.
\end{itemize}
\item If $H^1(\G,\mathfrak{h})_{\rho \circ i} \neq 0$ and $\G$ is a {\em uniform} lattice, then $\gg = \mathfrak{so}_{d,1}(\R)$ or $\mathfrak{su}_{d,1}$. Moreover, if we write $\mathfrak{h} = V_1 \oplus \cdots \oplus V_r$ the decomposition of the $\gg$-semi-simple module $\mathfrak{h}$ into simple modules, then the highest weight of each $V_i$ is a multiple of the highest weight of the standard representation.
\end{itemize}
\end{theorem}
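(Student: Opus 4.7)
The plan is to reduce the cohomological question to a computation of relative Lie algebra cohomology via Matsushima's formula, then invoke the Vogan--Zuckerman / Kostant classification. First, fix a maximal compact subgroup $K$ of $G$ and let $X = G/K$ be the symmetric space; for $V$ a finite-dimensional $\gg$-module, put $\tilde V$ for the associated flat bundle on $X_\G = \G\backslash X$. Since $X$ is contractible, $H^*(\G,V) \cong H^*(X_\G,\tilde V)$. When $\G$ is uniform, Hodge theory applies and Matsushima's formula gives
\[
H^1(\G, V) \;\cong\; \bigoplus_{\pi \in \widehat G} m_\G(\pi)\, H^1\bigl(\gg,K;\, \pi_K \otimes V\bigr),
\]
where the sum ranges over unitary irreducible representations of $G$ occurring in $L^2(\G\backslash G)$ and $\pi_K$ denotes the $(\gg,K)$-module of $K$-finite vectors. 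This transfers the problem to a purely algebraic question about which unitary $(\gg,K)$-modules carry nontrivial cohomology against a given finite-dimensional coefficient module.

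Next, I would apply the classification of irreducible unitary representations $\pi$ of a semisimple $G$ with $H^*(\gg,K;\pi_K \otimes V) \neq 0$: these are exactly the Vogan--Zuckerman modules $A_{\mathfrak q}(\lambda)$, and Kostant's formula determines the precise degrees and $K$-types that can arise. A degree-1 contribution of the trivial representation $\pi = \mathbf 1$ reduces to computing $H^1(\gg,K;V) = \mathrm{Hom}_K(\mathfrak p, V^{\mathfrak k})/{\cdots}$ in terms of the Cartan decomposition $\gg = \mathfrak k \oplus \mathfrak p$; nontrivial $\pi$ cases are controlled by Vogan--Zuckerman. One checks case by case (using that $\mathfrak p$ is an irreducible $K$-module for simple $\gg$ of real rank $1$ and for the higher-rank list it decomposes) that degree-1 contributions force $\gg$ to have real rank $1$ and, further, exclude the quaternionic and octonionic hyperbolic cases; only $\gg = \mathfrak{so}_{d,1}$ and $\gg = \mathfrak{su}_{d,1}$ survive. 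Specializing to $V = \gg$ via the adjoint action imposes the extra constraint that the relevant $K$-type occur in $\gg$ itself; one verifies this is possible only in dimensions $d=2$ and $d=3$, giving the two cases in the first assertion. The same analysis applied to a general irreducible $\mathfrak h$ shows that the only highest weights of simple $\gg$-submodules $V_i \subset \mathfrak h$ permitting a nonzero contribution are precisely the integer multiples of the highest weight of the standard representation, which yields the final clause.

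Finally, for the non-uniform case, I would follow the Garland--Raghunathan--Borel approach: Matsushima's formula fails because $X_\G$ is non-compact, so one must work with $L^2$-cohomology and control contributions from the cusps. Using reduction theory to analyze each cusp and combining with vanishing of cohomology of the boundary $(\gg,K)$-complexes (which is Garland's result for higher rank), one still obtains the Matsushima isomorphism up to boundary contributions. For $G$ of rank $\geq 2$ the boundary contributions also vanish, so the uniform-case argument applies; for $\gg = \mathfrak{so}_{d,1}$ with $d \geq 4$ and $\mathfrak{su}_{d,1}$ with $d \geq 2$ one still gets vanishing; the only residual non-vanishing in the first assertion is for $\gg = \mathfrak{sl}_2(\mathbb C)$ with $\G$ non-uniform, realized concretely by Thurston's hyperbolic Dehn filling deformations.

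The hard part is the third step: the non-uniform case requires delicate analysis of cusp cohomology and a genuinely different cohomological machinery ($L^2$-cohomology, Borel--Serre compactification, or spectral-sequence arguments). By contrast, the uniform case is, once Matsushima's formula and Kostant's theorem are in hand, essentially a mechanical root-system check. The cleanest conceptual obstruction is understanding why rank-1 groups other than $\mathfrak{so}_{d,1}$ and $\mathfrak{su}_{d,1}$ are rigid --- this reflects the special structure of the $K$-module $\mathfrak p$ in the quaternionic and octonionic cases, and is in fact the germ of Corlette's and Gromov--Schoen's later superrigidity results.
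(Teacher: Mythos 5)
The paper itself offers no proof of this statement: it is quoted as a known theorem with references to Weil, Garland--Raghunathan and Raghunathan, so there is no in-paper argument to compare yours against. Judged on its own, your outline is a coherent modern route, but it is a genuinely different one from the original proofs being cited. Weil and Raghunathan worked before the Vogan--Zuckerman classification existed: the historical argument passes from $H^1(\G,V)\cong H^1(X_\G,\tilde V)$ to harmonic forms via Hodge theory and then kills the harmonic $1$-forms by a Bochner-type positivity computation on an explicit quadratic form (the Matsushima--Murakami method, following Calabi--Vesentini); the exceptional cases $\lieso{d}$ and $\mathfrak{su}_{d,1}$ with highest weight a multiple of the standard one are exactly where that quadratic form fails to be definite. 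Your route through the full decomposition of $L^2(\G\backslash G)$ and the $A_{\mathfrak q}(\lambda)$ classification buys a cleaner conceptual statement (which unitary representations carry degree-one cohomology against $V$) at the cost of much heavier machinery, and it is anachronistic relative to the cited sources. The main weakness is that every substantive step --- Matsushima's formula, Wigner's lemma, the Vogan--Zuckerman/Kostant classification, the rank-one case check excluding $\mathfrak{sp}_{n,1}$ and $\mathfrak f_4^{-20}$, and above all the non-uniform cusp analysis of Garland--Raghunathan --- is invoked rather than carried out, so as written this is a proof schema rather than a proof; in particular the non-uniform part of the first bullet (where the only actual non-vanishing, $\mathfrak{sl}_2(\Cb)$ with $\G$ non-uniform, lives) is precisely the part you defer entirely to the literature. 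One small point in your favor: you correctly observe that for \emph{uniform} lattices the first bullet reduces to the second, since the highest weight of the adjoint representation is a multiple of that of the standard representation only for $\mathfrak{sl}_2(\R)$.
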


\subsection{Higher order}
\par{
Given a 1-cocycle $u_1\in Z^1(\G,\gg)_{\rho}$, i.e. a Zariski tangent vector to the representation variety, we may ask if $u_1$ is {\em integrable}\index{integrable}, i.e. the tangent vector to a smooth deformation. We can start with the simplest investigation: Is the 1-cocycle $u_1$ {\em integrable up to second order}, i.e. the tangent vector to a smooth deformation up to order $2$? 
Writing the expression
$$
\forall \g\in \G, \quad \rho_t(x) = \exp(t\,u_1(\g) + t^2 \,u_2(\g) + O(t^3))\, \rho_0(\g)
$$
and using the Baker-Campbell-Hausdorff formula, we see that 
$\rho_t$ is a homomorphism up to second order if and only if
$$
\forall \g,\delta \in \G, \quad u_2(\g)-u_2(\g \delta)+ Ad_{\rho_0(\g)} u_2(\delta) = \frac12 [Ad_{\rho_0(\g)} u_1(\delta), u_1(\g)] : =\frac12 [u_1,u_1](\g,\delta).
$$
}
\par{
Hence, the 1-cocycle $u_1$ is  integrable up to second order if and only if the 2-cocycle $[u_1,u_1] \in Z^2(\G,\gg)$ is a 2-coboundary. We could ask the same question for the third order and so on. We would find a sequence of obstructions, which are all in $H^2(\G,\gg)_{\rho}$. 
In other words, for each $n \geqslant 2$, if we let 
\[V_n:=\{ u_1 \in Z^1(\G,\gg)_{\rho} \,\,|\,\, o_k(u_1) = 0  \hbox{ for every } k=2, \dotsc, n-1\},\] 
then there exists a map $o_n: V_n \to H^2(\G,\gg)_{\rho}$ such that the 1-cocycle $u_1$ is integrable up to order $n$ if and only if the obstructions $o_k(u_1) =0$ for all $k=2, \dotsc , n$.
}
\\
\par{
The story ends with a good news. Recall that $G=\Gb_{\mathbb{R}}$ and $\R \llbracket t \rrbracket$ is the ring of formal power series.  A {\em formal deformation}\index{deformation!formal}\index{formal deformation} of $\rho : \G \to G$} is a representation $\tilde{\rho}:\G \to \Gb_{\R \llbracket t \rrbracket}$ whose evaluation at $t=0$ is $\rho$. A 1-cocycle $u_1$ is, by definition, the formal tangent vector to a formal deformation (or simply {\em formally integrable}\index{integrable!formally}\index{formally integrable}) if and only if the obstructions $o_n(u_1) =0$ for all $n \geqslant 2$. A priori, this does not imply that $u_1$ is the tangent vector to a smooth deformation, but this is in fact true:
}
\begin{theorem}[Artin \cite{artin}]
If a 1-cocycle $u_1$ is formally integrable, then $u_1$ is integrable.
\end{theorem}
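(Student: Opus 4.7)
The plan is to recognise this statement as a direct application of Artin's approximation theorem to the algebraic structure on $\Hom(\G,G)$. First, I would set up the representation variety concretely: choosing a finite generating set $\g_1,\dotsc,\g_n$ for $\G$, the space $\Hom(\G,G)$ embeds into $G^n$ via $\rho \mapsto (\rho(\g_1),\dotsc,\rho(\g_n))$, and its image is a real algebraic subvariety cut out by the system of polynomial (in fact, word) equations $w(g_1,\dotsc,g_n)=e$ indexed by relators $w$ in $\G$. By noetherianity of $G^n$, this subvariety is carved out by finitely many such equations, so we may work as if $\G$ were finitely presented with relators $w_1,\dotsc,w_k$.

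Next I would translate the hypothesis. A formal deformation $\tilde{\rho}:\G \to \Gb_{\R\llbracket t\rrbracket}$ of $\rho$ is exactly a tuple $(\tilde g_1(t),\dotsc,\tilde g_n(t)) \in \Gb_{\R\llbracket t\rrbracket}^n$ satisfying $w_j(\tilde g_1,\dotsc,\tilde g_n)=e$ for $j=1,\dotsc,k$ and $\tilde g_i(0)=\rho(\g_i)$. In other words, $\tilde\rho$ is a formal point of the algebraic variety $\Hom(\G,G) \subset G^n$ based at $\rho$. The 1-cocycle $u_1$ is just the 1-jet of this formal arc, viewed through the identification $T^{Zar}_{\rho}\Hom(\G,G)=Z^1(\G,\gg)_{\rho}$ discussed above.

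Now apply Artin's approximation theorem \cite{artin}: for a real analytic variety defined by polynomial (or convergent analytic) equations near a point $\rho$, every formal solution over $\R\llbracket t\rrbracket$ can be approximated, to any prescribed order $N$, by an honest convergent (analytic) solution. Taking $N=2$, we obtain an analytic arc $\rho_t$ in $\Hom(\G,G)$ with $\rho_0=\rho$ and whose 1-jet at $t=0$ coincides with that of $\tilde\rho$. The derivative $\frac{d}{dt}\rho_t(\g)\rho(\g^{-1})|_{t=0}$ then recovers $u_1(\g)$ for every $\g \in \G$, showing $u_1$ is integrable in the smooth sense.

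The only genuine obstacle is the invocation of Artin's theorem itself, which is a deep result about the relationship between formal and convergent solutions of analytic equations; the rest of the argument is bookkeeping. A minor technical point is the reduction from an arbitrary finitely generated $\G$ to a finite set of defining equations, but this is handled by the noetherianity of the ambient algebraic variety $G^n$, so no additional finite-presentation hypothesis on $\G$ is really needed.
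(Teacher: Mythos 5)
Your argument is correct and is exactly the intended justification: the paper states this result as a citation of Artin's approximation theorem, and your write-up supplies the standard bookkeeping (realizing $\Hom(\G,G)$ as a real algebraic subvariety of $G^n$ cut out by finitely many relator equations via noetherianity, identifying a formal deformation with a formal arc on this variety, and invoking Artin to replace it by a convergent arc with the same $1$-jet, hence the same cocycle $u_1$). Nothing is missing.
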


\subsection{Examples in hyperbolic geometry}

The world of hyperbolic geometry offers a lot of interesting behaviors. Assume that $M$ is a hyperbolic $d$-dimensional manifold with or without boundary and $\Gamma$ is the fundamental group of $M$.  

\subsubsection{Hyperbolic surfaces}

A lot is known on representations of surface groups, and the story about surface groups is different from higher dimensional manifold groups, which we will eventually concentrate on. Hence, we refer the readers to their favourite surveys on surface group representations (see, for example, Goldman \cite{Gs,GR}, Labourie \cite{lecture_labourie}, Guichard \cite{hdr_olivier}).

\subsubsection{Finite-volume hyperbolic manifolds}
\par{
If $d \geqslant 3$ and $M$ has finite volume, then the famous Mostow-Prasad rigidity theorem \cite{mostow,prasad} states that the holonomy $\rho$  of $M$ is globally rigid. This (conjugacy class of) representation is the {\em geometric representation}\index{representation!geometric} of $\G$.
}
\\
\par{
However this does not imply that $\rho$ is locally rigid. Indeed, the geometric representation might be deformed to non-faithful or non-discrete representations.\footnote{It is easy to see that every discrete and faithful representations of $\G$ are of the same type.} It is a theorem of Thurston for dimension $d=3$ and of Garland  and Raghunathan for dimension $d \geqslant 4$ that $\rho$ is locally deformable if and only if $d=3$ and $M$ has a cusp. This wonderful exception in the local rigidity of finite-volume hyperbolic manifolds is the starting point of the Thurston hyperbolic Dehn surgery theorem.
}

\begin{theorem}[Thurston \cite{Thurston:2002}, Garland-Raghunathan \cite{garland_raghunathan}]\label{thm:GR}
The holonomy representation of a finite-volume hyperbolic manifold $M$ of dimension $d \geqslant 3$ is infinitesimally rigid except if $d=3$ and $M$ is not compact. In the exceptional case, the geometric representation is a smooth point of the character variety of dimension twice the number of cusps.
\end{theorem}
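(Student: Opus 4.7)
The plan is to compute the twisted cohomology $H^1(\G,\gg)_{\rho}$ where $\gg = \lieso{d}$ and $\rho$ is the geometric representation of $\G$, and then combine Weil's infinitesimal rigidity theorem (Theorem \ref{thm:Rigidity}) with a direct dimension count in the exceptional case. The proof naturally splits into three regimes: (i) $M$ closed with $d \geq 3$; (ii) $M$ of finite volume, non-compact, with $d \geq 4$; and (iii) $M$ of finite volume, non-compact, with $d = 3$.

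For the closed case (i), I would follow the Calabi-Weil-Matsushima approach. Using the flat bundle $E_{\rho} = \widetilde{M} \times_{\rho} \gg$ over $M = \Hb^d/\G$, one has the de Rham isomorphism $H^1(\G,\gg)_{\rho} \cong H^1(M, E_{\rho})$, and via Hodge theory on the compact Riemannian manifold $M$ every class is represented by a unique harmonic $\gg$-valued $1$-form. Decomposing $\gg = \mathfrak{k} \oplus \mathfrak{p}$ with $\mathfrak{k} = \mathfrak{so}_d$ and applying a Matsushima-Weil-Bochner identity to such a harmonic representative yields a pointwise non-negative integrand whose vanishing forces the form to be zero as soon as $d \geq 3$. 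Combined with Theorem \ref{thm:Rigidity}, this gives infinitesimal (hence local) rigidity.

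For the non-compact finite-volume case (ii), Garland-Raghunathan work with $L^2$-cohomology. The main task is to promote each class in $H^1(\G, \gg)_{\rho}$ to an $L^2$-harmonic representative, which requires a careful analysis on cusp neighbourhoods, where $M$ is isometric to a warped product $N \times [0, \infty)$ with $N$ a flat $(d-1)$-manifold. The decay of harmonic $E_{\rho}$-valued forms in such a cusp is governed by the eigenvalues of the Laplacian on $N$; for $d \geq 4$ one verifies that the relevant eigenvalues are large enough to force $L^2$-decay, so the Bochner argument from case (i) applies verbatim. This decay estimate fails precisely at $d = 3$, which signals the existence of genuine deformations in that dimension.

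For the exceptional case (iii), I would count parameters directly. Each of the $k$ cusps contributes a peripheral $\Z^2$-subgroup whose $\rho$-image is a rank-two parabolic group fixing a point of $\partial \Hb^3$; such a group sits inside a complex $1$-dimensional group of parabolic translations in $\mathrm{PSL}_2(\Cb)$. The long exact sequence relating the cohomology of $\G$ to that of the cusp subgroups, together with a compactly-supported version of the Bochner argument for the bulk, gives $\dim H^1(\G, \gg)_{\rho} = 2k$. To upgrade this infinitesimal count to actual smoothness at $\rho$, I would invoke Thurston's construction via ideal-triangulation parameters (or, equivalently, via complex shape parameters of the cusp tori): each cusp admits a complex $1$-parameter family of hyperbolic deformations that can be varied freely near the complete structure, and together these cut out a smooth $2k$-real-dimensional slice in the character variety realising every class of $H^1(\G, \gg)_{\rho}$. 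The main obstacle is the $L^2$ analysis of case (ii): identifying $L^2$-cohomology with ordinary group cohomology and establishing the appropriate cuspidal decay of harmonic representatives is the technical heart of the argument, and it is exactly this decay that breaks in the hyperbolic Dehn-surgery dimension $d=3$.
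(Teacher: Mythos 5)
This is a survey, and the paper gives no proof of Theorem~\ref{thm:GR}: the statement is simply attributed to Thurston and to Garland--Raghunathan, with a remark that Bergeron--Gelander later gave an alternative proof deduced from Mostow--Prasad rigidity. So there is no in-paper argument to compare yours against; what you have written is a reconstruction of the original proofs, and as an outline it is accurate. Your case (i) is the Calabi--Weil--Matsushima Bochner argument (with the caveat that the splitting $\gg = \mathfrak{k}\oplus\mathfrak{p}$ is a splitting of the metrized flat bundle as associated bundles of $K$-representations, applied pointwise to the harmonic representative, not a splitting of $\Gamma$-modules); case (ii) is exactly the content of Garland--Raghunathan, where the issue is producing square-integrable representatives in the cusps so that the $d\geq 3$ Bochner identity still applies; and case (iii) is Thurston's count: injectivity of $H^1(M,E_\rho)\to H^1(\partial M,E_\rho)$ at the complete structure (your ``compactly supported Bochner argument,'' which is legitimate since the Weitzenb\"ock term is still positive for $d=3$ in the $L^2$ setting), half-lives-half-dies in $H^1(\partial M,E_\rho)\cong \Cb^{2k}$ giving $\dim_{\R}H^1(\G,\gg)_\rho = 2k$, and Thurston's explicit $k$-complex-parameter deformation to match the Zariski tangent space and conclude smoothness. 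The one thing to be honest about is that this is a roadmap rather than a proof: essentially all of the difficulty sits in the cuspidal decay estimates you defer to, and you say so. It is worth knowing that the route the paper actually points to (Bergeron--Gelander) is genuinely different: it bootstraps infinitesimal rigidity from the global Mostow--Prasad rigidity theorem rather than from harmonic-form analysis, which avoids the $L^2$ machinery at the cost of invoking a much deeper input.
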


Bergeron and Gelander \cite{BergeronGe}  gave an alternative proof of Theorem \ref{thm:GR} using the Mostow-Prasad rigidity.

\subsubsection{Hyperbolic manifolds with boundary}

We also wish to cite this beautiful theorem which pushes this kind of question beyond the scope of finite-volume manifolds.

\begin{theorem}[Kerckhoff-Storm \cite{KerSto}]
The holonomy representation of a compact hyperbolic manifold with totally geodesic boundary of dimension $d \geqslant 4$ is infinitesimally rigid.
\end{theorem}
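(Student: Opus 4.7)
The natural approach is to \emph{double $M$ along $\partial M$}. Because $\partial M$ is totally geodesic, the double $DM := M \cup_{\partial M} M$ carries a closed hyperbolic metric of dimension $d$, and the involution $\sigma \colon DM \to DM$ exchanging the two copies is an isometry fixing $\partial M$ pointwise. By Van Kampen, $\pi_1(DM) \cong \Gamma \ast_{\pi_1(\partial M)} \Gamma$ with $\Gamma = \pi_1(M)$; the action of $\sigma$ on this group interchanges the two copies of $\Gamma$ and fixes $\pi_1(\partial M)$. With coefficients $\gg = \lieso{d}$ twisted by $\mathrm{Ad}\circ\rho$, the Mayer--Vietoris sequence for this amalgam reads
\begin{equation*}
H^{1}(\pi_1(DM), \gg) \longrightarrow H^{1}(\Gamma, \gg) \oplus H^{1}(\Gamma, \gg) \xrightarrow{\ \delta\ } H^{1}(\pi_1(\partial M), \gg),
\end{equation*}
where $\delta$ is the difference of the two restriction maps. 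Since $DM$ is closed hyperbolic of dimension $d \geq 4 \geq 3$, a first application of Calabi--Weil (the closed case of Theorem \ref{thm:GR}) gives $H^{1}(\pi_1(DM), \gg) = 0$, so $\delta$ is injective.

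The crux is to exploit the involution $\sigma$. On coefficients, $\sigma$ acts by conjugation by a hyperbolic reflection $r$ across the hyperplane containing $\partial M$, yielding an $\mathrm{O}_{d-1,1}$-equivariant splitting $\gg = \gg^{+} \oplus \gg^{-}$ with $\gg^{+} \cong \lieso{d-1}$ (the adjoint module) and $\gg^{-} \cong \R^{d-1,1}$ (the standard module). Because $\sigma$ interchanges the two copies of $\Gamma$, the map $\delta$ \emph{anti}-commutes with $\sigma$ and hence exchanges $\sigma$-eigenspaces. A direct cocycle computation identifies the $(-1)$-eigenspace of $\sigma$ on the middle term with $H^{1}(\Gamma, \gg)$ via $v \mapsto (-\mathrm{Ad}(r)v,\,v)$, and shows that $\delta$ sends such a vector to $-2\,(i^{*}v)^{+}$, the projection of its restriction $i^{*}v \in H^{1}(\pi_1(\partial M), \gg)$ onto the $\gg^{+}$-summand. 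Injectivity of $\delta$ then produces an injection
\begin{equation*}
H^{1}(\Gamma, \gg) \, \hookrightarrow \, H^{1}(\pi_1(\partial M), \lieso{d-1}).
\end{equation*}
A \emph{second} application of Calabi--Weil, this time to the closed hyperbolic $(d-1)$-manifold $\partial M$ (valid precisely because $d - 1 \geq 3$), kills the right-hand side, and we conclude $H^{1}(\Gamma, \gg) = 0$. The hypothesis $d \geq 4$ is tight for this strategy: it is exactly what is needed to invoke Calabi--Weil on both $DM$ (dimension $d$) and $\partial M$ (dimension $d-1$).

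The main technical obstacle will be the careful $\sigma$-bookkeeping for Mayer--Vietoris: establishing the anti-commutation of $\delta$ with $\sigma$ (the sign arises because the two copies of $\Gamma$ are swapped), correctly identifying the $(-1)$-eigenspace on the left and the $(+1)$-eigenspace on the right with $H^{1}(\Gamma,\gg)$ and $H^{1}(\pi_1(\partial M), \lieso{d-1})$ respectively, and verifying that it is only the adjoint piece $\gg^{+}$ that enters the final obstruction. This last point is crucial: the ``dangerous'' cohomology $H^{1}(\pi_1(\partial M), \R^{d-1,1})$, which need not vanish and in fact houses Johnson--Millson style bending classes, lies in the $(-1)$-eigenspace on the right---opposite the image of $\delta$ restricted to the $(-1)$-eigenspace on the left---and so does not obstruct the argument. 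Peripheral points---$\pi_{1}$-injectivity of $\pi_1(\partial M) \hookrightarrow \Gamma$ (automatic from total geodesicity), orientability (passing to the orientation double cover if necessary), and multiple boundary components (replace the amalgam by a graph-of-groups Mayer--Vietoris)---are routine.
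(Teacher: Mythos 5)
First, a point of order: the paper does not prove this statement at all --- it is quoted verbatim from Kerckhoff--Storm \cite{KerSto} with no argument --- so there is no ``paper's proof'' to compare against. What can be said is that your route is emphatically \emph{not} the route of \cite{KerSto}: their proof is analytic, a Weil/Matsushima--Murakami-type Weitzenb\"ock--Bochner argument for harmonic $1$-forms valued in the flat $\lieso{d}$-bundle, whose entire difficulty is the control of the boundary term in the integration by parts; and they explicitly discuss in their introduction that they were unable to derive the theorem formally from the rigidity of the double. Your proposal, if it closed, would reduce a substantial 2012 paper to a one-page consequence of Calabi--Weil applied twice. That is a loud alarm bell, and you should treat it as such before believing the argument.

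On the substance: the core algebra for \emph{connected} boundary is more solid than I expected --- the Mayer--Vietoris exactness at the middle term, the anticommutation of $\delta$ with $\sigma$, the identification of $\gg^{+}$ with the adjoint module $\lieso{d-1}$ of $\pi_1(\partial M)$, and the cocycle-level gluing of $v$ on one vertex group with $-\mathrm{Ad}(r)\,v\circ\sigma_*$ on the other (after correcting $v$ by a coboundary so that $v|_{\pi_1(\partial M)}$ is $\gg^{-}$-valued) all check out when written at the level of derivations into $\gg\rtimes\pi_1(DM)$. But two of the points you wave off as ``routine'' are genuine gaps. First, the disconnected-boundary case: with components $\Sigma_1,\dots,\Sigma_k$ the double is a graph of groups with $k$ edges, the involution $\sigma_*$ fixes only the edge group at the basepoint component and \emph{conjugates} the others (by the loops formed from the connecting paths and their $\sigma$-images), and each component determines a \emph{different} reflection $r_i$ and hence a different splitting $\gg=\gg^{+}_{i}\oplus\gg^{-}_{i}$; there is no single coboundary correction making all restrictions simultaneously $\gg^{-}_{i}$-valued, so the clean eigenspace bookkeeping collapses and must be redone with the stable letters of the graph of groups carried along. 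Second, and more importantly, you have not actually carried out the ``$\sigma$-bookkeeping'' you yourself identify as the crux --- you state the expected signs and eigenspace matchings but defer the verification. Given that the experts who proved this theorem state that the doubling approach did not work for them, the burden is on you to either produce the complete cocycle-level verification (including the disconnected case) or to locate the point where it breaks; as submitted, this is a plausible strategy sketch, not a proof.
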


\section{Infinitesimal duality to complex hyperbolic geometry}

\par{
We now return to the original interest of this survey: convex projective structures on manifolds. From the point of view of representations, our problem is to understand deformations $\rho_t:\pi_1(M) \to \s{d+1}$ from the holonomy $\rho_0 :\pi_1(M) \to \so{d}(\mathbb{R})$ of the hyperbolic structure on $M$ into representations in $\s{d+1}$.
}
\\
\par{
Suppose that $M$ is a finite-volume hyperbolic manifold of dimension $d \geqslant
 3$ and $\G$ is the fundamental group of $M$. We have seen that there exists a unique discrete faithful representation $\rho_{\mathrm{geo}}$ of $\G$ into $\so{d}(\R)$, up to conjugation. If $G$ is a Lie group and $i: \so{d}(\mathbb{R}) \to G$ is a representation, then we call the conjugacy class $[i \circ \rho_{geo}]$ the {\em hyperbolic point}\index{hyperbolic!point}\index{point!hyperbolic} of the character variety $\chi(\G,G)$ and we denote it again by $\rho_{\mathrm{geo}}$. We abuse a little bit of notation here, since we ignore $i$, but in the following $i$ will always be the canonical inclusion.
}
\\
\par{
Complex hyperbolic geometry can help us to understand local deformations into $\s{d+1}$. Indeed, complex hyperbolic geometry is  “dual” to Hilbert geometry, however, only at the hyperbolic point and at the infinitesimal level.
}
\begin{rem}\label{remark_duality_complex_hyp}
The groups $\s{d+1}$ and $\mathrm{SU}_{d,1}$ are non-compact real forms of the complex algebraic group $\mathrm{SL}_{d+1}(\mathbb{C})$ that both contains the real algebraic group $\so{d}(\R)$. Moreover, the Lie algebra $\sl{d+1}$ splits as 
\begin{equation}\label{eq:split}
\sl{d+1} = \lieso{d} \oplus \lieo
\end{equation}
where $\lieo$ is the orthogonal complement to $\lieso{d}$ in $\sl{d+1}$ with respect to the Killing form of $\sl{d+1}$, and the adjoint action of $\so{d}(\mathbb{R})$ preserves the decomposition (\ref{eq:split}). Hence to study the cohomology group $H^1(M,\sl{d+1})_{\rho}$, we just have to understand $H^1(M,\lieo)_{\rho}$, since the cohomology group $H^1(M,\lieso{d})_{\rho}$ is well known. But, since the Lie algebra $\mathfrak{su}_{d,1} = \lieso{d} \oplus i\lieo$, we can find $H^1(M,\lieo)_{\rho}$ using complex hyperbolic geometry (see Heusener-Porti \cite{HP}, Cooper-Long-Thistlethwaite \cite{CLT_flexing} for more details).
\end{rem}

Remark \ref{remark_duality_complex_hyp} evolves into the following theorem:

\begin{theorem}[Cooper-Long-Thistlethwaite \cite{CLT_flexing}]
Let $M$ be a closed hyperbolic manifold, and $\G = \pi_1(M)$. Then the hyperbolic point $\rho_{geo}$ in $\chi(\G,\s{d+1})$ is smooth if and only if the corresponding hyperbolic point in $\chi(\G,\mathrm{SU}_{d,1})$ is smooth. Moreover, in that case, the dimensions of the two character varieties at the hyperbolic point are the same.
\end{theorem}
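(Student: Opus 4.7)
My plan is to compare infinitesimal deformations and higher obstructions for the two real forms via the common splitting highlighted in Remark~\ref{remark_duality_complex_hyp}. The $\so{d}$-invariant decompositions
$$\sl{d+1}=\lieso{d}\oplus\lieo \quad \text{and}\quad \mathfrak{su}_{d,1}=\lieso{d}\oplus i\lieo$$
induce direct sum decompositions of $H^k(\Gamma,\cdot)_{\rho_{\mathrm{geo}}}$ in every degree, and the map $X\mapsto iX$ is an $\so{d}$-equivariant $\mathbb{R}$-linear isomorphism $\lieo\xrightarrow{\sim}i\lieo$, hence induces canonical isomorphisms $H^k(\Gamma,\lieo)\cong H^k(\Gamma,i\lieo)$ for every $k$. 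Combined with Weil's rigidity theorem (Theorem~\ref{thm:GR}), which gives $H^1(\Gamma,\lieso{d})_{\rho_{\mathrm{geo}}}=0$ since $M$ is closed and $d\geqslant 3$, this identifies both Zariski tangent spaces at $\rho_{\mathrm{geo}}$ with $H^1(\Gamma,\lieo)_{\rho_{\mathrm{geo}}}$. In particular the two tangent spaces have the same dimension.

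For the smoothness equivalence I intend to match obstruction classes order by order. Given $u_1\in Z^1(\Gamma,\lieo)$, the second-order obstruction in the $\s{d+1}$-theory is $\tfrac12[u_1,u_1]\in H^2(\Gamma,\lieso{d})_{\rho_{\mathrm{geo}}}$ (since $[\lieo,\lieo]\subset\lieso{d}$). The corresponding first-order deformation $iu_1\in Z^1(\Gamma,i\lieo)\subset Z^1(\Gamma,\mathfrak{su}_{d,1})$ has obstruction $\tfrac12[iu_1,iu_1]=-\tfrac12[u_1,u_1]$, using the identity $[iX,iY]=-[X,Y]$ inside $\sl{d+1}(\mathbb{C})$ for $X,Y\in\lieo$. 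The two classes live in the same cohomology group and differ only by a sign, hence vanish simultaneously. The same pattern should persist at every order: decomposing each higher correction $u_n=a_n+b_n$ with $a_n\in\lieso{d}$ and $b_n\in\lieo$, one can transport the $\s{d+1}$-recursion to a $\mathrm{SU}_{d,1}$-recursion for $(\tilde{a}_n,i\tilde{b}_n)$ by applying an explicit sign rule dictated by the parity of the number of $\lieo$-factors in each iterated bracket. By Artin's theorem, this yields a bijection between formal deformations in the two settings, so one deformation problem is unobstructed if and only if the other is.

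The main technical obstacle is formalizing the sign bookkeeping above in a manner that is functorial in the Artin parameter. The cleanest route is to model both deformation functors as Maurer--Cartan functors of the DGLAs $C^{*}(\Gamma,\sl{d+1})$ and $C^{*}(\Gamma,\mathfrak{su}_{d,1})$ and to construct an explicit comparison via the identity on the $\lieso{d}$-part together with multiplication by $i$ on the $\lieo$-part; the resulting discrepancy in the bracket restricted to $\lieo\times\lieo$ is a single global sign that can be absorbed by a quadratic rescaling of the deformation parameter. Since isomorphic DGLAs control isomorphic formal deformation functors, this produces an isomorphism of the formal neighborhoods of $\rho_{\mathrm{geo}}$ in the two character varieties, yielding at once the smoothness equivalence and the equality of local dimensions.
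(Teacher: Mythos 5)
Your first paragraph is exactly the content of the paper's Remark~\ref{remark_duality_complex_hyp} (which is all the paper offers by way of proof -- the theorem itself is quoted from \cite{CLT_flexing}): the $\mathrm{Ad}\circ\rho_{\mathrm{geo}}$-invariant splittings $\sl{d+1}=\lieso{d}\oplus\lieo$ and $\mathfrak{su}_{d,1}=\lieso{d}\oplus i\lieo$, together with $H^1(\G,\lieso{d})_{\rho_{\mathrm{geo}}}=0$ for closed $M$ of dimension $d\geq 3$, identify both Zariski tangent spaces with $H^1(\G,\lieo)_{\rho_{\mathrm{geo}}}$. That part, and your second-order computation $[iu_1,iu_1]=-[u_1,u_1]\in Z^2(\G,\lieso{d})_{\rho_{\mathrm{geo}}}$ (valid because the splitting is the eigenspace decomposition of an involutive automorphism, so $[\lieo,\lieo]\subset\lieso{d}$), are correct. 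One small slip: the vanishing of $H^1(\G,\lieso{d})$ is Theorem~\ref{thm:GR} (Garland--Raghunathan), not Weil's Theorem~\ref{thm:Rigidity}.

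The genuine gap is in the higher-order step. The comparison map you propose -- identity on $\lieso{d}$, multiplication by $i$ on $\lieo$ -- is not a Lie algebra homomorphism, and no DGLA isomorphism $C^{*}(\G,\sl{d+1})\cong C^{*}(\G,\mathfrak{su}_{d,1})$ can exist, because $\sl{d+1}$ and $\mathfrak{su}_{d,1}$ are non-isomorphic real forms; so the final sentence of your argument ("isomorphic DGLAs control isomorphic deformation functors") has no input. Nor can the sign on $\lieo\times\lieo$ be "absorbed by a quadratic rescaling of the deformation parameter": a substitution $t\mapsto\lambda t$ multiplies the order-$n$ term by $\lambda^{n}$ uniformly and cannot see the $\lieso{d}$-versus-$\lieo$ bigrading inside a fixed order. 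What your "parity of the number of $\lieo$-factors" remark is groping toward is the following lemma, which you would need to prove: using $H^1(\G,\lieso{d})=0$ and the equivariance of the obstruction calculus under the involution $\theta$ ($+1$ on $\lieso{d}$, $-1$ on $\lieo$), every formal deformation can be normalized so that $u_n\in C^1(\G,\lieso{d})$ for $n$ even and $u_n\in C^1(\G,\lieo)$ for $n$ odd; only then does the substitution $t\mapsto it$ produce a formal $\mathfrak{su}_{d,1}$-deformation. Even granting that, a bijection between sets of formally integrable cocycles does not by itself yield "smooth iff smooth'': smoothness of a real (semi-)algebraic germ is not equivalent to unobstructedness of each tangent vector without a versality/Kuranishi statement identifying the germ of $\chi(\G,G)$ at $\rho_{\mathrm{geo}}$ with the zero locus of the obstruction map. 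A more robust way to close the gap is to note that both character varieties are real forms of the single complex variety $\chi(\G,\mathrm{SL}_{d+1}(\Cb))$ with common real point $\rho_{\mathrm{geo}}$, and that regularity of the local ring and the local dimension at a real point are detected on the complexification, hence coincide for the two real loci; this is the mechanism behind the statement in \cite{CLT_flexing}.
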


\begin{theorem}[Guichard \cite{guichard_plonge}]\label{thm:Guichard}
Let $M$ be a closed hyperbolic manifold, and $\G = \pi_1(M)$. In $\chi(\G,\mathrm{SL}_{d+1}(\mathbb{C}))$, hence in particular in $\chi(\G,\s{d+1})$ and $\chi(\G,\mathrm{SU}_{d,1})$, the representations close to the hyperbolic point $\rho_{geo}$ are faithful and discrete.
\end{theorem}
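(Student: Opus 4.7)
The plan is to identify $\rho_{\mathrm{geo}}$ as a projective Anosov representation of the word-hyperbolic group $\Gamma$ into $\mathrm{SL}_{d+1}(\mathbb{C})$, and then to invoke the general openness, discreteness and faithfulness properties of Anosov representations already recalled in the introduction of this survey.

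First, since $M$ is a closed hyperbolic $d$-manifold, $\Gamma$ acts geometrically on $\mathbb{H}^d$ and is therefore Gromov-hyperbolic, with Gromov boundary $\partial_\infty \Gamma$ that is $\Gamma$-equivariantly homeomorphic to $\partial\mathbb{H}^d \cong \mathbb{S}^{d-1}$. Modelling $\mathbb{H}^d$ as the projectivisation of the positive cone of a Lorentz form of signature $(d,1)$ on $\mathbb{R}^{d+1}$, one has the chain of inclusions $\mathrm{PSO}_{d,1} \hookrightarrow \mathrm{PGL}_{d+1}(\mathbb{R}) \hookrightarrow \mathrm{PGL}_{d+1}(\mathbb{C})$ through which $\rho_{\mathrm{geo}}$ is taken, and the complexified light cone provides a $\rho_{\mathrm{geo}}$-equivariant continuous map $\xi : \partial_\infty \Gamma \to \mathbb{P}(\mathbb{C}^{d+1})$ whose image consists of isotropic lines, together with a transverse companion $\xi^\star : \partial_\infty \Gamma \to \mathbb{P}((\mathbb{C}^{d+1})^\star)$ sending $\eta$ to the tangent hyperplane to the light cone at $\xi(\eta)$; these satisfy $\xi^\star(\eta)(\xi(\eta')) \neq 0$ whenever $\eta \neq \eta'$.

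Next, I would verify that $\rho_{\mathrm{geo}}$ is $P_1$-Anosov (projective Anosov) with boundary maps $(\xi,\xi^\star)$. The singular-value gap $\log(\sigma_1/\sigma_2)(\rho_{\mathrm{geo}}(\gamma))$ is, up to a fixed multiplicative constant, the hyperbolic translation length of $\gamma$ on $\mathbb{H}^d$, which by the \v{S}varc--Milnor lemma applied to the cocompact action is comparable to the word length of $\gamma$ along any quasi-geodesic in $\Gamma$. Combined with the continuous equivariant transverse boundary maps above, this yields the exponential contraction required in the Labourie--Guichard--Wienhard formulation \cite{labourie,Guichard}. I would then apply the openness theorem for projective Anosov representations of Gromov-hyperbolic groups into real reductive Lie groups, in the case where the target $\mathrm{SL}_{d+1}(\mathbb{C})$ is regarded as a real Lie group: the locus of such representations is open in $\Hom(\Gamma,\mathrm{SL}_{d+1}(\mathbb{C}))$, and every Anosov representation has discrete image and finite kernel. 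Because $\Gamma = \pi_1(M)$ is torsion-free, this kernel must be trivial, so every representation in a sufficiently small neighbourhood of $\rho_{\mathrm{geo}}$ in $\Hom(\Gamma,\mathrm{SL}_{d+1}(\mathbb{C}))$ is faithful and discrete; passing to the quotient yields the statement in $\chi(\Gamma,\mathrm{SL}_{d+1}(\mathbb{C}))$, and the conclusions for $\chi(\Gamma,\mathrm{SL}_{d+1}(\mathbb{R}))$ and $\chi(\Gamma,\mathrm{SU}_{d,1})$ follow by restriction, as both are real subvarieties of the complex character variety.

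The main obstacle is the verification step: properly setting up the Anosov formalism with a \emph{complex} target group and checking the contraction estimate. No genuinely new input is required once the geometric data above is assembled --- the cocompactness on $\mathbb{H}^d$ supplies all the necessary quasi-isometric comparisons --- but one has to take care with the choice of parabolic $P_1 \subset \mathrm{SL}_{d+1}(\mathbb{C})$ and with the distinction between real and complex structures on the target. An alternative route via the structural stability of convex cocompact actions on properly convex domains of $\mathbb{R}\mathbb{P}^d$, transported to $\mathbb{C}\mathbb{P}^d$, is conceivable but would demand more ad hoc arguments, whereas the Anosov approach plugs directly into the theory already cited in this survey.
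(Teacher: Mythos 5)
Your argument is correct in substance. Note first that the survey itself offers no proof of this statement: it is quoted from Guichard's paper, whose original argument establishes the stability of the quasi-isometric embedding $\G\hookrightarrow \so{d}(\R)\hookrightarrow G$ under small perturbations by constructing equivariant boundary maps and a uniform contraction property --- essentially the Anosov condition \emph{avant la lettre}. Your route, which recognizes $\rho_{\mathrm{geo}}$ as a projective Anosov representation of the Gromov-hyperbolic group $\G$ into $\mathrm{SL}_{d+1}(\Cb)$ viewed as a real semisimple Lie group, and then invokes the openness of the Anosov locus together with discreteness and finiteness of the kernel (hence faithfulness, since $\G$ acts freely on $\mathbb{H}^d$ and is torsion-free), is the modern repackaging of exactly that mechanism; what it buys is that all the delicate estimates are outsourced to the general Guichard--Wienhard machinery, at the cost of invoking a theorem that historically generalizes the result being proved. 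Your geometric setup (complexified light cone for $\xi$, tangent hyperplanes for $\xi^\star$, transversality from the Lorentzian form) is right, as is the reduction of the real and unitary cases to the complex one by intersecting a conjugation-invariant open set with the subvarieties $\Hom(\G,\s{d+1})$ and $\Hom(\G,\mathrm{SU}_{d,1})$.

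One imprecision should be fixed: the singular-value gap $\log(\sigma_1/\sigma_2)(\rho_{\mathrm{geo}}(\gamma))$ equals the displacement $d_{\mathbb{H}^d}(o,\gamma o)$ of a basepoint, not the translation length of $\gamma$. The \v{S}varc--Milnor comparison with word length holds for the displacement but fails element-by-element for translation length (an element of enormous word length may be conjugate to a short one and have small translation length), so as literally written your verification of the gap condition would not go through; replacing ``translation length'' by ``displacement of the basepoint fixed by the maximal compact subgroup'' repairs it. A second, milder point: the passage from a neighbourhood in $\Hom$ to a neighbourhood in the character variety $\chi$ deserves a word --- it works here because $\rho_{\mathrm{geo}}$ is irreducible with finite centralizer, so the $G$-action is proper near its orbit and the quotient map is open there --- but the survey itself elides this, so it is not a gap relative to the intended statement.
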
 

The local pictures of the character variety for $G=\s{d+1}$ and $\mathrm{SU}_{d,1}$ are therefore the same; however the global pictures can be very different. The work of Morgan-Shalen \cite{morgan_shalen_valutations}, Bestvina \cite{bestvina_degenerations} and Paulin \cite{paulin_topologie} shows that the space of discrete and faithful representations of $\G$ in $\mathrm{SU}_{d,1}$ is compact (if $d \geqslant 2$), but this can be false if $G=\s{d+1}$. 

\section{Convex projective structures on $3$-manifolds}

\subsection{Computing character varieties}

\par{
Cooper, Long and Thistlethwaite \cite{CLT_c} investigated the local structure of the variety $\chi(\G,\s{4})$ at the hyperbolic point $\rho := \rho_{\mathrm{geo}}$ when $\G$ is the fundamental group of one of the first $4500$ closed hyperbolic 3-manifolds with $2$-generator groups in the Hodgson-Weeks census: 
}
\begin{center}
\verb|http://www.math.uic.edu/t3m/SnapPy/censuses.html|
\end{center}

\par{
We summarize their conclusions about the character variety $\chi(\G,\s{4})$ around the hyperbolic point:
\begin{itemize}
\item 4439 points, i.e. $H^1(\G,\s{4})_{\rho}=0$.
\item 9 singular points, i.e. $H^1(\G,\s{4})_{\rho} \neq 0$ but no Zariski tangent vector is integrable.
\item 43 smooth curves.
\item 7 smooth surfaces.
\item 1 singular surface such that $H^1(\G,\s{4})_{\rho}$ is $3$-dimensional.
\item 1 singular 3-variety, which has two 3-dimensional branches meeting in a curve.
\end{itemize}
}

\par{
First, we should mention that these computations are mostly done in floating-point mode, hence this summary is a very good speculation but not a statement. Second, the authors checked rigorously their speculations on certain character varieties of this list.
}

\begin{rem}
If $\G$ is the fundamental group of a closed hyperbolic 3-manifold, then the first obstruction $o_2(u_1)=0$, for every Zariski tangent vector $u_1$ at the hyperbolic point $\rho$ (see Cooper-Long-Thistlethwaite \cite{CLT_flexing}). Indeed, first, the infinitesimal rigidity of the closed hyperbolic $3$-manifold in $\so{3}(\mathbb{R})$ implies that $u_1 \in H^1(\G,\mathfrak{o})$. Second, since $[\mathfrak{o},\mathfrak{o}] \subset \mathfrak{so}_{3,1}$, it follows that $o_2(u_1)$ is not only an element of $H^{2}(\G,\mathfrak{sl}_4)$ but also an element of $H^{2}(\G,\mathfrak{so}_{3,1})$. Finally, we know from Poincar\'e duality that $H^{2}(\G,\mathfrak{so}_{3,1}) = H^{1}(\G,\mathfrak{so}_{3,1}) = 0$.
\end{rem}

The singularities of some varieties are therefore more than quadratic, since for example the manifold “Vol3” is locally rigid even if its Zariski tangent space is one-dimensional. Compare to the result of Goldman-Millson \cite{GoldMill} that the singularity at a reductive representation is at most quadratic if $\G$ is the fundamental group of a Kähler manifold.

\subsection{Infinitesimal rigidity relative to the boundary}
\par{
Heusener and Porti made use of a relative version of the infinitesimal rigidity for finite-volume hyperbolic $3$-manifolds $M$ in order to obtain 
the infinitesimal rigidity for some Dehn fillings of $M$. 
}
\\
\par{
Let $M$ be a 3-manifold with a boundary whose interior $N$ carries a finite-volume complete hyperbolic metric, and let $\rho := \rho_{geo}$ be the holonomy representation of $N$. We say that $M$ (or $N$) is {\em infinitesimally rigid relative to the boundary\index{infinitesimally rigid!relative to the boundary}} if the map $H^1(M,G)_{\rho} \to H^1(\partial M,G)_{\rho}$ is injective. Roughly speaking, at the infinitesimal level, every deformation must change the geometry of the cusp.
}
The combination of the following two theorems shows in particular that infinitely many closed hyperbolic 3-manifolds are locally rigid in $G=\s{4}$.

\begin{theorem}[Theorem 1.4 of Heusener-Porti \cite{HP}]
Infinitely many Dehn fillings of a non-compact hyperbolic 3-manifold of finite volume which is infinitesimally rigid relative to the boundary are infinitesimally rigid.
\end{theorem}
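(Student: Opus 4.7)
The plan is to run a twisted Mayer--Vietoris argument on the decomposition
\[
M(\alpha) = M \cup_{T^2} V_\alpha,
\]
where, for notational simplicity, I assume $N$ has a single cusp with $\partial M = T^2$ and $V_\alpha$ denotes the solid torus glued along the Dehn filling slope $\alpha$; the case of several cusps is handled cusp-by-cusp in the same way. By the hyperbolic Dehn surgery theorem, all but finitely many $\alpha$ give rise to a hyperbolic structure on $M(\alpha)$ whose holonomy $\rho_\alpha$ is a continuous deformation of $\rho_{\mathrm{geo}}$. Let $\mathfrak{g}$ be the Lie algebra of $G = \s{d+1}$, viewed as a $\pi_1$-module via $\mathrm{Ad} \circ \rho_\alpha$.

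First I would write the Mayer--Vietoris sequence
\[
\cdots \to H^0(T^2,\mathfrak{g})_{\rho_\alpha} \to H^1(M(\alpha),\mathfrak{g})_{\rho_\alpha} \to H^1(M,\mathfrak{g})_{\rho_\alpha} \oplus H^1(V_\alpha,\mathfrak{g})_{\rho_\alpha} \xrightarrow{\psi_\alpha} H^1(T^2,\mathfrak{g})_{\rho_\alpha} \to \cdots
\]
Since $\rho_\alpha$ is close to $\rho_{\mathrm{geo}}$, which is irreducible, the image of $\rho_\alpha$ is irreducible with trivial centralizer for all small $\alpha$, so the relevant $H^0$ groups vanish and $H^1(M(\alpha),\mathfrak{g})_{\rho_\alpha} = 0$ is equivalent to the injectivity of $\psi_\alpha$. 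The hypothesis of relative infinitesimal rigidity asserts that the first restriction $H^1(M)\to H^1(T^2)$ is injective, so $\mathrm{ker}(\psi_\alpha)$ is isomorphic to the intersection
\[
L_M \cap L_{V_\alpha} \subset H^1(T^2,\mathfrak{g})_{\rho_\alpha},
\]
where $L_M$ and $L_{V_\alpha}$ denote the images of the two restriction maps (here one uses that $V_\alpha$ is homotopy equivalent to its core circle, and that, for $\rho_\alpha$ close to $\rho_{\mathrm{geo}}$, the image of the core is loxodromic so $H^1(V_\alpha)\to H^1(T^2)$ is injective as well).

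Next I would analyse how $L_M$ and $L_{V_\alpha}$ sit inside $H^1(T^2,\mathfrak{g})_{\rho_\alpha}$. Using Poincar\'e--Lefschetz duality and the long exact sequence of $(M, \partial M)$, the subspace $L_M$ is a half-dimensional (Lagrangian) subspace with respect to the natural symplectic form on $H^1(T^2,\mathfrak{g})_{\rho_\alpha}$ coming from the Killing form, and it is \emph{fixed} (under the canonical identifications provided by the deformation of $\rho_\alpha$). In contrast, $L_{V_\alpha}$ is the annihilator of the Dehn filling slope $\alpha$ in a sense made precise by restriction to the core circle, and therefore it varies linearly with $\alpha$ in the $\mathbb{Z}^2$ of peripheral slopes. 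Hence the condition $L_M \cap L_{V_\alpha} \ne 0$ cuts out a proper algebraic subvariety of the space of slopes.

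The main obstacle is making precise the last step --- tracking the motion of $L_{V_\alpha}$ as $\alpha$ varies while the coefficient system $\mathfrak{g}_{\rho_\alpha}$ also varies. I would handle this by working in a trivialised family of coefficient bundles over a neighbourhood of $\rho_{\mathrm{geo}}$ in the character variety (possible because the deformation is smooth near the geometric point by Theorem~\ref{thm:GR}), so that all the groups $H^1(T^2,\mathfrak{g})_{\rho_\alpha}$ are identified with a single vector space in which $L_M$ is constant and $L_{V_\alpha}$ depends linearly on the slope. Combining the general-position statement that $L_M \cap L_{V_\alpha} = 0$ fails only on a proper subvariety with Thurston's hyperbolic Dehn surgery theorem, which ensures that the set of slopes $\alpha$ yielding a hyperbolic $M(\alpha)$ is infinite, one concludes that infinitely many Dehn fillings are infinitesimally rigid.
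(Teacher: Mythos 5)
This survey does not prove the statement; it is quoted from Heusener--Porti \cite{HP}, so your proposal can only be measured against their argument. Your skeleton is the right one and matches theirs in outline: Mayer--Vietoris for $M(\alpha)=M\cup_{T^2}V_\alpha$, the ``half lives, half dies'' consequence of Poincar\'e--Lefschetz duality making the image of $H^1(M,\gg)\to H^1(T^2,\gg)$ half-dimensional and isotropic for the cup-product pairing, and the reduction of infinitesimal rigidity of $M(\alpha)$ to the transversality $L_M\cap L_{V_\alpha}=0$. (One small correction along the way: $H^0(T^2,\gg)_{\rho_\alpha}$ does \emph{not} vanish --- it is the centralizer of a loxodromic abelian subgroup, a Cartan subalgebra --- the connecting map dies because $H^0(V_\alpha,\gg)\to H^0(T^2,\gg)$ is onto, not because the groups are zero.)

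The genuine gap is the last step, which you yourself flag as ``the main obstacle'': the proposed resolution does not work. There is no parameter space over which ``$L_M\cap L_{V_\alpha}\neq 0$ cuts out a proper algebraic subvariety'' makes sense: the slopes form a discrete lattice, each slope carries its \emph{own} representation $\rho_\alpha$, and the only accumulation point of this family is $\rho_{\mathrm{geo}}$ itself, where the peripheral holonomy degenerates from loxodromic to parabolic. At that degeneration $\dim H^1(T^2,\gg)_\rho$ jumps, so the torus cohomologies cannot be trivialised into ``a single vector space'' over a neighbourhood of $\rho_{\mathrm{geo}}$; moreover $L_M=L_M(\rho_\alpha)$ is not constant in any such trivialisation, and the hypothesis of relative infinitesimal rigidity constrains the restriction map only at $\rho_{\mathrm{geo}}$, not at the nearby $\rho_\alpha$ (injectivity of $H^1(M,\gg)_{\rho_\alpha}\to H^1(T^2,\gg)_{\rho_\alpha}$ for $\alpha$ large is itself something to prove, via semicontinuity). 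This is exactly why the theorem asserts only ``infinitely many'' fillings rather than ``all but finitely many''. Heusener and Porti replace your general-position step by a compactness/contradiction argument: assuming infinitely many fillings are not infinitesimally rigid, they choose nonzero classes $z_n\in H^1(M(\alpha_n),\gg)_{\rho_{\alpha_n}}$, restrict them to $M$, normalise, and extract a limit cocycle for $\rho_{\mathrm{geo}}$ whose boundary behaviour contradicts infinitesimal rigidity relative to the cusps; the technical core of their paper is precisely the control of the peripheral cohomology through the parabolic degeneration that your trivialisation assumes away. To repair your write-up you would need to supply that limiting analysis; the transversality picture alone does not yield the conclusion.
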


\begin{theorem}[Heusener-Porti \cite{HP}]
There exist non-compact hyperbolic 3-manifolds of finite volume which are infinitesimally rigid relative to the boundary.
\end{theorem}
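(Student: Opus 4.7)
The plan starts from the $\SO_{3,1}(\mathbb{R})$-invariant Killing-orthogonal decomposition $\mathfrak{sl}_{4}(\mathbb{R}) = \mathfrak{so}_{3,1}\oplus\mathfrak{o}$ recalled in Remark~\ref{remark_duality_complex_hyp}. Because $\rho=\rho_{\mathrm{geo}}$ takes values in $\SO_{3,1}(\mathbb{R})$ and the adjoint action preserves this decomposition, both $H^{1}(M,\mathfrak{sl}_{4})_{\rho}$ and $H^{1}(\partial M,\mathfrak{sl}_{4})_{\rho}$ split as direct sums, and the restriction map to the boundary splits accordingly. The theorem thus reduces to exhibiting a single cusped finite-volume hyperbolic 3-manifold $M$ for which both summands inject into their counterparts on $\partial M$.

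The first summand is essentially free. By Theorem~\ref{thm:GR}, $H^{1}(M,\mathfrak{so}_{3,1})_{\rho}$ is smooth of dimension twice the number of cusps for every such $M$, and its image in $H^{1}(\partial M,\mathfrak{so}_{3,1})_{\rho}$ is a Lagrangian subspace for the symplectic intersection form (the classical \emph{half-lives-half-dies} phenomenon, combined with Mostow-Prasad rigidity to rule out a non-trivial kernel). So only the $\mathfrak{o}$-summand requires genuine work.

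For the $\mathfrak{o}$-summand I would invoke the infinitesimal complex-hyperbolic duality: the $\SO_{3,1}(\mathbb{R})$-module $\mathfrak{o}$ is equivariantly isomorphic to $i\mathfrak{o}\subset\mathfrak{su}_{3,1}$, so $H^{1}(M,\mathfrak{o})_{\rho}\cong H^{1}(M,i\mathfrak{o})_{\rho}$, with the same isomorphism on $\partial M$ and the restriction maps intertwined. The question thus becomes whether the $\mathrm{SU}_{3,1}$-character variety of $\pi_{1}(M)$ has, at the geometric point, all its ``non-hyperbolic'' tangent directions detected at the cusp. Applying Poincar\'e-Lefschetz duality to the Killing form on $\mathfrak{su}_{3,1}$, injectivity of the restriction is equivalent to the vanishing of $H^{1}(M,\partial M;i\mathfrak{o})_{\rho}$, and a half-lives-half-dies argument always yields $\dim H^{1}(M,i\mathfrak{o})_{\rho}\leq \tfrac{1}{2}\dim H^{1}(\partial M,i\mathfrak{o})_{\rho}$; equality is precisely the sought injectivity.

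The main obstacle is to produce a single $M$ realizing this equality, since extra ``bending-type'' tangent vectors could in principle arise, for instance from closed totally geodesic surfaces inside $M$ (as in the Johnson-Millson mechanism of Lemma~\ref{lem:JM}). My strategy would be to hunt among low-complexity examples in the Hodgson-Weeks census or among explicit arithmetic cusped manifolds, deliberately choosing $M$ so as to forbid such geodesic surfaces, and then to compute directly the local dimension of the $\mathrm{SU}_{3,1}$-character variety at $\rho_{\mathrm{geo}}$; Theorem~\ref{thm:Guichard} guarantees that nearby representations remain discrete and faithful, so this computation is meaningful. If the dimension matches the boundary contribution coming from cusp-shape deformations in $\mathfrak{su}_{3,1}$, the half-lives-half-dies bound is sharp, injectivity of the restriction holds, and the theorem follows. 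Verifying that at least one such $M$ exists is the genuinely delicate step, and is where I would expect the bulk of the work to reside.
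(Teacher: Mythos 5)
Your reduction is the right framework, and it matches how Heusener--Porti actually proceed: split $\mathfrak{sl}_4(\R)$ into $\mathfrak{so}_{3,1}\oplus\mathfrak{o}$, dispose of the $\mathfrak{so}_{3,1}$-summand via the classical computation $\dim H^1(M,\mathfrak{so}_{3,1})_\rho=2k$ ($k$ the number of cusps) together with half-lives-half-dies, and reduce the $\mathfrak{o}$-summand to the dimension count $\dim H^1(M,\mathfrak{o})_\rho=\tfrac12\dim H^1(\partial M,\mathfrak{o})_\rho$. The problem is that the theorem is an \emph{existence} statement, and your proposal explicitly defers its entire content --- producing one manifold and verifying the dimension count --- to a future ``hunt.'' That step is not soft: avoiding closed totally geodesic surfaces is only a \emph{necessary} condition (their presence produces Johnson--Millson bending classes invisible at the boundary), not a sufficient one, so no amount of careful selection replaces an actual computation. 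In the cited source the examples are explicit: the figure-eight knot complement and the Whitehead link complement (named in the remark following the theorem in this survey), for which Heusener--Porti compute the twisted cohomology $H^1(M,\mathfrak{o})_\rho$ directly from the two-bridge group presentation, with coefficients in $\mathfrak{o}\cong\mathrm{Sym}^2_0(\R^4)$, and check that its dimension equals half of $\dim H^1(\partial M,\mathfrak{o})_\rho$. Without carrying out such a verification for at least one manifold, the proof is incomplete exactly where the theorem has its content.

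There are also two technical slips in your duality paragraph, both repairable. First, injectivity of $H^1(M,\mathfrak{o})_\rho\to H^1(\partial M,\mathfrak{o})_\rho$ is \emph{not} equivalent to the vanishing of $H^1(M,\partial M;\mathfrak{o})_\rho$: the long exact sequence of the pair identifies the kernel with the image of $H^1(M,\partial M;\mathfrak{o})_\rho\to H^1(M,\mathfrak{o})_\rho$, and since the parabolic cusp holonomy fixes nonzero vectors of $\mathfrak{o}$ while $H^0(M,\mathfrak{o})_\rho=0$, the connecting map from $H^0(\partial M,\mathfrak{o})_\rho$ already forces $H^1(M,\partial M;\mathfrak{o})_\rho\neq 0$. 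The correct criterion is that the map $H^1(M,\partial M;\mathfrak{o})_\rho\to H^1(M,\mathfrak{o})_\rho$ be zero. Second, half-lives-half-dies gives $\dim H^1(M,\mathfrak{o})_\rho\geq\tfrac12\dim H^1(\partial M,\mathfrak{o})_\rho$ --- the image is exactly half-dimensional, so the source is at least that large --- not $\leq$; injectivity is precisely the case of equality, which is the criterion you end up invoking anyway.
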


\begin{rem}
For example, the figure-eight knot complement and the Whitehead link complement are infinitesimally rigid relative to the boundary. A finite-volume non-compact hyperbolic 3-manifold which contains an embedded totally geodesic closed hypersurface is not infinitesimally rigid relative to the boundary. This raises the following question: Can we find a (topological) characterization of finite-volume hyperbolic 3-manifolds which are infinitesimally rigid relative to the boundary? 
An answer even for hyperbolic knot or link complements would already be quite nice.
\end{rem}

Surprisingly, the technique of Heusener and Porti, which is extended by Ballas, also produces deformations. A {\em slope}\index{slope} is a curve in the boundary, and a slope $\g$ is {\em rigid}\index{slope!rigid} if the map 
$H^1(M,\mathfrak{o})_{\rho} \to H^1(\g ,\mathfrak{o})_{\rho}$
is non-trivial.

\begin{theorem}[Heusener-Porti \cite{HP}, Ballas \cite{Ballas_deform}]
Infinitely many {\rm (}generalized\,{\rm )} Dehn fillings of an amphichiral knot whose longitude is a rigid slope are deformable.
\end{theorem}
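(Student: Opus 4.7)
The plan is to adapt the infinitesimal approach of Heusener-Porti, using the amphichirality involution to produce non-trivial (rather than purely rigid) deformations of Dehn fillings. By Remark \ref{remark_duality_complex_hyp} it suffices to analyze $H^1(\cdot, \mathfrak{o})_{\rho_{\mathrm{geo}}}$, since $\mathfrak{sl}_{4}(\R) = \mathfrak{so}_{3,1} \oplus \mathfrak{o}$ and the $\mathfrak{so}_{3,1}$-part is controlled by classical hyperbolic rigidity. Let $M$ denote the knot exterior, $\rho_0$ its hyperbolic holonomy, $\mu$ the meridian and $\lambda$ the longitude, and let $\tau : M \to M$ be the diffeomorphism realizing the amphichirality.

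First I would extract a convenient $1$-cocycle. Since $\lambda$ is rigid, the restriction $H^1(M,\mathfrak{o})_{\rho_0} \to H^1(\lambda,\mathfrak{o})_{\rho_0}$ is nonzero, so I can pick $u_1 \in H^1(M,\mathfrak{o})_{\rho_0}$ whose restriction to $\lambda$ is nontrivial. The involution $\tau$ acts on $H^1(M,\mathfrak{o})_{\rho_0}$ and on $H^1(\partial M,\mathfrak{o})_{\rho_0}$, and since $\tau$ is orientation-reversing on the boundary torus it exchanges (up to sign) the roles of $\mu$ and $\lambda$ in a controlled way. Decomposing into $\tau$-eigenspaces lets me adjust $u_1$ (by adding $\tau$-symmetrizations) so that its restriction to $H^1(\partial M,\mathfrak{o})_{\rho_0}$ has a prescribed form in the basis given by $\mu,\lambda$. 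This is the exact mechanism by which amphichirality upgrades Heusener-Porti’s relative rigidity argument into a source of deformations.

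Next, for a generalized Dehn filling of slope $\gamma = p\mu + q\lambda$, the deformation space of the filled (possibly cone) manifold at the hyperbolic point corresponds, via the Ehresmann-Thurston principle (Theorem \ref{thm:EhTh1}), to the subspace of $H^1(M,\mathfrak{o})_{\rho_0}$ whose restriction to $\gamma$ vanishes. Using the previous step, the amphichirality-symmetrized class $u_1$ restricts to a linear functional on the slope lattice whose kernel is a line in $\R\mu \oplus \R\lambda$. The set of integer (or rational) slopes meeting this line non-trivially is infinite, yielding infinitely many candidate fillings for which there is a nonzero first-order deformation transverse to conjugation.

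The main obstacle is integration: showing that the $1$-cocycles so produced are the tangent vectors of honest deformations, not merely formal ones obstructed at some finite order. To handle this I would invoke Artin’s theorem together with a vanishing argument for the obstruction map $o_n : V_n \to H^2(M,\mathfrak{o})_{\rho_0}$. Specifically, the amphichirality-induced involution acts on $H^2$ as well, and the classes $o_n(u_1)$ built from a $\tau$-eigenvector lie in the opposite eigenspace of a subspace that can be shown to be trivial using Poincar\'e--Lefschetz duality for the pair $(M,\partial M)$ together with the hyperbolic infinitesimal rigidity of $\mathfrak{so}_{3,1}$-cohomology. Once the obstructions vanish, Heusener-Porti's implicit-function-theorem argument (suitably equivariantized under $\tau$) transports the deformation through the Dehn filling, producing for each admissible slope an honest curve of representations in $\chi(\pi_1^{\mathrm{orb}}(M(p,q)),\mathrm{SL}_4(\R))$ not arising from conjugation.
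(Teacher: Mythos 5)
The survey states this theorem without proof, citing Heusener--Porti and Ballas, so there is no in-paper argument to compare against; your proposal must therefore stand on its own, and as written it has three concrete gaps. First, you compute everything at the cusped holonomy $\rho_{\mathrm{geo}}$ of the knot exterior $M$, but the representation whose deformability is at stake is the holonomy $\rho_{p/q}$ of the \emph{filled} manifold or orbifold. Since $\rho_{\mathrm{geo}}(p\mu+q\lambda)$ is a nontrivial parabolic, $\rho_{\mathrm{geo}}$ does not even factor through $\pi_1(M_{p/q})$; the relevant representation is the hyperbolic Dehn surgery deformation $\rho_{p/q}$, and a class in $H^1(M,\mathfrak{o})_{\rho_{\mathrm{geo}}}$ killing the slope says nothing directly about $H^1(M_{p/q},\mathfrak{o})_{\rho_{p/q}}$. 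Bridging this requires a continuity or limiting statement for the twisted cohomology as $\rho_{p/q}\to\rho_{\mathrm{geo}}$, which is where most of the work in the cited sources lies and which is absent here. Second, your slope count fails: the kernel of a nonzero linear functional on $\mathbb{R}\mu\oplus\mathbb{R}\lambda$ is a single line, which contains at most one primitive integral slope up to sign, so \enquote{the set of integer slopes meeting this line non-trivially is infinite} does not produce infinitely many fillings. The infinitude must come from the asymptotics of the restriction maps as $(p,q)\to\infty$, not from one fixed linear condition at $\rho_{\mathrm{geo}}$. Third, the integrability step leans on equivariance under the amphichirality involution $\tau$, but $\tau$ acts on the peripheral torus by a determinant $-1$ matrix in the $(\mu,\lambda)$ basis and hence sends the slope $p/q$ to $-p/q$; it does not in general descend to $M_{p/q}$, so the eigenspace argument for killing the obstructions $o_n$ cannot be transported to the filled manifold. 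Note also that for a closed aspherical $3$-manifold Poincar\'e duality gives $H^2(\Gamma,\mathfrak{o})\cong H^1(\Gamma,\mathfrak{o})$, which is nonzero precisely in the situation you are trying to create, so the obstruction space does not vanish for free.

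What you do have right is the reduction to $\mathfrak{o}$-coefficients via the splitting $\mathfrak{sl}_4(\mathbb{R})=\mathfrak{so}_{3,1}\oplus\mathfrak{o}$ of Remark \ref{remark_duality_complex_hyp}, the use of the rigidity of the longitude to produce a class with nontrivial boundary restriction, and the idea that the amphichiral symmetry constrains the half-dimensional (Lagrangian) image of $H^1(M,\mathfrak{o})$ in $H^1(\partial M,\mathfrak{o})$. These are genuine ingredients of the Heusener--Porti--Ballas argument, but the three gaps above are each essential and none is closed by the proposal as written.
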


\bibliographystyle{abbrv}
\bibliography{bibliography2}

\printindex

\end{document}